\newtheorem{theorem}{Theorem}[section]
\newtheorem{proposition}[theorem]{Proposition}
\newtheorem{lemma}[theorem]{Lemma}
\newtheorem{fact}[theorem]{Fact}
\newtheorem{claim}[theorem]{Claim}
\theoremstyle{remark}
\newtheorem*{remark}{Remark}
\theoremstyle{definition}
\newtheorem{definition}{Definition}[section]
\numberwithin{equation}{section}
\newcommand{\R}{\mathbb{R}}
\newcommand{\N}{\mathbb{N}}
\newcommand{\C}{\mathbb{C}}
\newcommand{\F}{\mathcal{F}}
\newcommand{\Sc}{\mathcal{S}}
\newcommand{\Sp}{\mathbb{S}}
\newcommand{\minus}{\setminus}
\newcommand{\vertiii}[1]{{\left\vert\kern-0.25ex\left\vert\kern-0.25ex\left\vert #1 
		\right\vert\kern-0.25ex\right\vert\kern-0.25ex\right\vert}}
\author[M. Fakhoury]{Micheline Fakhoury}
\address{Univ. Artois, UR 2462, Laboratoire de Math\'{e}matiques de Lens (LML)\\ F-62300 Lens, France}
\email{micheline.fakhoury@univ-artois.fr}
\begin{document}
	\emergencystretch 3em
	
	\title[Tingley's problem for Schreier spaces]{Tingley's problem for Schreier spaces \\and their  $p\,$-$\,$convexifications}

	\begin{abstract} We describe the surjective isometries of the unit sphere of real  Schreier spaces of all orders and their $p\,$-$\,$convexifications, for $1<p<\infty$. This description allows us to provide for those spaces a positive answer  to a special case of Tingley's problem, which asks whether every surjective isometry of the unit sphere of a real Banach space can be extended to a linear isometry of the entire space.
	\end{abstract}
	
	\keywords{Tingley's problem, isometries, unit sphere, combinatorial spaces, Schreier spaces, $p\,$-$\,$convexifications.}
	\subjclass[2020]{46B04, 46B45, 03E05}
	%\thanks{The author would like to thank her advisor \'Etienne Matheron for his help in the writing of the paper, and the referee for his/her valuable suggestions.}
	\thanks{This work was supported in part by
		the project FRONT of the French
		National Research Agency (grant ANR-17-CE40-0021)}
	
	\maketitle
	
	\section{Introduction}
	The so-called \emph{Tingley's problem} has been intensively studied in recent years. It has the following simple formulation,  originally posed by Tingley in~\cite{T}: Let $X$ and $Y$ be two real Banach spaces and let $S_X$ and $S_Y$ be their unit spheres, respectively. Does every surjective isometry $T: S_X \to S_Y$ extend to a linear  isometry $\widetilde  T : X\to Y$?
	
	Note that this problem also makes sense for complex Banach spaces; however, the best one can hope for is that $T$ can be extended to an  $\R$-linear isometry. For instance,  if $X=Y= \mathbb{C}$ and $T(x)=\Bar{x}$ for all $x\in S_X$, then $T$ cannot be extended to a $\C$-linear isometry of $X$.
	
	This intriguing problem has been studied for various Banach spaces, and several   partial positive answers have been obtained; see e.g. the surveys~\cite{Ding7, PSurvey, YZ}. It should be noted that this problem remains open for Banach spaces of dimension three or higher. In a notable advancement, T. Banakh~\cite{Banakh} provided a positive answer for \emph{two-dimensional} real Banach spaces. Moreover, the problem remains unsolved for the case where $X=Y$. Recently, N. Ma\'slany~\cite{Ma} obtained a positive answer  to this particular problem (when $X=Y$) for some real combinatorial Tsirelson spaces by describing the surjective isometries of their unit spheres. This motivated us to study the same problem for other combinatorial-like Banach spaces, in particular for  Schreier spaces and their  $p\,$-$\,$convexifications.
	
	Throughout the paper, all vector spaces are \emph{real}. We denote by $c_{00}$ the vector space of all finitely supported real-valued sequences, and by $(e_i)_{i\in\N}$ the canonical basis of $c_{00}$. 
	
	We point out that $\N$ starts at~$1$. By a \emph{combinatorial family} $\F$, we mean a family of finite subsets of $\N$ that contains all singletons and is hereditary for inclusion: if $F \in \F$ and $G \subset F$ then $G \in \F$. 
	
	Given a combinatorial family $\F$, the \emph{combinatorial Banach space} $X_\F$ is defined to be the completion of $c_{00}$  with respect to the following norm: 
	\[ \Vert x \Vert_\F := \sup \biggl\{\sum_{i\in F} \vert x(i)\vert; \; F \in \F\biggr\}. \]
	This is indeed a norm because $\mathcal F$ contains all singletons.
	
	Now, given a combinatorial family $\F$ and $p\in (1,\infty)$, the $p\,$-$\,$convexification of $X_\F$, denoted by $X_{\F,p}$, is defined to be the completion of $c_{00}$ with respect to the norm 
	\[ \Vert x \Vert_{\F,p} := \sup \biggl\{\biggl(\sum_{i\in F} \vert x(i)\vert^p\biggr)^{{1}/{p}}; \; F \in \F\biggr\}. \]
	Moreover, for convenience of notation, we consider that $X_{\F,1}=X_\F$. 
	
	It is clear by definition that $(e_i)_{i\in\N}$ is a $1$-unconditional Schauder basis for $X_{\F,p}$, $1\leqslant p<\infty $. 
	
	\medskip
	As mentioned above, in the present paper, we study  Tingley's problem for $X_{\F,p}$ where $\F$ is a generalized Schreier family $\Sc_\alpha$, defined below,  and $p\in [1,\infty)$. 
	
	\smallskip
	D. E. Alspach and S. A. Argyros introduced  in \cite{AA}  the so-called ``generalized Schreier families''. 
	Let $\omega_1$ be the first uncountable ordinal. We will say that a (transfinite) sequence 
	$(\Sc_\alpha)_{0\leqslant  \alpha<\omega_1}$ of families of finite subsets of $\N$ is a \emph{transfinite Schreier sequence} if
	
	\begin{itemize}
		\item $\Sc_0=\{ F\subset\N;\; \vert F\vert\leqslant 1\}$;
		\item $\Sc_{\beta +1}=\bigl\{\bigcup_{i=1}^{n}E_i; \; n \leqslant  E_1<E_2< \dots <E_n \text{ and }E_i\in \Sc_{\beta}\bigr\}$ for every $\beta<\omega_1$;
		\item $\Sc_{\alpha}= \{F \subset \N; \; F \in \Sc_{\alpha_n} \text{ for some }n \leqslant  F\}$ if $\alpha$ is a limit ordinal, where $(\alpha_n)_{n\in\N}$ is a preassigned strictly increasing sequence of non-zero ordinals such that $\sup_n\alpha_n=\alpha$. 
	\end{itemize}
	
	Note that in the definition of $\Sc_{\beta+1}$ and $\Sc_\alpha$ (when $\alpha$ is a limit ordinal), we assume  $n\leqslant  E$ is always true if $E=\emptyset$, and similarly  $E<E'$ is true  if either $E$ or $E'$ is $\emptyset$. 
	
	Also, it is important to mention that   there is  not a unique  transfinite Schreier sequence $(\Sc_\alpha)_{\alpha<\omega_1}$, because for limit ordinals $\alpha$, the family $\Sc_\alpha$ depends on the choice of the increasing sequences $(\alpha_n)$ converging to $\alpha$. On the other hand, $\Sc_k$ is uniquely determined for each natural number $k$; and in particular, $\Sc_k$ for $k:= 1$ is  the classical  \emph{Schreier family} \[ \mathcal S_1:=\{\emptyset\}\cup\{ \emptyset\neq F\subset\N;\; \vert F\vert\leqslant \min(F)\}.\]
	
	One can verify by transfinite induction that for any $ \alpha<\omega_1$, $\Sc_\alpha$ is a combinatorial family that is  spreading (for any $F \in \Sc_\alpha$ and $\sigma : F \to \N$ such that  $\sigma(n) \geqslant   n$ for all $n \in F$, 
	we have $\sigma(F) \in \Sc_\alpha$), and  compact when considered as a subset of $\{ 0, 1\}^\N$ \textit{via} the identification of a set $F\subset\N$ with its characteristic function, \textit{i.e.}  $\Sc_\alpha$ is closed in $\{ 0,1\}^\N$. Moreover, it can be verified easily by transfinite induction that for any $1\leqslant  \alpha<\omega_1$,   $\{m,n\}\in \Sc_\alpha$ for any $2\leqslant  m<n\in \N$, and $\Sc_\alpha^{MAX}=\{\{1\}\}$ where $\Sc_\alpha^{MAX}$ is the set of all maximal elements of $\Sc_\alpha$ with respect to inclusion.
	
	\smallskip
	Note that  $X_{\Sc_1}$ is called the \emph{Schreier space}, and given a  countable ordinal $\alpha<\omega_1$,  $X_{\Sc_\alpha}$ is called the \emph{Schreier space of order $\alpha$}. 
	
	\medskip
	Before stating our main theorem, we need to point out a property of transfinite Schreier sequences $(\Sc_\alpha)_{\alpha<\omega_1}$ that depends on the choice of the approximating sequences. 
	
	\begin{definition}\label{def}
		A transfinite Schreier sequence $(\Sc_\alpha)_{\alpha<\omega_1}$  will be  said to be \emph{good} if it has the following property: for every limit ordinal $\alpha<\omega_1$, the approximating sequence $(\alpha_n)$ is made up of  successor ordinals, $\alpha_n=\beta_n+1$, and the sequence 
		$(\Sc_{\beta_n})_{n\in\N}$ is increasing.
	\end{definition}
	Note that there exist transfinite Schreier sequences that are good by \cite[Proposition~3.1 (viii)]{Cau} (see also \cite[Fact 3.1]{
		F}).
	
	\medskip
	
	We are going to prove the following theorem.
	
	\begin{theorem}\label{main} Let $(\Sc_\alpha)_{\alpha<\omega_1}$ be a good transfinite Schreier sequence and let $p\in [1,\infty)$ and $1\leqslant \alpha<\omega_1$. Then every surjective isometry of the unit sphere of $X_{\Sc_\alpha,p}$ can be extended to a linear surjective isometry of $X_{\Sc_\alpha,p}$. Moreover, if $p\in \{1,2\}$, it is not necessary to assume that $(\Sc_\alpha)_{\alpha<\omega_1}$ is  good. 
	\end{theorem}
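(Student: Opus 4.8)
The plan is to show that every surjective isometry $T$ of $S:=S_{X_{\Sc_\alpha,p}}$ agrees on $S$ with a map of the form $U=D_\epsilon P_\pi$, where $D_\epsilon x=(\epsilon_i x(i))_i$ is a change of signs ($\epsilon\in\{-1,1\}^\N$) and $P_\pi x=(x(\pi^{-1}(i)))_i$ is the coordinate permutation induced by an automorphism $\pi$ of $\Sc_\alpha$ (that is, $F\in\Sc_\alpha\Leftrightarrow\pi(F)\in\Sc_\alpha$). Since $(e_i)$ is $1$-unconditional and $\Sc_\alpha$ is invariant both under sign changes and under such a $\pi$, any $U$ of this form is a linear surjective isometry of $X_{\Sc_\alpha,p}$; so producing one that agrees with $T$ on $S$ solves the problem. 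The whole task thus reduces to reconstructing the pair $(\epsilon,\pi)$ from the metric and to checking that $U^{-1}T$ is the identity on $S$.

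I would first exploit the structural consequences of the hypotheses recorded in the introduction. Because $\{1\}$ is the unique maximal singleton, no admissible set other than $\{1\}$ contains the coordinate $1$; splitting any norming set accordingly yields the isometric decomposition $X_{\Sc_\alpha,p}=\R e_1\oplus_\infty Y$, with $Y=\overline{\Span\{e_i:i\geqslant 2\}}$ and $\Vert x\Vert_{\Sc_\alpha,p}=\max\{|x(1)|,\Vert x-x(1)e_1\Vert_{\Sc_\alpha,p}\}$. This makes $e_1$ geometrically distinguished: its farthest points in $S$ (those at the maximal distance $2$) form the whole ball $\{-1\}\times B_Y$, whereas for $i\geqslant 2$ the coordinate can be paired (as $\{m,n\}\in\Sc_\alpha$ for $2\leqslant m<n$) and the corresponding set has a different, prism-like shape. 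One concludes $T(\{\pm e_1\})=\{\pm e_1\}$ and that $T$ respects the splitting.

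The heart of the matter is the key lemma: $T$ maps the distinguished set $\{\pm e_i:i\in\N\}$ onto itself, inducing a bijection $\pi$ of $\N$ and signs $\epsilon_i$ with $Te_i=\epsilon_i e_{\pi(i)}$. I would characterise $\pm e_i$ purely metrically — for $p=1$ as the extreme points of $B_{X_{\Sc_\alpha}}$, which are exactly the signed indicators $\sum_{i\in F}\epsilon_i e_i$ of maximal $F\in\Sc_\alpha$ and hence form a discrete set on which $\pm e_i$ are easily isolated, and for general $p$ through the farthest-point and extreme-point structure above. Comparing the norms $\Vert\sum_{i\in F}e_i\Vert_{\Sc_\alpha,p}=(\max_{G\in\Sc_\alpha}|F\cap G|)^{1/p}$, which $T$ preserves, then forces $\pi$ to carry $\Sc_\alpha$ to itself, so that $U=D_\epsilon P_\pi$ is indeed a linear isometry. \textbf{This lemma for $p\notin\{1,2\}$ is the main obstacle.} For $p=1$ the extreme points are discrete and rigid, while for $p=2$ the Hilbertian geometry aligns inner-product orthogonality with disjointness of supports, so the distinguished points are directly recognisable; this is why goodness is not needed in those two cases. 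For the remaining exponents the extreme points form a continuum, and I expect to organise them by transfinite induction on $\alpha$ along the recursion $\Sc_{\beta+1}=\{\bigcup_{i=1}^{n}E_i:n\leqslant E_1<\dots<E_n,\ E_i\in\Sc_\beta\}$; at a limit ordinal the uniform norm estimates needed to run the induction are available precisely when the approximating families $(\Sc_{\beta_n})$ are increasing and built from successors, i.e. under the goodness hypothesis, with compactness of $\Sc_\alpha$ in $\{0,1\}^\N$ used to pass to the limit.

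Finally, with $(\epsilon,\pi)$ in hand I would set $R:=U^{-1}T$, a surjective isometry of $S$ fixing every $\pm e_i$, and show $R=\mathrm{id}$. This is the routine closing step: using $1$-unconditionality and the max-decomposition, each coordinate $x(i)$ is recovered from the metric data $\{\Vert x\pm e_i\Vert_{\Sc_\alpha,p}\}$, so $Rx=x$ for all $x\in S$. Then $\widetilde T:=U$ is the desired linear surjective isometry of $X_{\Sc_\alpha,p}$ extending $T$.
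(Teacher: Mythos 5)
Your overall skeleton---first pin down $T(\pm e_1)$, then show that $T$ carries the signed basis vectors onto themselves, then recover every coordinate of $T(x)$ from the distances $\Vert x\pm e_i\Vert$---is the same as the paper's, and several preliminary observations are sound: since $\{1\}\in\Sc_\alpha^{MAX}$, the splitting $X_{\Sc_\alpha,p}=\R e_1\oplus_\infty Y$ is correct; your farthest-point identification of $\pm e_1$ is a workable variant of Lemmas~\ref{l3}--\ref{l4} (resp.\ Lemma~\ref{lemma4} for $p=1$); allowing a permutation $\pi$ costs nothing (a posteriori it is the identity, by the quoted classification of linear isometries); and the closing coordinate-recovery step is indeed the more routine part (Lemmas~\ref{zero} and~\ref{final}, though the $p=1$ analogue, Lemma~\ref{long}, is not quite ``routine''). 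The genuine gap is that your ``key lemma''---$T(\{\pm e_i\})=\{\pm e_i\}$, i.e.\ Proposition~\ref{main3}---is where essentially all of the paper's work lies, and the arguments you propose for it do not stand. For $p=1$ your premise is false on two counts. First, the signed indicators $\sum_{i\in F}\epsilon_ie_i$ of maximal $F\in\Sc_\alpha$ are not extreme points of $B_{X_{\Sc_\alpha}}$; they do not even lie on the unit sphere (such a vector has norm $\vert F\vert$). You are most likely thinking of the extreme points of the \emph{dual} ball, on which a sphere isometry of $X$ does not act. Second, and worse, no $\pm e_i$ is an extreme point of $B_{X_{\Sc_\alpha,p}}$ for any $p$: by the very $\oplus_\infty$ splitting you set up, every extreme point $x$ must satisfy $\vert x(1)\vert=1$, so $e_i=\frac12\bigl((e_i+te_1)+(e_i-te_1)\bigr)$ rules out $i\geqslant 2$, while $e_1=\frac12\bigl((e_1+te_2)+(e_1-te_2)\bigr)$ rules out $i=1$. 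Hence there is no extreme-point set ``on which $\pm e_i$ are easily isolated''. (Even with a correct description, the statement that a surjective isometry of the sphere preserves extreme points of the ball is not free and would itself require proof.)

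The other two cases are asserted rather than proved. For $p=2$ the appeal to ``Hilbertian geometry'' is not an argument: $X_{\Sc_\alpha,2}$ is far from a Hilbert space, and the paper needs a one-sided parallelogram inequality (Lemma~\ref{Fact1}), a classification of each $T(e_i)$ as $\pm e_j$ or $\tfrac{\sqrt2}{2}(\pm e_{j_1}\pm e_{j_2})$ (Lemma~\ref{Fact2}), and a further argument excluding the two-point supports (Lemmas~\ref{Fact4}--\ref{Fact6}) to reach Proposition~\ref{main3}. For $p\notin\{1,2\}$ you yourself flag the step as the main obstacle and offer only a plan (transfinite induction on $\alpha$ with compactness at limit stages); this plan is not carried out, and it is not how the difficulty is actually resolved: the paper uses the failure of the parallelogram identity in $\ell_p$, $p\neq 2$, to force $T(e_i)=\pm e_j$ for a cofinal set of indices (Lemma~\ref{l10}, via Claim~\ref{cl1}), and then a combinatorial argument with maximal Schreier sets (Lemma~\ref{equality}, using Fact~\ref{maxsucc}) to conclude $T(e_k)=\pm e_k$; goodness enters there to merge two admissible supports and to decompose maximal sets at limit ordinals, not to ``pass to the limit''. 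Note also that your derivation that $\pi$ preserves $\Sc_\alpha$ compares norms of vectors such as $\sum_{i\in F}e_i$ that do not lie on the sphere, where $T$ is not yet defined. In sum, the proposal settles only those parts of the theorem that are straightforward once Proposition~\ref{main3} is available, and for the hard part it offers, in the one case where it is specific ($p=1$), an argument resting on false statements.
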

	
	In the sequel, we fix a  transfinite Schreier sequence $(\Sc_\alpha)_{\alpha<\omega_1}$, an ordinal $1\leqslant \alpha<\omega_1$ and $p\in [1,\infty)$.  Moreover, we assume that $(\Sc_\alpha)_{\alpha<\omega_1}$ is good when $p\in(1,\infty)\minus\{2\}$.
	
	\medskip
	We denote by $\Sp$ the unit sphere of $X_{\Sc_\alpha,p}$ and we fix a surjective isometry $T: \Sp\to\Sp$. Our aim is to show that $T$ can be extended to a linear surjective isometry $\Tilde{T}: X_{\Sc_\alpha,p}\to X_{\Sc_\alpha,p}$. 

     \medskip
    Fortunately, we know by~\cite{ABC},~\cite{BFT} (for $p=1$) and \cite{F} (for $p>1$) that any linear surjective isometry $J$ of $X_{\Sc_\alpha,p}$ is \emph{diagonal}, \textit{i.e.} there exists a sequence of signs $(\theta_i)_{i\in \N}$ (\textit{i.e.} $\theta_i=\pm 1$) such that  $Je_i=\theta_i e_i$ for all $i\in \N$. It follows that if $T$ can be extended to a linear surjective isometry of $X_{\Sc_\alpha,p}$, then there exists a sequence of signs $(\theta_i)$ such that $T(x)(i)=\theta_i x(i)$ for any $x\in \Sp$ and $i\in \N$. Conversely, this formula does define a linear surjective isometry of $X_{\Sc_\alpha,p}$.  Therefore, our aim  is to prove the following theorem.
	
	\begin{theorem}\label{main2}
		There exists a sequence of signs $(\theta_i)_{i\in \N}$ such that for any $x\in \Sp$ and $i\in \N$, we have $T(x)(i)=\theta_i x(i)$.    
	\end{theorem}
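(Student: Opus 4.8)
The plan is to reduce everything to the distinguished first coordinate and then propagate sign information along the whole basis. The starting observation is that $1$ is \emph{isolated} in $\Sc_\alpha$: the only member of $\Sc_\alpha$ containing $1$ is the singleton $\{1\}$ (one checks this by transfinite induction, consistently with $\Sc_\alpha^{MAX}=\{\{1\}\}$, since any set containing $1$ inside a union $\bigcup E_i$ would force $E_1=\{1\}$ and $n\leqslant 1$). Consequently the norm splits as $\Vert x\Vert_{\Sc_\alpha,p}=\max\{|x(1)|,\Vert(x(i))_{i\geqslant 2}\Vert'\}$, where $\Vert\cdot\Vert'$ is the norm induced on the tail, so that $X_{\Sc_\alpha,p}=\R e_1\oplus_\infty X'$ is an $\ell_\infty$-direct sum and $\R e_1$ is a one-dimensional $M$-summand. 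First I would prove that $T$ is odd, i.e. $T(-x)=-T(x)$ for all $x\in\Sp$; because these spaces are far from strictly convex I would not invoke uniqueness of the farthest point, but instead characterize $-x$ intrinsically among the points at distance $2$ from $x$ by an auxiliary metric condition, a step amenable to the standard techniques available for spaces with a $1$-unconditional basis.

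Next I would pin down $T(\pm e_1)$. A surjective isometry of $\Sp$ carries maximal faces of the ball onto maximal faces; these are the sets $\{x\in\Sp:\,f(x)=1\}$ for $f$ extreme in the dual ball, and such $f$ are supported on the maximal members of $\Sc_\alpha$. Now $\{1\}$ is the \emph{unique} maximal singleton, since every $\{k\}$ with $k\geqslant 2$ sits inside $\{k,k+1\}\in\Sc_\alpha$; hence its face $C_{e_1^*}=\{x\in\Sp:\,x(1)=1\}$ is, via the splitting, exactly $\{1\}\times B_{X'}$, and it is the unique maximal face isometric to a full ball of a hyperplane. Its Chebyshev center inside $\Sp$ is precisely $e_1$ (the unique point at sphere-distance $\leqslant 1$ from the whole face, with radius $1$), and this center is a metric invariant preserved by $T$. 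Since the two special caps $C_{\pm e_1^*}$ have centers $\pm e_1$ and $T$ permutes them, I get $T(e_1)=\theta_1 e_1$ for some sign $\theta_1$; after composing $T$ with the diagonal linear isometry $x\mapsto\theta_1 x(1)e_1+\sum_{i\geqslant 2}x(i)e_i$ I may assume $T(e_1)=e_1$.

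The heart of the argument is to recover the remaining coordinates. Here I would run an inductive scheme in which the rich pair structure of $\Sc_\alpha$ — every $\{m,n\}$ with $2\leqslant m<n$ belongs to $\Sc_\alpha$ — together with the spreading property lets me detect, for each fixed $i$, the quantity $|x(i)|$ from metric data that $T$ preserves, and then read off a consistent sign $\theta_i$. Concretely, I expect to characterize $\pm e_i$, and more generally the support of a sphere element, through the way it interacts in norm with small combinatorially admissible perturbations, proving along the way that the induced permutation of the basis must be the identity, because any support-preserving, structure-preserving bijection of $\Sc_\alpha$ fixes each integer — a rigidity matching the description of the linear isometries as diagonal. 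Assembling the coordinatewise signs then yields a single sequence $(\theta_i)$ with $T(x)(i)=\theta_i x(i)$ for all $x\in\Sp$ and $i\in\N$, which is the claim.

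The main obstacle, and the place where the hypotheses genuinely bite, is this last coordinate-identification step for a general exponent $p\in(1,\infty)\minus\{2\}$: the sphere is neither flat as for $p=1$, where the $\ell_1$-type geometry makes disjointness and supports metrically transparent, nor ellipsoidal on two-dimensional sections as for $p=2$, where Hilbertian rigidity applies, so controlling the coordinatewise data requires approximating $\Sc_\alpha$ at limit ordinals by a monotone sequence of successor-level families — precisely what the \emph{goodness} of $(\Sc_\alpha)_{\alpha<\omega_1}$ supplies. For $p\in\{1,2\}$ the extra structure removes the need for this approximation, which is why goodness may be dropped in those two cases.
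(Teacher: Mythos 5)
Your preliminary observations are sound: since $\{1\}\in\Sc_\alpha^{MAX}$, the only member of $\Sc_\alpha$ containing $1$ is $\{1\}$ itself, so the norm does split as $\Vert x\Vert=\max\{\vert x(1)\vert,\Vert x-x(1)e_1\Vert\}$, and your Chebyshev-center characterization of $e_1$ is a workable substitute for the paper's Lemma~\ref{l3} (which characterizes $\pm e_1$ by the condition $\min\{\Vert u+x\Vert,\Vert u-x\Vert\}\leqslant 1$ for all $x\in \Sp$). But even this early stage contains unproven claims. First, oddness of $T$ on all of $\Sp$ is not obtainable by ``standard techniques'': in infinite dimensions, antipodality of surjective sphere isometries is precisely the hard part of Tingley's problem, and the paper never establishes it as an intermediate step --- it proves $T(-e_i)=-T(e_i)$ only for basis vectors, which for $p>1$ already requires the equality case of Minkowski's inequality (Lemmas~\ref{l1},~\ref{l2},~\ref{l6}) and for $p=1$ an entirely different argument (Lemmas~\ref{lemma2},~\ref{lemma3},~\ref{lemma6}). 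Second, your face argument needs that $\{x\in\Sp;\;x(1)=1\}$ is the \emph{unique} maximal face affinely isometric to the ball of a hyperplane; yet for $u\in\Sp$ with $\sum_{i\in F}\vert u(i)\vert^p=1$ and $\vert F\vert\geqslant 2$, the maximal convex sets $\{x\in\Sp;\;x(i)=u(i)\text{ for all }i\in F\}$ are also infinite-dimensional, so ruling them out requires an argument you do not give.

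The decisive gap, however, is that the core of the theorem is only announced, never argued. Everything from ``The heart of the argument'' onward is a statement of intent (``I would run an inductive scheme'', ``I expect to characterize'') covering exactly the part of the proof where all the difficulty lies. One must show: (a) that $f_n=T(e_n)$ is finitely supported with $\mathrm{supp}(f_n)\in\Sc_\alpha\minus\Sc_\alpha^{MAX}$ (Lemma~\ref{l7}); (b) the duality $f_n(m)\neq 0$ if and only if $T^{-1}(e_m)(n)\neq 0$ (Lemma~\ref{l8}); (c) that each $f_i$ equals $\pm e_j$ for a single $j$ --- and here the paper needs three genuinely different mechanisms: for $p\in(1,\infty)\minus\{2\}$ the failure of the parallelogram-type identity in $\ell_p$ combined with the goodness hypothesis (Lemma~\ref{l10}, Claim~\ref{cl1}); for $p=2$ a Hilbert-space orthogonality argument (Lemmas~\ref{Fact1} through~\ref{Fact5}); for $p=1$ a delicate analysis of $1$-sets (Lemmas~\ref{lemma15} and~\ref{treslong}); (d) that the induced map $i\mapsto j$ is the identity (Lemma~\ref{equality}, whose proof via Claim~\ref{cl2} is a nontrivial combinatorial construction, or the increasing-bijection argument of Lemma~\ref{Fact6}); and (e) the propagation from basis vectors to arbitrary $x\in\Sp$ (Lemmas~\ref{zero} and~\ref{final}, which for $p=1$ require the additional machinery of Lemma~\ref{long}). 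Your appeal to rigidity of ``support-preserving, structure-preserving bijections of $\Sc_\alpha$'' presupposes that $T$ already acts on the basis by such a bijection, which is item (c) --- the very thing to be proved. As it stands, the proposal is an outline whose correct parts duplicate the easier lemmas of the paper and whose hard parts are missing.
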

	If we are able to prove Theorem~\ref{main2}, then we  immediately obtain Theorem~\ref{main}. %, since $T$ is the restriction of the linear surjective isometry $\Tilde{T}:X_{\Sc_\alpha,p}\to X_{\Sc_\alpha,p}$ defined by $\Tilde{T}(e_i)=\theta_i e_i$ for any $i\in \N$. 
	The main step  to prove it is to show the following proposition. 
	
	\begin{proposition}\label{main3}
		There exists a sequence of signs $(\theta_i)_{i\in \N}$ such that $T(e_i)=\theta_i e_i$ for any $i\in \N$. 
	\end{proposition}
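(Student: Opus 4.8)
The plan is to prove Proposition~\ref{main3} in three movements: first isolate $\pm e_1$ by a metric feature of $\Sp$, then show that $T$ maps the whole signed basis $E:=\{\pm e_i:i\in\N\}$ onto itself, and finally recover the index, ruling out any nontrivial permutation. The tool underlying everything is that a surjective isometry preserves betweenness: $\|x-z\|_{\Sc_\alpha,p}+\|z-y\|_{\Sc_\alpha,p}=\|x-y\|_{\Sc_\alpha,p}$ holds if and only if the same identity holds for $Tx,Tz,Ty$. On the sphere this means that $T$ sends segments contained in $\Sp$ to segments contained in $\Sp$, and hence preserves both the flat faces of $\Sp$ and, for every $x$, its farthest-point set $\mathrm{Far}(x):=\{y\in\Sp:\|x-y\|_{\Sc_\alpha,p}=2\}$.

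Next I would exploit the exceptional status of the index~$1$: since the only member of $\Sc_\alpha$ containing $1$ is $\{1\}$, the plane $\Span\{e_1,e_j\}$ carries the $\ell_\infty$ norm for every $j\geqslant 2$, whereas $\Span\{e_i,e_j\}$ carries the $\ell_p$ norm whenever $2\leqslant i<j$ (using $\{i,j\}\in\Sc_\alpha$). A direct computation then yields a sharp dichotomy for farthest-point sets: $\mathrm{Far}(e_1)$ is infinite-dimensional, as it contains $-e_1+w$ for every $w$ supported in $\{2,3,\dots\}$ with $\|w\|_{\Sc_\alpha,p}\leqslant 1$, while for each $j\geqslant 2$ one has the single segment $\mathrm{Far}(e_j)=\{-e_j+se_1:|s|\leqslant 1\}$. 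Being a metric invariant, this forces $T$ to send $\pm e_1$ to a point with an infinite-dimensional farthest-point set; after verifying that among all points of $\Sp$ only $\pm e_1$ enjoy this property, we get $T(e_1)=\pm e_1$.

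With $e_1$ anchored, I would show that each $e_j$ with $j\geqslant 2$ is sent to some $\pm e_{\pi(j)}$, because the points of $\Sp$ whose farthest-point set is a nondegenerate segment directed along the already-identified $e_1$-axis are precisely the $\pm e_k$, $k\geqslant 2$; thus $T$ induces a signed index map $e_j\mapsto\theta_j e_{\pi(j)}$ with $\pi(1)=1$. It remains to prove $\pi=\mathrm{id}$, and for this I would encode $\Sc_\alpha$-membership metrically: for a finite $A\subseteq\{2,3,\dots\}$ the vector $|A|^{-1/p}\sum_{i\in A}e_i$ lies on $\Sp$ exactly when $A\in\Sc_\alpha$ and then serves as a common ``centre'' equidistant from, and unusually close to, the family $\{e_i:i\in A\}$, so the existence and radius of such a centre detect whether $A\in\Sc_\alpha$. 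Since $T$ preserves these configurations, $\pi$ satisfies $A\in\Sc_\alpha\iff\pi(A)\in\Sc_\alpha$ for every finite $A$; together with $\pi(1)=1$ and the membership-invariant $c(i):=\max\{|F|:F\in\Sc_\alpha,\ i\in F\}$, a combinatorial argument then forces $\pi(i)=i$ for all $i$.

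The main obstacle is to make all of this uniform in $p\in[1,\infty)$ and, above all, in the order $\alpha$. The farthest-point and equidistant-centre computations are transparent for successor ordinals, but at a limit ordinal $\alpha$ the family $\Sc_\alpha$ is built from the approximating families $\Sc_{\alpha_n}$ subject to the condition $n\leqslant F$, and controlling the relevant metric quantities requires these families to fit together monotonically; this is precisely where the \emph{good} hypothesis is used, as it makes $(\Sc_{\beta_n})$ increasing and lets the metric estimates pass to the limit by transfinite induction. The exponents $p\in\{1,2\}$ are exceptional because the relevant two-dimensional sections are then polyhedral ($p=1$) or Euclidean ($p=2$), so the equidistant-centre test is exact and needs no coherence between the approximating families, which is exactly why goodness may be dropped in those two cases. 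I expect the verification that $\pi=\mathrm{id}$ and the limit-ordinal bookkeeping to be the most delicate parts of the argument.
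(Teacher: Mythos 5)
Your underlying tool is sound — since $T$ is a surjective isometry of $\Sp$, it carries $\mathrm{Far}(x):=\{y\in\Sp:\Vert x-y\Vert=2\}$ onto $\mathrm{Far}(T(x))$ — but both metric characterizations you build on top of it are false, and the proof collapses there. First, $\pm e_1$ are \emph{not} the only points of $\Sp$ with infinite-dimensional farthest-point sets. Take any $F\in\Sc_\alpha^{MAX}$ with $F\neq\{1\}$ (so $1\notin F$) and set $u:=\vert F\vert^{-1/p}\sum_{i\in F}e_i\in\Sp$; for $\alpha=1$ one may take $u=2^{-1/p}(e_2+e_3)$. For any $w$ with $\mathrm{supp}(w)\cap F=\emptyset$ and $\Vert w\Vert\leqslant\vert F\vert^{-1/p}$, the point $y:=-u+w$ lies on $\Sp$: if $G\in\Sc_\alpha$ and $G\neq F$, maximality of $F$ forces $\vert G\cap F\vert\leqslant\vert F\vert-1$, whence
\[
\sum_{k\in G}\vert y(k)\vert^p\;\leqslant\;\frac{\vert F\vert-1}{\vert F\vert}+\Vert w\Vert^p\;\leqslant\;1,
\]
while $G=F$ gives exactly $1$; moreover $\sum_{k\in F}\vert u(k)-y(k)\vert^p=2^p$, so $y\in\mathrm{Far}(u)$. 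Thus $\mathrm{Far}(u)$ contains $-u$ plus the whole ball of radius $\vert F\vert^{-1/p}$ of the infinite-dimensional subspace spanned by $\{e_k:k\notin F\}$, although $u\neq\pm e_1$; your first dichotomy therefore cannot identify $T(e_1)$. Second, the points whose farthest-point set is a nondegenerate segment parallel to the $e_1$-axis are \emph{not} precisely the $\pm e_k$, $k\geqslant 2$: for $0<\vert s\vert<1$ the point $e_k+se_1$ lies on $\Sp$ and has \emph{exactly the same} farthest-point set $\{-e_k+te_1:\vert t\vert\leqslant1\}$ as $e_k$ (because $\{1\}$ is the only member of $\Sc_\alpha$ containing $1$); and when $\alpha\geqslant2$ the point $2^{-1/p}(e_2+e_3)$ also has a segment parallel to $e_1$ as farthest-point set, since $\{2,3,k\}\in\Sc_\alpha$ for every $k\geqslant4$ kills all other coordinates. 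So even granting the first step, the farthest-point structure only locates $T(e_k)$ inside a set containing points such as $\pm e_{\pi(k)}+se_1$; the signed index map $e_j\mapsto\theta_je_{\pi(j)}$ is never obtained. Compare with the paper, which characterizes $\pm e_1$ by the property $\min\{\Vert u+x\Vert,\Vert u-x\Vert\}\leqslant1$ for all $x\in\Sp$ (Lemma~\ref{l3}) — a test that the above $u$ fails — and then separately proves $T(A)=A$ (Lemma~\ref{l5}) to control first coordinates.

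The endgame is also unproven. Your invariant $c(i)=\max\{\vert F\vert:F\in\Sc_\alpha,\ i\in F\}$ is $+\infty$ for every $i\geqslant2$ already when $\alpha=2$ (e.g.\ $\{2,3\}\cup G\in\Sc_2$ for every $G\in\Sc_1$ with $G>3$, so arbitrarily large members of $\Sc_2$ contain $2$), hence it cannot force $\pi=\mathrm{id}$; proving that a bijection fixing $1$ and preserving $\Sc_\alpha$-membership in both directions must be the identity is precisely the hard rigidity that the paper obtains in Lemmas~\ref{equality} and~\ref{l12} (for $p\neq 2$) and Lemma~\ref{Fact6} (for $p=2$). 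Your account of the exceptional hypotheses also does not match how they actually enter: goodness is used not for any ``limit-stage metric estimate'' but to glue two Schreier sets into a single member of $\Sc_\alpha$ (Claim~\ref{cl1}, and Case 2 of Lemma~\ref{equality}); and for $p=1$ your very first formula is already wrong, since $\Vert e_j-y\Vert=2$ no longer forces $y(j)=-1$ (take $y=-\tfrac12e_j-\tfrac12e_l$), which is why the paper handles $p=1$ by entirely different ($1$-set) combinatorics. In short, your skeleton (identify $\pm e_1$, then the signed basis, then the indices) coincides with the paper's, but each of the three concrete mechanisms you propose has a counterexample or an unproved core, so the proposal is not a proof.
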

	Note that Proposition~\ref{main3} re-proves some theorems from~\cite{ABC},~\cite{BFT}  and ~\cite{F} concerning the description of linear surjective isometries of $X_{\Sc_\alpha,p}$, for $1\leqslant p<\infty$, and generalizes \cite[Proposition 3.2 (b)]{F} (in the real case) because we do not need to assume that $(\Sc_\alpha)_{\alpha<\omega_1}$ is good when $p=2$. 
	
	\smallskip\smallskip
	Before starting the proof of Theorem~\ref{main2}, we collect a few facts concerning the families $\Sc_\alpha$ for future use.
	
	\begin{fact}\label{max}\emph{(\cite[Lemma 3.1]{G})} 
		If  $G \in \Sc_\alpha \minus \Sc_\alpha^{MAX}$, then $G \cup \{l\}\in \Sc_\alpha$, for all $l > G$.
	\end{fact}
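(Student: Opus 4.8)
The plan is to separate the two properties of $\Sc_\alpha$ that play a role: its abstract regularity (heredity and spreading) and its recursive construction. First I would record the immediate consequence of spreading that I will call \emph{monotone domination}: if $A=\{a_1<\dots<a_k\}\in\Sc_\alpha$ and $B=\{b_1<\dots<b_k\}$ satisfies $b_i\geqslant a_i$ for all $i$, then $B\in\Sc_\alpha$ (apply the spreading map $a_i\mapsto b_i$). Now fix $G\in\Sc_\alpha\minus\Sc_\alpha^{MAX}$ and pick $m\notin G$ with $G\cup\{m\}\in\Sc_\alpha$, and let $l>G$. If $m<\max G$, then comparing $G\cup\{m\}$ and $G\cup\{l\}$ entrywise as increasing sequences, every coordinate of $G\cup\{l\}$ dominates the corresponding coordinate of $G\cup\{m\}$ (the only nontrivial coordinate is the last, where $l>\max G$), so monotone domination gives $G\cup\{l\}\in\Sc_\alpha$. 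Likewise, if $m>\max G$ and $l\geqslant m$, then $G\cup\{l\}$ dominates $G\cup\{m\}$ and we are done. The single remaining configuration is $m>\max G$ with $\max G<l<m$: here one must \emph{decrease} an appended largest element, which spreading cannot achieve.

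This is the crux, and it is genuinely not a formal consequence of heredity and spreading. Indeed one can build a hereditary and spreading (in fact compact) family containing all spreads of $\{2,3,6\}$ but omitting $\{2,3,4\}$, in which $\{2,3\}$ is non-maximal yet cannot be extended by $4$. So I would isolate and prove, \emph{using the recursive definition}, the statement
\[\text{(A)}\qquad A\in\Sc_\gamma,\ \max A<l<l',\ A\cup\{l'\}\in\Sc_\gamma\ \Longrightarrow\ A\cup\{l\}\in\Sc_\gamma,\]
for every $\gamma<\omega_1$, by transfinite induction on $\gamma$.

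For $\gamma=0$ the hypothesis forces $A=\emptyset$ and (A) is trivial. For a successor $\gamma=\beta+1$, decompose $A\cup\{l'\}=E_1\cup\dots\cup E_s$ with $s\leqslant E_1$ and $E_i\in\Sc_\beta$; the element $l'$ lies in the last block $E_s$. If $E_s=\{l'\}$, then replacing this block by $\{l\}$ preserves all the defining inequalities (the first block, hence $\min$, is unchanged and $l>\max E_{s-1}$), giving $A\cup\{l\}\in\Sc_{\beta+1}$. Otherwise $E_s\minus\{l'\}\neq\emptyset$ and $\max(E_s\minus\{l'\})\leqslant\max A<l<l'$, so the inductive hypothesis (A) for $\beta$ applied to $E_s\minus\{l'\}\in\Sc_\beta$ yields $(E_s\minus\{l'\})\cup\{l\}\in\Sc_\beta$; substituting this block for $E_s$ exhibits $A\cup\{l\}$ as an admissible union, hence in $\Sc_{\beta+1}$. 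For a limit $\gamma$, from $A\cup\{l'\}\in\Sc_\gamma$ we obtain $A\cup\{l'\}\in\Sc_{\gamma_n}$ for some $n\leqslant\min A$; the inductive hypothesis (A) for $\gamma_n$ gives $A\cup\{l\}\in\Sc_{\gamma_n}$, and since $\min(A\cup\{l\})=\min A\geqslant n$ we conclude $A\cup\{l\}\in\Sc_\gamma$. Note that this limit step uses neither spreading nor that the sequence is good.

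Finally, combining (A) with the two spreading cases of the first paragraph disposes of the remaining configuration $\max G<l<m$ (apply (A) with $l'=m$), and therefore proves the Fact for all $l>G$. The main obstacle is precisely step (A): recognizing that decreasing an appended top coordinate is unavailable from the abstract regularity of $\Sc_\alpha$ and must be extracted from the block structure of the successor step, with the limit step carried entirely by the constraint $n\leqslant\min F$.
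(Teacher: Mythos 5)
The paper itself gives no argument for this Fact: it is imported verbatim from Gasparis (\cite[Lemma 3.1]{G}), so there is no internal proof to match yours against. Your proposal is a correct, self-contained substitute, and it is organized the right way. The reduction via spreading (entrywise monotone domination) correctly disposes of the cases $m<\max G$ and $l\geqslant m$, and you rightly isolate the only genuine content as statement (A): an appended top element can be decreased while staying above $\max A$. Your counterexample (the hereditary closure of the spreads of $\{2,3,6\}$, which is hereditary, spreading and compact, contains $\{2,3\}$ and $\{2,3,6\}$ but not $\{2,3,4\}$) is valid and worth recording, since it shows that no purely formal argument from heredity, spreading and compactness can work and that any citation-free proof must descend into the recursive construction, as yours does. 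The transfinite induction proving (A) is correct: in the successor step, either $l'$ is a block by itself, in which case replacing $\{l'\}$ by $\{l\}$ leaves the admissibility constraint $s\leqslant E_1$ and the block ordering intact, or $l'$ sits in a larger last block, where the inductive hypothesis applies inside that block; the limit step uses only the constraint $n\leqslant\min$, so it indeed needs neither spreading nor goodness of the sequence $(\Sc_\alpha)_{\alpha<\omega_1}$. Two small points you should make explicit: your block decomposition tacitly takes the $E_i$ nonempty, which is the intended reading of the definition (allowing empty initial blocks under the stated conventions would degenerate $\Sc_{\beta+1}$); and in the limit step the inequality $n\leqslant\min A$ presumes $A\neq\emptyset$, the case $A=\emptyset$ being trivial because singletons belong to every $\Sc_\gamma$.
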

	\begin{fact}\label{maxsucc}\emph{(\cite[Lemma 3.9]{F})}
		Let $\alpha<\omega_1$ be a successor ordinal, $\alpha=\beta+1$. If $G \in \Sc_\alpha^{MAX}$, then $G= \bigcup_{i=1}^m G_i$ where $G_1<\dots<G_m \in \Sc_\beta^{MAX}$ and $m=\min G_1$.    
	\end{fact}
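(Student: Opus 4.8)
The plan is to build the required decomposition by a greedy peeling of $G$ into consecutive blocks, and then to invoke the maximality of $G$ to force the number of blocks and the maximality of the last block. The structural tool I will use throughout is the following reformulation of Fact~\ref{max}, applied at level $\beta$ (the cited lemma \cite[Lemma~3.1]{G} holds verbatim for every family $\Sc_\beta$): for $E\in\Sc_\beta$ and any $l>\max E$, one has $E\cup\{l\}\in\Sc_\beta$ \emph{if and only if} $E\notin\Sc_\beta^{MAX}$. The forward direction is immediate since $E\subsetneq E\cup\{l\}\in\Sc_\beta$ witnesses non-maximality, and the converse is exactly Fact~\ref{max}.

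Write $G=\{g_1<g_2<\dots<g_N\}$ and cut it greedily: let $G_1=\{g_1,\dots,g_t\}$ be the longest initial segment of $G$ lying in $\Sc_\beta$ (well defined since $\Sc_\beta$ is hereditary), then repeat on $G\setminus G_1$, producing $G=G_1\cup\dots\cup G_m$ with $G_1<\dots<G_m$ and every $G_i\in\Sc_\beta$. By the equivalence above, each block terminated because the next element of $G$ could not be adjoined is automatically a \emph{maximal} element of $\Sc_\beta$; hence $G_1,\dots,G_{m-1}\in\Sc_\beta^{MAX}$ for free, and only the last block $G_m$ will need a separate argument.

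Next I bound $m$. Since $G\in\Sc_{\beta+1}$, fix an admissible decomposition $G=E_1\cup\dots\cup E_n$ with $E_1<\dots<E_n\in\Sc_\beta$ and $n\leqslant\min E_1=\min G$. A routine ``greedy is coarsest'' comparison of the two interval partitions of $G$ — each $E_i$ is an initial segment of the portion of $G$ it occupies, so the greedy blocks absorb them and can only be fewer (an induction on $N$ using heredity of $\Sc_\beta$) — yields $m\leqslant n$, hence $m\leqslant\min G_1$. With this in hand, maximality of $G$ settles the two remaining points. If $G_m$ were not maximal in $\Sc_\beta$, then for $l>\max G$ the equivalence gives $G_m\cup\{l\}\in\Sc_\beta$, so $G\cup\{l\}=G_1\cup\dots\cup G_{m-1}\cup(G_m\cup\{l\})$ is still an admissible $m$-block decomposition (as $m\leqslant\min G_1$), placing $G\cup\{l\}$ in $\Sc_{\beta+1}$ and contradicting the maximality of $G$; thus $G_m\in\Sc_\beta^{MAX}$. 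Finally, if $m<\min G_1$, then since singletons lie in $\Sc_\beta$ I may append the block $\{l\}$ for any $l>\max G$ to obtain an admissible $(m+1)$-block decomposition of $G\cup\{l\}$ (because $m+1\leqslant\min G_1$), again contradicting maximality of $G$. Therefore $m=\min G_1$, as desired.

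I expect the main obstacle to be the bookkeeping in the ``greedy is coarsest'' step yielding $m\leqslant\min G_1$: one must compare the greedy interval partition with an arbitrary admissible one and check that passing to the greedy blocks never increases the block count, using only heredity of $\Sc_\beta$. Everything else is a clean interplay between Fact~\ref{max} and the definition of inclusion-maximality.
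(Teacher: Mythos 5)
Your proof is correct, but note that this paper never proves Fact~\ref{maxsucc}: it is quoted from \cite[Lemma 3.9]{F}, so your argument should be judged as a self-contained replacement for that citation rather than against an in-paper proof. The structure is sound: the greedy peeling gives $G=G_1\cup\dots\cup G_m$ with $G_1<\dots<G_m\in\Sc_\beta$, and each of $G_1,\dots,G_{m-1}$ is maximal because it rejected the very next element of $G$ (the contrapositive of Fact~\ref{max} applied at level $\beta$); the comparison induction $E_1\cup\dots\cup E_i\subseteq G_1\cup\dots\cup G_i$ against an arbitrary admissible decomposition $(E_i)_{i=1}^n$ of $G$ yields $m\leqslant n\leqslant\min G_1$; and the maximality of $G$ in $\Sc_\alpha$ rules out both a non-maximal last block and $m<\min G_1$, since either case would produce an admissible decomposition of $G\cup\{l\}$, $l>\max G$, into at most $\min G_1$ blocks, contradicting $G\in\Sc_\alpha^{MAX}$. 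Two cautions. First, invoking Fact~\ref{max} for $\Sc_\beta$ rather than the fixed $\Sc_\alpha$ is legitimate and matches the paper's own practice (it is applied to $\Sc_\beta$ and to $\Sc_{\alpha_n}$ inside the proof of Lemma~\ref{equality}), but your parenthetical justification is shakier than you suggest: the extension property of non-maximal sets is a special feature of the Schreier families, not a consequence of being hereditary, spreading and compact --- for instance, the regular family consisting of the empty set, all singletons and all pairs except $\{1,2\}$ has $\{1\}$ non-maximal yet not extendable by $2$. Second, the greedy recursion terminates and produces nonempty blocks because every $\Sc_\beta$ contains all singletons, not merely because $\Sc_\beta$ is hereditary; relatedly, writing $n\leqslant\min E_1=\min G$ tacitly assumes all $E_i$ are nonempty, which is the intended (and necessary) reading of the paper's conventions on empty blocks in the definition of $\Sc_{\beta+1}$.
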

	\begin{fact}\label{Fact3}\emph{(\cite[Fact 3.4]{F})}
		Let $F\subset\N$, and let $m,n,p\in\N$. 
		If $m<n<p<F$ and $\{m\}\cup F \in \Sc_\alpha$, then $\{n,p\}\cup F \in \Sc_\alpha$.
	\end{fact}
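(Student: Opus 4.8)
The plan is to prove the statement by transfinite induction on $\alpha$, establishing for each $1\le\alpha<\omega_1$ the assertion $P(\alpha)$: \emph{if $m<n<p<F$ and $\{m\}\cup F\in\Sc_\alpha$, then $\{n,p\}\cup F\in\Sc_\alpha$}. It is worth noting first why the spreading property alone does not suffice. The injection $m\mapsto n$, $f\mapsto f$ spreads $\{m\}\cup F$ to $\{n\}\cup F\in\Sc_\alpha$, but $\{n,p\}\cup F$ contains one more element than $\{m\}\cup F$, and spreading preserves cardinality; likewise a single use of Fact~\ref{max} is blocked because the new element $p$ must be inserted \emph{below} $F$, not above the whole set. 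One must therefore genuinely exploit the recursive construction of the families, which is what the induction does.

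For the base case $\alpha=1$, I would argue by cardinality. Recall $\Sc_1=\{\emptyset\}\cup\{F:\ |F|\le\min F\}$. If $F=\emptyset$, then $n\ge 2$ gives $|\{n,p\}|=2\le n$, so $\{n,p\}\in\Sc_1$. If $F\neq\emptyset$ with $|F|=k$, the hypothesis $\{m\}\cup F\in\Sc_1$ forces $k+1\le m$, hence $n>m\ge k+1$; then $\{n,p\}\cup F$ has $k+2$ elements and minimum $n\ge k+2$, so it lies in $\Sc_1$.

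For the successor case $\alpha=\beta+1$ with $\beta\ge 1$, assuming $P(\beta)$, I would use the defining decomposition of $\Sc_{\beta+1}$. Writing $F=\{f_1<\dots<f_k\}$, there exist blocks $r\le E_1<\dots<E_r$ in $\Sc_\beta$ with $\{m\}\cup F=\bigcup_{i=1}^r E_i$; since $m$ is the minimum, the first block has the form $E_1=\{m,f_1,\dots,f_j\}$ for some $0\le j\le k$, and $r\le\min E_1=m$. Applying $P(\beta)$ to $E_1$ (valid because $m<n<p<\{f_1,\dots,f_j\}$) yields $E_1':=\{n,p,f_1,\dots,f_j\}\in\Sc_\beta$. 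Setting $E_i':=E_i$ for $i\ge 2$, one checks that the blocks $E_1'<E_2'<\dots<E_r'$ remain successive (indeed $\max E_1'=\max E_1<\min E_2$ when $j<k$) and that $r\le m<n=\min E_1'$; hence $\{n,p\}\cup F=\bigcup_{i=1}^r E_i'\in\Sc_{\beta+1}$. Note that $\beta\ge 1$ is precisely what makes $P(\beta)$ available in the degenerate subcase $j=0$, where it reduces to $\{n,p\}\in\Sc_\beta$ for $2\le n<p$.

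The limit case $\alpha=\sup_n\alpha_n$ is then bookkeeping. From $\{m\}\cup F\in\Sc_\alpha$ there is an index $j\le\min(\{m\}\cup F)=m$ with $\{m\}\cup F\in\Sc_{\alpha_j}$; since $\alpha_j\ge 1$, $P(\alpha_j)$ gives $\{n,p\}\cup F\in\Sc_{\alpha_j}$, and as $j\le m<n=\min(\{n,p\}\cup F)$ the very same index witnesses $\{n,p\}\cup F\in\Sc_\alpha$. I expect the successor step to be the main obstacle: the crucial observation is that only the first block of the decomposition needs to be modified, that the inductive hypothesis applies precisely to it, and that the remaining blocks can be transported unchanged while the block count $r\le m<n$ stays admissible.
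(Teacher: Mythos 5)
Your proposal is correct, but there is nothing in the paper to compare it against: the paper does not prove Fact~\ref{Fact3}, it simply quotes it from \cite[Fact 3.4]{F}. Your transfinite induction is the natural self-contained argument that the citation outsources, and it goes through: the base case $\alpha=1$ is a correct cardinality count ($|\{m\}\cup F|\leqslant m$ gives $|\{n,p\}\cup F|=|F|+2\leqslant m+1\leqslant n$); in the successor case you correctly observe that only the first block of the admissible decomposition of $\{m\}\cup F$ needs to be changed, that $P(\beta)$ applies to it verbatim (with the convention $p<\emptyset$ covering the subcase $E_1=\{m\}$, where your remark that $\beta\geqslant 1$ is essential is right, since $\{n,p\}\notin\Sc_0$), and that the block count stays admissible because $r\leqslant m<n$; the limit case transfers the witnessing index $j\leqslant m<n$ unchanged. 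Your opening observation that spreading and Fact~\ref{max} cannot yield the result --- spreading preserves cardinality, and Fact~\ref{max} only inserts elements above the whole set --- correctly explains why the recursive structure of the families must be used. Two small points for a polished write-up. First, the paper's conventions formally allow empty blocks $E_i$ in the decomposition defining $\Sc_{\beta+1}$, so before writing $E_1=\{m,f_1,\dots,f_j\}$ and $r\leqslant \min E_1=m$ you should note that the blocks may be taken nonempty (so that $m$, the minimum of the union, lies in the first block, which is then an initial segment of $\{m\}\cup F$); this is the intended reading of the definition, under which the count constraint can be read off from $E_1$. Second, your parenthetical ``$\max E_1'=\max E_1<\min E_2$'' is accurate only for $j\geqslant 1$; when $j=0$ one has $\max E_1'=p\neq m=\max E_1$, and the successiveness of the blocks instead follows directly from $p<F$, so the conclusion is unaffected. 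Neither point is a gap in the argument.
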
 
	In the remainder of this paper, we will prove Theorem~\ref{main2} for $p\in (1,\infty)$ in Section~\ref{sec1}, and for $p=1$ in Section~\ref{secp1}. %~\ref{sec1}, we collect some useful information on $T$ and the supports of $T(e_i)$, $i\in \N$, that are true for any $p\in (1,\infty)$. Then, we show Proposition~\ref{main3} in Section~\ref{sec2} for $p\in (1,\infty)\minus \{2\}$, and in Section~\ref{sec3} for $p=2$. Finally, we prove Theorem~\ref{main2} in Section~\ref{sec4}.

	\section{The case $p\in(1,\infty)$}\label{sec1}
	In this section, we will treat the case where $p\in (1,\infty)$. As mentioned in the introduction, we  fix a  good transfinite Schreier sequence $(\Sc_\alpha)_{\alpha<\omega_1}$, an ordinal $1\leqslant \alpha<\omega_1$ and $p\in (1,\infty)$. Moreover, we fix a surjective isometry $T:\Sp\to \Sp$ where $\Sp$ is the unit sphere of $X_{\Sc_\alpha,p}$. For simplicity, we will write $\Vert\,\cdot\,\Vert$ instead of $\Vert \,\cdot\,\Vert_{\Sc_\alpha,p}$. 
	
	\smallskip
	This section contains four subsections. In subsection~\ref{subzero}, we address facts that are true for any $p\in (1,\infty)$. Then, we will prove Proposition~\ref{main3} in Subsections~\ref{sec2} and~\ref{sec3} for $p\in (1,\infty)\minus \{2\}$ and $p=2$ respectively. Finally, we will prove Theorem~\ref{main2} in Subsection~\ref{sec4} for $p\in (1,\infty)$. 
	
	\subsection{General facts}\label{subzero}
	We begin this subsection with an important fact that will be used frequently in this section.
	\begin{fact}\label{normeatteinte}
		For any $x\in \Sp$,  there exists $F\in \Sc_\alpha$ such that 
		\[\sum_{i\in F}\vert x(i)\vert^p =1. \]
	\end{fact}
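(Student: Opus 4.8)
The statement asserts that the supremum defining $\Vert x\Vert$ is actually attained by some admissible set. Since $\Vert x\Vert=1$, raising to the $p$-th power gives $\sup_{F\in\Sc_\alpha}\sum_{i\in F}|x(i)|^p=1$, so the plan is to produce a single finite set $F\in\Sc_\alpha$ realizing this supremum, and then to verify $\sum_{i\in F}|x(i)|^p=1$ by proving two inequalities.

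First I would fix a sequence $(F_n)\subset\Sc_\alpha$ with $\sum_{i\in F_n}|x(i)|^p\to 1$. Since $\Sc_\alpha$ is closed in $\{0,1\}^\N$, hence compact, I pass to a subsequence converging coordinatewise to some $F\in\Sc_\alpha$; because every member of $\Sc_\alpha$ is a \emph{finite} subset of $\N$, the limit $F$ is finite. This $F$ is the candidate maximizer. The upper bound is then immediate: $F\in\Sc_\alpha$ forces $\sum_{i\in F}|x(i)|^p\leqslant\Vert x\Vert^p=1$.

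The substance lies in the reverse inequality, and this is where the main obstacle appears: coordinatewise convergence in $\{0,1\}^\N$ only controls the behavior of the $F_n$ on a fixed initial segment, so a priori the sets $F_n$ could carry a nonnegligible share of their $\ell_p$-weight on large indices that ``escape to infinity'' and are invisible in the limit $F$. To neutralize this escaping mass I would use that $(e_i)$ is a Schauder basis for $X_{\Sc_\alpha,p}$: writing $P_{>N}x:=x-\sum_{i\leqslant N}x(i)e_i$, one has $\Vert P_{>N}x\Vert\to 0$ as $N\to\infty$. Given $\varepsilon>0$, I would choose $N\geqslant\max F$ with $\Vert P_{>N}x\Vert^p<\varepsilon$. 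For $k$ large, coordinatewise convergence gives $F_{n_k}\cap[1,N]=F$, so splitting the sum at $N$ yields $\sum_{i\in F_{n_k}}|x(i)|^p=\sum_{i\in F}|x(i)|^p+\sum_{i\in F_{n_k},\,i>N}|x(i)|^p$, and the last term is at most $\Vert P_{>N}x\Vert^p<\varepsilon$ (take $G=F_{n_k}\in\Sc_\alpha$ in the definition of $\Vert P_{>N}x\Vert$). Letting $k\to\infty$ gives $1\leqslant\sum_{i\in F}|x(i)|^p+\varepsilon$, and then $\varepsilon\downarrow 0$ gives $\sum_{i\in F}|x(i)|^p\geqslant 1$. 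Combined with the upper bound this finishes the proof, and in particular $F\neq\emptyset$ is forced automatically.
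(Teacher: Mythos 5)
Your proof is correct, but it does not follow the paper's route: the paper gives no argument at all for this fact, disposing of it with a one-line citation to \cite[Proposition~2.1]{ABC}. What you have written is a genuine self-contained proof, built from exactly the two ingredients the paper records in its introduction: compactness of $\Sc_\alpha$ in $\{0,1\}^\N$, which lets you extract from a maximizing sequence $(F_n)$ a coordinatewise limit $F\in\Sc_\alpha$ (automatically finite, since all members of $\Sc_\alpha$ are finite), and the Schauder basis property of $(e_i)$, which makes the tails $\Vert P_{>N}x\Vert$ vanish and thereby neutralizes the mass of the sets $F_{n_k}$ escaping beyond $N$ --- the one real obstacle in this kind of argument, which you correctly identify. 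The two key estimates are sound: for large $k$ one has $F_{n_k}\cap\{1,\dots,N\}=F$ because coordinatewise convergence stabilizes on finitely many coordinates, and $\sum_{i\in F_{n_k},\, i>N}\vert x(i)\vert^p\leqslant\Vert P_{>N}x\Vert^p<\varepsilon$ by testing the norm of $P_{>N}x$ against the set $G=F_{n_k}$ itself. The only loose end is the degenerate case $F=\emptyset$, where $\max F$ is undefined; there one simply drops the constraint $N\geqslant\max F$, and the same computation yields $1\leqslant\varepsilon$ for every $\varepsilon>0$, a contradiction, so this case cannot occur --- consistent with your closing remark that $F\neq\emptyset$ is forced. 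As for what each approach buys: the citation keeps the paper short and defers to a published result covering general regular families, while your argument makes the fact self-contained within the properties of $\Sc_\alpha$ already stated in the paper, and is, in substance, the standard proof underlying the cited proposition.
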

	\begin{proof}
		This is true by~\cite[Proposition 2.1]{ABC}. 
	\end{proof}
	
	Now, we address a collection of lemmas that work for $p\in (1,\infty)$. Obviously, the next lemma is not true for $p=1$, which is why we will treat the case $p=1$ separately in Section~\ref{secp1}.
	\begin{lemma}\label{l1}
		Let $n\in \N$ and $x\in \Sp$. Then, $\Vert x + e_n \Vert=2 $ if and only if $x(n)=1$. 
	\end{lemma}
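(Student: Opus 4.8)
The plan is to prove the two implications separately, the backward one being immediate and the forward one resting on the strict convexity of $t\mapsto t^p$ for $p>1$ (which is precisely what breaks down at $p=1$).

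For the ``if'' direction, suppose $x(n)=1$. Since every singleton belongs to $\Sc_\alpha$, testing the norm on $F=\{n\}$ gives $\Vert x+e_n\Vert\geqslant |x(n)+1|=2$, while the triangle inequality together with $\Vert x\Vert=\Vert e_n\Vert=1$ yields $\Vert x+e_n\Vert\leqslant 2$; hence equality.

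For the ``only if'' direction, assume $\Vert x+e_n\Vert=2$. Since $(x+e_n)/2\in\Sp$, I would apply Fact~\ref{normeatteinte} to obtain a set $F\in\Sc_\alpha$ at which the norm is attained, that is $\sum_{i\in F}|(x+e_n)(i)|^p=2^p$. First I would note that $n\in F$: otherwise $(x+e_n)$ and $x$ agree on $F$, forcing $\sum_{i\in F}|x(i)|^p=2^p>1=\Vert x\Vert^p$, a contradiction. Writing $t:=x(n)$ (so that $|t|\leqslant 1$ and $t+1\geqslant 0$) and isolating the $n$-th term, the attainment identity becomes
\[ 2^p=(t+1)^p+\sum_{i\in F\setminus\{n\}}|x(i)|^p. \]
Since $\sum_{i\in F\setminus\{n\}}|x(i)|^p=\sum_{i\in F}|x(i)|^p-|t|^p\leqslant 1-|t|^p$, this gives $2^p\leqslant (t+1)^p-|t|^p+1$.

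The crux, and the only genuinely non-routine step, is then the elementary scalar inequality $h(t):=(1+t)^p-|t|^p\leqslant 2^p-1$ for $t\in[-1,1]$, with equality only at $t=1$. On $[0,1]$ one has $h'(t)=p\bigl((1+t)^{p-1}-t^{p-1}\bigr)>0$ because $p>1$, so $h$ is strictly increasing and $h(t)\leqslant h(1)=2^p-1$ with equality iff $t=1$; on $[-1,0]$ one simply bounds $h(t)\leqslant (1+t)^p\leqslant 1<2^p-1$. Feeding this into the previous estimate forces $2^p\leqslant h(t)+1\leqslant 2^p$, whence $h(t)=2^p-1$ and therefore $t=x(n)=1$, as desired. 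This monotonicity argument is exactly where the hypothesis $p>1$ enters, consistent with the lemma failing for $p=1$.
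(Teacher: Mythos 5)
Your proof is correct, and while it follows the same skeleton as the paper's — apply Fact~\ref{normeatteinte} to the normalized vector $(x+e_n)/2$ to get an attaining set $F$, then show $n\in F$ — the decisive step is genuinely different. The paper first argues that $\sum_{i\in F}\vert x(i)\vert^p=1$ (otherwise Minkowski's inequality forces $\Vert x+e_n\Vert<2$) and then invokes the \emph{equality case} of Minkowski's inequality to conclude that $x$ restricted to $F$ is a nonnegative multiple of $e_n$, whence $x(n)=1$. You instead isolate the $n$-th coordinate and reduce the whole question to the scalar inequality $(1+t)^p-\vert t\vert^p\leqslant 2^p-1$ on $[-1,1]$, with equality only at $t=1$, proved by elementary monotonicity. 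What your route buys: it is self-contained, avoiding the characterization of equality in Minkowski (which is precisely the strict-convexity input being used as a black box in the paper), and it makes explicit where $p>1$ enters; notably, it is the same device the paper itself uses later in Lemma~\ref{imp}, where the strict monotonicity of $t\mapsto(1+t)^p-t^p$ on $[0,1]$ is the key point. What the paper's route buys: it is shorter granted the Minkowski equality case, and it yields slightly more structural information (that $x$ vanishes on all of $F\minus\{n\}$), although that extra conclusion is not needed for the statement.
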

	\begin{proof}
		We first prove the forward implication. By Fact~\ref{normeatteinte}, there exists $F \in \Sc_\alpha$ such that $\sum_{i\in F}\vert x(i)+e_n(i)\vert^p=2^p$. If $n\notin F$, then this sum is less than or equal to $1$, which is a contradiction. Hence $n \in F$. Moreover, if $\sum_{i\in F} \vert x(i)\vert ^p<1$, then by Minkowski inequality and since $n \in F$, we get 
		\[ \biggl(\sum_{i\in F}\vert x(i)+e_n(i)\vert^p\biggr)^{\frac1p}\leqslant \biggl(\sum_{i\in F}\vert x(i)\vert^p\biggr)^{\frac1p}+ \biggl(\sum_{i\in F}\vert e_n(i)\vert^p\biggr)^{\frac1p} <2, \]
		which is a contradiction. Hence, $\sum_{i\in F} \vert x(i)\vert ^p=1$. Therefore, 
		\[  \biggl(\sum_{i\in F}\vert x(i)+e_n(i)\vert^p\biggr)^{\frac1p}=2= \biggl(\sum_{i\in F}\vert x(i)\vert^p\biggr)^{\frac1p}+ \biggl(\sum_{i\in F}\vert e_n(i)\vert^p\biggr)^{\frac1p}.  \]
		So by Minkowski again (the equality case), there exists $\lambda \geqslant 0$ such that $x(i)= \lambda e_n(i)$ for every $i \in F$. This implies that $x(i)=0$ for every $i \in F\minus \{n\}$ and $x(n)=\lambda\geqslant 0$. Since  $\sum_{i\in F} \vert x(i)\vert ^p=1$, it follows that $x(n)=1$. The reverse implication is obvious. 
	\end{proof}
	
	The proof of the following lemma is similar to that of~\cite[Lemma 2.3]{T1}.
	
	\begin{lemma}\label{l2}
		We have $T(-e_1)=-T(e_1)$ and $T^{-1}(-e_1)=-T^{-1}(e_1)$.  
	\end{lemma}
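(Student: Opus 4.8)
The plan is to establish the first identity $T(-e_1)=-T(e_1)$; the second one follows immediately, because $T^{-1}$ is again a surjective isometry of $\Sp$, and applying the first identity to $T^{-1}$ in place of $T$ gives $T^{-1}(-e_1)=-T^{-1}(e_1)$. Set $a:=T(e_1)$ and $b:=T(-e_1)$. Since $T$ is an isometry, $\|a-b\|=\|e_1-(-e_1)\|=\|2e_1\|=2$, so $a$ and $b$ are at maximal distance in $\Sp$. The obstacle is that $X_{\Sc_\alpha,p}$ is far from being strictly convex: because the coordinate $1$ is isolated (the set $\{1\}$ is maximal in $\Sc_\alpha$), the sphere carries large flat faces; for instance $\{y\in\Sp:y(1)=1\}$ contains the whole segment $\{e_1+te_2:\,0\leqslant t\leqslant 1\}$. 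Hence $\|a-b\|=2$ does not by itself yield $b=-a$, and the real content of the lemma is to upgrade \emph{diametral} to \emph{antipodal}.

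First I would locate a set on which $a$ and $b$ are already antipodal. Applying Fact~\ref{normeatteinte} to the sphere element $(a-b)/2$ produces a set $F\in\Sc_\alpha$ with $\sum_{i\in F}|a(i)-b(i)|^p=2^p$. Since $\sum_{i\in F}|a(i)|^p\leqslant\|a\|^p=1$ and $\sum_{i\in F}|b(i)|^p\leqslant\|b\|^p=1$, Minkowski's inequality gives
\[ 2=\Bigl(\sum_{i\in F}|a(i)-b(i)|^p\Bigr)^{1/p}\leqslant\Bigl(\sum_{i\in F}|a(i)|^p\Bigr)^{1/p}+\Bigl(\sum_{i\in F}|b(i)|^p\Bigr)^{1/p}\leqslant 2. \]
As $p>1$, the equality case of Minkowski's inequality forces $\sum_{i\in F}|a(i)|^p=\sum_{i\in F}|b(i)|^p=1$ together with $a(i)=-b(i)$ for every $i\in F$; in particular $F$ is a norming set for both $a$ and $b$. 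This is the step modelled on~\cite[Lemma 2.3]{T1}: in a strictly convex space it would already complete the proof.

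It remains to propagate the relation $a=-b$ from $F$ to all of $\N$, and this is where I expect the main difficulty to lie. The leverage is that $F$ norms $a$: if $j\notin F$ and $F\cup\{j\}\in\Sc_\alpha$, then
\[ 1=\|a\|^p\geqslant\sum_{i\in F}|a(i)|^p+|a(j)|^p=1+|a(j)|^p, \]
so $a(j)=0$, and symmetrically $b(j)=0$; thus $(a+b)(j)=0$ for every such $j$. By Fact~\ref{max}, as soon as $F$ is not a maximal element of $\Sc_\alpha$ this settles all $j>\max F$, leaving only the finitely many coordinates below $\max F$. Ruling out mass of $a+b$ on those remaining coordinates is the crux: here I would exploit the hereditary and gap-filling properties of $\Sc_\alpha$ (Facts~\ref{max} and~\ref{Fact3}) together with the spreading property, choosing the norming set $F$ as advantageously as possible, and I would bring in the special position of $e_1$ — whose far set $\{y\in\Sp:\|y-e_1\|=2\}$ is, via Lemma~\ref{l1}, exactly the face $\{y:y(1)=-1\}$ — to constrain $a$ and $b$ enough to force $a+b=0$, that is, $T(-e_1)=-T(e_1)$.
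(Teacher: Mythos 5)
There is a genuine gap, located exactly at the point you yourself flag as ``the crux''. Your first two steps are correct: the equality case of Minkowski does produce a set $F\in\Sc_\alpha$ on which $a:=T(e_1)$ and $b:=T(-e_1)$ are antipodal and which norms both of them, and every coordinate $j\notin F$ with $F\cup\{j\}\in\Sc_\alpha$ indeed carries no mass of $a$ or $b$. But the remaining step --- killing $a+b$ on the coordinates $j$ for which $F\cup\{j\}\notin\Sc_\alpha$ (which may be \emph{all} coordinates outside $F$, if $F$ happens to be maximal) --- is only announced, not carried out, and the tools you propose for it cannot close the gap. The pair $(a,b)$ subject only to $\|a-b\|=2$ genuinely need not be antipodal (e.g.\ $a=e_2$ and $b=-e_2+\tfrac12 e_1$ lie on $\Sp$ and satisfy $\|a-b\|=2$), so any correct argument must bring in relations of $a$ and $b$ with \emph{other} points of the sphere; your outline never does this. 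Moreover, the ``special position of $e_1$'' is only usable on the domain side: on the range side, Lemma~\ref{l1} describes the far set of $e_1$, not of $a=T(e_1)$, and any concrete information about the coordinates of $T(e_1)$ (namely $T(e_1)=\pm e_1$) is Lemma~\ref{l4}, which is proved \emph{after} and \emph{from} the present lemma --- invoking it here would be circular.

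The paper's proof supplies precisely the missing mechanism, and it works on the domain side rather than the range side. Setting $x_1:=T^{-1}(-T(e_1))$, the isometry gives $\|x_1-e_1\|=2$, so Lemma~\ref{l1} pins $x_1(1)=-1$. Then, for each $j\geqslant 2$, surjectivity provides auxiliary points $x_j:=T^{-1}(-T(e_j))$ and $y_j:=T^{-1}(-T(-e_j))$, which by Lemma~\ref{l1} satisfy $x_j(j)=-1$ and $y_j(j)=1$; the isometry then yields
\[
|x_1(j)+1|=|x_1(j)-x_j(j)|\leqslant\|x_1-x_j\|=\|e_j-e_1\|=1
\qquad\text{and}\qquad
|x_1(j)-1|\leqslant\|x_1-y_j\|=\|e_j+e_1\|=1,
\]
forcing $x_1(j)=0$ for every $j\geqslant 2$, hence $x_1=-e_1$. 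This simultaneous use of the preimages of $-T(\pm e_j)$ for all $j$ is the idea your range-side propagation lacks; without an analogous family of auxiliary points constraining $a+b$ coordinate by coordinate, your approach cannot be completed.
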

	\begin{proof}
		Since $T$ is surjective, there exists $x_1\in \Sp$ such that $T(x_1)=-T(e_1)$. We need to show that $x_1=-e_1$. 
		
		Since $T$ is an isometry and $T(x_1)=-T(e_1)$, we have
		\[ \Vert x_1-e_1\Vert= \Vert T(x_1)-T(e_1)\Vert=2. \]
		So Lemma~\ref{l1} implies that  $x_1(1)=-1$. Now, let $j \in \N\minus \{1\}$ and let $x_j, y_j \in \Sp$ be such that $T(x_j)=-T(e_j)$ and $T(y_j)=-T(-e_j)$. Since $T$ is an isometry and by Lemma~\ref{l1} again, we get $x_j(j)=-1$ and $y_j(j)=1$. Since $x_j(j)=-1$, it follows that 
		\[\vert x_1(j)+1\vert \leqslant \Vert x_1-x_j\Vert = \Vert T(x_1)-T(x_j)\Vert=\Vert -T(e_1)+T(e_j)\Vert =\Vert e_j-e_1\Vert =1. \]
		In the same way (and since $y_j(j)=1$), we get $\vert x_1(j)-1\vert \leqslant 1$. Hence, $x_1(j)=0$. 
		So, we have shown that $x_1(1)=-1$ and $x_1(j)=0$ for every $j\in \N\minus \{1\}$, \textit{i.e.} $x_1=-e_1$.
		
		Similarly, we show that $T^{-1}(-e_1)=-T^{-1}(e_1)$.
	\end{proof}
	\begin{lemma}\label{l3}
		Let $u\in \Sp$. Then, $\min\{\Vert u+x\Vert, \Vert u-x\Vert\}\leqslant1$ for every $x \in \Sp$  if and only if $u\in \{\pm e_1\}$.
	\end{lemma}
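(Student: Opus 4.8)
The plan is to prove the two implications separately, and the whole argument rests on a single structural peculiarity of coordinate~$1$: since $\Sc_\alpha^{MAX}=\{\{1\}\}$, the only $F\in\Sc_\alpha$ containing $1$ is $\{1\}$ itself, whereas every index $k\geqslant 2$ can be paired, because $\{k,m\}\in\Sc_\alpha$ whenever $2\leqslant k<m$. For the implication $u\in\{\pm e_1\}\Rightarrow$ property, I would take $u=e_1$ and split the supremum defining $\Vert e_1\pm x\Vert^p$ into sets $F\ni 1$ and sets $F\not\ni 1$. The former force $F=\{1\}$ and contribute $|1\pm x(1)|^p$; the latter contribute $\sum_{i\in F}|x(i)|^p\leqslant\Vert x\Vert^p=1$, with the \emph{same} bound for both signs since $|\pm x(i)|=|x(i)|$. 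Hence $\Vert e_1\pm x\Vert=\max\{|1\pm x(1)|,B\}$ for some $B\leqslant 1$. As $|x(1)|\leqslant\Vert x\Vert=1$, at least one of $1\pm x(1)$ lies in $[0,1]$; choosing the matching sign makes that norm $\leqslant 1$, so $\min\{\Vert e_1+x\Vert,\Vert e_1-x\Vert\}\leqslant 1$. The case $u=-e_1$ is immediate from $\Vert -e_1\pm x\Vert=\Vert e_1\mp x\Vert$.

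For the converse I would argue by contraposition, and here the point is that no case analysis on the shape of $u$ is needed. Suppose $u\in\Sp$ has $u(k)\neq 0$ for some $k\geqslant 2$; I claim $x=e_m\in\Sp$ violates the inequality for a suitable \emph{fresh} index $m$. Indeed, for any $m>k$ we have $2\leqslant k<m$, so $\{k,m\}\in\Sc_\alpha$, and testing the norm on this two–element set yields
\[
\Vert u\pm e_m\Vert^p\;\geqslant\;|u(k)|^p+|u(m)\pm 1|^p\;\geqslant\;|u(k)|^p+\bigl(1-|u(m)|\bigr)^p .
\]
To force both right–hand sides above $1$ it suffices to pick $m$ with $|u(m)|$ small enough that $(1-|u(m)|)^p>1-|u(k)|^p$; such an $m>k$ exists because $u(i)\to 0$, which follows from the density of $c_{00}$ in $X_{\Sc_\alpha,p}$ together with $|x(i)|\leqslant\Vert x\Vert$. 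Thus, if $u$ enjoys the stated property, then $u(k)=0$ for all $k\geqslant 2$, i.e. $u=u(1)e_1$, and $\Vert u\Vert=1$ gives $u=\pm e_1$.

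The step I expect to be the crux is choosing the test vector in the converse. The naive guess $x=e_n$ with $n$ in the support of $u$ can fail, since for one of the two signs coordinate $n$ produces cancellation rather than reinforcement; this is exactly the phenomenon that makes $u=e_1$ satisfy the property. The clean fix is to place $x$ on a \emph{fresh} coordinate $m$ where $u(m)\approx 0$, so that $|u(m)\pm1|\approx 1$ for \emph{both} signs, and to pair it via $\{k,m\}\in\Sc_\alpha$ with a genuinely nonzero coordinate $k\geqslant 2$ of $u$. This pairing is available for every $u$ that is not supported on $\{1\}$, and is unavailable precisely at coordinate~$1$ because $\{1\}$ is the unique maximal member of $\Sc_\alpha$ — which is what singles out $\pm e_1$ in the statement.
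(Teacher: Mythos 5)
Your proof is correct, and while the easy direction matches the paper's (which simply declares it obvious from $\{1\}\in \Sc_\alpha^{MAX}$ — you merely spell out the resulting max-decomposition of $\Vert e_1\pm x\Vert$), your hard direction takes a genuinely different and simpler route. The paper also argues by contradiction, but splits into two cases: if some coordinate $j\geqslant 2$ of $u$ vanishes, it tests with $\pm e_j$ against a nonzero coordinate $i\geqslant 2$; if instead \emph{all} coordinates $j\geqslant 2$ are nonzero, it invokes Fact~\ref{normeatteinte} to produce a set $F\in\Sc_\alpha$ on which the norm is attained, takes the largest coordinate $b$ of $u$ on $F$, chooses a tail index $j>F$ with $\vert u(j)\vert<1-b$, and then needs the spreading property (to see that $(F\minus\{i_0\})\cup\{j\}\in\Sc_\alpha$) together with the sign choice $\mathrm{sgn}(u(j))$ to push both norms above $1$. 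Your single test vector $e_m$, with $m>k$ chosen so that $(1-\vert u(m)\vert)^p>1-\vert u(k)\vert^p$, handles both of these cases at once, using only the pair $\{k,m\}\in\Sc_\alpha$ and the fact that coordinates of elements of $X_{\Sc_\alpha,p}$ vanish at infinity (your density argument for this is valid; the paper uses the same fact, via convergence of the basis expansion, inside its second case). The trade-off is entirely in your favor here: the norm-attainment fact and the spreading property are tools the paper needs elsewhere anyway, whereas your argument is shorter, avoids the case distinction, and isolates the exact quantitative mechanism — reinforcement at a fresh, nearly-zero coordinate paired with a genuinely nonzero coordinate $k\geqslant 2$ — that singles out $\pm e_1$, for which no such pairing exists since the only member of $\Sc_\alpha$ containing $1$ is $\{1\}$ itself.
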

	\begin{proof}
		The reverse implication is obvious because $\{1\}\in \Sc_\alpha^{MAX}$. For the forward implication, towards a contradiction, assume that there exists $i \geqslant 2$ such that $u(i)\neq 0$.  
		
		Assume first that there exists $j \geqslant 2$ such that $u(j)= 0$. Then $\{i,j\}\in \Sc_\alpha$ and $i\neq j$, so 
		\[\Vert u\pm e_j\Vert^p\geqslant \vert u(i)\vert^p+1 >1,\]
		which is a contradiction. 
		
		Assume now that $ u(j)\neq 0$, for every $j\geqslant 2$. Note that in this case, $\vert u(j)\vert \neq 1$  for every $j\geqslant 2$ since othewise we get $\Vert u \Vert >1$ because $\{m,n\}\in \Sc_\alpha$ for any $2\leqslant m<n \in \N$.  By Fact~\ref{normeatteinte}, there exists $F \in \Sc_\alpha$ such that $\sum_{k\in F}\vert u(k)\vert^p=1$. Let $b :=\max\{\vert u(k)\vert; \; k\in F\}$ and let $i_0 \in F $ be such that $\vert u(i_0)\vert =b$. Since $\Vert u-\sum_{k=1}^{n}u(k)e_k\Vert$ converges to zero as $n$ tends to infinity, one can find $j > F$ such that $\vert u(j)\vert < 1-b$. We have 
		\[ \Vert u+ \mathrm{sgn}(u(j))e_{j}\Vert\geqslant \vert u(j)+ \mathrm{sgn}(u(j))\vert=1+\vert u(j)\vert >1. \]
		Moreover, since $(F\minus \{i_0\})\cup \{j\}\in \Sc_\alpha$ (because $\Sc_\alpha $ is spreading), we have 
		\[\begin{aligned}
			\Vert u-\mathrm{sgn}(u(j))e_j\Vert^p&\geqslant \sum_{k\in F\minus \{i_0\}}\vert u(k)\vert^p+(1-\vert u(j)\vert)^p\\&>\sum_{k\in F\minus \{i_0\}}\vert u(k)\vert^p+b^p\\&=\sum_{k\in F}\vert u(k)\vert^p=1
		\end{aligned}\]
		which is a contradiction again. 
		
		Hence, $ u(i) =0$ for every $i \geqslant 2$, and so since $\Vert u \Vert =1$, we have  $u=\pm e_1$.
	\end{proof}
	\begin{lemma}\label{l4}
		There exists a sign $\theta_1$ such that $T(e_1)=\theta_1 e_1$ and $T^{-1}(e_1)=\theta_1 e_1$.   
	\end{lemma}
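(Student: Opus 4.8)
The plan is to characterize $\pm e_1$ purely metrically via Lemma~\ref{l3} and then to transport that characterization through the isometry $T$, using the antipodality already established in Lemma~\ref{l2}. Concretely, I would set $u:=T(e_1)$ and aim to verify that $u$ satisfies the defining condition of Lemma~\ref{l3}, namely that $\min\{\Vert u+x\Vert,\Vert u-x\Vert\}\le 1$ for every $x\in\Sp$. Once this is done, the forward implication of Lemma~\ref{l3} forces $u\in\{\pm e_1\}$, which yields a sign $\theta_1$ with $T(e_1)=\theta_1 e_1$.

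First I would fix an arbitrary $x\in\Sp$ and, using surjectivity of $T$, write $x=T(a)$ for some $a\in\Sp$. The term $\Vert u-x\Vert=\Vert T(e_1)-T(a)\Vert=\Vert e_1-a\Vert$ is immediate since $T$ is an isometry. The crux is to handle the \emph{sum} $\Vert u+x\Vert$, which is not a priori a distance between two points lying in the image of $T$. Here I would invoke Lemma~\ref{l2}: since $-u=-T(e_1)=T(-e_1)$, symmetry of the norm gives
\[\Vert u+x\Vert=\Vert -u-x\Vert=\Vert T(-e_1)-T(a)\Vert=\Vert {-e_1}-a\Vert=\Vert e_1+a\Vert.\]
Combining the two computations yields $\min\{\Vert u+x\Vert,\Vert u-x\Vert\}=\min\{\Vert e_1+a\Vert,\Vert e_1-a\Vert\}$, and the latter is $\le 1$ because $e_1\in\{\pm e_1\}$ already satisfies the condition of Lemma~\ref{l3} by its (easy) reverse implication. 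As $x$ was arbitrary, $u$ satisfies the full condition, so $T(e_1)=\theta_1 e_1$ for some sign $\theta_1$.

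Running the identical argument with $T^{-1}$ in place of $T$ (Lemma~\ref{l2} also supplies $T^{-1}(-e_1)=-T^{-1}(e_1)$) produces a sign $\varepsilon$ with $T^{-1}(e_1)=\varepsilon e_1$. It then remains to check that $\varepsilon=\theta_1$, which I would obtain by applying $T^{-1}$ to $T(e_1)=\theta_1 e_1$ and pulling the sign through $T^{-1}$ via Lemma~\ref{l2} (trivially if $\theta_1=1$, and using $T^{-1}(-e_1)=-T^{-1}(e_1)$ if $\theta_1=-1$):
\[e_1=T^{-1}(\theta_1 e_1)=\theta_1\,T^{-1}(e_1)=\theta_1\varepsilon\, e_1,\]
so that $\theta_1\varepsilon=1$ and hence $\varepsilon=\theta_1$, as required.

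The only genuine difficulty is the middle step: a surjective isometry of the sphere preserves distances but need not respect sums, so there is no reason a priori for $\Vert T(e_1)+T(a)\Vert$ to relate to anything on the domain side. The whole argument hinges on the fact that antipodality is already available at the single point $e_1$ (Lemma~\ref{l2}), which is precisely what converts the problematic sum into the distance $\Vert e_1+a\Vert$; everything else is a direct transfer through the isometry.
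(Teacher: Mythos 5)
Your proposal is correct and follows essentially the same route as the paper: the paper likewise transports the metric characterization of Lemma~\ref{l3} through $T$ using surjectivity and the antipodality $T(-e_1)=-T(e_1)$ from Lemma~\ref{l2}, concluding $T(e_1)=\theta_1 e_1$ and then $T^{-1}(e_1)=\theta_1 e_1$ via Lemma~\ref{l2}. Your write-up merely makes the transfer computation $\Vert T(e_1)+T(a)\Vert=\Vert e_1+a\Vert$ explicit and handles $T^{-1}$ by a slightly more roundabout (but equivalent) sign-matching step.
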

	\begin{proof}
		Since $\min\{\Vert e_1+x\Vert, \Vert e_1-x\Vert\}\leqslant1$ for every $x \in \Sp$ (by Lemma~\ref{l3}), $T$ is a surjective isometry and $T(-e_1)=-T(e_1)$ (by Lemma~\ref{l2}), it follows that $\min\{\Vert T(e_1)+y\Vert, \Vert T(e_1)-y\Vert\}\leqslant1$ for every $y \in \Sp$. Hence, there exists a sign $\theta_1$ such that $T(e_1)=\theta_1 e_1$ by Lemma~\ref{l3}, and this implies that $T^{-1}(e_1)=\theta_1 e_1$ by Lemma~\ref{l2}.
	\end{proof}
	
	For any $x\in \Sp$, we denote by $\mathrm{supp}(x)$ the support of $x$. Moreover, from now on, we denote by $A$ the set of all  $x\in \Sp$ such that $1\in \mathrm{supp}(x)$.
	
	\begin{lemma}\label{l5}
		We have $T(A)=A$. 
	\end{lemma}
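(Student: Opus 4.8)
The plan is to find a purely metric description of $A$ — one phrased only in terms of distances to $\pm e_1$ — and then to transport it through $T$ using Lemmas~\ref{l2} and~\ref{l4}. The criterion I would use is
\[ x\in A \iff \max\bigl\{\Vert x+e_1\Vert,\,\Vert x-e_1\Vert\bigr\}>1. \]
The reason such a clean test exists is the exceptional role of the index $1$: the only element of $\Sc_\alpha$ containing $1$ is $\{1\}$ itself (this is the combinatorial fact behind $\Sc_\alpha^{MAX}=\{\{1\}\}$, easily checked by transfinite induction). Consequently the first coordinate is \emph{isolated} in the norm: every $F\in\Sc_\alpha$ is either $\{1\}$ or is contained in $\{2,3,\dots\}$, on which $x\pm e_1$ agrees with $x$.

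To establish the criterion I would argue as follows. If $x(1)=0$, then for $F\in\Sc_\alpha$ with $1\notin F$ we have $\sum_{i\in F}\vert(x\pm e_1)(i)\vert^{p}=\sum_{i\in F}\vert x(i)\vert^{p}\leqslant\Vert x\Vert^{p}=1$, while for the only remaining set $F=\{1\}$ the sum equals $\vert x(1)\pm1\vert^{p}=1$; taking the supremum gives $\Vert x\pm e_1\Vert\leqslant1$, so the right-hand side fails. Conversely, if $x(1)\neq0$, set $\varepsilon=\mathrm{sgn}(x(1))$; since $\{1\}\in\Sc_\alpha$ we get $\Vert x+\varepsilon e_1\Vert\geqslant\vert x(1)+\varepsilon\vert=1+\vert x(1)\vert>1$, so the maximum exceeds $1$. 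This settles both directions of the equivalence.

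Finally I would push the criterion through $T$. By Lemma~\ref{l4}, $T(e_1)=\theta_1 e_1$, and by Lemma~\ref{l2}, $T(-e_1)=-T(e_1)=-\theta_1 e_1$. Hence, for every $x\in\Sp$, writing $y=T(x)$ and using that $T$ is an isometry,
\[ \Vert x-e_1\Vert=\Vert y-\theta_1 e_1\Vert,\qquad \Vert x+e_1\Vert=\Vert y+\theta_1 e_1\Vert. \]
Because $\theta_1^{2}=1$, the unordered pair $\{y+\theta_1 e_1,\,y-\theta_1 e_1\}$ is exactly $\{y+e_1,\,y-e_1\}$, so $\max\{\Vert x+e_1\Vert,\Vert x-e_1\Vert\}=\max\{\Vert y+e_1\Vert,\Vert y-e_1\Vert\}$. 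By the criterion this means $x\in A$ if and only if $T(x)\in A$, and since $T$ is a surjection of $\Sp$ this equivalence immediately yields $T(A)=A$.

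The only genuine content is locating the right metric description of $A$; everything afterwards is forced. The main point — and the step I expect to be the real obstacle — is recognizing that the isolation of the first coordinate in the $\Sc_\alpha$-norm makes $\max\{\Vert x\pm e_1\Vert\}$ detect precisely whether $x(1)=0$. Once this observation is made, the transfer through $T$ is automatic, and notably it uses only the combinatorial structure of $\Sc_\alpha$, not any strict-convexity feature of $p>1$.
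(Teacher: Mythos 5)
Your proof is correct and takes essentially the same approach as the paper: both arguments rest on the fact that $\{1\}\in\Sc_\alpha^{MAX}$ makes the first coordinate detectable from the distances to $\pm e_1$ (if $y(1)=0$ then $\Vert y\pm e_1\Vert\leqslant 1$, while if $x(1)\neq 0$ then $\Vert x+\mathrm{sgn}(x(1))e_1\Vert>1$), transported through $T$ via Lemmas~\ref{l2} and~\ref{l4}. The only cosmetic difference is that you package this as an explicit metric criterion for membership in $A$ and invoke surjectivity once, whereas the paper runs the same computation as a proof by contradiction, separately for $T$ and for $T^{-1}$.
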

	\begin{proof}
		Let $x\in A$ and $\varepsilon=-\mathrm{sgn}(x(1))$. Towards a contradiction, assume that $T(x)(1)=0$. We have 
		\[\Vert x-\varepsilon e_1\Vert \geqslant \vert x(1)-\varepsilon \vert= 1+\vert x(1)\vert >1.\]
		However, using Lemma~\ref{l2} and Lemma~\ref{l4}, we have
		\[\Vert T(x)-T(\varepsilon e_1)\Vert = \Vert T(x)-\varepsilon T( e_1)\Vert =\Vert T(x)-\varepsilon \theta_1 e_1\Vert=1,\]
		because $T(x)(1)=0$ and $\{1\}\in \Sc_\alpha^{MAX}$, which contradicts the fact that $T$ is an isometry. Hence $T(x)\in A$. We show in the same way that $T^{-1}(x)\in A$ for every $x\in A$.    
	\end{proof}
	\begin{lemma}\label{l6}
		For every $i \in \N\minus \{1\}$, we have $T(-e_i)=-T(e_i)$ and $T^{-1}(-e_i)=-T^{-1}(e_i)$.
	\end{lemma}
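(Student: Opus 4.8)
The plan is to adapt the surjectivity argument used for Lemma~\ref{l2}, taking advantage of the fact that for an index $i\geqslant 2$ the roles of the coordinates are, in a sense, reversed: the coordinates $k\geqslant 2$ become \emph{easy} and the coordinate at $1$ becomes the delicate one. So I would fix $i\in\N\minus\{1\}$ and, using surjectivity of $T$, choose $x_i\in\Sp$ with $T(x_i)=-T(e_i)$; it then suffices to prove that $x_i=-e_i$, for this gives $T(-e_i)=T(x_i)=-T(e_i)$ at once.

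First I would pin down the coordinate of modulus $1$. As $T$ is an isometry,
\[\Vert x_i-e_i\Vert=\Vert T(x_i)-T(e_i)\Vert=\Vert -2T(e_i)\Vert=2,\]
so applying Lemma~\ref{l1} to $-x_i$ yields $x_i(i)=-1$. Next I would eliminate every coordinate $k\geqslant 2$ with $k\neq i$: for such $k$ we have $\{i,k\}\in\Sc_\alpha$ (recall that $\{m,n\}\in\Sc_\alpha$ whenever $2\leqslant m<n$), whence
\[1=\Vert x_i\Vert^p\geqslant \vert x_i(i)\vert^p+\vert x_i(k)\vert^p=1+\vert x_i(k)\vert^p,\]
forcing $x_i(k)=0$.

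The one remaining coordinate, $x_i(1)$, is the crux, and it cannot be reached by the norm estimate above precisely because $\{1,i\}\notin\Sc_\alpha$ (as $\{1\}\in\Sc_\alpha^{MAX}$). Here I would bring in Lemma~\ref{l5}: since $T(A)=A$ and $T$ is a bijection of $\Sp$, we also have $T(A^c)=A^c$, and as $e_i\notin A$ this gives $T(e_i)(1)=0$, hence $T(x_i)(1)=-T(e_i)(1)=0$. Were $x_i(1)\neq 0$, we would have $x_i\in A$, so $T(x_i)\in A$ and thus $T(x_i)(1)\neq 0$, a contradiction. Therefore $x_i(1)=0$, and together with the previous steps this gives $x_i=-e_i$, as desired. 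The identity $T^{-1}(-e_i)=-T^{-1}(e_i)$ would then follow by repeating the argument verbatim with $T^{-1}$ in place of $T$, using that $T^{-1}$ is a surjective isometry satisfying $T^{-1}(A)=A$ (both inclusions being recorded in Lemma~\ref{l5}).

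I expect the coordinate at $1$ to be the only genuine obstacle: every other coordinate is dispatched by the elementary norm computation, whereas the coordinate at $1$ escapes it entirely because $\{1\}$ is maximal, and so it must be controlled through the structural invariance $T(A)=A$ rather than through the geometry of the norm.
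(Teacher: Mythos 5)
Your proposal is correct and follows essentially the same route as the paper: surjectivity gives $x_i$ with $T(x_i)=-T(e_i)$, Lemma~\ref{l1} (applied to $-x_i$) forces $x_i(i)=-1$, the membership $\{i,j\}\in\Sc_\alpha$ for $2\leqslant i\neq j$ kills all coordinates outside $\{1,i\}$, and the invariance $T(A)=A$ from Lemma~\ref{l5} handles the coordinate at $1$, with the statement for $T^{-1}$ obtained by symmetry. Your identification of the coordinate at $1$ as the only step requiring the structural lemma (because $\{1\}\in\Sc_\alpha^{MAX}$ blocks the norm estimate) matches the paper's reasoning exactly.
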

	\begin{proof}
		Let $i\in \N\minus\{1\}$. Since $T$ is surjective, there exists $x_i\in \Sp$ such that $T(x_i)=-T(e_i)$. We want to show that $x_i=-e_i$. Since $\Vert x_i-e_i\Vert=\Vert T(x_i)-T(e_i)\Vert=2$, it follows from Lemma~\ref{l1}  that $x_i(i)=-1$. This implies that for every $j\notin \{1,i\}$, we have $x_i(j)=0$ since otherwise we get $\Vert x_i\Vert>1$ (because $\{i,j\} \in \Sc_\alpha$). So it remains to show that $x_i(1)=0$, but this is clear by Lemma~\ref{l5} since $x_i=T^{-1}(-T(e_i))$ and $e_i\notin A$. So we have shown that $x_i(i)=-1$ and $x_i(j)=0$ for every $j\neq i$, \textit{i.e.} $x_i=-e_i$. In the same way, we show that $T^{-1}(-e_i)=-T^{-1}(e_i)$.%Towards a contradiction, assume that $x_i(1)\neq 0$ and let $\varepsilon=-\mathrm{sgn}(x_i(1))$. We have 
		%\[\Vert x_i-\varepsilon e_1\Vert = \vert x_i(1)-\varepsilon\vert=1+\vert x_i(1)\vert >1\].
		%However, by Lemma~\ref{l2} and Lemma~\ref{l4}, we get
		%\[\Vert T(x_i)-T(\varepsilon e_1)\Vert=\Vert-T(e_i)-T(\]
	\end{proof}
	For every $i\in\N$, we set 
	\[ T(e_i)=: f_i\qquad{\rm and}\qquad T^{-1}(e_i)=: d_i.\]
	\begin{lemma}\label{l7}
		For every $n\in \N\minus \{1\}$, $\mathrm{supp} (f_n)$ and $\mathrm{supp} (d_n)$ belong to  $\Sc_\alpha\minus\Sc_\alpha^{MAX}$. 
	\end{lemma}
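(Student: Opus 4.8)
The statements for $f_n$ and $d_n$ are symmetric under interchanging $T$ and $T^{-1}$, so I would only treat $f_n=T(e_n)$. First I would record two reductions. Since $e_n\notin A$ for $n\geqslant 2$, Lemma~\ref{l5} gives $f_n(1)=0$, so that $\mathrm{supp}(f_n)\subseteq\{2,3,\dots\}$ and in particular $\mathrm{supp}(f_n)\neq\{1\}$. Next, because $(e_i)$ is $1$-unconditional and every member of $\Sc_\alpha$ is finite, the inclusion $\mathrm{supp}(f_n)\in\Sc_\alpha$ is equivalent to $\sum_{i}\vert f_n(i)\vert^p=1$: if $\mathrm{supp}(f_n)=G\in\Sc_\alpha$ then $\Vert f_n\Vert^p=\sum_{i\in G}\vert f_n(i)\vert^p$, and conversely Fact~\ref{normeatteinte} together with $\sum_i\vert f_n(i)\vert^p=1$ forces the whole support into one norming member of $\Sc_\alpha$. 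Thus the lemma reduces to proving (A) $\sum_i\vert f_n(i)\vert^p=1$ and (B) that the resulting finite set $\mathrm{supp}(f_n)$ is not maximal.

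The tool for (B), once (A) is known, is Fact~\ref{max}: a member $G\in\Sc_\alpha\minus\Sc_\alpha^{MAX}$ can be enlarged by any $l>G$, and then $\Vert x+e_l\Vert^p=\Vert x\Vert^p+1$ whenever $\mathrm{supp}(x)=G$ carries the full mass. This suggests the metric criterion I would isolate and prove for $x\in\Sp$: one has $\mathrm{supp}(x)\in\Sc_\alpha\minus\Sc_\alpha^{MAX}$ if and only if $\Vert x+e_l\Vert=2^{1/p}$ for all $l$ beyond some index. The forward direction is the computation just described; for the converse I would use Fact~\ref{normeatteinte} to produce, for each large $l$ with $\Vert x+e_l\Vert^p=2$, a norming set $G_l\in\Sc_\alpha$ with $G_l\cup\{l\}\in\Sc_\alpha$ and full mass on $G_l$, and then use the heredity and spreading of $\Sc_\alpha$ to see that these $G_l$ coincide with a single non-maximal member equal to $\mathrm{supp}(x)$.

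To transport this criterion through $T$ I would use Lemma~\ref{l6}: since $T(-e_l)=-f_l$ and hence $T^{-1}(-f_l)=-e_l$, the fact that $T$ is an isometry yields the bridge identities $\Vert f_n\pm e_l\Vert=\Vert e_n\pm d_l\Vert$ and, symmetrically, $\Vert d_n\pm e_l\Vert=\Vert e_n\pm f_l\Vert$. Applying the criterion to the basis vector $e_n$ (whose support $\{n\}$ is non-maximal for $n\geqslant 2$, since $\{n,n+1\}\in\Sc_\alpha$) gives $\Vert e_n+e_l\Vert=2^{1/p}$ for $l>n$, so the goal becomes to establish $\Vert e_n+d_l\Vert=2^{1/p}$ for all large $l$.

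The main obstacle is precisely this last coupling: the support of $f_n$ is controlled by the still-unknown vectors $d_l=T^{-1}(e_l)$, and conversely. I would break the circle by proving the statements for all the $f_n$ and all the $d_n$ simultaneously. The separation relations $\Vert d_l-d_m\Vert=\Vert e_l-e_m\Vert=2^{1/p}$ (and likewise for the $f$'s) force the norming sets of $d_l$ to move off to infinity, so that for fixed $n$ and large $l$ the mass of $d_l$ lies on indices exceeding $n$; attaching the single small index $n$ to that far-away support so as to reach the value $2^{1/p}$ is the delicate combinatorial point, and this is where Facts~\ref{max},~\ref{maxsucc} and~\ref{Fact3}, together with the spreading property of $\Sc_\alpha$, are needed. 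Once $\Vert e_n+d_l\Vert=2^{1/p}$ is secured for all large $l$, the criterion delivers (A) and (B) at once, and the identical argument with $T$ replaced by $T^{-1}$ handles $d_n$.
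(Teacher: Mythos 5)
Your reduction of the lemma to statements (A) and (B), the bridge identities $\Vert f_n\pm e_l\Vert=\Vert e_n\pm d_l\Vert$, and the forward direction of your metric criterion are all fine; the fatal problem is that the \emph{converse} direction of the criterion --- the only direction your argument actually uses --- is false. Here is a counterexample with $\alpha=1$: let $a:=2^{-1/p}$, fix $0<b\leqslant a$, and set
\[
x:= b\,e_2+ a\,e_3+ a\,e_5 .
\]
Then $x\in\Sp$: a set of $\Sc_1$ containing $2$ has at most two elements, hence carries mass at most $b^p+\tfrac12\leqslant 1$, while a set contained in $[3,\infty)$ carries mass at most $a^p+a^p=1$. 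For every $l\geqslant 6$ we have $\{3,5,l\}\in\Sc_1$, so $\sum_{i\in\{3,5,l\}}\vert x(i)+e_l(i)\vert^p=2$, and no $F\in\Sc_1$ can give more than $1+1=2$ because $x(l)=0$; hence $\Vert x\pm e_l\Vert=2^{1/p}$ for all $l\geqslant 6$. Nevertheless $\mathrm{supp}(x)=\{2,3,5\}\notin\Sc_1$, and the total mass $\sum_i\vert x(i)\vert^p=1+b^p$ exceeds $1$, so both (A) and (B) fail. The reason is that the tests $\Vert x+e_l\Vert=2^{1/p}$ only detect the existence of \emph{one} non-maximal full-mass norming set able to absorb the far-away index $l$; they are blind to extra support coordinates (here $b\,e_2$) lying outside every such norming set. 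Consequently, even if you had secured $\Vert f_n+e_l\Vert=2^{1/p}$ for all large $l$, you could not conclude $\mathrm{supp}(f_n)\in\Sc_\alpha\minus\Sc_\alpha^{MAX}$: as far as those norm identities are concerned, $f_n$ could be the vector $x$ above.

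The second pillar of your plan is also unsound as described. To prove $\Vert e_n+d_l\Vert=2^{1/p}$ for large $l$ you need to know that a full-mass set of $d_l$ lies beyond $n$ and that the index $n$ can be attached to it inside $\Sc_\alpha$; in the paper this is exactly Lemma~\ref{f2}, which is \emph{deduced from} Lemma~\ref{l7} together with Fact~\ref{max}, and the statement that the mass of $d_l$ eventually avoids $\{2,\dots,n\}$ rests on Lemma~\ref{l8} combined with the finiteness of the supports $\mathrm{supp}(f_a)$, $a\leqslant n$ --- again Lemma~\ref{l7}. Your proposed substitute, the separation relations $\Vert d_l\pm d_m\Vert=2^{1/p}$, carries almost no information about supports: with $\alpha=1$, $a=2^{-1/p}$ and $b>0$ small, the vectors $u_l:=b\,e_2+a\,e_{4l}+a\,e_{4l+1}$ are unit vectors satisfying $\Vert u_l\pm u_m\Vert=2^{1/p}$ for all $l\neq m$, yet every $u_l$ has the index $2$ in its support; so these relations do not force norming sets ``off to infinity,'' and no actual simultaneous-induction scheme is given to break the circle. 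The paper's proof avoids both obstacles by a different mechanism: from Lemmas~\ref{l1},~\ref{l5} and~\ref{l6} it extracts the rigidity statement of Fact~\ref{f1}, namely $\Vert f_n+y\Vert<2$ for \emph{every} $y\in\Sp\minus(A\cup\{f_n\})$ (note the bound $2$, the triangle-inequality maximum, not $2^{1/p}$), and then tests $f_n$ against vectors manufactured from $f_n$ itself: its restriction to a norming set (forcing the support to equal that set), and that restriction augmented by one small far-out coordinate (forcing non-maximality). Tests against images of basis vectors alone, as in your plan, cannot see the support.
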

	To prove Lemma~\ref{l7}, we need the following fact. 
	\begin{fact}\label{f1}
		Let $n \in\N\minus\{1\}$. For any $y \in \Sp\minus (A\cup \{f_n\})$, we have $\Vert f_n+y\Vert <2 $, and for any $z\in \Sp\minus (A\cup \{d_n\})$,   we have $\Vert d_n+z\Vert <2 $.  
	\end{fact}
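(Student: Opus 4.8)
The plan is to prove both assertions by one argument, establishing the statement about $f_n$ first and then repeating it for $d_n$ with $T^{-1}$ in place of $T$; this is legitimate because $T^{-1}$ is again a surjective isometry of $\Sp$ for which the analogues of Lemmas~\ref{l5} and~\ref{l6} hold ($T^{-1}(A)=A$ and $T^{-1}(-e_i)=-T^{-1}(e_i)$). The key idea is to convert the quantity $\Vert f_n+y\Vert$, which is a \emph{sum}, into a genuine distance, so that Lemma~\ref{l1} becomes applicable.

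First I would fix $y\in\Sp$ and set $w:=T^{-1}(-y)$, so that $T(w)=-y$ and $\Vert w\Vert=1$. Since $T$ is an isometry and $f_n=T(e_n)$, we obtain
\[ \Vert f_n+y\Vert=\Vert T(e_n)-T(w)\Vert=\Vert e_n-w\Vert. \]
Applying Lemma~\ref{l1} to the unit vector $-w$ gives $\Vert e_n-w\Vert=\Vert(-w)+e_n\Vert=2$ if and only if $(-w)(n)=1$, i.e. $w(n)=-1$. Since $\Vert f_n+y\Vert\leqslant 2$ always holds, it therefore suffices to show that $w(n)=-1$ forces $y\in A$ or $y=f_n$.

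Assume $w(n)=-1$. As $n\geqslant 2$, for every $j\geqslant 2$ with $j\neq n$ we have $\{j,n\}\in\Sc_\alpha$, so $1=\Vert w\Vert^p\geqslant\vert w(j)\vert^p+\vert w(n)\vert^p=\vert w(j)\vert^p+1$, forcing $w(j)=0$. Hence $w$ is supported in $\{1,n\}$ and $w=w(1)e_1-e_n$. I would then split into two cases. If $w(1)=0$, then $w=-e_n$, so $-y=T(-e_n)=-T(e_n)=-f_n$ by Lemma~\ref{l6}, whence $y=f_n$. If $w(1)\neq 0$, then $1\in\mathrm{supp}(w)$, i.e. $w\in A$; since $T(A)=A$ by Lemma~\ref{l5}, we get $-y=T(w)\in A$, and therefore $y\in A$. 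This proves the first assertion. The second follows by the symmetric argument: for $z\in\Sp$ set $u:=T(-z)$, so that $z=-T^{-1}(u)$ and $\Vert d_n+z\Vert=\Vert T^{-1}(e_n)-T^{-1}(u)\Vert=\Vert e_n-u\Vert$; then $u(n)=-1$ yields $u=u(1)e_1-e_n$, and the same dichotomy (now using $T^{-1}(-e_n)=-T^{-1}(e_n)$ and $T^{-1}(A)=A$) gives $z=d_n$ or $z\in A$.

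The only real subtlety—and the reason the exceptional set $A$ appears in the statement—is the initial passage from the sum $f_n+y$ to a difference through which $T$ can be pulled; once $w:=T^{-1}(-y)$ is introduced, the combinatorial fact that $\{j,n\}\in\Sc_\alpha$ for all $j\geqslant 2$ pins $w$ down on all coordinates except the first, and it is precisely this unconstrained first coordinate (which need not vanish, since $\{1,n\}\notin\Sc_\alpha$) that allows the case $y\in A$ to occur.
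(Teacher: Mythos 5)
Your proof is correct and follows essentially the same route as the paper's: both arguments reduce the equality $\Vert f_n+y\Vert=2$ to the rigidity statement of Lemma~\ref{l1} by pulling the configuration back through the isometry, and both then combine $T(A)=A$ (Lemma~\ref{l5}), $T(-e_n)=-f_n$ (Lemma~\ref{l6}) and the fact that $\{j,n\}\in\Sc_\alpha$ for all $2\leqslant j\neq n$ to conclude that equality can only occur when $y=f_n$ or $y\in A$. The only cosmetic difference is that you pull back $-y$ via $w=T^{-1}(-y)$ and analyze the coordinates of $w$ explicitly, whereas the paper rewrites $y+f_n=y-T(-e_n)$ and applies the (implicitly justified) statement that $x\in\Sp\minus A$ with $\Vert x+e_n\Vert=2$ forces $x=e_n$ directly to $T^{-1}(y)$; your version spells out the coordinate argument the paper leaves to the reader.
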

	\begin{proof}[Proof of Fact~\ref{f1}]
		Let us show that $\Vert f_n+y\Vert <2 $ for any $y \in \Sp\minus (A\cup \{f_n\})$. Lemma~\ref{l1} implies that if $x\in \Sp\minus A$, then $\Vert x+e_n \Vert=2 $ if and only if $x=e_n$. It follows that if $y \in \Sp\minus A$, then $\Vert y+f_n\Vert=2$ if and only if $y=f_n$, since $T$ is a surjective isometry, $T(A)=A$ (by Lemma~\ref{l5}) and $T(-e_n)=-f_n$ (by Lemma~\ref{l6}). So we get the desired result since $\Vert f_n+y\Vert\leqslant\Vert f_n\Vert+ \Vert y\Vert=2$ for any $y\in \Sp$. In the same way we show that $\Vert d_n+z\Vert <2 $ for any $z\in \Sp\minus (A\cup \{d_n\})$.
	\end{proof}
	\begin{proof}[Proof of Lemma~\ref{l7}]
		Let $n \in \N\minus\{1\}$. We want to show that $\mathrm{supp} (f_n )\in \Sc_\alpha\minus\Sc_\alpha^{MAX}$. Since $\Vert f_n \Vert =1$, there exists $F \in \Sc_\alpha$ such that $\sum_{i \in F}\vert f_n(i)\vert^p=1$ and $f_n(i)\neq 0$ for all $i \in F$, by Fact~\ref{normeatteinte} and since $\Sc_\alpha$ is hereditary. We show first that $f_n(j)=0$ for all $j \notin F$. We know by Lemma~\ref{l5} that $f_n(1)=0$, so $1\notin F$. Towards a contradiction, assume that there exists $j \in \N \minus (F \cup \{1\})$ such that $f_n(j)\neq 0$ and let $y:=\sum_{i\in F}f_n(i)e_i$. It is clear that $y\in \Sp\minus A$. Moreover, 
		\[\Vert f_n+y\Vert \geqslant\biggl(\sum_{i\in F}\vert f_n(i)+ y(i)\vert^p\biggr)^{\frac1p}=2.\]
		Hence, $\Vert f_n+y\Vert =2$ which contradicts Fact~\ref{f1} since $y \neq f_n$. So we have shown that $f_n(j)=0$ for all $j \notin F$. Now, assume that $F\in \Sc_\alpha^{MAX}$. Let $j \in \N \minus (F \cup \{1\})$ (so $f_n(j)=0$) and let $z\in X_{\Sc_\alpha,p}$ be such that 
		\[
		z(i)= \begin{cases}
			f_n(i) &\text{if }i \in F, \\
			\min\{\vert f_n(i)\vert; \; i \in F\} &\text{if } i=j,\\
			0&\text{otherwise. }
		\end{cases}
		\]
		We want to show that $z\in \Sp$. Let $G\in \Sc_\alpha$. If $G=F$, then $\sum_{i\in G}\vert z(i)\vert^p=1$. If $j \notin G$, then the sum is less than or equal to 1. If $j \in G$, then there exists $i\in F$ such that $i\notin G$ because we are assuming that $F\in \Sc_\alpha^{MAX}$, hence the sum is also less than or equal to 1. Therefore, $\Vert z\Vert=1$. However, $\Vert z+f_n\Vert =2$ (because $\sum_{i\in F}\vert z(i)+f_n(i)\vert^p=2^p$ and $\Vert z+f_n\Vert\leqslant\Vert z\Vert + \Vert f_n\Vert=2$) which contradicts Fact~\ref{f1} since $z(j)\neq0$. In the same way, we show that $\mathrm{supp } (d_n) \in \Sc_\alpha\minus\Sc_\alpha^{MAX}$.
	\end{proof}
	\begin{lemma}\label{l8}
		Let $n,m\in \N\minus \{1\}$. Then, $f_n(m)\neq0 $ if and only if $d_m(n)\neq0$. 
	\end{lemma}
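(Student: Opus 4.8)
The plan is to prove the one-sided implication $f_n(m)\neq 0\Rightarrow d_m(n)\neq 0$ for an \emph{arbitrary} surjective isometry of $\Sp$, and then to obtain the converse for free: applying this implication to $T^{-1}$ (for which $(d_i)$ plays the role of $(f_i)$ and $(f_i)$ the role of $(d_i)$) gives $d_n(m)\neq 0\Rightarrow f_m(n)\neq 0$, and relabelling $n\leftrightarrow m$ turns this into $d_m(n)\neq 0\Rightarrow f_n(m)\neq 0$. So I fix $n,m\in\N\minus\{1\}$ with $f_n(m)\neq 0$ and argue by contradiction, assuming $d_m(n)=0$.

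First I would transfer the problem from $d_m$ to $f_n$ through the isometry. Since $T(d_m)=e_m$ and, by Lemma~\ref{l6}, $T(-d_m)=-e_m$, applying $T$ to the pairs $\{e_n,\pm d_m\}$ yields
\[ \Vert e_n+d_m\Vert=\Vert f_n+e_m\Vert \qquad\text{and}\qquad \Vert e_n-d_m\Vert=\Vert f_n-e_m\Vert. \]
Now the assumption $d_m(n)=0$ means that coordinate $n$ is absent from $d_m$, so $e_n+d_m$ and $e_n-d_m$ have the same modulus in every coordinate; since the norm of $X_{\Sc_\alpha,p}$ depends only on the moduli of the coordinates, this gives $\Vert e_n+d_m\Vert=\Vert e_n-d_m\Vert$, and hence $\Vert f_n+e_m\Vert=\Vert f_n-e_m\Vert$.

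The crux is to contradict this last equality using $f_n(m)\neq 0$. I would set $F:=\mathrm{supp}(f_n)$, which lies in $\Sc_\alpha$ by Lemma~\ref{l7}, so that $\sum_{i\in F}\vert f_n(i)\vert^p=\Vert f_n\Vert^p=1$ and $m\in F$; then put $c:=\vert f_n(m)\vert\in(0,1]$ and $s:=\mathrm{sgn}(f_n(m))$. Evaluating on $F$ gives the lower bound $\Vert f_n+se_m\Vert^p\geqslant(1-c^p)+(1+c)^p$, while splitting an arbitrary $G\in\Sc_\alpha$ according to whether $m\in G$ and using $\sum_{i\in G\minus\{m\}}\vert f_n(i)\vert^p\leqslant 1-c^p$ gives the upper bound $\Vert f_n-se_m\Vert^p\leqslant\max\{1,(1-c^p)+(1-c)^p\}$. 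Because $(1+c)^p>(1-c)^p$ and $(1+c)^p>c^p$ for $c>0$, the lower bound strictly exceeds the upper bound, so $\Vert f_n+se_m\Vert>\Vert f_n-se_m\Vert$, contradicting $\Vert f_n+e_m\Vert=\Vert f_n-e_m\Vert$.

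The main obstacle I anticipate is precisely this last step. Unlike Lemma~\ref{l1}, which detects the exact value $x(n)=1$, here one must detect that a coordinate is merely nonzero, and a sup-norm can in principle ``miss'' a small coordinate. What makes the argument go through is that, by Lemma~\ref{l7}, the entire support $F=\mathrm{supp}(f_n)$ is itself a norming member of $\Sc_\alpha$ containing $m$; this forces the extremal set for $f_n+se_m$ to include $m$ and thereby makes the perturbation by $\pm e_m$ visible in the norm. I would therefore be careful to invoke Lemma~\ref{l7} (rather than merely Fact~\ref{normeatteinte}, which need not produce a norming set containing $m$).
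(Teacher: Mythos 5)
Your proof is correct, and its skeleton matches the paper's: argue by contradiction, transfer norms through $T$ using Lemma~\ref{l6}, and use Lemma~\ref{l7} (rather than merely Fact~\ref{normeatteinte}) so that $\mathrm{supp}(f_n)$ is itself a norming set containing $m$ — your closing remark about why Lemma~\ref{l7} is the indispensable ingredient is exactly right, and your disposal of the converse by applying the one-sided implication to $T^{-1}$ is the same move as the paper's ``the proof of the reverse implication is the same.'' Where you genuinely diverge is the final detection mechanism. The paper compares a single quantity against the threshold $2$: since $\mathrm{supp}(f_n)\in\Sc_\alpha$, one gets $\Vert f_n+\varepsilon e_m\Vert^p=1-\vert f_n(m)\vert^p+(1+\vert f_n(m)\vert)^p>2$, where the strict inequality $(1+c)^p>1+c^p$ is precisely where $p>1$ enters, while $d_m(n)=0$ caps $\Vert e_n+\varepsilon d_m\Vert^p$ at $2$. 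You instead exploit $1$-unconditionality: $d_m(n)=0$ forces the symmetry $\Vert f_n+e_m\Vert=\Vert e_n+d_m\Vert=\Vert e_n-d_m\Vert=\Vert f_n-e_m\Vert$, which you then break with the estimates $(1-c^p)+(1+c)^p$ versus $\max\{1,\,(1-c^p)+(1-c)^p\}$; these need only the monotonicity of $t\mapsto t^p$, i.e. $(1+c)^p>(1-c)^p$ and $(1+c)^p>c^p$. The practical difference is that in your version the dependence on $p>1$ is isolated entirely inside Lemma~\ref{l7}, whereas the paper invokes it a second time in the norm estimate; neither argument survives at $p=1$ (Lemma~\ref{l7} fails there, since unit vectors need not have support in $\Sc_\alpha$), which is why Section~\ref{secp1} reproves the analogue (Lemma~\ref{lemma11}) with $1$-sets and the $\varepsilon$-gap instead.
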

	\begin{proof}
		Let $m,n \in \N\minus \{1\}$ be such that $f_n(m)\neq0 $ and let $\varepsilon$ be the sign of $f_n(m)$. Since $m\in \mathrm{supp}(f_n)$, $\mathrm{supp}(f_n)\in \Sc_\alpha$ (by Lemma~\ref{l7}) and $p>1$,  it follows that  
		\[ \begin{aligned}
			\Vert f_n+ \varepsilon e_m\Vert^p&=\sum_{i\in \mathrm{supp }(f_n)}\vert f_n(i)+ \varepsilon e_m(i)\vert^p\\&=\sum_{i\in \mathrm{supp }(f_n)\minus \{m\}}\vert f_n(i)\vert^p+\vert f_n(m)+\varepsilon\vert^p\\&=\sum_{i\in \mathrm{supp }(f_n)\minus \{m\}}\vert f_n(i)\vert^p+(1+\vert f_n(m)\vert)^p\\&>\sum_{i\in \mathrm{supp }(f_n)\minus \{m\}}\vert f_n(i)\vert^p+\vert f_n(m)\vert^p+1=2.
		\end{aligned}\]
		However, using Lemma~\ref{l6}, we have 
		\[\begin{aligned}
			\Vert f_n+ \varepsilon e_m\Vert&=\Vert e_n-T^{-1}(-\varepsilon e_m)\Vert\\&=\Vert e_n+\varepsilon T^{-1}(e_m)\Vert\\&=\Vert e_n+\varepsilon d_m\Vert.
		\end{aligned}\]
		Hence, $\Vert e_n+\varepsilon d_m\Vert^p>2$. Now, towards a contradiction, assume that $n \notin \mathrm{supp}(d_m)$. Then, for any $G \in \Sc_\alpha$, we have
		\[ \sum_{k\in G}\vert e_n(k)+ \varepsilon d_m(k)\vert^p=1+ \sum_{k \in G\minus \{n\}}\vert d_m(k)\vert^p\leqslant  2\qquad\hbox{if $n\in G$}\] 
		and 
		\[ \sum_{k \in G}\vert e_n(k)+ \varepsilon d_m(k)\vert^p=\sum_{k\in G}\vert d_m(k)\vert ^p \leqslant  1\qquad\hbox{if $n\notin G$}.\]
		Therefore, $\Vert e_n + \varepsilon d_m \Vert^p\leqslant  2$, which is a contradiction. So, $d_m(n)\neq0$. The proof of the reverse implication is the same.
	\end{proof}
	\begin{lemma}\label{l9}
		For every $k \in \N\minus\{1\}$, there exists $n>k$ such that $k<\mathrm{supp}(f_n)$ and $k<\mathrm{supp}(d_n)$.   
	\end{lemma}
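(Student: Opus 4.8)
The plan is to reduce both requirements to a single finiteness argument built on the duality recorded in Lemma~\ref{l8}. First observe that, since $e_n\notin A$ for $n\geqslant 2$ and $T(A)=A$ (Lemma~\ref{l5}), we have $f_n(1)=d_n(1)=0$ for every $n\geqslant 2$; hence the requirement $k<\mathrm{supp}(f_n)$ amounts to $f_n(m)=0$ for all $2\leqslant m\leqslant k$, and similarly for $d_n$. The first thing I would do is translate these vanishing conditions through Lemma~\ref{l8}: by that lemma, $f_n(m)\neq 0$ if and only if $d_m(n)\neq 0$, so ``$f_n(m)=0$ for all $2\leqslant m\leqslant k$'' is exactly the statement that $n\notin\bigcup_{m=2}^{k}\mathrm{supp}(d_m)$. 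Applying Lemma~\ref{l8} with the two indices interchanged gives $d_n(m)\neq 0$ if and only if $f_m(n)\neq 0$, so ``$d_n(m)=0$ for all $2\leqslant m\leqslant k$'' is exactly the statement that $n\notin\bigcup_{m=2}^{k}\mathrm{supp}(f_m)$.

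With this reformulation in hand, the second and decisive step is to invoke finiteness. By Lemma~\ref{l7}, each of the sets $\mathrm{supp}(f_m)$ and $\mathrm{supp}(d_m)$ (for $2\leqslant m\leqslant k$) belongs to $\Sc_\alpha$ and is therefore finite, so the set
\[ S:=\bigcup_{m=2}^{k}\bigl(\mathrm{supp}(f_m)\cup\mathrm{supp}(d_m)\bigr) \]
is a finite subset of $\N$. I would then simply pick any integer $n>k$ with $n\notin S$, which is possible because $S$ is finite while $\{\,j\in\N:\,j>k\,\}$ is infinite. For such an $n$ we have $n\geqslant 2$, so Lemma~\ref{l8} applies, and the two equivalences above yield both $k<\mathrm{supp}(f_n)$ and $k<\mathrm{supp}(d_n)$, as required.

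I do not expect any genuine obstacle here: the entire content is the bookkeeping of the duality of Lemma~\ref{l8} together with the crucial fact, supplied by Lemma~\ref{l7}, that the supports of the $f_m$ and $d_m$ are finite. The only subtlety is that I must dodge \emph{both} finite unions with a single choice of $n$, which is why I take $n$ outside the combined set $S$; and since $n>k\geqslant 2$ automatically forces $n\neq 1$, the vanishing $f_n(1)=d_n(1)=0$ handles the index $1$ with no extra argument.
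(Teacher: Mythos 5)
Your proof is correct and is essentially the paper's argument in direct (rather than contradiction) form: the paper assumes no such $n$ exists and derives, via the duality of Lemma~\ref{l8} and a pigeonhole step, that some $d_a$ with $a\in\{2,\dots,k\}$ has infinite support, contradicting Lemma~\ref{l7}; your observation that the set of ``bad'' indices $n$ is exactly $\bigcup_{m=2}^{k}\bigl(\mathrm{supp}(f_m)\cup\mathrm{supp}(d_m)\bigr)$, which is finite by Lemma~\ref{l7}, is the same mechanism stated contrapositively. Your explicit handling of the index $1$ via Lemma~\ref{l5} is also what the paper uses, just implicitly.
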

	\begin{proof}
		Let $k \in \N\minus\{1\} $. Towards a contradiction, assume that for all $n >k$, $\mathrm{supp}(f_n )\cap \{2, \dots ,k\}\neq\emptyset $ or  $\mathrm{supp}(d_n )\cap \{2, \dots ,k\}\neq\emptyset $. Assume, without loss of generality, that $\mathrm{supp}(f_n )\cap \{2, \dots ,k\}\neq\emptyset $ for infinitely many $n>k$. It follows that there exist $a \in \{2, \dots ,k\}$ and infinitely many $n>k$ such that $f_n(a)\neq 0$. Hence,  by Lemma~\ref{l8},  $d_a(n) \neq 0$ for infinitely many $n$, which is a contradiction since $d_a \in c_{00}$ by Lemma~\ref{l7}. %In the same way, \textcolor{red}{one} shows that there exists $m>k$ such that $\mathrm{supp}(d_m)>k$.
	\end{proof}
	
	\begin{lemma}\label{f2}
		Let $n \in \N\minus\{1\}$. If $k> \mathrm{supp}(f_n)$, then $d_k(n)=0$ and $\{n\}\cup\mathrm{supp}(d_k)\in \Sc_\alpha$. 
	\end{lemma}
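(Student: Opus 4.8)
The plan is to establish the two assertions separately, exploiting that $T$ transfers distances and that the norming set of each $f_n,d_k$ sits inside $\Sc_\alpha$.

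For the statement $d_k(n)=0$, I would simply invoke Lemma~\ref{l8}. Since $k>\mathrm{supp}(f_n)$, in particular $k\notin\mathrm{supp}(f_n)$, so $f_n(k)=0$; moreover $k\geqslant 2$ because $\mathrm{supp}(f_n)$ is a non-empty subset of $\N\minus\{1\}$ by Lemma~\ref{l7}, and $k$ exceeds all of its elements. Applying Lemma~\ref{l8} to the pair $(n,k)$ (both in $\N\minus\{1\}$) then yields $d_k(n)=0$ at once.

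For the second assertion, the key idea is to compute $\Vert e_n-d_k\Vert$ in two ways. Since $T(d_k)=e_k$ and $T(e_n)=f_n$, the isometry gives $\Vert e_n-d_k\Vert=\Vert f_n-e_k\Vert$, and I would evaluate the right-hand side explicitly. The lower bound $\Vert f_n-e_k\Vert^p\geqslant 2$ comes from testing the set $\mathrm{supp}(f_n)\cup\{k\}$: this set lies in $\Sc_\alpha$ by Fact~\ref{max}, which applies because $\mathrm{supp}(f_n)\in\Sc_\alpha\minus\Sc_\alpha^{MAX}$ (Lemma~\ref{l7}) and $k>\mathrm{supp}(f_n)$, and on it the $p$-sum is $1+1=2$ (using that $\mathrm{supp}(f_n)$ carries the full norm of $f_n$ and $f_n(k)=0$). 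The matching upper bound $\Vert f_n-e_k\Vert^p\leqslant 2$ follows by splitting an arbitrary $F\in\Sc_\alpha$ according to whether $k\in F$ and using $\Vert f_n\Vert=1$. Hence $\Vert e_n-d_k\Vert^p=2$.

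Finally I would use that this distance is attained. The vector $(e_n-d_k)/2^{1/p}$ lies in $\Sp$, so by Fact~\ref{normeatteinte} there is $G\in\Sc_\alpha$ with $\sum_{i\in G}\vert(e_n-d_k)(i)\vert^p=2$. Since $d_k(n)=0$ from the first part, the coordinate at $n$ contributes exactly $1$; consequently $n\in G$, for otherwise the sum would equal $\sum_{i\in G}\vert d_k(i)\vert^p\leqslant\Vert d_k\Vert^p=1$. Then $\sum_{i\in G\minus\{n\}}\vert d_k(i)\vert^p=1=\Vert d_k\Vert^p$. The genuinely delicate point is the last step: because $\mathrm{supp}(d_k)\in\Sc_\alpha$ already carries the entire mass of $d_k$ (Lemma~\ref{l7}), the numerical equality $\sum_{i\in G\minus\{n\}}\vert d_k(i)\vert^p=\sum_{i}\vert d_k(i)\vert^p$ forces the set-inclusion $\mathrm{supp}(d_k)\subseteq G\minus\{n\}$, since no nonzero coordinate of $d_k$ may be omitted from $G$. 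Therefore $\{n\}\cup\mathrm{supp}(d_k)\subseteq G$, and heredity of $\Sc_\alpha$ gives $\{n\}\cup\mathrm{supp}(d_k)\in\Sc_\alpha$, as desired.
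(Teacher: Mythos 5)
Your proposal is correct and follows essentially the same route as the paper: $d_k(n)=0$ via Lemma~\ref{l8}, then $\Vert f_n-e_k\Vert^p=2=\Vert e_n-d_k\Vert^p$ using Fact~\ref{max} and Lemma~\ref{l7}, and finally membership of $\{n\}\cup\mathrm{supp}(d_k)$ in $\Sc_\alpha$ by heredity. The only difference is that you spell out the attainment step (via Fact~\ref{normeatteinte} and the inclusion $\mathrm{supp}(d_k)\subseteq G\minus\{n\}$), which the paper leaves implicit in its closing ``otherwise $\Vert e_n-d_k\Vert^p<2$'' remark.
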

	\begin{proof}
		Since $k>\mathrm{supp}(f_n)$, it follows that $f_n(k)=0$ and so $d_k(n)=0$ by Lemma~\ref{l8}. It remains to show that $\{n\}\cup\mathrm{supp}(d_k)\in \Sc_\alpha$. Since $\mathrm{supp}(f_n) \in \Sc_\alpha\minus  \Sc_\alpha^{MAX}$ (by Lemma~\ref{l7}) and $k>\mathrm{supp}(f_n)$, it follows by Fact~\ref{max} that $\mathrm{supp}(f_n) \cup \{k\}\in \Sc_\alpha$. This implies that $\Vert f_n - e_k\Vert ^p=2$, and so $\Vert e_n-d_k\Vert^p = 2$. Hence, $\{n\}\cup\mathrm{supp}(d_k)\in \Sc_\alpha$ since otherwise we get $\Vert e_n - d_k\Vert^p<2$ which is a contradiction. 
	\end{proof}
	\begin{lemma}\label{imp}
		Let $\{i_1, \dots,i_n\}\in \Sc_\alpha$ be such that $f_{i_1}=\varepsilon_1 e_{j_1},\dots , f_{i_n}=\varepsilon_n e_{j_n}$ where $\varepsilon_1,\dots, \varepsilon_n$ are signs and $\{j_1,\dots,j_n\}\in \Sc_\alpha$. Then, for any $a_1,\dots,a_n\in \R\minus\{0\}$ such that $\sum_{k=1}^n\vert a_k\vert^p=1$, and for any $x,y\in \Sp$ such that $x(i_k)=a_k=y(j_k)$ for all $k \in \{1,\dots, n\}$, we have 
		\[  \begin{aligned}\vert T(x)(j_l)\vert=\vert a_l\vert \qquad\text{and}\qquad \vert T^{-1}(y)(i_l)\vert=\vert a_l\vert \end{aligned}\]   for all $l\in \{1,\dots, n\}$.
	\end{lemma}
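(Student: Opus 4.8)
The plan is to prove the forward claim, that $c_l := T(x)(j_l)$ satisfies $|c_l| = |a_l|$ for every $l$, and then to obtain the statement about $T^{-1}(y)$ by symmetry. Indeed, since $T(-e_{i_k}) = -T(e_{i_k})$ (Lemma~\ref{l6}), the hypothesis $f_{i_k} = \varepsilon_k e_{j_k}$ gives $T(\varepsilon_k e_{i_k}) = e_{j_k}$, hence $T^{-1}(e_{j_k}) = \varepsilon_k e_{i_k}$; thus the data $(T, i_k, a_k, x)$ and $(T^{-1}, j_k, a_k, y)$ play perfectly interchangeable roles, and the second claim follows from the first applied to $T^{-1}$. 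Note also that an index equal to $1$ can only occur in the trivial case $n=1$, $i_1 = 1$ (since $\Sc_\alpha^{MAX} = \{\{1\}\}$ forces $\{i_1,\dots,i_n\}\notin\Sc_\alpha$ as soon as $1$ and another index both appear), so Lemma~\ref{l6} applies throughout, with Lemmas~\ref{l2} and~\ref{l4} covering that degenerate case.

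First I would record an exact distance formula on the domain side. Writing $\sigma_l := \mathrm{sgn}(a_l)$ and $F_0 := \{i_1,\dots,i_n\}$, I claim that
\[ \Vert x + \sigma_l e_{i_l}\Vert^p = (1 - |a_l|^p) + (1 + |a_l|)^p. \]
The lower bound is immediate by testing with the norming set $F_0 \in \Sc_\alpha$, on which $x$ takes the values $a_k$ with $\sum_k |a_k|^p = 1$, so that the contribution of the coordinates $k \ne l$ is exactly $1 - |a_l|^p$. The crucial point is that the (possibly infinite) extra support of $x$ outside $F_0$ is harmless for the upper bound: for any $F \in \Sc_\alpha$ with $i_l \in F$ one has $\sum_{i \in F \minus \{i_l\}} |x(i)|^p \le 1 - |a_l|^p$ because $\Vert x\Vert = 1$, while for $F$ with $i_l \notin F$ the relevant sum is at most $1$. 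Hence $F_0$ is a maximizing set and the displayed equality holds.

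Transporting through $T$ is then routine: from $T(\sigma_l e_{i_l}) = \sigma_l \varepsilon_l e_{j_l}$ (Lemma~\ref{l6}) and the fact that $T$ is an isometry, the last display becomes
\[ \Vert T(x) + \sigma_l \varepsilon_l e_{j_l}\Vert^p = (1 - |a_l|^p) + (1 + |a_l|)^p. \]
The main obstacle is that $T(x)$ may a priori be spread over infinitely many coordinates, so this single number does not obviously determine $c_l$. I would overcome this with a \emph{one-sided} estimate: using only $\Vert T(x)\Vert = 1$, the same ``extra support is harmless'' computation gives $\Vert T(x) + \sigma_l \varepsilon_l e_{j_l}\Vert^p \le (1 - |c_l|^p) + |c_l + \sigma_l\varepsilon_l|^p \le (1 - |c_l|^p) + (1 + |c_l|)^p$. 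Comparing with the exact value above and invoking the strict monotonicity of $r \mapsto (1 - r^p) + (1 + r)^p$ on $[0,1]$ (its derivative is $p[(1+r)^{p-1} - r^{p-1}] > 0$), I obtain $|c_l| \ge |a_l|$ for every $l$.

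Finally I would close the loop by summation. Since $\{j_1,\dots,j_n\} \in \Sc_\alpha$ and $\Vert T(x)\Vert = 1$, testing the norm on this set gives $\sum_k |c_k|^p \le 1 = \sum_k |a_k|^p$. Together with the termwise inequalities $|c_l| \ge |a_l|$ and the strict monotonicity of $r \mapsto r^p$, this forces equality in each coordinate, that is $|c_l| = |a_l|$ for all $l$. This establishes the forward claim, and the claim for $T^{-1}(y)$ follows by the symmetry noted at the outset.
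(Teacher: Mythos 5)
Your proof is correct and follows essentially the same route as the paper's: an exact computation of $\Vert x+\mathrm{sgn}(a_l)e_{i_l}\Vert^p$ via the norming set $\{i_1,\dots,i_n\}$, transport through $T$ using Lemma~\ref{l6}, a one-sided upper bound in terms of $\vert T(x)(j_l)\vert$, strict monotonicity of $t\mapsto(1+t)^p-t^p$ to get $\vert T(x)(j_l)\vert\geqslant\vert a_l\vert$, and summation over $\{j_1,\dots,j_n\}\in\Sc_\alpha$ to force equality. The only (harmless) additions are your explicit treatment of the degenerate case where an index equals $1$ and the symmetry remark for $T^{-1}$, both of which the paper handles implicitly.
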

	\begin{proof}
		Let $x\in \Sp$ be such that $x(i_k)=a_k$ for all $k\in \{1,\dots, n\}$, and let us fix $l\in \{1,\dots,n\}$. We denote by $\varepsilon$ the sign of $a_l$. Let us first compute $\Vert x+\varepsilon e_{i_l}\Vert ^p$. Let $G\in\Sc_\alpha$. If $i_l\notin G$, then $\sum_{k\in G}\vert x(k)+ \varepsilon e_{i_l}(k)\vert^p\leqslant1$. If $i_l\in G$, then \[\begin{aligned}
			\sum_{k\in G}\vert x(k)+\varepsilon e_{i_l}(k)\vert^p&=\sum_{k\in G\minus \{i_l\}}\vert x(k)\vert^p+\vert \varepsilon + x(i_l)\vert^p\\&=\sum_{k\in G\minus \{i_l\}}\vert x(k)\vert^p+ (1+ \vert a_l\vert)^p\\&=\sum_{k\in G}\vert x(k)\vert^p-\vert a_l\vert^p+ (1+ \vert a_l\vert)^p\\&\leqslant 1-\vert a_l\vert^p+ (1+ \vert a_l\vert)^p.
		\end{aligned}\]
		If we take $G=\{i_1,\dots, i_n\}$, then $\sum_{k\in G}\vert x(k)\vert^p=\sum_{k=1}^n\vert a_k\vert^p=1$, and hence  \[\sum_{k\in G}\vert x(k)+\varepsilon e_{i_l}(k)\vert^p= 1-\vert a_l\vert^p+ (1+ \vert a_l\vert)^p.\] So, \[\Vert x+\varepsilon e_{i_l}\Vert ^p= 1-\vert a_l\vert^p+ (1+ \vert a_l\vert)^p.\]
		Since $T$ is an isometry, $T(-e_{i_l})=-f_{i_l}$ by Lemma~\ref{l6} and $f_{i_l}=\varepsilon_l e_{j_l}$, it follows that  
		\[\Vert T(x)+\varepsilon  \varepsilon_l e_{j_l}\Vert^p= 1-\vert a_l\vert^p+ (1+ \vert a_l\vert)^p \geqslant 2.\]
		Let $F _0\in \Sc_\alpha$ be such that  \[\Vert T(x)+\varepsilon  \varepsilon_l e_{j_l}\Vert^p=\sum_{k\in F_0}\vert T(x)(k)+\varepsilon  \varepsilon_l e_{j_l}(k)\vert^p.\]
		If $j_l\notin F_0$, then $\Vert T(x)+\varepsilon  \varepsilon_l e_{j_l}\Vert^p\leqslant1 $, which is a contradiction. Hence $j_l\in F_0$ and we have 
		\[\begin{aligned}
			\sum_{k\in F_0}\vert T(x)(k)+\varepsilon  \varepsilon_l e_{j_l}(k)\vert^p&=\sum_{k\in F_0\minus \{j_l\}}\vert T(x)(k)\vert^p+ \vert T(x)(j_l)+\varepsilon \varepsilon_l\vert^p\\&= \sum_{k\in F_0}\vert T(x)(k)\vert^p-\vert T(x)(j_l)\vert ^p+ \vert T(x)(j_l)+\varepsilon \varepsilon_l\vert^p\\&\leqslant 1-\vert T(x)(j_l)\vert ^p+ \vert T(x)(j_l)+\varepsilon \varepsilon_l\vert^p\\&\leqslant  1-\vert T(x)(j_l)\vert ^p+ (1+\vert T(x)(j_l)\vert)^p.   
		\end{aligned}\]
		Hence, \[1-\vert a_l\vert^p+ (1+ \vert a_l\vert)^p\leqslant  1-\vert T(x)(j_l)\vert ^p+ (1+\vert T(x)(j_l)\vert)^p, \]  
		so
		\[(1+ \vert a_l\vert)^p -\vert a_l\vert^p\leqslant (1+\vert T(x)(j_l)\vert)^p -\vert T(x)(j_l)\vert ^p.\]
		Since the function $t \mapsto (1+t)^p-t^p $ is strictly increasing on $[0,1]$, it follows that 
		\[\vert T(x)(j_l)\vert\geqslant\vert a_l\vert. \]
		This holds for any $l\in \{1,\dots,n\}$. So, since $\{j_1,\dots, j_n\}\in \Sc_\alpha$, we have  
		\[\Vert T(x)\Vert ^p\geqslant \sum_{l=1}^n \vert T(x)(j_l)\vert^p\geqslant \sum_{l=1}^n\vert a_l\vert^p=1.\] 
		If there exists $l\in \{1, \dots, n\}$ such that $\vert T(x)(j_l)\vert>\vert a_l\vert$, then $\Vert T(x)\Vert ^p> \sum_{l=1}^n\vert a_l\vert^p=1$ which is a contradiction since $T(x)\in \Sp$. Therefore, $\vert T(x)(j_l)\vert=\vert a_l\vert$ for every $l\in \{1,\dots, n\}$. %It remains to show that $\mathrm{sgn}(T(x)(j_l))=\varepsilon\varepsilon_l$ where $\varepsilon $ is the sign of $a_l$. Towards a contradiction, assume that $\mathrm{sgn}(T(x)(j_l))=-\varepsilon\varepsilon_l$. Then 
		%\[\begin{aligned}
			%   \Vert T(x)+\varepsilon  \varepsilon_l e_{j_l}\Vert^p&=\sum_{k\in F_0}\vert T(x)(k)+\varepsilon  \varepsilon_l e_{j_l}(k)\vert^p\\&=\sum_{k\in F_0\minus \{j_l\}}\vert T(x)(k)\vert^p+ \vert T(x)(j_l)+\varepsilon \varepsilon_l\vert^p\\&\leqslant 1+1=2,
			%\end{aligned}\]
			% which contradicts (\ref{truc}). Therefore, $\mathrm{sgn}(T(x)(j_l))=\varepsilon\varepsilon_l$ and $T(x)(j_l)=\varepsilon_l a_l$.
			
			In the same way, we show that $\vert T^{-1}(y)(i_l)\vert=\vert a_l\vert$ for any  $y\in \Sp$ such that $y(j_k)=a_k$ for all $k \in \{1,\dots, n\}$ because $f_{i_1}=\varepsilon_1 e_{j_1}, \dots , f_{i_n}=\varepsilon_n e_{j_n}$ imply that $d_{j_1}=\varepsilon_1e_{i_1}, \dots, d_{j_n}=\varepsilon_n e_{i_n}$ by Lemma~\ref{l6}.
		\end{proof}

		\subsection{The case $p\neq2$}\label{sec2}
		In this subsection, we assume that $p\in (1,\infty)\minus \{2\}$. Recall that $(\Sc_\alpha)_{\alpha<\omega_1}$ is good. The aim of this subsection is to prove Proposition~\ref{main3} in this case.
		
		\medskip The proof of the following lemma requires $p\neq 2$.
		
		\begin{lemma}\label{l10}
			For any $k \in \N\minus\{1\}$, there exist $i,j>k$ such that $f_i=\pm e_j$ and  $d_j=\pm e_i$.
		\end{lemma}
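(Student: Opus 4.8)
\emph{Reduction.} I would first note that the two conclusions of the lemma are essentially one. If $f_i=\varepsilon e_j$ for some sign $\varepsilon$, then $f_i=\varepsilon e_j$ gives $T^{-1}(f_i)=e_i$, and by Lemma~\ref{l6} we get $d_j=T^{-1}(e_j)=\varepsilon T^{-1}(\varepsilon e_j)=\varepsilon T^{-1}(f_i)=\varepsilon e_i$, so $d_j=\pm e_i$ comes for free. Likewise, if we arrange that $\mathrm{supp}(f_i)>k$, then the single index $j$ with $\mathrm{supp}(f_i)=\{j\}$ satisfies $j>k$. Hence it suffices to produce one index $i>k$ with $\mathrm{supp}(f_i)>k$ and $|\mathrm{supp}(f_i)|=1$.

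\emph{Extremal setup.} By Lemma~\ref{l9} the set of indices $n>k$ for which $\mathrm{supp}(f_n)>k$ is nonempty, and the same holds for the $d_n$. Among all such $f_n$ and $d_n$ I would pick one whose support has minimal cardinality $S$, and assume towards a contradiction that $S\geqslant 2$. Since the whole setup is symmetric under exchanging $T$ with $T^{-1}$ (and hence $f$ with $d$), we may assume the minimum is attained by some $f_i$, with $\mathrm{supp}(f_i)=\{j_1<\dots<j_S\}\subset(k,\infty)$ and $i>k$. Writing $c_s=f_i(j_s)\neq 0$, Fact~\ref{normeatteinte} and Lemma~\ref{l7} give $\sum_s|c_s|^p=1$ and $\{j_1,\dots,j_S\}\in\Sc_\alpha$.

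\emph{Producing the contradiction.} Because every subset of a member of $\Sc_\alpha$ is again admissible, the closed span of $e_{j_1},\dots,e_{j_S}$ is isometric to $\ell_p^{S}$; thus $b\mapsto T^{-1}\bigl(\sum_s b_s e_{j_s}\bigr)$ isometrically embeds the $\ell_p^{S}$–sphere into $\Sp$, carrying the interior point $(c_1,\dots,c_S)$ to the \emph{single} basis vector $e_i=T^{-1}(f_i)$ and each vertex $\pm\hat e_s$ to $\pm d_{j_s}$. The plan is to translate, through $T$ and Lemma~\ref{l6}, the relevant distances $\Vert e_{j_s}\pm f_i\Vert$ and $\Vert d_{j_s}\pm e_i\Vert$ into expressions of the single-variable function $g(t)=(1+t)^p-t^p$, exactly as in the proofs of Lemmas~\ref{l1}, \ref{l8} and \ref{imp}. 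The role of the hypothesis $p\neq 2$ is then decisive: since $g''(t)=p(p-1)\bigl[(1+t)^{p-2}-t^{p-2}\bigr]$ has the sign of $p-2$, the function $g$ is \emph{strictly} convex when $p>2$ and \emph{strictly} concave when $1<p<2$, whereas for $p=2$ one has $g(t)=1+2t$ and every such estimate collapses to an equality. Using this strictness I would choose a suitable point $w$ of the embedded $\ell_p^{S}$–sphere (a perturbation of $e_i$) for which the inside and the ambient norms can agree only when the embedding is aligned with a single coordinate direction; for $S\geqslant 2$ this either forces $\Vert T(w)\Vert>1$, contradicting $T(w)\in\Sp$, or yields a far–out index whose image support has fewer than $S$ elements, contradicting the minimality of $S$. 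Either way $S=1$, and the reduction above finishes the proof.

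\emph{Main obstacle.} The hard part is the last step: turning the hypothesis $S\geqslant 2$ into a genuine \emph{strict} inequality. One must choose the perturbation so that, on the one hand, all auxiliary vectors stay on $\Sp$ and all supports invoked remain inside $\Sc_\alpha$ — where Fact~\ref{max}, Lemma~\ref{f2} and the spreading property of $\Sc_\alpha$ are exactly what guarantee that the enlarged supports are admissible — and so that, on the other hand, the strict convexity or concavity of $g$ is actually activated. This last point is precisely what is unavailable when $p=2$, and is the reason that case is handled separately in Subsection~\ref{sec3}. Once a single far–out coordinate with $f_i=\pm e_j$ is secured, Lemma~\ref{imp} is the natural device to propagate the basis–to–basis behaviour to the neighbouring coordinates.
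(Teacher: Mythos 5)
There is a genuine gap: your ``Producing the contradiction'' step is a plan, not a proof. The decisive object --- the ``suitable point $w$'' whose existence is supposed to force either $\Vert T(w)\Vert>1$ or a drop in support cardinality --- is never constructed, and the claimed dichotomy is never verified; you acknowledge as much in your final paragraph. The strict convexity/concavity of $g(t)=(1+t)^p-t^p$ for $p\neq 2$ is a correct observation, but no inequality in the argument is ever written down in which that strictness is actually invoked, so there is no way to check that this mechanism closes the argument. (Note also that the paper uses the monotonicity of $g$, in Lemma~\ref{imp}, for \emph{all} $p>1$ including $p=2$; so monotonicity-type estimates alone cannot be where $p\neq2$ enters.) In addition, the minimal-cardinality setup is doing no real work: nothing in your sketch uses minimality of $S$ except the unproved alternative ``yields a far-out index whose image support has fewer than $S$ elements.''

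For comparison, the paper's proof extracts the contradiction from a completely different and concrete rigidity statement. Choosing $i$ with $\mathrm{supp}(f_i)>\{k\}\cup\mathrm{supp}(f_2)\cup\mathrm{supp}(f_3)$ (Lemma~\ref{l9}) and assuming two distinct points $r,s\in\mathrm{supp}(f_i)$, Lemma~\ref{l8} gives $i\in\mathrm{supp}(d_r)\cap\mathrm{supp}(d_s)$; Lemma~\ref{f2} and the goodness of $(\Sc_\alpha)_{\alpha<\omega_1}$ yield (Claim~\ref{cl1}) that $\mathrm{supp}(d_r)\cup\mathrm{supp}(d_s)\in\Sc_\alpha$, so the ambient norms of $d_r\pm d_s$ coincide with their $\ell_p$ norms; then the isometry and Lemma~\ref{l6} give
\[
\Vert d_r+d_s\Vert^p_{\ell_p}+\Vert d_r-d_s\Vert^p_{\ell_p}
=\Vert e_r+e_s\Vert^p+\Vert e_r-e_s\Vert^p
=2\bigl(\Vert d_r\Vert^p_{\ell_p}+\Vert d_s\Vert^p_{\ell_p}\bigr),
\]
and for $p\neq2$ equality in this identity forces $d_r$ and $d_s$ to have disjoint supports (\cite[Theorem~8.3]{Car}), contradicting $i$ lying in both. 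Your proposal never addresses the admissibility issue that Claim~\ref{cl1} resolves --- which is precisely where the goodness hypothesis is consumed and without which one cannot replace ambient norms by $\ell_p$ norms --- and it replaces the $\ell_p$ parallelogram rigidity by an unspecified perturbation argument. Your reduction (that $f_i=\pm e_j$ implies $d_j=\pm e_i$ via Lemma~\ref{l6}, so only the first conclusion needs proof) is correct and matches the paper, but the heart of the lemma is missing.
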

		\begin{proof}
			By Lemma~\ref{l9}, there exists $i>\{k\}\cup\mathrm{supp}(f_2)\cup \mathrm{supp}(f_3)$, such that $\mathrm{supp }(f_i)> \{k\}\cup\mathrm{supp}(f_2)\cup \mathrm{supp}(f_3)$. We need to show that there exists $j $ such that $f_i=\pm e_j$. Towards a contradiction, assume that $\mathrm{supp}(f_i)$ contains at least two distinct integers $r$ and $s$. Since $f_i(r)\neq0$ and $f_i(s)\neq0$, it follows from Lemma~\ref{l8} that $d_r(i)\neq 0$ and $d_s(i)\neq 0$ (\textit{i.e.} $i\in \mathrm{supp}(d_r)\cap \mathrm{supp}(d_s)$). Since $r,s > \mathrm{supp}(f_2)\cup \mathrm{supp}(f_3) $ (because $r,s\in \mathrm{supp}(f_i)$), it follows from Lemma~\ref{f2} that $d_r(2)=d_s(2)=d_r(3)=d_s(3)=0$ (and we know by Lemma~\ref{l5} that $d_r(1)=d_s(1)=0$), and that $\{2\}\cup\mathrm{supp}(d_r) \in \Sc_\alpha$ and $\{2\}\cup\mathrm{supp}(d_s) \in \Sc_\alpha$. Now, we need the following claim:
			\begin{claim}\label{cl1}
				We have $\mathrm{supp}(d_r)\cup \mathrm{supp}(d_s)\in \Sc_\alpha $.
			\end{claim}
			\begin{proof}[Proof of Claim~\ref{cl1}]
				
				Assume that $\alpha$ is a successor ordinal ($\alpha=\beta+1$ where $\beta$ is an ordinal). Since $\{2\}\cup\mathrm{supp}(d_r) \in \Sc_\alpha$ and $\{2\}\cup\mathrm{supp}(d_s) \in \Sc_\alpha$, it follows that $\mathrm{supp}(d_r)=F_1\cup F_2$ and $\mathrm{supp}(d_s)=F_3\cup F_4$ where $F_1, F_2, F_3, F_4 \in \Sc_\beta$. Since $\min(\mathrm{supp}(d_r)\cup\mathrm{supp}(d_s))\geqslant 4$, it follows that $F_1\cup F_2 \cup F_3\cup F_4\in \Sc_\alpha$. 
				
				Now, assume that $\alpha$ is a limit ordinal (the approximating sequence $(\alpha_n)$ is such that $\alpha_n=\beta_n+1$ for all $n\in \N$ and $(S_{\beta_n})$ is increasing because  $(\Sc_\alpha)_{\alpha<\omega_1}$ is good).  Since $\{2\}\cup\mathrm{supp}(d_r) \in \Sc_\alpha$ and $\{2\}\cup\mathrm{supp}(d_s) \in \Sc_\alpha$, there exist $k_1,k_2\leqslant2$ such that $\{2\}\cup\mathrm{supp}(d_r) \in\Sc_{\alpha_{k_1}}$ and $\{2\}\cup\mathrm{supp}(d_s) \in \Sc_{\alpha_{k_2}}$. Hence, $\{2\}\cup\mathrm{supp}(d_r)\in  \Sc_{\alpha_{k'}}$ and $\{2\}\cup\mathrm{supp}(d_s)\in  \Sc_{\alpha_{k'}}$ where $k'=\max\{k_1,k_2\}$ since $(\Sc_{\alpha_n})$ is increasing (because $(\Sc_{\beta_n})$ is increasing). So, $\mathrm{supp}(d_r)=F'_1\cup F'_2$ and $\mathrm{supp}(d_s)=F'_3\cup F'_4$ where $F'_1, F'_2, F'_3, F'_4\in \Sc_{\beta_{k'}}$.  Since $\min(\mathrm{supp}(d_r)\cup\mathrm{supp}(d_s))\geqslant 4$, it follows that $F'_1\cup F'_2 \cup F'_3\cup F'_4\in \Sc_{\alpha_{k'}}$ and so $\mathrm{supp}(d_r)\cup\mathrm{supp}(d_s)\in \Sc_{\alpha}$ (because $k'<\mathrm{supp}(d_r)\cup\mathrm{supp}(d_s)$).
			\end{proof}
			Since $\mathrm{supp}(d_r)\cup\mathrm{supp}(d_s)\in \Sc_{\alpha}$, it follows by Lemma~\ref{l6} that 
			\[\begin{aligned}\Vert d_r+d_s\Vert^p_{\ell_p}+\Vert d_r-d_s\Vert^p_{\ell_p}&=\Vert d_r+d_s\Vert^p+\Vert d_r-d_s\Vert^p\\&=\Vert e_r+e_s\Vert^p+\Vert e_r-e_s\Vert^p\\&=2+2\\&=2(\Vert d_r\Vert^p_{\ell_p}+\Vert d_s\Vert^p_{\ell_p}),\end{aligned}\]
			which is a contradiction, since $p\neq 2$ and $\mathrm{supp}(d_r)\cap \mathrm{supp}(d_s)\neq\emptyset$ (see e.g.~\cite[Theorem~8.3]{Car}). So, $\mathrm{supp}(f_i)=\{j\}$ where $j>k$, \textit{i.e.} $f_i=\pm e_j$. This means that $d_j=\pm e_i$ (by Lemma~\ref{l6}).    \end{proof}
		%\begin{lemma}\label{l11}
		% Let $\alpha=1$. For any $k \geqslant 2$, there exist $i,j>k$ such that $f_i=\pm e_j$. (Equivalently, for any $k \geqslant 2$, there exist $i,j>k$ such that $d_j=\pm e_i$.)
		% \end{lemma}
	% \begin{proof}
		%By Lemma~\ref{l9}, there exists $j>\{k\}\cup\mathrm{supp}f_2$ such that $\mathrm{supp }d_j > \{k\}\cup\mathrm{supp}f_2$.  Lemma~\ref{f2} implies that $d_j(2)=0$ and $\{2\}\cup\mathrm{supp}d_j \in \Sc_1$. Hence, $\mathrm{supp}d_j=\{i\}$ for some $i\geqslant 3$ (and $i>k$  because $\mathrm{supp}d_j>k$). Therefore, $d_j=\pm e_i$ which means that $f_i=\pm e_j$ by Lemma ~\ref{l6}.  
		% \end{proof}
	
	% \textcolor{red}{In what follows, we consider $p\neq 2$ and $1\leqslant\alpha<\omega_1$ or $p=2$ and $\alpha=1$.}
	
	\begin{lemma}\label{equality}
		Let $k,k'\in \N\minus \{1\}$ be such that $f_k=\pm e_{k'}$.  Then $k=k'$. 
	\end{lemma}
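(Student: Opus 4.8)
The plan is to show that a diagonal relation can never strictly raise an index, i.e. that $f_k=\pm e_{k'}$ forces $k'\leqslant k$. Since $f_k=\pm e_{k'}$ gives $d_{k'}=\pm e_k$ (by Lemma~\ref{l6}), applying the same statement to the surjective isometry $T^{-1}$, which satisfies $d_{k'}=\pm e_k$, yields $k\leqslant k'$; the two inequalities give $k=k'$. So I may assume $k<k'$ and derive a contradiction.

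Write $\sigma(i)=j$ whenever $f_i=\pm e_j$; by Lemma~\ref{l10} there are infinitely many such ``diagonal'' indices, and by Lemma~\ref{l10} each $\sigma(i)$ may be taken as large as we wish. The heart of the proof is to build a finite set $S$ of diagonal indices with $\min S=k$ (so $k\in S$) such that: (i) $S\notin\Sc_\alpha$ and its largest admissible subsets have size $r:=|S|-1$; (ii) every element of $S$ lies in some $G\subseteq S$ with $G\in\Sc_\alpha$ and $|G|=r$; and (iii) $\sigma(S)\in\Sc_\alpha$. For (i)--(ii) I first produce, using only diagonal indices, a maximal set $M\in\Sc_\alpha^{MAX}$ with $\min M=k$; this is possible because the recursive description of maximal sets (Fact~\ref{maxsucc}) prescribes only block sizes, while Lemma~\ref{l10} supplies diagonal indices in arbitrary quantity at arbitrarily large positions. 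Putting $S:=M\cup\{s^{\ast}\}$ for a diagonal $s^{\ast}>M$, Fact~\ref{max} gives $S\notin\Sc_\alpha$, while $M$ and the spreading $(M\minus\{\max M\})\cup\{s^{\ast}\}$ are admissible subsets of size $r=|M|$ together covering every element of $S$. For (iii) I use $k<k'$: applying Fact~\ref{Fact3} to a far-spread maximal set $\{k\}\cup G'$ of minimum $k$ (with $k<k'<p<G'$) produces an admissible set $H=\{k',p\}\cup G'$ of minimum $k'$ and size $r+1$; choosing the diagonal indices of $S$ so that all their $\sigma$-values exceed $\max H$ makes $\sigma(S)$ a spreading of $H$, hence admissible.

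Granting such an $S$, set $x:=r^{-1/p}\sum_{i\in S}e_i$. By (i)--(ii) we get $\|x\|=1$, and each $i\in S$ lies in a norm-attaining admissible set; running the one-coordinate estimate from the proof of Lemma~\ref{imp} (which needs only a norm-attaining $\Sc_\alpha$-set through $i$ on the source side, together with $f_i=\pm e_{\sigma(i)}$, and in particular does \emph{not} require the image set to be admissible) yields $|T(x)(\sigma(i))|\geqslant r^{-1/p}$ for every $i\in S$. Since $\sigma(S)\in\Sc_\alpha$ by (iii),
\[
\|T(x)\|^p\geqslant\sum_{i\in S}\bigl|T(x)(\sigma(i))\bigr|^p\geqslant\frac{|S|}{r}=\frac{r+1}{r}>1=\|x\|^p,
\]
which contradicts the fact that $T$ is an isometry. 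This rules out $k<k'$, and by the symmetry above also $k>k'$, so $k=k'$.

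The norm computation in the last step is routine; the main obstacle is the combinatorial construction of $S$, namely arranging (iii) together with a maximal set $M$ living inside the sparse set of diagonal indices. One must simultaneously respect the rigid position/size constraints of maximal $\Sc_\alpha$-sets and push every image $\sigma(i)$ far enough to the right that the minimum of the source configuration can be \emph{promoted} from $k$ to $k'$ by spreading. For $\Sc_1$ this is transparent (any $k+1$ diagonal indices with minimum $k$ suffice), but for higher orders it requires careful transfinite bookkeeping based on Fact~\ref{maxsucc} and Fact~\ref{Fact3}.
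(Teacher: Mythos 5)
Your strategy is genuinely different from the paper's, and its skeleton is sound. The paper compares the two distances $\Vert e_k-x\Vert$ and $\Vert f_k-T(x)\Vert$, which forces it to prove $T(x)(k')=0$ and to run separate successor/limit arguments built on Fact~\ref{maxsucc}; you instead aim at a direct norm inflation $\Vert T(x)\Vert>1=\Vert x\Vert$ for a flat vector supported on a ``just barely non-admissible'' set $S$ of diagonal indices. Two of your key observations are correct and worth stating explicitly: (a) the one-coordinate estimate $\vert T(x)(\sigma(i))\vert\geqslant\vert x(i)\vert$ extracted from the proof of Lemma~\ref{imp} really does need only a norm-attaining admissible set through $i$ on the source side, $f_i=\pm e_{\sigma(i)}$ and Lemma~\ref{l6} (image-side admissibility is used there only to upgrade the inequality to an equality); and (b) a maximal set $M\in\Sc_\alpha^{MAX}$ of diagonal indices with $\min M=k$ exists by Fact~\ref{max} plus compactness alone, exactly as in the paper's proof of Lemma~\ref{l12} --- you do not need Fact~\ref{maxsucc} for this, so your route would in fact avoid the successor/limit dichotomy inside this lemma altogether.

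The genuine gap is in your arrangement of (iii). For the size count $\vert H\vert=r+1$ to work out, your ``far-spread maximal set'' must essentially be $M$ itself, i.e. $G'=M\minus\{k\}$ and $H=\{k',p\}\cup(M\minus\{k\})$ (an independently chosen far-spread maximal set cannot be guaranteed to have cardinality $r$, since maximal sets with the same minimum have wildly different sizes). But then the condition ``all $\sigma$-values of elements of $S$ exceed $\max H$'' is circular: $\max H=\max M$ and the cardinality of $M$ are only known \emph{after} the whole sequence $n_1<n_2<\cdots$ has been chosen, because you must keep adding elements until maximality is reached, while each $\sigma(n_i)$ is frozen at the moment $n_i$ is picked; Lemma~\ref{l10} lets you push a new pair beyond any threshold known at that moment, never beyond the future value $\max M$. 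Worse, at this stage nothing excludes that $\sigma(n)<n$ for every large diagonal index $n$ (ruling this out is morally the content of the lemma being proved), and in that scenario your condition is outright impossible to satisfy. The gap is repairable, but by a different device: impose only the interleaved condition $n_{i+1},\sigma(n_{i+1})>\max(n_i,\sigma(n_i))$ (and $n_1,\sigma(n_1)>k'+1$, $s^*,\sigma(s^*)>\max(n_{r_0},\sigma(n_{r_0}))$), which \emph{is} arrangeable; take $H=\{k',k'+1\}\cup(M\minus\{k\})\in\Sc_\alpha$ by Fact~\ref{Fact3} --- this is where $k<k'$ enters --- and note that the increasing bijection $H\to\sigma(S)$ sends $k'\mapsto k'$, $k'+1\mapsto\sigma(n_1)$, $n_i\mapsto\sigma(n_{i+1})$, $n_{r_0}\mapsto\sigma(s^*)$, so the extra element $k'+1$ shifts everything by one slot and the interleaved domination suffices for spreading. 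This shift-by-one bookkeeping is exactly what the paper encodes with its sequences $k_j<n_j,n'_j\leqslant k_{j+1}$; without it, or something equivalent, your step (iii) does not go through.
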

	\begin{proof}
		Towards a contradiction, assume that $k'>k$. Let $k_1:=k'+1$.  By Lemma~\ref{l10}, we can fix  $n_1,n'_1>k_1$ such that $f_{n_1}=\pm e_{n'_1}$. Next, let $k_2:=\max\{n_1,n'_1\}$ and fix $n_2,n'_2>k_2$ such that $f_{n_2}=\pm e_{n'_2}$. Continuing in this way, we obtain three strictly increasing sequences $(k_j)_{j=1}^\infty$, $(n_j)_{j=1}^\infty$ and $(n'_j)_{j=1}^\infty$.  Now, we will distinguish two cases. 
		
		\smallskip  \textbf{Case 1:}  $\alpha$ is a successor ordinal ($\alpha=\beta
		+1)$. Since $k_1>2 $, it follows that $\{k_1\}\in \Sc_\beta^{MAX}$ if $\beta=0$ and that $\{k_1\}\in \Sc_\beta\minus\Sc_\beta^{MAX}$ if $\beta\geqslant 1$. Hence, Fact~\ref{max}, together with the fact that $\Sc_\beta $ is a compact family of finite subsets of $\N$, implies that there exists $r_1\geqslant 1$ such that $E_1:=\{k_j;\; j=1,\dots , r_1\}\in \Sc_\beta^{MAX}$. Let $F_1:=\{n_j;\; j=1,\dots , r_1\}$ and $F'_1:=\{n'_j;\; j=1,\dots , r_1\}$. Note that $F_1,F'_1\in \Sc_\beta$ since $E_1\in \Sc_\beta $ and $\Sc_\beta $ is spreading. In the same way, for every $i\in \{2,\dots, k+1\}$, there exists $r_i\geqslant r_{i-1}+1$  such that  $E_i:=\{k_j;\; j=r_{i-1}+1,\dots , r_i\}\in \Sc_\beta^{MAX}$, and then $F_i:=\{n_j;\; j=r_{i-1}+1,\dots , r_i\}$ and $F'_i:=\{n'_j;\; j=r_{i-1}+1,\dots , r_i\}$ belong to $\Sc_\beta$.\\
		So, we have $E_1<\dots<E_k<E_{k+1}$, $F_1<\dots<F_k<F_{k+1}$,  $F'_1<\dots<F'_k<F'_{k+1}$ and $\vert E_i\vert =\vert F_i\vert= \vert F'_i\vert$ for every $i \in \{1,\dots, k+1\}$.\\
		Now, let $a_1,\dots, a_{r_k}\in \R\minus \{0\}$ be such that $\sum_{i=1}^{r_k}\vert a_i\vert^p=1$. We denote $\sum_{i=1}^{r_k}a_ie_{n_i}$ by $x$. Then $x\in \Sp$ since $\sum_{i=1}^{r_k}\vert a_i\vert^p=1$ and $ \bigcup_{i=1}^k F_i\in \Sc_\alpha$ because $F_1<\dots<F_k\in \Sc_\beta$ and $\min \bigl( \bigcup_{i=1}^k F_i\bigr)=n_1>k $. Since $ \bigcup_{i=1}^k F_i\in \Sc_\alpha$, $\bigcup_{i=1}^k F'_i\in \Sc_\alpha$, and $f_{n_i}=\pm e_{n'_i}$ for all $i\in \{1,\dots, r_k\}$, it follows by Lemma~\ref{imp} that $\vert T(x)(n'_i)\vert=\vert a_i\vert $ for all $i\in \{1,\dots, r_k\}$. \\
		We claim that $T(x)(k')=0$. Indeed, if we assume that  $T(x)(k')\neq0$ and since $\{k'\}\cup \bigcup_{i=1}^k F'_i \in \Sc_\alpha$ (because $F'_1<\dots<F'_k \in \Sc_\beta$ and $k+1\leqslant k'$), then 
		\[\begin{aligned}
			\Vert T(x)\Vert^p &\geqslant \vert T(x)(k')\vert^p+ \sum_{i=1}^{r_k}\vert T(x)(n'_i)\vert^p\\&= \vert T(x)(k')\vert^p+ \sum_{i=1}^{r_k}\vert a_i\vert^p>1,  \end{aligned}\]
		which is a contradiction.\\ Since $f_k=\pm e_{k'}$, $\{k'\}\cup \bigcup_{i=1}^k F'_i \in \Sc_\alpha$ and $T(x)(k')= 0$, it follows that 
		\begin{align}\label{cont1}
			\Vert f_k-T(x) \Vert^p\geqslant 1+\sum_{i=1}^{r_k}\vert T(x)(n'_i)\vert^p= 1+\sum_{i=1}^{r_k}\vert a_i\vert^p=2.
		\end{align}
		Now, let us show that  $\Vert e_k-x\Vert^p<2 $. Fix $F \in \Sc_\alpha$.
		Note that if $k\notin F$ or if there exists $i\in\{1,\dots, r_k\}$ such that $n_i\notin F$, then 
		\begin{align}\label{con2} \sum_{i\in F}\vert e_k(i)-x(i)\vert ^p<2.\end{align}
		Now, we need the following claim. 
		
		\begin{claim}\label{cl2}
			We have $\{k,n_1,\dots,n_{r_k}\}\notin \Sc_\alpha$.
		\end{claim}
		\begin{proof}[Proof of Claim~\ref{cl2}]
			Towards a contradiction, assume that $\{k,n_1,\dots,n_{r_k}\}\in \Sc_\alpha$, and let $G \in \Sc_\alpha^{MAX}$ be such that   $k\in G$ and $n_i\in G$ for every $i \in\{1,\dots, r_k\}$. By Fact~\ref{maxsucc}, we may write $G = \bigcup_{i=1}^m G_i$ where $G_1< \dots< G_m \in \Sc_\beta^{MAX}$ and $\min G_1 =m$.  Since $k\in G$, it follows that $\min G \leqslant k$ and so $m=\min G_1\leqslant k$. If $m=1$ then $G=\{1\}$ because $\{1\}\in \Sc_\alpha^{MAX}$ which is a contradiction. Hence $m\geqslant 2$.  We claim that there exists $j_1\in \{1,\dots , r_1\}$ such that $n_{j_1}\notin G_1$. Indeed, assume towards a contradiction that $n_i\in G_1$ for all $i\in \{1,\dots , r_1\}$. Hence, $\{\min G_1,n_1, \dots ,n_{r_1}\}\subset G_1\in \Sc_\beta$, which implies that $\{\min G_1,n_1, \dots, n_{r_1}\}\in \Sc_\beta$ since $\Sc_\beta $ is hereditary. However, since $\{k_1,\dots, k_{r_1}, k_{r_1+1}\}\notin \Sc_\beta $ (because $\{k_1,\dots, k_{r_1}\}\in \Sc_\beta^{MAX}$) and since $\Sc_\beta$ is spreading, it follows that $\{\min G_1,n_1, \dots, n_{r_1}\}\notin \Sc_\beta$ (because $\min G_1\leqslant k<k_1$ and $n_i\leqslant k_{i+1}$ for all $i\in \N$ by construction) which is the desired contradiction. %This shows that if $m=1$, then there is no $G \in \Sc_\alpha^{MAX}$ such that $k\in G$ and $n_i\in G$ for every $i \in\{1,\dots, r_k\}$. \textcolor{red}{Now,} if $m\geqslant2$ and  
			Since we are assuming that $n_i\in G$ for every $i \in\{1,\dots, r_k\}$, and since we have shown that there exists $j_1\in \{1,\dots , r_1\}$ such that $n_{j_1}\notin G_1$, we must have $\min G_2\leqslant n_{r_1}$ and so $\min G_2\leqslant k_{r_1+1}$. One shows in the same way that for every $i\in \{1, \dots, m\}$, there exists $j_i\in \{r_{i-1}+1,\dots , r_i\}$ such that $n_{j_i}\notin G_i$ and  $\min G_i\leqslant n_{r_{i-1}}$. In particular, $\min G_m\leqslant n_{r_{m-1}}$ and  there exists $j_m\in \{r_{m-1}+1,\dots , r_m\}$ such that $n_{j_m}\notin G_m$. Hence, $n_{j_m}\notin G_i $ for all  $i\in \{1,\dots, m-1\}$ since $G_1<\dots<G_m$, $\min G_m \leqslant n_{r_{m-1}}$ and $n_{r_{m-1}}< n_{j_m}$. This implies that $n_{j_m}\notin G$, which is again a contradiction. 
		\end{proof}
		
		Claim~\ref{cl2} and (\ref{con2}) imply that  $\Vert e_k-x\Vert ^p<2$  which contradicts (\ref{cont1}) since $T$ is an isometry. So, $k'\leqslant k$. 
		
		\smallskip  \textbf{Case 2:} $\alpha$ is a limit ordinal (the approximating sequence $(\alpha_n)$ is such that $\alpha_n=\beta_n+1$ for all $n\in \N$ and $(S_{\beta_n})$ is increasing because  $(\Sc_\alpha)_{\alpha<\omega_1}$ is good). We will proceed as in the successor case but replacing $\beta$ with $\beta_k$. In other words, there exists $r_1> 1$ such that $E_1:= \{k_j;\; j=1,\dots, r_1\}\in \Sc_{\beta_k}^{MAX}$ and hence $F_1:= \{n_j;\; j=1,\dots, r_1\}$ and $F'_1:= \{n'_j;\; j=1,\dots, r_1\}$ belong to $\Sc_{\beta_k}$ since $\Sc_{\beta_k}$ is spreading.   In the same way, for every $i\in \{2,\dots, k+1\}$, there exists $r_i> r_{i-1}+1$  such that  $E_i:=\{k_j;\; j=r_{i-1}+1,\dots , r_i\}\in \Sc_{\beta_k}^{MAX}$, and then $F_i:=\{n_j;\; j=r_{i-1}+1,\dots , r_i\}$ and $F'_i:=\{n'_j;\; j=r_{i-1}+1,\dots , r_i\}$ belong to $\Sc_{\beta_k}$.  Now, let $a_1,\dots, a_{r_k}\in \R\minus \{0\}$ be such that $\sum_{i=1}^{r_k}\vert a_i\vert^p=1$ and  $y:= \sum_{i=1}^{r_k}a_ie_{n_i}$. Since $\bigcup_{i=1}^k F_i\in \Sc_{\alpha_k}$ and $k<\min \bigl( \bigcup_{i=1}^k F_i\bigr)=n_1$, it follows that $\bigcup_{i=1}^k F_i\in \Sc_{\alpha}$ and so $y \in \Sp$. In the same way, we have that $\bigcup_{i=1}^k F'_i\in \Sc_\alpha$.  Since $ \bigcup_{i=1}^k F_i\in \Sc_\alpha$, $\bigcup_{i=1}^k F'_i\in \Sc_\alpha$, and $f_{n_i}=\pm e_{n'_i}$ for all $i\in \{1,\dots, r_k\}$, it follows by Lemma~\ref{imp} that $\vert T(y)(n'_i)\vert=\vert a_i\vert $ for all $i\in \{1,\dots, r_k\}$. Now, it is easy to prove that  $T(y)(k')=0$, as we did for $x$ in the successor case, since $\{k'\}\cup \bigcup_{i=1}^k F'_i \in \Sc_{\alpha}$ (because $\{k'\}\cup \bigcup_{i=1}^k F'_i \in \Sc_{\alpha_k}$ and $k<\min \bigl( \{k'\}\cup \bigcup_{i=1}^k F'_i\bigr)=k'$). %and $k<\min \bigl( \{j\}\cup \bigcup_{i=1}^k F'_i\bigr)=j$, it follows that $\{j\}\cup \bigcup_{i=1}^k F'_i \in \Sc_\alpha$. Hence, if $T(y)(j)=0$, then 
		% \[\begin{aligned}
			% \Vert T(y)\Vert^p &\geqslant \vert T(y)(j)\vert^p+ \sum_{i=1}^{r_k}\vert T(y)(n'_i)\vert^p\\&= \vert T(y)(j)\vert^p+ \sum_{i=1}^{r_k}\vert a_i\vert^p>1,  \end{aligned}\]
		% which is a contradiction. So, $T(y)(j)=0$.\\ 
		Since $f_k=\pm e_{k'}$, $\{k'\}\cup \bigcup_{i=1}^k F'_i \in \Sc_\alpha$ and $T(y)(k')= 0$, it follows that 
		\begin{align}\label{contrad1}
			\Vert f_k-T(y) \Vert^p\geqslant 1+\sum_{i=1}^{r_k}\vert T(y)(n'_i)\vert^p= 1+\sum_{i=1}^{r_k}\vert a_i\vert^p=2.
		\end{align}
		Now, let us show that $\Vert e_k-y\Vert^p<2 $ which will contradict (\ref{contrad1}) since $T$ is an isometry. Fix $F \in \Sc_\alpha$.
		Note that if $k\notin F$ or if there exists $i\in\{1,\dots, r_k\}$ such that $n_i\notin F$, then 
		\begin{align}\label{contrad2} \sum_{i\in F}\vert e_k(i)-x(i)\vert ^p<2.\end{align}
		Now, let us show that $\{k,n_1,\dots,n_{r_k}\}\notin \Sc_\alpha$. Towards a contradiction, assume that there exists $G\in \Sc_\alpha^{MAX}$ such that $k\in G$ and $n_i\in G$ for every $i \in\{1,\dots, r_k\}$. Since $k\in G$, there exists $n \leqslant  \min G \leqslant  k$ such that $G \in \Sc_{\alpha_n}$.  
		Note that $G \in \Sc_{\alpha_n}^{MAX}$. Indeed, if $G \in \Sc_{\alpha_n}\minus \Sc_{\alpha_n}^{MAX}$, then Lemma~\ref{max} implies that  $G \cup \{l\} \in \Sc_{\alpha_n}$, for each $l>G$. Since $ n \leqslant  \min G= \min(G\cup\{l\})$, it follows that $G\cup\{l\} \in \Sc_\alpha$, which contradicts the fact that $G \in \Sc_\alpha^{MAX}$. So, by Fact~\ref{maxsucc}, we may write $G = \bigcup_{i=1}^m G_i$ where $G_1< \dots< G_m \in \Sc_{\beta_n}^{MAX}$ and $\min G_1 =m$. Since $\Sc_{\beta_n}\subset \Sc_{\beta_k}$ (because $n \leqslant  k$ and $(\Sc_{\beta_n})_{n\in \N}$ is increasing), it follows that $G_1,\dots, G_m\in \Sc_{\beta_k}$. So, we continue 
		exactly as the proof of Claim~\ref{cl2} in the successor case but replacing $\beta$ with $\beta_k$. 
		
		Hence,  $\Vert e_k-x\Vert^p<2$,  which is the desired contradiction. So, $k'\leqslant k$.

		\medskip \smallskip   Finally, Lemma~\ref{l6} implies that $d_{k'}=\pm e_k$, so we get in the same way a contradiction if we suppose that $k>k'$. Therefore, $k=k'$. 
	\end{proof}  
	
	\begin{lemma}\label{l12}
		Let $k\in \N\minus \{1\}$. We have $k\leqslant \mathrm{supp}(f_k)$ and $k\leqslant \mathrm{supp}(d_k)$. 
	\end{lemma}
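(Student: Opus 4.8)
The plan is to argue by contradiction, following the scheme of Lemma~\ref{equality}. By symmetry it suffices to prove $k\leqslant\mathrm{supp}(f_k)$; the statement for $d_k$ is obtained identically, by applying the argument to the isometry $T^{-1}$ (whose associated vectors are the $d_k$). So suppose there is $m\in\mathrm{supp}(f_k)$ with $m<k$; since $f_k(1)=0$ by Lemma~\ref{l5}, we have $2\leqslant m<k$. The idea is to manufacture a unit vector $x$ supported far to the right of $\mathrm{supp}(f_k)$ on which $T$ acts diagonally, and then to compare $\Vert e_k-x\Vert$ with $\Vert f_k-T(x)\Vert=\Vert T(e_k)-T(x)\Vert$; these two quantities are equal because $T$ is an isometry, and the gap we force between them will come precisely from the inequality $m<k$.

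First I would gather diagonal anchors. Combining Lemma~\ref{l10} with Lemma~\ref{equality} yields, beyond any threshold, indices $n$ with $f_n=\pm e_n$. Iterating as in the construction of Lemma~\ref{equality}, and using the compactness of $\Sc_\beta$ (in the successor case $\alpha=\beta+1$), I would build maximal blocks $F_1<\dots<F_r$ of anchor indices, all lying above $\{k\}\cup\mathrm{supp}(f_k)$, each $F_i$ being the spread of a set in $\Sc_\beta^{MAX}$, with $r=k-1$. Writing $\{n_1,\dots,n_N\}:=\bigcup_{i=1}^rF_i$, I choose $a_1,\dots,a_N\in\R\minus\{0\}$ with $\sum_j\vert a_j\vert^p=1$ and set $x:=\sum_j a_je_{n_j}$. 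Since $\bigcup_{i=1}^rF_i$ is a union of $r=k-1$ blocks of $\Sc_\beta$ with minimum $>k>k-1$, it lies in $\Sc_\alpha$, so $x\in\Sp$; and since $f_{n_j}=\pm e_{n_j}$ with $\{n_j\}\in\Sc_\alpha$, Lemma~\ref{imp} gives $\vert T(x)(n_j)\vert=\vert a_j\vert$, whence, as $\Vert T(x)\Vert=1$, necessarily $T(x)=\sum_j\pm a_je_{n_j}$. Now $\{k\}\cup\{n_1,\dots,n_N\}$ is a union of exactly $k$ blocks of $\Sc_\beta$ with minimum $k$, hence lies in $\Sc_\alpha$, giving $\Vert e_k-x\Vert^p\geqslant 1+\sum_j\vert a_j\vert^p=2$. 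On the other hand $T(x)$ and $f_k$ have disjoint supports, so $\Vert f_k-T(x)\Vert^p$ is the largest value of $\sum_{n_j\in F}\vert a_j\vert^p+\sum_{i\in\mathrm{supp}(f_k)\cap F}\vert f_k(i)\vert^p$ over $F\in\Sc_\alpha$, a quantity that is strictly less than $\sum_j\vert a_j\vert^p+\sum_i\vert f_k(i)\vert^p=2$ as soon as no admissible $F$ contains all of $\mathrm{supp}(f_k)\cup\{n_1,\dots,n_N\}$. The tuning $r=k-1$ is exactly what makes $\{k\}\cup\{n_j\}$ admissible while leaving $\mathrm{supp}(f_k)\cup\{n_j\}$ inadmissible, and this is the only place where $m<k$ is used.

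The crux, and the main obstacle, is therefore the combinatorial claim that $\mathrm{supp}(f_k)\cup\{n_1,\dots,n_N\}\notin\Sc_\alpha$; granting it, $2\leqslant\Vert e_k-x\Vert^p=\Vert f_k-T(x)\Vert^p<2$ is the desired contradiction. I would prove this claim exactly as Claim~\ref{cl2}, adapted so that $\mathrm{supp}(f_k)$ (whose minimum is $m<k$) plays the role of the singleton $\{k\}$: assuming the contrary, pick a maximal $G\supseteq\mathrm{supp}(f_k)\cup\{n_1,\dots,n_N\}$ and write $G=\bigcup_{i=1}^\mu G_i$ with $G_1<\dots<G_\mu\in\Sc_\beta^{MAX}$ and $\min G_1=\mu$ by Fact~\ref{maxsucc}; then $\mu=\min G_1\leqslant\min\mathrm{supp}(f_k)=m$, so $G$ splits into at most $m$ maximal $\Sc_\beta$-blocks. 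But the spreading relation $n_j\leqslant k_{j+1}$ from the construction, together with the maximality of each $F_i$ (its preimage being maximal in $\Sc_\beta$), forces each of the $r=k-1$ anchor blocks to meet the complement of a distinct $G_i$ and hence to require a distinct $G_i$ beyond the one absorbing $\mathrm{supp}(f_k)$; this yields $\mu\geqslant k>m$, contradicting $\mu\leqslant m$. The limit case is identical after replacing $\beta$ by $\beta_k$ and invoking goodness to ensure that the relevant maximal sets all live in a single $\Sc_{\beta_k}$, exactly as in Case~2 of Lemma~\ref{equality}.
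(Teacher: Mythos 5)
Your overall scheme (diagonal anchors from Lemmas~\ref{l10} and~\ref{equality}, a test vector $x$ carried by the anchors, Lemma~\ref{imp}, and the comparison $\Vert e_k-x\Vert=\Vert f_k-T(x)\Vert$) is the same as the paper's, but there is a genuine gap at the step ``$\vert T(x)(n_j)\vert=\vert a_j\vert$ for all $j$ and $\Vert T(x)\Vert=1$, whence necessarily $T(x)=\sum_j\pm a_je_{n_j}$''. Knowing the anchor coordinates of $T(x)$ and that $\Vert T(x)\Vert =1$ does not pin down its support: to force $T(x)(j')=0$ at some other coordinate $j'$ you need an \emph{admissible} set containing $j'$ together with anchors carrying mass $1$, so that $T(x)(j')\neq 0$ would give $\Vert T(x)\Vert>1$. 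With your calibration ($A:=\{n_1,\dots,n_N\}$ a union of $k-1$ maximal $\Sc_\beta$-blocks lying above $k$), spreading and block-counting produce such sets for every $j'\geqslant k$ (and $j'=1$ is handled by Lemma~\ref{l5}), but \emph{not} for $j'\in\{2,\dots,k-1\}$ --- and this range contains precisely $m=\min\mathrm{supp}(f_k)$ and every other point of $\mathrm{supp}(f_k)$ below $k$, i.e.\ exactly the coordinates your endgame needs $T(x)$ to avoid. The tension is structural, not cosmetic: your final contradiction needs $\mathrm{supp}(f_k)\cup A$ (already $\{m\}\cup A$, e.g.\ when $\alpha=1$) to be inadmissible, while killing a possible mass of $T(x)$ at $m$ would require $m$ to sit inside an admissible set together with anchors of full mass. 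So ``$T(x)$ and $f_k$ have disjoint supports'' cannot be established in your configuration, and without it the bound $\Vert f_k-T(x)\Vert^p<2$ collapses (a vector of the form $ce_m+\sum_j\pm a_je_{n_j}$ with $\{m\}\cup A\notin\Sc_\alpha$ and $c$ small is perfectly compatible with everything you have proved about $T(x)$).

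The paper resolves exactly this tension by calibrating the anchors to $k_0:=\min\mathrm{supp}(f_k)$ rather than to $k$: after excluding $f_k=\pm e_{k_0}$ via Lemma~\ref{equality} (so some $m_0>k_0$ also lies in $\mathrm{supp}(f_k)$), it chooses anchors $k_1<\dots<k_r$ above $\mathrm{supp}(f_k)\cup\{k\}$ with $f_{k_i}=\pm e_{k_i}$ and, crucially, $\{k_0,k_1,\dots,k_r\}\in\Sc_\alpha^{MAX}$ (a single application of Fact~\ref{max}; no successor/limit split, no goodness, no Fact~\ref{maxsucc} at this stage). Spreading $k_0\mapsto j$ then shows $\{j,k_1,\dots,k_r\}\in\Sc_\alpha$ for \emph{every} $j\geqslant k_0$, which kills $T(x)$ on all of $[k_0,\infty)\minus\{k_1,\dots,k_r\}\supseteq\mathrm{supp}(f_k)$ and simultaneously gives $\Vert e_k-x\Vert^p=2$ since $k>k_0$; the contradiction then comes from maximality: a set $G$ with $\sum_{i\in G}\vert f_k(i)-T(x)(i)\vert^p=2$ must contain $\mathrm{supp}(f_k)\supseteq\{k_0,m_0\}$, hence must miss some anchor $k_{i_1}$, and $(G\minus\mathrm{supp}(f_k))\cup\{k_{i_1}\}$ forces $\Vert T(x)\Vert>1$ --- an argument that is even robust to possible unknown mass of $T(x)$ below $k_0$, which your route cannot tolerate. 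Any attempt to repair your claim that $T(x)$ lives on the anchors pushes you to demand that $\{k_0\}\cup A$ be admissible and maximal, i.e.\ back to the paper's configuration.
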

	\begin{proof}
		Let us show that $k\leqslant \mathrm{supp}(f_k)$. Towards a contradiction, assume that $k_0:=\min \mathrm{supp}(f_k)<k$. If $f_k=\pm e_{k_0}$, then $k=k_0$, by Lemma~\ref{equality}, which is a contradiction. Hence there exists $m_0>k_0$ such that $m_0 \in \mathrm{supp}(f_k)$. By Lemmas~\ref{l10} and $\ref{equality}$, one can fix $k_1>\max(\mathrm{supp}(f_k)\cup\{k\})$ such that $f_{k_1}=\pm e_{k_1}$. In the same way, for every $n\in \N\minus \{1\}$, one can fix $k_n>k_{n-1}$ such that $f_{k_n}=\pm e_{k_n}$. Since $k_0>1$ (because $f_k(1)=0$ by Lemma~\ref{l5}), it follows that $\{k_0\}\in \Sc_\alpha\minus \Sc_\alpha^{MAX}$, and so there exists $r\geqslant1$ such that $\{k_0, k_1, \dots, k_r\}\in \Sc_\alpha^{MAX}$ by Fact~\ref{max}. Now let $a_1,\dots,a_r\in \R\minus \{0\}$ be such that $\sum_{i=1}^r\vert a_i \vert ^p=1$ and let $x:= \sum_{i=1}^r a_i e_{k_i}$. Note that $x \in \Sp$ since $\{k_1,\dots, k_r\}\in \Sc_\alpha$. We  know by Lemma~\ref{imp} that $\vert T(x)(k_i)\vert =\vert a_i \vert $, for every $i\in \{1,\dots,r\}$. Note that for any $j\geqslant k_0$ such that $j\notin \{k_1, \dots, k_r\}$, we have $\{j,k_1,\dots,k_r\}\in \Sc_\alpha$ since $\{k_0,k_1,\dots,k_r\}\in \Sc_\alpha$ and $\Sc_\alpha $ is spreading. This implies that for any such $j$ we have $T(x)(j)=0$ since otherwise we get 
		\[\begin{aligned}
			\Vert T(x)\Vert^p&\geqslant\vert T(x)(j)\vert^p+ \sum_{i=1}^r\vert T(x)(k_i) \vert ^p\\&=\vert T(x)(j)\vert^p+\sum_{i=1}^r\vert a_i \vert ^p>1,
		\end{aligned}\]
		which is a contradiction. \\
		Since $\{k, k_1,\dots, k_r\}\in \Sc_\alpha$ (because $\Sc_\alpha$ is spreading and we are assuming that $k>k_0$) and since  $k<k_1<\dots<k_r$ by construction, it follows that $\Vert e_k-x\Vert^p=2$. Hence $\Vert f_k-T(x)\Vert^p=2$. So, let $G \in \Sc_\alpha$ be such that $\sum_{i\in G}\vert f_k(i)-T(x)(i)\vert ^p=2$. \\
		Assume that there exists $i_0\in \mathrm{supp}(f_k)$ such that $i_0\notin G$. Then, since $T(x)(j)=0$ for all $j\in \mathrm{supp}(f_k)$, it follows that  
		\[\begin{aligned}
			\sum_{i\in G}\vert f_k(i)-T(x)(i)\vert ^p&=\sum_{i\in G\cap \mathrm{supp}(f_k)}\vert f_k(i)\vert^p+\sum_{i\in G\minus \mathrm{supp}(f_k)}\vert T(x)(i)\vert^p\\&\leqslant \sum_{i\in  \mathrm{supp}(f_k)}\vert f_k(i)\vert^p-\vert f_k(i_0)\vert^p+\sum_{i\in G\minus \mathrm{supp}(f_k)}\vert T(x)(i)\vert^p <2,
		\end{aligned}\]
		which is a contradiction. Hence, $\mathrm{supp}(f_k)\subset G$ and 
		\[
		\sum_{i\in\mathrm{supp}(f_k)}\vert f_k(i)\vert^p+\sum_{i\in G\minus \mathrm{supp}(f_k)}\vert T(x)(i)\vert^p=2.
		\]
		So, \[\sum_{i\in G\minus \mathrm{supp}(f_k)}\vert T(x)(i)\vert^p=1.\]
		We denote $G\minus \mathrm{supp}(f_k) $ by $F$. Note that $F\in \Sc_\alpha$ since $\Sc_\alpha$ is hereditary and $G\in \Sc_\alpha$. We claim that there exists $i_1\in \{1,\dots, r\}$ such that $k_{i_1}\notin F$. Indeed, assume that $\{k_1,\dots, k_r\}\subset F$. Since $G=\mathrm{supp}(f_k)\cup F\in \Sc_\alpha$, $k_0,m_0\in \mathrm{supp}(f_k)$ and $\Sc_\alpha$ is hereditary, it follows that $\{k_0,m_0,k_1,\dots,k_r\}\in \Sc_\alpha$, which contradicts the fact the $\{k_0,k_1,\dots,k_r\}\in \Sc_\alpha^{MAX}$. This proves our claim. \\
		%Since there exists $i_1\in \{1,\dots, r\}$ such that $k_{i_1}\notin F$ and since $T(x)(j)=0$ for all $j\geqslant j_0$ such that $j\notin \{k_1,\dots,k_r\}$, it follows that $\min F<k_0$ because $\sum_{i\in F}\vert T(x)(i)\vert^p=1$ and $\sum{i\in \{1,\dots,r\}\minus\{i_1\}}\vert T(x)(k_i)\vert^p<1$.
		Since $G=F\cup\mathrm{supp}(f_k)\in \Sc_\alpha$, $k_0\in \mathrm{supp}(f_k)$ (hence $k_0\notin F$) and $\Sc_\alpha$ is hereditary, it follows that $F\cup\{k_0\}\in \Sc_\alpha$. This implies that $F\cup\{k_{i_1}\}\in \Sc_\alpha$ since $\Sc_\alpha$ is spreading. Hence, 
		\[\begin{aligned}\Vert T(x)\Vert^p&\geqslant \sum_{i\in F\cup\{k_{i_1}\}}\vert T(x)(i)\vert^p\\&=\sum_{i\in F}\vert T(x)(i)\vert^p+\vert T(x)(k_{i_1})\vert^p\\&=1+\vert a_{i_1}\vert^p>1,
		\end{aligned} \]
		which is the required contradiction. So, we have shown that $k\leqslant \mathrm{supp}(f_k)$. In the same way, we show that $k \leqslant \mathrm{supp}(d_k)$. 
	\end{proof}
	We conclude this subsection by proving Proposition~\ref{main3}.
	\begin{lemma}\label{principal}
		For any $k\in \N$, there exists a sign $\theta_k$ such that $f_k=\theta_k e_k$ and $d_k=\theta_k e_k$.
	\end{lemma}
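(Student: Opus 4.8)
The plan is to treat $k=1$ separately and then handle $k\geqslant 2$ by combining the two support estimates already at our disposal. For $k=1$, Lemma~\ref{l4} already provides a sign $\theta_1$ with $f_1=\theta_1 e_1$ and $d_1=\theta_1 e_1$, so nothing remains to be done in that case.

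For $k\in\N\minus\{1\}$, the key observation is that Lemmas~\ref{l12} and~\ref{l8} together force $\mathrm{supp}(f_k)$ to collapse to the single index $k$. Indeed, Lemma~\ref{l12} gives $k\leqslant\mathrm{supp}(f_k)$, so $f_k(m)=0$ whenever $m<k$; in particular $f_k(1)=0$. For the indices $m>k$, I would argue through the duality of Lemma~\ref{l8}: since $m>k\geqslant 2$, Lemma~\ref{l12} applied to $d_m$ yields $m\leqslant\mathrm{supp}(d_m)$, hence $d_m(k)=0$; but $f_k(m)\neq 0$ is equivalent to $d_m(k)\neq 0$ by Lemma~\ref{l8}, so $f_k(m)=0$. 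Thus $f_k(m)=0$ for every $m\neq k$, i.e. $\mathrm{supp}(f_k)\subseteq\{k\}$. Since $\Vert f_k\Vert=1$ and $\{k\}\in\Sc_\alpha$, this forces $\mathrm{supp}(f_k)=\{k\}$ and $\vert f_k(k)\vert=1$, so $f_k=\theta_k e_k$ for a sign $\theta_k$.

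It then remains to transfer this to $d_k$ with the same sign. Writing $T(e_k)=\theta_k e_k$ and applying $T^{-1}$, I would invoke the oddness of $T^{-1}$ on basis vectors (Lemma~\ref{l6}): if $\theta_k=1$ then $T^{-1}(e_k)=e_k$, while if $\theta_k=-1$ then $T^{-1}(-e_k)=e_k$ gives $-T^{-1}(e_k)=e_k$, that is $T^{-1}(e_k)=-e_k$. In both cases $d_k=\theta_k e_k$, which completes the proof and, as noted in the excerpt, thereby establishes Proposition~\ref{main3}.

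A word on where the difficulty lies: with Lemmas~\ref{l12} and~\ref{l8} in hand the assembly above is essentially formal, so the genuine obstacle has already been overcome upstream, in the proof of Lemma~\ref{l12} (which itself rests on Lemmas~\ref{l10} and~\ref{equality} and the combinatorial Claim~\ref{cl2}). The only points requiring care here are the bookkeeping of indices when applying the symmetry of Lemma~\ref{l8}---one must check both indices lie in $\N\minus\{1\}$, which is automatic since $m>k\geqslant 2$---and the sign-tracking in the final passage from $f_k$ to $d_k$.
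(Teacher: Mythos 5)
Your proof is correct and follows essentially the same route as the paper's: Lemma~\ref{l12} kills the indices $m<k$, the symmetry of Lemma~\ref{l8} combined with Lemma~\ref{l12} applied to $d_m$ kills the indices $m>k$, and Lemma~\ref{l6} transfers the conclusion to $d_k$. The only cosmetic difference is that the paper phrases the $m>k$ step as a proof by contradiction while you argue it directly; the content is identical.
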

	\begin{proof}
		This is proved for $k=1$ in Lemma~\ref{l4}. Note that $f_1=\theta_1 e_1$ implies $d_1=\theta_1 e_1$ by Lemma~\ref{l2}.\\
		Assume now that $k \geqslant 2$. By Lemma~\ref{l12}, $k\leqslant \mathrm{supp}(f_k)$. Towards a contradiction, assume that there exists $m>k$ such that $f_k(m)\neq 0$. Then, by Lemma~\ref{l8}, we have $d_m(k)\neq 0$ which contradicts the fact that $m\leqslant \mathrm{supp}(d_m)$ (by Lemma~\ref{l12}) since $k<m$. So, $\mathrm{supp}(f_k)=\{k\}$ which implies that there exists a sign $\theta_k$ such that $f_k=\theta_k e_k$, and so $d_k=\theta_k e_k$ by Lemma~\ref{l6}.
	\end{proof}
	\subsection{The case $p=2$}\label{sec3}
	In this subsection, we assume that  $p=2$ and  $1 \leqslant\alpha<\omega_1$ without any additional assumption on the approximating sequence $(\alpha_n)$ for $\alpha$ in the limit case, \textit{i.e.} without assuming that the transfinite Schreier sequence $(\Sc_\alpha)_{\alpha<\omega_1}$ is good. As in the previous subsection, our aim is to prove Proposition~\ref{main3}.
	
	\begin{lemma}\label{Fact1}
		Let $i \in \N\minus \{1\}$ and $x \in \Sp$. If $\Vert e_i+x\Vert >1 $ and $\Vert e_i-x\Vert >1$, then $\Vert e_i+x\Vert ^2+\Vert e_i-x\Vert^2\leqslant 4$. 
	\end{lemma}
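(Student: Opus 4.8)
The plan is to exploit that for $p=2$ the defining supremum is an $\ell_2^2$-sum over an admissible set from $\Sc_\alpha$, so that on any fixed such set a parallelogram identity applies and only the coordinate $i$ survives the cancellation. First I would fix maximizing sets. Since $\Vert e_i+x\Vert>1>0$, the normalized vector $(e_i+x)/\Vert e_i+x\Vert$ lies in $\Sp$, so Fact~\ref{normeatteinte} (applied to this unit vector and rescaled) yields $F_+\in\Sc_\alpha$ with $\Vert e_i+x\Vert^2=\sum_{k\in F_+}\vert e_i(k)+x(k)\vert^2$; in the same way I obtain $F_-\in\Sc_\alpha$ realizing $\Vert e_i-x\Vert^2$.

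Next I would show $i\in F_+$ and $i\in F_-$, and this is the one place where the hypotheses $\Vert e_i\pm x\Vert>1$ are genuinely used. If $i\notin F_+$, then $\sum_{k\in F_+}\vert e_i(k)+x(k)\vert^2=\sum_{k\in F_+}\vert x(k)\vert^2\leqslant\Vert x\Vert^2=1$, contradicting $\Vert e_i+x\Vert^2>1$; the same argument forces $i\in F_-$. Having isolated $i$, I would split each sum off that coordinate, writing $\Vert e_i+x\Vert^2=\vert 1+x(i)\vert^2+\sum_{k\in F_+\minus\{i\}}\vert x(k)\vert^2$ and $\Vert e_i-x\Vert^2=\vert 1-x(i)\vert^2+\sum_{k\in F_-\minus\{i\}}\vert x(k)\vert^2$.

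Finally I would add the two expressions. The coordinate terms combine via $\vert 1+x(i)\vert^2+\vert 1-x(i)\vert^2=2+2x(i)^2$, while each tail is controlled by membership in $\Sc_\alpha$: $\sum_{k\in F_+\minus\{i\}}\vert x(k)\vert^2=\sum_{k\in F_+}\vert x(k)\vert^2-x(i)^2\leqslant 1-x(i)^2$, and likewise for $F_-$. Summing gives $\Vert e_i+x\Vert^2+\Vert e_i-x\Vert^2\leqslant 2+2x(i)^2+2(1-x(i)^2)=4$, as desired. I expect the only delicate step to be the attainment-and-membership step of the first two paragraphs; once it is in place, the conclusion is essentially a one-line parallelogram computation whose slack $2x(i)^2$ is cancelled exactly by the two $\Sc_\alpha$-tail bounds.
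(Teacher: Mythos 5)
Your proof is correct and follows essentially the same route as the paper: both choose norm-attaining sets $F_+,F_-\in\Sc_\alpha$ (the paper picks them directly, you justify the attainment by normalizing and invoking Fact~\ref{normeatteinte}, which is a valid and slightly more careful justification of the same step), both use the hypotheses $\Vert e_i\pm x\Vert>1$ only to force $i$ into each set, and both conclude by the same split-off-coordinate-$i$ computation with $\Vert x\Vert=1$ bounding the tails. There is nothing to fix.
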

	\begin{proof}
		Assume that  $\Vert e_i+x\Vert >1 $ and $\Vert e_i-x\Vert >1$. Choose $F,G \in \Sc_\alpha$ such that 
		\[\Vert e_i+x\Vert^2=\sum_{k\in F}\vert e_i(k)+x(k)\vert^2 \]
		and 
		\[\Vert e_i-x\Vert^2=\sum_{k\in G}\vert e_i(k)-x(k)\vert^2. \]
		If $i \notin F $ then $\Vert e_i+x\Vert \leqslant 1$ which is a contradiction. So, $i \in F$ and similarly $i \in G$. Hence, 
		\[\begin{aligned}
			\Vert e_i+x\Vert^2+ \Vert e_i-x\Vert ^2&= \sum_{k\in F\minus \{i\}}\vert x(k)\vert^2+ \vert 1+x(i)\vert ^2+ \sum_{k\in G\minus \{i\}}\vert x(k)\vert^2+ \vert 1-x(i)\vert ^2\\&=\sum_{k\in F\minus \{i\}}\vert x(k)\vert^2+\sum_{k\in G\minus \{i\}}\vert x(k)\vert^2+ 2+2\vert x(i)\vert ^2\\&=\sum_{k\in F}\vert x(k)\vert ^2+\sum_{k\in G}\vert x(k)\vert ^2+2\leqslant4,
		\end{aligned}\]
		since $\sum_{k\in F}\vert x(k)\vert ^2\leqslant1$ and $\sum_{k\in G}\vert x(k)\vert ^2\leqslant1$ because $\Vert x \Vert =1$.
	\end{proof}
	\begin{lemma}\label{Fact2}
		Let $i \in \N\minus \{1\}$. Then, 
		\begin{align}\label{C1}f_i=a e_{j_1}+b e_{j_2} \qquad{\text{ where}} \quad j_1\neq j_2 \in\N\minus\{1\} \quad{\text{and}} \quad\vert a\vert=\vert b\vert=\sqrt2/2,\end{align}
		or
		\begin{align}\label{C2}f_i=\pm e_{j} \qquad{\text{where }} \quad j\in\N\minus\{1\}.\end{align}
	\end{lemma}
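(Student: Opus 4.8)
The plan is to transfer the parallelogram-type estimate of Lemma~\ref{Fact1} from $e_i$ to $f_i=T(e_i)$, and then read off the shape of $f_i$ from it. First I would record the transfer. Since $T$ is a surjective isometry and $T(-e_i)=-f_i$ by Lemma~\ref{l6}, for every $y\in\Sp$, writing $x=T^{-1}(y)$, one has $\Vert f_i-y\Vert=\Vert T(e_i)-T(x)\Vert=\Vert e_i-x\Vert$ and $\Vert f_i+y\Vert=\Vert -f_i-y\Vert=\Vert T(-e_i)-T(x)\Vert=\Vert e_i+x\Vert$. Hence the pair $(\Vert f_i+y\Vert,\Vert f_i-y\Vert)$ equals $(\Vert e_i+x\Vert,\Vert e_i-x\Vert)$, and as $y$ runs over $\Sp$ so does $x$. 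In particular Lemma~\ref{Fact1} passes to $f_i$: for every $y\in\Sp$, if $\Vert f_i+y\Vert>1$ and $\Vert f_i-y\Vert>1$, then $\Vert f_i+y\Vert^2+\Vert f_i-y\Vert^2\leqslant4$. Call this property $(\star)$.

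Next set $F:=\mathrm{supp}(f_i)$. By Lemma~\ref{l5} we have $1\notin F$, and by Lemma~\ref{l7} we have $F\in\Sc_\alpha\minus\Sc_\alpha^{MAX}$; write $f_i=\sum_{j\in F}c_je_j$ with $\sum_{j\in F}c_j^2=1$ (the norm being attained on $F$ since $F\in\Sc_\alpha$ contains the whole support). The goal is to show that, under $(\star)$, either $\vert F\vert=1$ (giving \eqref{C2}) or $\vert F\vert=2$ with $\vert c_{j_1}\vert=\vert c_{j_2}\vert=\sqrt2/2$ (giving \eqref{C1}). I would argue by contradiction, assuming $f_i$ is of neither form, which leaves two cases: $\vert F\vert\geqslant3$, or $\vert F\vert=2$ with $c_{j_1}^2\neq c_{j_2}^2$.

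In each case I would manufacture a test vector $y\in\Sp$ violating $(\star)$. The idea is to leave $f_i$ essentially unchanged on $F$ while placing additional mass of $y$ on fresh coordinates beyond $F$, chosen so that $F\cup\mathrm{supp}(y)\notin\Sc_\alpha$; this is available because $F\in\Sc_\alpha\minus\Sc_\alpha^{MAX}$, so by Fact~\ref{max} and the spreading of $\Sc_\alpha$ one can adjoin new indices in admissible position. Once the combined support leaves $\Sc_\alpha$, the $\Sc_\alpha$-sets $F^+,F^-$ attaining $\Vert f_i+y\Vert$ and $\Vert f_i-y\Vert$ are forced to differ over $F$, so in the expansion $\Vert f_i+y\Vert^2+\Vert f_i-y\Vert^2=\sum_{k\in F^+}(f_i+y)(k)^2+\sum_{k\in F^-}(f_i-y)(k)^2$ the cross term no longer cancels; exploiting the imbalance of the $c_j$ (when $\vert F\vert=2$) or the surplus coordinate (when $\vert F\vert\geqslant3$) one aims for a strictly positive surviving contribution, hence $\Vert f_i+y\Vert^2+\Vert f_i-y\Vert^2>4$.

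The delicate point — and the step I expect to be the main obstacle — is that the two requirements on $y$ pull against each other: the perturbations that break the cancellation and push the sum above $4$ tend to drain mass away from the dominant coordinate of $f_i$, which threatens to drop $\min\{\Vert f_i+y\Vert,\Vert f_i-y\Vert\}$ to $1$ or below and thereby void the hypothesis of $(\star)$. The construction must therefore be calibrated quantitatively: the amount of displaced mass and the number of new coordinates have to be tuned against exactly how far $F$ is from maximal, i.e. how many indices $\Sc_\alpha$ permits us to append, so that both $\Vert f_i\pm y\Vert$ stay \emph{strictly} above $1$ while the sum of squares strictly exceeds $4$. Carrying out this calibration, separately in the two cases and uniformly in $\alpha$ (distinguishing successor and limit ordinals as in the earlier lemmas), is where the real work lies; should $(\star)$ alone prove too weak to separate the balanced two-point vectors from the unbalanced or higher-support ones, I would fall back on the full pair-correspondence $(\Vert f_i+y\Vert,\Vert f_i-y\Vert)=(\Vert e_i+x\Vert,\Vert e_i-x\Vert)$ recorded above, using the geometry of the whole attained set rather than the single inequality. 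The contradiction then yields the dichotomy \eqref{C1}–\eqref{C2}.
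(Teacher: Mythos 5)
Your reduction is sound and coincides exactly with the paper's: you transfer Lemma~\ref{Fact1} to $f_i$ via Lemma~\ref{l6} and surjectivity of $T$, you note $1\notin\mathrm{supp}(f_i)$ and $\mathrm{supp}(f_i)\in\Sc_\alpha\minus\Sc_\alpha^{MAX}$ by Lemmas~\ref{l5} and~\ref{l7}, you set up the contradiction correctly, and you correctly identify Fact~\ref{max} and spreading as the tools for adjoining fresh coordinates. But the proof stops there. You never exhibit the test vector $y$, never verify $\Vert y\Vert=1$, and never prove the two strict inequalities $\Vert f_i\pm y\Vert>1$ together with $\Vert f_i+y\Vert^2+\Vert f_i-y\Vert^2>4$; you state yourself that this calibration is ``where the real work lies'' and you hedge with a fallback plan in case the method fails. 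Since the entire content of the lemma is precisely that calibration, what you have is a correct reduction plus a statement of intent, and that is a genuine gap.

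For comparison, here is how the paper closes it, and note that it dissolves exactly the tension you flagged. First the two cases collapse into one: if $f_i=\sum_{k=1}^n v_ke_{i_k}$ satisfies neither (\ref{C1}) nor (\ref{C2}), then $n\geqslant 2$ and some $\vert v_l\vert<\sqrt{2}/2$. The vector $y$ is then anchored at the \emph{small} coordinate, not perturbed away from the dominant one: $y$ agrees with $f_i$ exactly at $i_l$, carries tiny entries $\varepsilon_k\varepsilon$ of matching sign on the rest of the support, puts mass $\gamma$ on fresh coordinates $i_n+1,\dots,i_n+m$ chosen so that $F:=\mathrm{supp}(f_i)\cup\{i_n+1,\dots,i_n+m\}\in\Sc_\alpha^{MAX}$, and one last entry $\delta=\min\{\vert v_l\vert,\varepsilon,\gamma\}$ at $i_n+m+1$; the normalization $v_l^2+(n-1)\varepsilon^2+m\gamma^2=1$ together with maximality of $F$ forces $\Vert y\Vert=1$. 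On $F$ all signs agree, so $\Vert f_i+y\Vert^2\geqslant 2+2v_l^2+2\varepsilon\sum_{k\neq l}\vert v_k\vert>2$. On the other side $f_i-y$ vanishes at $i_l$, so by spreading the set $(F\minus\{i_l\})\cup\{i_n+m+1\}$ is admissible and picks up the extra $\delta^2$, giving $\Vert f_i-y\Vert^2\geqslant 1-v_l^2+(n-1)\varepsilon^2-2\varepsilon\sum_{k\neq l}\vert v_k\vert+m\gamma^2+\delta^2$, which stays above $1$ precisely because $\varepsilon$ is chosen with $2\varepsilon\sum_{k\neq l}\vert v_k\vert<1-2v_l^2$ (the paper's condition (\ref{C3}); this is the only place where $\vert v_l\vert<\sqrt{2}/2$ is used). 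Summing the two bounds, the sign-dependent terms cancel and one gets $4+\delta^2>4$, contradicting the transferred Lemma~\ref{Fact1}. Note also that, contrary to what you anticipate, no successor/limit distinction on $\alpha$ is needed anywhere: Fact~\ref{max} and spreading suffice, which is exactly why the $p=2$ case requires no goodness assumption on $(\Sc_\alpha)_{\alpha<\omega_1}$.
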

	\begin{proof}
		We have $f_i= \sum_{k=1}^n v_k e_{i_k}$ where $v_k\neq0$ for all $k\in \{1,\dots,n\}$, $i_1<\dots< i_n$ and $\{i_1,\dots, i_n\}\in \Sc_\alpha\minus \Sc_\alpha^{MAX}$ by Lemma~\ref{l7}. Towards a contradiction, assume that $f_i$ does not satisfy (\ref{C1}) or (\ref{C2}). Then, $n\geqslant2$ and there exists $l\in \{1,\dots ,n\}$ such that $\vert v_{l}\vert <\sqrt2/2$ since $\sum_{k=1}^n v_k^2=1$. Let us fix such an $l$ and $\varepsilon>0$ such that 
		\begin{align}\label{C3}\varepsilon<\frac{1-2v_{l}^2}{2\sum\limits_{k=1 \atop k\neq l}^n\vert v_k\vert }\end{align}
		and 
		\begin{align}\label{C4}
			\varepsilon^2<\frac{1-v_l^2}{n-1}\cdot
		\end{align}
		Moreover, we fix $m\geqslant 1$ such that $F:=\{i_1,\dots, i_n,i_n+1, \dots, i_n+m\}\in \Sc_\alpha^{MAX}$ (this is possible by Fact~\ref{max}), and let $\varepsilon_k$ be the sign of $v_k$ for all $k\in \{1, \dots, n\}$. Now, let us define $y\in X_{\Sc_\alpha,2}$ as follows: 
		\[y=v_le_{i_l}+\sum\limits_{k=1\atop k\neq l}^n \varepsilon_k \varepsilon e_{i_k}+\sum\limits_{k=1}^m \gamma e_{i_n+k}+\delta e_{i_n+m+1},\]
		where $\gamma >0 $ is such that 
		\begin{align}\label{C5}v_l^2+(n-1)\varepsilon^2+m\gamma^2=1,\end{align}
		and $\delta =\min\{\vert v_l\vert ,\varepsilon,\gamma\}$. Note that one can indeed find  such a $\gamma>0$ since $v_l^2+(n-1)\varepsilon^2<1$ by (\ref{C4}). Since $F \in \Sc_\alpha^{MAX}$ and $\delta =\min\{\vert v_l\vert ,\varepsilon,\gamma\}$, it is easy to check that $\Vert y \Vert=1$ by~(\ref{C5}).
		
		Now, we will show that $\Vert f_i+y\Vert>1$, $\Vert f_i-y\Vert>1$ and $\Vert f_i+y\Vert^2+\Vert f_i-y\Vert^2>4$. This will contradict Lemma~\ref{Fact1}, by Lemma~\ref{l6} and since $T$ is a surjective isometry.
		
		Since $F \in \Sc_\alpha$,  we have 
		\begin{align}\label{R1}\Vert f_i+y\Vert ^2&\geqslant \sum_{k\in F}\vert f_i(k)+y(k)\vert^2 \nonumber\\&=(v_l+v_l)^2+\sum\limits_{k=1\atop k\neq l}^n (v_k+\varepsilon_k\varepsilon)^2+m\gamma^2\nonumber\\&=\sum\limits_{k=1}^n v_k^2 + v_l^2+(n-1)\varepsilon^2+m\gamma^2+ 2v_l^2+2\varepsilon\sum\limits_{k=1\atop k\neq l}^n\vert v_k\vert\nonumber\\&=1+1+2v_l^2+2\varepsilon\sum\limits_{k=1\atop k\neq l}^n\vert v_k\vert>2,
		\end{align}
		and since  $(F\minus \{i_l\})\cup\{i_n+m+1\}\in \Sc_\alpha$, we have 
		\begin{align}\label{R2}
			\Vert f_i-y\Vert^2&= \sum\limits_{k=1\atop k\neq l}^n (v_k+\varepsilon_k\varepsilon)^2+m\gamma^2+\delta^2\nonumber\\&= \sum\limits_{k=1\atop k\neq l}^n v_k^2+(n-1)\varepsilon^2-2\varepsilon\sum\limits_{k=1\atop k\neq l}^n\vert v_k\vert+m\gamma^2+\delta^2\nonumber\\&=1-v_l^2+(n-1)\varepsilon^2-2\varepsilon\sum\limits_{k=1\atop k\neq l}^n\vert v_k\vert+m\gamma^2+\delta^2\\&>v_l^2+(n-1)\varepsilon^2+m\gamma^2+\delta^2=1+\delta^2>1,\nonumber
		\end{align}
		because $1-2\varepsilon\sum\limits_{k=1\atop k\neq l}^n\vert v_k\vert>2v_l^2$ by (\ref{C3}). 
		
		Therefore, by (\ref{R1}) and (\ref{R2}), we have 
		\[
		\Vert f_i+y\Vert^2+\Vert f_i-y\Vert^2\geqslant 3+v_l^2+(n-1)\varepsilon^2+m\gamma^2+\delta^2=4+\delta^2>4,
		\]
		which is the required contradiction.
	\end{proof}
	
	\begin{lemma}\label{Fact4}
		Let $i>j\in \N\minus\{1\}$. There exists $x\in \Sp$ such that $\Vert x+e_i\Vert^2=\Vert x-e_i\Vert^2=2 $, $\Vert x+e_j\Vert^2<2$ and $\Vert x-e_j\Vert^2<2$.
	\end{lemma}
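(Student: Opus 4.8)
The plan is to construct $x$ explicitly as a flat unit vector supported on a finite set $E$ lying strictly above $i$, chosen so that $i$ can be adjoined to $E$ within $\Sc_\alpha$ while $j$ cannot. Precisely, I would first isolate the combinatorial heart of the statement: find a finite set $E\subset\N$ with $\min E=i+2$ (so that $1,i,j\notin E$) satisfying
\[ E\cup\{i\}\in\Sc_\alpha \qquad\text{and}\qquad E\cup\{j\}\notin\Sc_\alpha. \]
Granting this, I set $x:=\tfrac{1}{\sqrt{|E|}}\sum_{k\in E}e_k$. Since $E\subset E\cup\{i\}\in\Sc_\alpha$ we have $E\in\Sc_\alpha$, and $\sum_{k\in F}|x(k)|^2\leqslant\sum_{k\in E}|x(k)|^2=1$ for every $F\in\Sc_\alpha$, with equality at $F=E$; hence $x\in\Sp$ and $x(i)=x(j)=0$.

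For the combinatorial claim, I would build $E$ by a maximality argument. Because $2\leqslant i<i+2$ we have $\{i,i+2\}\in\Sc_\alpha$, and using Fact~\ref{max} I can repeatedly adjoin elements larger than the current maximum; since $\Sc_\alpha$ is compact in $\{0,1\}^\N$ it contains no infinite set, so this process stops at some $\{i\}\cup E\in\Sc_\alpha^{MAX}$ with $\min E=i+2$. Now suppose towards a contradiction that $E\cup\{j\}\in\Sc_\alpha$. Applying Fact~\ref{Fact3} with $m=j$, $n=i$, $p=i+1$ and $F=E$ — legitimate since $j<i<i+1<\min E$ — yields $\{i,i+1\}\cup E\in\Sc_\alpha$. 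As $i+1\notin\{i\}\cup E$, this contradicts the maximality of $\{i\}\cup E$. Therefore $E\cup\{j\}\notin\Sc_\alpha$, which is exactly what I need.

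It then remains to check the four norm conditions, which are routine. For $i$: taking $F=E\cup\{i\}\in\Sc_\alpha$ and using $x(i)=0$ gives $\sum_{k\in F}|x(k)\pm e_i(k)|^2=1+\sum_{k\in E}|x(k)|^2=2$, and since $\|x\pm e_i\|\leqslant\|x\|+\|e_i\|=2$ we get $\|x\pm e_i\|^2=2$. For $j$: fix $F\in\Sc_\alpha$. If $j\notin F$ then $\sum_{k\in F}|x(k)\pm e_j(k)|^2=\sum_{k\in F}|x(k)|^2\leqslant1$. If $j\in F$ then, because $E\cup\{j\}\notin\Sc_\alpha$ and $\Sc_\alpha$ is hereditary, $E\not\subset F$, so $\sum_{k\in F\cap E}|x(k)|^2\leqslant1-\tfrac1{|E|}$ and hence
\[ \sum_{k\in F}|x(k)\pm e_j(k)|^2=1+\sum_{k\in F\cap E}|x(k)|^2\leqslant 2-\tfrac1{|E|}. \]
Taking the supremum over $F\in\Sc_\alpha$ gives $\|x\pm e_j\|^2\leqslant 2-\tfrac1{|E|}<2$.

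The only genuine obstacle is the combinatorial claim; everything else is a direct computation. The subtlety is that decreasing the minimum of a set from $i$ to $j$ is an ``anti-spreading'' move, so it need not preserve membership in $\Sc_\alpha$ — but turning this heuristic into an explicit witness $E$ requires the right device. Making $\{i\}\cup E$ maximal and then using Fact~\ref{Fact3} to split the small index $j$ into the pair $i,i+1$, thereby contradicting maximality, is precisely what does the job, and it is here that the hypotheses $i>j\geqslant 2$ together with the structural Facts~\ref{max} and~\ref{Fact3} are used.
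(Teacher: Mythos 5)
Your construction is essentially the paper's own proof: the paper likewise uses Fact~\ref{max} (plus compactness) to produce a maximal set $\{i,i+2,\dots,i+m\}\in \Sc_\alpha^{MAX}$ omitting $i+1$, rules out $\{j,i+2,\dots,i+m\}\in\Sc_\alpha$ by exactly your application of Fact~\ref{Fact3} with $(m,n,p)=(j,i,i+1)$ contradicting maximality, and then takes $x$ supported on $\{i+2,\dots,i+m\}$ with nonzero coordinates of $\ell_2$-sum one (your flat coefficients are a special case; the paper leaves the norm verifications as ``easy to check'').

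One slip in your verification, though: from $\sum_{k\in E\cup\{i\}}|x(k)\pm e_i(k)|^2=2$ and the triangle inequality $\Vert x\pm e_i\Vert\leqslant \Vert x\Vert+\Vert e_i\Vert=2$ you conclude $\Vert x\pm e_i\Vert^2=2$; but the triangle inequality only gives $\Vert x\pm e_i\Vert^2\leqslant 4$, which together with the lower bound yields $2\leqslant \Vert x\pm e_i\Vert^2\leqslant 4$, not equality. (This pattern does work elsewhere in the paper, e.g.\ in Lemma~\ref{l7}, because there the lower bound on the $p$-th power is $2^p$, matching the triangle bound; here your lower bound on the square is $2$, not $4$.) The correct upper bound is the same one-line computation you already use for $j$: for any $G\in\Sc_\alpha$, since $x(i)=0$ and $G\minus\{i\}\in\Sc_\alpha$ by heredity, $\sum_{k\in G}|x(k)\pm e_i(k)|^2\leqslant 1+\sum_{k\in G\minus\{i\}}|x(k)|^2\leqslant 1+\Vert x\Vert^2=2$. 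With that replacement your proof is complete and coincides with the paper's.
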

	\begin{proof}
		By Fact~\ref{max}, there exists $m\geqslant 2$ such that $F:=\{i,i+2,\dots, i+m\}\in \Sc_\alpha^{MAX}$. Then $\{j, i+2,\dots, i+m\}\notin \Sc_\alpha$, since otherwise, we get by Fact~\ref{Fact3} that $\{i,i+1,i+2,\dots, i+m\}\in \Sc_\alpha$ which contradicts the fact that $F\in \Sc_\alpha^{MAX}$. Now, we take 
		\[x:=\sum_{k=2}^m a_ke_{i+k} \qquad{\rm where}\qquad\sum_{k=2}^m\vert a_k\vert^2=1 \quad{\rm and}\quad a_k\neq0 \quad{\text{for all}} \quad k\in \{2,\dots, m\}.\]
		It is easy to check that $x$ has the required properties.
	\end{proof}
	\begin{lemma}\label{Fact5}
		For any  $k\in \N\minus \{1\}$, there exists $j\in \N\minus \{1\}$ such that $d_k=\pm e_j$. 
	\end{lemma}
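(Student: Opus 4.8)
The plan is to exclude the two–coordinate branch of a dichotomy. The dichotomy itself comes for free: applying Lemma~\ref{Fact2} to the surjective isometry $T^{-1}$ in place of $T$ — legitimate because every general fact of Subsection~\ref{subzero}, and in particular Lemmas~\ref{l6} and~\ref{l7}, holds verbatim with $d_k=T^{-1}(e_k)$ playing the role of $f_k$ — gives that either $d_k=\pm e_j$, in which case we are done, or
\[ d_k=a\,e_{j_1}+b\,e_{j_2},\qquad 2\leqslant j_1<j_2,\quad |a|=|b|=\tfrac{\sqrt2}{2}. \]
I would assume this second alternative and seek a contradiction.

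First I would extract structure on the $f$–side. By Lemma~\ref{l8}, the conditions $d_k(j_1)\neq0$ and $d_k(j_2)\neq0$ force $k\in\mathrm{supp}(f_{j_1})\cap\mathrm{supp}(f_{j_2})$. If $f_{j_1}$ were a single signed vector it would have to be $\pm e_k$, whence $d_k=T^{-1}(\pm e_k)=\pm e_{j_1}$ by Lemma~\ref{l6}, contradicting that $d_k$ has two coordinates; so by Lemma~\ref{Fact2} both $f_{j_1}$ and $f_{j_2}$ are balanced two–term vectors, each containing $e_k$. This is precisely the configuration that survives Lemma~\ref{Fact2} and that must now be killed using the extra leverage of Lemma~\ref{Fact4}.

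Next I would feed $j_1<j_2$ into Lemma~\ref{Fact4} (with $i=j_2$, $j=j_1$) to obtain a test vector $x\in\Sp$ supported strictly above $j_2$ — hence with $\mathrm{supp}(x)$ disjoint from $\{j_1,j_2\}$ — such that $\Vert x\pm e_{j_2}\Vert^2=2$ while $\Vert x\pm e_{j_1}\Vert^2<2$; combinatorially, $\{j_2\}\cup\mathrm{supp}(x)\in\Sc_\alpha$ but $\{j_1\}\cup\mathrm{supp}(x)\notin\Sc_\alpha$. Because $T^{-1}(\pm e_k)=\pm d_k$ by Lemma~\ref{l6}, the isometry transports norms cleanly, namely $\Vert T(x)\pm e_k\Vert=\Vert x\pm d_k\Vert$, with no need to know the action of $T$ on $-x$. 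The idea is that $x$ resolves the two indices \emph{asymmetrically}, whereas $d_k$ weights them \emph{equally} ($|a|=|b|$); a careful maximization of $\sum_{i\in F}|x(i)\pm d_k(i)|^2$ over $F\in\Sc_\alpha$ — in which the admissible overlap of $\mathrm{supp}(x)$ with $\{j_1,j_2\}$ is pinned down by the two combinatorial facts just displayed and by the spreading/hereditary properties of $\Sc_\alpha$ — should, after choosing the free coefficients of $x$ appropriately, produce a pair of values for $\Vert T(x)\pm e_k\Vert$ that is incompatible with the norm identities and with Lemma~\ref{Fact1} applied at the single basis vector $e_k$.

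I expect this last combinatorial maximization to be the main obstacle: converting the qualitative $j_1$/$j_2$ asymmetry of Lemma~\ref{Fact4} into a sharp numerical contradiction requires controlling exactly how large a piece of $\mathrm{supp}(x)$ may be adjoined to $\{j_1,j_2\}$ within $\Sc_\alpha$, and — exactly as in the proof of Lemma~\ref{equality} — this has to be carried out separately in the successor case $\alpha=\beta+1$ and in the limit case, using Facts~\ref{max} and~\ref{maxsucc} to describe $\Sc_\alpha^{MAX}$. Once the two-coordinate alternative is ruled out in every case, the dichotomy leaves only $d_k=\pm e_j$, which is the assertion.
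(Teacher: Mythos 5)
Your setup is sound: applying Lemma~\ref{Fact2} to $T^{-1}$ is legitimate (its proof only uses Lemmas~\ref{l6},~\ref{l7}, Fact~\ref{max} and Lemma~\ref{Fact1}, all of which are symmetric in $T$ and $T^{-1}$), and Lemma~\ref{l8} does place $k$ in $\mathrm{supp}(f_{j_1})\cap\mathrm{supp}(f_{j_2})$. The problem is that the step which is supposed to produce the contradiction cannot work. Take the vector $x$ given by Lemma~\ref{Fact4} for the pair $(i,j)=(j_2,j_1)$; as you note, $\mathrm{supp}(x)$ is disjoint from $\{j_1,j_2\}=\mathrm{supp}(d_k)$, with $\{j_2\}\cup\mathrm{supp}(x)\in\Sc_\alpha$ and $\{j_1\}\cup\mathrm{supp}(x)\notin\Sc_\alpha$. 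Because the supports are disjoint, for every $F\in\Sc_\alpha$ the sum $\sum_{i\in F}\vert x(i)\pm d_k(i)\vert^2$ does not depend on the sign, so $\Vert x+d_k\Vert=\Vert x-d_k\Vert$. Moreover, since $\{j_1\}\cup\mathrm{supp}(x)\notin\Sc_\alpha$, heredity shows that no $F\in\Sc_\alpha$ can contain $\{j_1,j_2\}$ together with all of $\mathrm{supp}(x)$, and one then checks $\Vert x\pm d_k\Vert^2\leqslant\max\bigl\{3/2,\,2-\min_{i\in\mathrm{supp}(x)}\vert x(i)\vert^2\bigr\}<2$, whatever the free coefficients of $x$ are. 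Consequently $\Vert T(x)+e_k\Vert^2+\Vert T(x)-e_k\Vert^2=2\Vert x+d_k\Vert^2<4$, which is exactly what Lemma~\ref{Fact1} at $e_k$ permits: there is no incompatibility at all. The $j_1$/$j_2$ asymmetry supplied by Lemma~\ref{Fact4} is precisely what gets erased when you bundle the two indices into the single vector $d_k$ and test it against an $x$ supported away from both, so no choice of coefficients can rescue this mechanism.

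The paper extracts the contradiction on the $f$-side instead, and for that it needs an intermediate step your sketch omits. It first proves $\mathrm{supp}(f_{j_1})\cup\mathrm{supp}(f_{j_2})\in\Sc_\alpha$ (immediate for $\alpha\geqslant2$; for $\alpha=1$ a short argument using the injectivity of $T$), so that $\Vert f_{j_1}\pm f_{j_2}\Vert$ coincides with the $\ell_2$-norm; the transported identities $\Vert f_{j_1}\pm f_{j_2}\Vert^2=\Vert e_{j_1}\pm e_{j_2}\Vert^2=2$ then force $\langle f_{j_1},f_{j_2}\rangle_{\ell_2}=0$, which together with Lemma~\ref{Fact2} pins down $\mathrm{supp}(f_{j_1})=\mathrm{supp}(f_{j_2})=\{k,l\}$ with opposite relative signs. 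Your configuration with supports $\{k,l_1\}\neq\{k,l_2\}$ is never excluded in your proposal, yet it must be, and this is how. Only with this shared-support form in hand does Lemma~\ref{Fact4} bite: the paper shows that \emph{any} $y\in\Sp$ with $\Vert y+f_{j_2}\Vert^2=\Vert y-f_{j_2}\Vert^2=2$ must satisfy $\Vert y+f_{j_1}\Vert^2\geqslant2$ or $\Vert y-f_{j_1}\Vert^2\geqslant2$, whereas $y=T(x)$ would have to violate this. Note finally that the successor/limit case analysis with Facts~\ref{max} and~\ref{maxsucc} that you anticipate appears nowhere in the paper's proof of this lemma --- and it had better not, since that machinery (as used in Lemma~\ref{equality}) leans on the goodness of $(\Sc_\alpha)_{\alpha<\omega_1}$, an assumption the $p=2$ subsection is expressly designed to avoid.
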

	\begin{proof}
		Let $k\in \N\minus \{1\}$. Towards a contradiction, assume that there exist $i\neq j \in \N\minus \{1\}$ such that $d_k(i)\neq 0$ and $d_k(j)\neq 0$. Then, by Lemma~\ref{l8},  $k \in \mathrm{supp}(f_i)\cap \mathrm{supp}(f_j)$, and so, by Lemma~\ref{Fact2}, $\#(\mathrm{supp}(f_i)\cup\mathrm{supp}(f_j))\leqslant 3$. Now, we will show that $\mathrm{supp}(f_i)\cup\mathrm{supp}(f_j )\in \Sc_\alpha$. 
		
		If $\alpha\geqslant 2$, then we are done since by the definition of $\Sc_\alpha$, every subset of $\N$ with cardinality at most $3$ belong to $\Sc_\alpha$. 
		
		If $\alpha=1$, then by the definition of $S_1$, any subset of $\N$ with cardinality at most $3$ and such that its minimum is greater than or equal to $3$ belongs to $\Sc_1$. So, it remains to show that $\min (\mathrm{supp}(f_i)\cup\mathrm{supp}(f_j))\neq 2$. Towards a contradiction, assume for example that $2\in \mathrm{supp} (f_i)$. By Lemma~\ref{l7} (and since $\alpha=1$), $\mathrm{supp}( f_i)=\{2\}$. Hence, $k=2$ and so $2\in \mathrm{supp} (f_j) $. By Lemma~\ref{l7} again, $\mathrm{supp} (f_j)=\{2\}$. Therefore, by lemma~\ref{l6}, $\mathrm{supp}(T(e_i))=\mathrm{supp}(T(e_j))=\mathrm{supp}(T(-e_i))=\mathrm{supp}(T(-e_j))=\{2\}$ which contradicts the injectivity of $T$. 
		
		Since $\mathrm{supp}(f_i)\cup\mathrm{supp}(f_j) \in \Sc_\alpha$, it follows that $\Vert f_i\pm f_j\Vert^2 _{\ell_2}=\Vert f_i\pm f_j\Vert^2=\Vert e_i \pm e_j\Vert ^2=2 $. So, $f_i$ and $f_j$ are orthogonal in $\ell_2$ (and so $k$ cannot be the unique point of intersection of  $\mathrm{supp} (f_i)$ and $\mathrm{supp} (f_j)$). So there exists $l\neq k \in \N\minus \{1\}$ such that $\mathrm{supp}(f_i)=\mathrm{supp}(f_j)=\{k,l\}$. Without loss of generality, assume that $i>j$ and that 
		\[ f_i=\frac{\sqrt{2}}{2}e_k+\frac{\sqrt{2}}{2}e_l\]
		and \[f_j=\frac{\sqrt{2}}{2}e_k-\frac{\sqrt{2}}{2}e_l\]
		by Lemma~\ref{Fact2}. 
		
		Now, we will show that for any $y \in \Sp$, if $\Vert y+f_i\Vert ^2=\Vert y-f_i\Vert ^2=2$, then $\Vert y+f_j\Vert ^2\geqslant2$ or $\Vert y-f_j\Vert ^2\geqslant2$. This will contradict Lemma~\ref{Fact4}, by Lemma~\ref{l6} and since $T$ is a surjective isometry.
		
		Let $y \in \Sp$ be such that $\Vert y+f_i\Vert ^2=\Vert y-f_i\Vert^2=2$. Then, there exist $F,G \in \Sc_\alpha$ such that \begin{align}\label{F} \Vert y+f_i\Vert^2=\sum_{r\in F}\vert y(r)+f_i(r)\vert ^2 =2 \qquad{\rm where } \quad y(r)+f_i(r)\neq 0 \text{ for all }r\in F,\end{align}
		and  \begin{align}\label{G}
			\Vert y-f_i\Vert^2=\sum_{r\in G}\vert y(r)-f_i(r)\vert ^2 =2 \qquad{\rm where } \quad y(r)-f_i(r)\neq 0 \text{ for all }r\in G.\end{align} 
		Note that $F$ contains $k$ or $l$ since otherwise, we get $\Vert y+f_i\Vert^2=\sum_{r\in F}\vert y(r)\vert ^2\leqslant 1$, which is a contradiction. Similarly, we have that $G$ contains $k$ or $l$.
		
		-- If $F$ contains $l$ and not $k$, then 
		\[ \Vert y-f_j\Vert^2\geqslant\sum_{r\in F}\vert y(r)-f_j(r)\vert^2=\sum_{r\in F}\vert y(r)+f_i(r)\vert^2=2.\]
		
		-- In the same way, we show that $\Vert y+f_j\Vert^2\geqslant2$ or  $\Vert y-f_j\Vert^2\geqslant2$ if $F$ contains $k$ and not $l$, or $G$ contains $l$ and not $k$, or $G$ contains $k$ and not $l$.
		
		-- If $\{k,l\}\subset F\cap G$ and $\mathrm{supp}(y)\cap \{k,l\}=\emptyset$, then it is easy to see that $\Vert y+f_j\Vert^2=\Vert y-f_j\Vert^2=2$. 
		
		-- If $\{k,l\}\subset F\cap G$ and $\mathrm{supp}(y)$ contains $k$ and not $l$, then 
		\[ \Vert y+f_j\Vert ^2\geqslant\sum_{r\in F}\vert y(r)+f_j(r)\vert^2=\sum_{r\in F}\vert y(r)+f_i(r)\vert^2=2,\]
		and 
		\[\Vert y-f_j\Vert ^2\geqslant\sum_{r\in G}\vert y(r)-f_j(r)\vert^2= \sum_{r\in G}\vert y(r)-f_i(r)\vert^2=2.\]
		
		-- In the same way, we show that $\Vert y+f_j\Vert^2\geqslant2$ and $\Vert y-f_j\Vert^2\geqslant2$ if  $\{k,l\}\subset F\cap G$ and $\mathrm{supp}(y)$ contains $l$ and not $k$.
		
		-- If $\{k,l\}\subset F\cap G\cap \mathrm{supp}(y)$, then, by (\ref{F}) and (\ref{G}), we have 
		\begin{align}\label{F1}
			\biggl(y(k)+\frac{\sqrt 2}{2}\biggr)^2+\biggl(y(l)+ \frac{\sqrt 2}{2}\biggr)^2+\sum_{r\in F\minus \{k,l\}}\vert y(r)\vert^2=2, 
		\end{align}
		and  \begin{align}\label{G1}
			\biggl(y(k)-\frac{\sqrt 2}{2}\biggr)^2+\biggl(y(l)-\frac{\sqrt 2}{2}\biggr)^2+\sum_{r\in G\minus \{k,l\}}\vert y(r)\vert^2=2. 
		\end{align}
		Moreover, we have 
		\begin{align}\label{F2}
			\Vert y+f_j\Vert^2\geqslant \sum_{r\in F}\vert y(r)+f_j(r)\vert^2=   \biggl(y(k)+\frac{\sqrt 2}{2}\biggr)^2+\biggl(y(l)- \frac{\sqrt 2}{2}\biggr)^2+\sum_{r\in F\minus \{k,l\}}\vert y(r)\vert^2,
		\end{align}
		and 
		\begin{align}\label{G2}
			\Vert y-f_j\Vert^2\geqslant \sum_{r\in G}\vert y(r)-f_j(r)\vert^2=   \biggl(y(k)-\frac{\sqrt 2}{2}\biggr)^2+\biggl(y(l)+\frac{\sqrt 2}{2}\biggr)^2+\sum_{r\in G\minus \{k,l\}}\vert y(r)\vert^2.
		\end{align}
		Hence, (\ref{F1}), (\ref{G1}), (\ref{F2}) and (\ref{G2}) imply that 
		\[\Vert y+f_j\Vert^2+\Vert y-f_j\Vert ^2\geqslant4,\]
		and so $\Vert y+f_j\Vert^2\geqslant2$ or $\Vert y-f_j\Vert^2\geqslant2$.

		\smallskip\smallskip
		So we have shown that for any $y \in \Sp$ such that $\Vert y+f_i\Vert ^2=\Vert y-f_i\Vert ^2=2$, we have $\Vert y+f_j\Vert ^2\geqslant2$ or $\Vert y-f_j\Vert ^2\geqslant2$ which is the required contradiction. Hence, $d_k=\pm e_j$ for some $j\in \N\minus \{1\}$, because $d_k(1)=0$ by Lemma \ref{l5}.
	\end{proof}
	
	\smallskip\smallskip
	Now, we prove Proposition~\ref{main3}. To do this, we could use Lemma~\ref{Fact5} and Lemma~\ref{equality} which is valid for $p=2$ as well.  However, we will provide a direct proof that does not require $(\Sc_\alpha)_{\alpha<\omega_1}$ to be good.
	\begin{lemma}\label{Fact6}
		For any $k\in \N$, there exists a sign $\theta_k$ such that $d_k=\theta_k e_k$ and $f_k=\theta_k e_k$.
	\end{lemma}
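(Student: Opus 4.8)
The plan is to first upgrade the one–sided information of Lemma~\ref{Fact5} into a full \emph{signed permutation} picture, and then to show, using only the spreading property of $\Sc_\alpha$ together with the Hilbertian identity available for $p=2$, that this permutation is the identity; no appeal to goodness is made. The case $k=1$ is exactly Lemma~\ref{l4}. For $k\geqslant 2$, Lemma~\ref{Fact5} gives $d_k=\pm e_{\sigma(k)}$ for some $\sigma(k)\in\N\minus\{1\}$. Since every lemma of this section holds for an arbitrary surjective isometry of $\Sp$, I would apply Lemma~\ref{Fact5} to $T^{-1}$ (whose role of ``$d_k$'' is played by $T(e_k)=f_k$) to obtain $f_k=\pm e_{\tau(k)}$ for some $\tau(k)$. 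By Lemma~\ref{l6}, $d_k=\pm e_{\sigma(k)}$ forces $f_{\sigma(k)}=\pm e_k$, so $\tau\circ\sigma=\mathrm{id}$ and, symmetrically, $\sigma\circ\tau=\mathrm{id}$; thus $\sigma,\tau$ are mutually inverse bijections of $\N\minus\{1\}$ fixing $1$. It then suffices to prove $\sigma=\mathrm{id}$, since $d_k=\theta_k e_k$ yields $f_k=\theta_k e_k$ by Lemma~\ref{l6}.

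The engine of the argument is the following rigidity claim, valid for every $w\in\Sp$ and all $2\leqslant k<j$: if $\Vert w+e_k\Vert^2=\Vert w-e_k\Vert^2=2$, then $\Vert w+e_j\Vert^2\geqslant 2$ or $\Vert w-e_j\Vert^2\geqslant 2$. To prove it I would pick a norming set $F\in\Sc_\alpha$ for $\Vert w+e_k\Vert$; necessarily $k\in F$, and the elementary estimate $2=\sum_{r\in F\minus\{k\}}|w(r)|^2+(w(k)+1)^2\leqslant 2+2w(k)$ forces $w(k)\geqslant 0$. The symmetric computation for $\Vert w-e_k\Vert$ gives $w(k)\leqslant 0$, so $w(k)=0$ and $F$ norms $w$ exactly. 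As $j>k$ and $\Sc_\alpha$ is spreading, $\tilde F:=(F\minus\{k\})\cup\{j\}\in\Sc_\alpha$; evaluating $\Vert w+\varepsilon e_j\Vert^2$ on $\tilde F$ with $\varepsilon=\mathrm{sgn}(w(j))$ then gives a value $\geqslant 2$ (using $\sum_{r\in F}|w(r)|^2=1$ and $w(k)=0$), which is the claim.

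To conclude, suppose $\sigma\neq\mathrm{id}$ and let $k\geqslant 2$ be minimal with $\sigma(k)\neq k$. Since $\sigma$ and $\tau$ fix $2,\dots,k-1$ and biject $\{k,k+1,\dots\}$ onto itself, both $\sigma(k)>k$ and $\tau(k)>k$. Applying Lemma~\ref{Fact4} with $i:=\tau(k)>k=:j$ yields $x\in\Sp$ with $\Vert x\pm e_{\tau(k)}\Vert^2=2$ and $\Vert x\pm e_k\Vert^2<2$. Setting $w:=T^{-1}(x)$ and transporting these relations through $T^{-1}$ by Lemma~\ref{l6}, using $d_{\tau(k)}=\pm e_k$ and $d_k=\pm e_{\sigma(k)}$, converts them into $\Vert w\pm e_k\Vert^2=2$ and $\Vert w\pm e_{\sigma(k)}\Vert^2<2$ with $\sigma(k)>k$, which is precisely the configuration forbidden by the claim. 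This contradiction gives $\sigma=\mathrm{id}$, hence $\tau=\mathrm{id}$, and the lemma follows.

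The hard part is isolating and proving the claim of the second paragraph: this is exactly where the $p=2$ Hilbert geometry (the forcing $w(k)=0$) replaces the successor/limit ordinal induction used in Lemma~\ref{equality}, and it is what allows us to drop the goodness hypothesis. The only delicate point there is the membership $\tilde F\in\Sc_\alpha$ when $j$ already belongs to $F$, which has to be handled separately (then $\tilde F=F\minus\{k\}$ and one invokes heredity rather than spreading); in both sub-cases the same computation delivers $\Vert w+\varepsilon e_j\Vert^2\geqslant 2$.
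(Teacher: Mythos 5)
Your proof is correct and follows essentially the same route as the paper: the paper also defines $\sigma$ by $d_k=\pm e_{\sigma(k)}$ via Lemma~\ref{Fact5}, proves exactly your rigidity claim (forcing the relevant coordinate of $w$ to vanish from the two norming conditions, then using the spreading property to shift mass to the larger index), and derives the contradiction with Lemma~\ref{Fact4} after transporting through the isometry with Lemma~\ref{l6}. The only cosmetic differences are that the paper gets surjectivity of $\sigma$ from Lemma~\ref{l8} rather than by applying Lemma~\ref{Fact5} to $T^{-1}$, and concludes by showing $\sigma$ is strictly increasing (hence the identity) rather than by your minimal-counterexample argument.
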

	\begin{proof}
		This is proved for $k=1$ in Lemma~\ref{l4}. Note that $f_1=\theta_1 e_1$ implies $d_1=\theta_1 e_1$ by Lemma~\ref{l2}. So let us prove the result for any $k \in \N\minus \{1\}$. For this, we define the map $\sigma: \N\minus\{1\}\to \N\minus \{1\}$ such that $d_k=\pm e_{\sigma(k)}$ for all $k\in\N\minus \{1\}$. The map $\sigma$ is well defined by Lemma~\ref{Fact5}, and it is surjective. Indeed, towards a contradiction, assume that there exists $n\in \N\minus\{1\}$ such that $d_k\neq\pm e_n$ for all $k\in \N\minus \{1\}$. This means by Lemma~\ref{Fact5} that $n\notin \mathrm{supp}(d_k)$ for all $k\in \N\minus \{1\}$. Hence, $f_n(k)=0$ for all $k\in \N\minus \{1\}$ by Lemma~\ref{l8}. This implies that $f_n=0$ since $f_n(1)=0 $ by Lemma~\ref{l5}, which is a contradiction since $f_n \in\Sp$.
		
		Now, we will show that $\sigma$ is strictly increasing.  Let $i > j\in \N\minus \{1\}$. Assume that $\sigma(i)\leqslant \sigma(j)$. As in the proof of Lemma~\ref{Fact5}, we want to get a contradiction with Lemma~\ref{Fact4}. So let $y\in \Sp$ be such that $\Vert y+d_i\Vert^2=\Vert y-d_i\Vert^2=2$. Then, there exist $F,G\in \Sc_\alpha$ such that 
		\[\sum_{k\in F}\vert y(k)+e_{\sigma(i)}(k)\vert^2 =2\]
		and 
		\[\sum_{k\in G}\vert y(k)-e_{\sigma(i)}(k)\vert^2 =2.\]
		If $\sigma(i)\notin F \cap G$, then $\sum_{k\in F}\vert y(k)+e_{\sigma(i)}(k)\vert^2\leqslant 1$ or $\sum_{k\in G}\vert y(k)-e_{\sigma(i)}(k)\vert^2\leqslant1$ which is a contradiction. Hence $\sigma(i)\in F\cap G$. \\
		If $\sigma(i)\in \mathrm{supp}(y) $, then $\vert y(\sigma(i))+1\vert <1$ or $\vert y(\sigma(i))-1\vert <1$ and hence it is easy to check that $\sum_{k\in F}\vert y(k)+e_{\sigma(i)}(k)\vert^2<2$ or $\sum_{k\in G}\vert y(k)-e_{\sigma(i)}(k)\vert^2<2$ which is a contradiction. Hence, $\sigma(i)\notin \mathrm{supp}(y)$, and therefore,\begin{align}\label{F3}
			\sum_{k\in F\minus \{\sigma(i)\} }\vert y(k)\vert^2=1. 
		\end{align} 
		Since $\Sc_\alpha$ is spreading, $F\in S_\alpha$ and we are assuming that $\sigma(j)\geqslant\sigma(i)$, it follows that 
		\begin{align}\label{F4}
			(F\minus \{\sigma(i)\})\cup \sigma(j)\in \Sc_\alpha.
		\end{align}
		First, assume that $\sigma(j) \in \mathrm{supp}(y)$. Then, (\ref{F3}) and (\ref{F4}) imply that $\sigma(j)\in F\minus \{\sigma(i)\}$, since otherwise we get $\Vert y\Vert>1$. Hence, by (\ref{F3}), we have $\Vert y+d_j\Vert ^2>2$ or $\Vert y-d_j\Vert^2>2$, because 
	\[	\begin{aligned}
			\vert y(\sigma(j))+1\vert^2>1+\vert y(\sigma(j))\vert ^2 \qquad{\rm or } \qquad  \vert y(\sigma(j))-1\vert^2>1+\vert y(\sigma(j))\vert ^2.
		\end{aligned}\]
		
		Now, we  assume that $\sigma(j) \notin \mathrm{supp}(y)$. Then, by (\ref{F3}) and (\ref{F4}), it is easy to check that $\Vert y+d_j\Vert^2=\Vert y-d_j\Vert^2=2$.
		
		So, we have shown that for any $y\in \Sp$ such that $\Vert y+d_i\Vert^2=\Vert y-d_i\Vert^2=2$, we have $\Vert y+d_j\Vert ^2\geqslant2$ or $\Vert y-d_j\Vert^2\geqslant2$, which contradicts Lemma~\ref{Fact4}. Hence, $\sigma(i)>\sigma(j)$ and $\sigma$ is strictly increasing. 
		
		Since $\sigma : \N\minus \{1\}\to \N\minus \{1\}$ is  strictly increasing and surjective, it follows that $\sigma$ is the identity map. Therefore, for any $k\in \N\minus \{1\}$, there exists a sign $\theta_k$ such that $d_k=\theta_k e_k$, and so  $f_k=\theta_k e_k$ by Lemma~\ref{l6}.
	\end{proof}
	\subsection{Proof of Theorem~\ref{main2} for $p\in (1,\infty)$}\label{sec4} Now, we show Theorem~\ref{main2} for $p\in (1,\infty)$, using Proposition~\ref{main3}, which has been proved in subsections~\ref{sec2} (for $p\in (1,\infty)\minus\{2\} $) and~\ref{sec3} (for $p=2$). Specifically, we use the fact that there exists a sequence of signs $(\theta_i)_{i\in \N}$ such that $T(e_i)=\theta_i e_i$ (and $T^{-1}(e_i)=\theta_i e_i$) for all $i\in \N$. 
	\begin{lemma}\label{zero}
		Let $x\in \Sp$ and $i\in \N$. If $x(i)=0$, then $T(x)(i)=0$ and  $T^{-1}(x)(i)=0$.
	\end{lemma}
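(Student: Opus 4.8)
The plan is to reduce the statement, via Proposition~\ref{main3} and the sign symmetry of the norm, to a single scalar inequality comparing $\|y+e_i\|$ and $\|y-e_i\|$. First I would record the elementary observation that $\|\cdot\|$ depends only on the absolute values of the coordinates, since $\|z\|^p=\sup_{F\in\Sc_\alpha}\sum_{k\in F}|z(k)|^p$; hence flipping the sign of a single coordinate leaves the norm unchanged. In particular, if $x(i)=0$ then $x+e_i$ and $x-e_i$ agree in absolute value coordinatewise, so $\|x+e_i\|=\|x-e_i\|$. I would then transfer this equality through $T$: by Proposition~\ref{main3} we have $T(e_i)=\theta_i e_i$, and by Lemma~\ref{l2} (for $i=1$) and Lemma~\ref{l6} (for $i\geqslant 2$) we have $T(-e_i)=-\theta_i e_i$. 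Writing $y:=T(x)$ and using that $T$ is an isometry,
\[
\|y-\theta_i e_i\|=\|T(x)-T(e_i)\|=\|x-e_i\|,\qquad \|y+\theta_i e_i\|=\|T(x)-T(-e_i)\|=\|x+e_i\|.
\]
As $\theta_i=\pm1$, the pair $\{\|y+\theta_i e_i\|,\|y-\theta_i e_i\|\}$ coincides with $\{\|y+e_i\|,\|y-e_i\|\}$, so the previous step yields $\|y+e_i\|=\|y-e_i\|$.

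It then remains to prove the key implication: for $y\in\Sp$, the equality $\|y+e_i\|=\|y-e_i\|$ forces $y(i)=0$. I would argue by contraposition. Assume $y(i)=c\neq0$ and set $\varepsilon=\mathrm{sgn}(c)$; the claim is that $\|y+\varepsilon e_i\|>\|y-\varepsilon e_i\|$, contradicting the equality. The crucial tool is that the norm is \emph{attained}: since $y-\varepsilon e_i\neq0$ (otherwise $y=\varepsilon e_i$, forcing $\|y+e_i\|=2\neq0=\|y-e_i\|$), applying Fact~\ref{normeatteinte} to the unit vector $(y-\varepsilon e_i)/\|y-\varepsilon e_i\|$ produces some $F_0\in\Sc_\alpha$ with $\|y-\varepsilon e_i\|^p=\sum_{k\in F_0}|y(k)-\varepsilon e_i(k)|^p$. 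If $i\in F_0$, then evaluating $y+\varepsilon e_i$ on the \emph{same} $F_0$ and comparing the two contributions at $i$ via $(|c|+1)^p>\bigl||c|-1\bigr|^p$ gives a strictly larger sum. If $i\notin F_0$, then $\|y-\varepsilon e_i\|^p=\sum_{k\in F_0}|y(k)|^p\leqslant\|y\|^p=1$, whereas evaluating $y+\varepsilon e_i$ on the singleton $\{i\}\in\Sc_\alpha$ gives $\|y+\varepsilon e_i\|^p\geqslant(|c|+1)^p>1$. Either way the inequality is strict, so $y(i)=0$, i.e. $T(x)(i)=0$.

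Finally, running the identical argument for the surjective isometry $T^{-1}$ — for which $T^{-1}(e_i)=d_i=\theta_i e_i$ and $T^{-1}(-e_i)=-\theta_i e_i$ — yields $T^{-1}(x)(i)=0$, completing the proof. The main obstacle is the strict-inequality step: one must split into the cases $i\in F_0$ and $i\notin F_0$ and exploit both the norm attainment and the monotonicity of $t\mapsto(|t|+1)^p-\bigl||t|-1\bigr|^p$; the remaining steps are routine bookkeeping built on Proposition~\ref{main3} and the sign symmetry of the combinatorial norm.
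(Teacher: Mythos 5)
Your proof is correct, and although it runs on the same basic mechanism as the paper's --- sign symmetry of the norm gives $\Vert x+e_i\Vert=\Vert x-e_i\Vert$ when $x(i)=0$, the isometry together with Proposition~\ref{main3} and Lemmas~\ref{l2}/\ref{l6} transfers this equality to $T(x)$, and a nonzero $i$-th coordinate of the image would force the two norms apart --- your decomposition is genuinely different. The paper argues by contradiction entirely on the image side: it first establishes $\Vert x\pm e_i\Vert>1$ by producing an auxiliary index $j\geqslant 2$ with $x(j)\neq 0$, which obliges it to treat $i=1$ and $x=\pm e_1$ as separate cases (via Lemmas~\ref{l5} and~\ref{l4}); it then compares a norming set of $T(x)-\theta_ie_i$ with a norming set of $x+e_i$ to derive $\Vert T(x)+\theta_ie_i\Vert>\Vert x+e_i\Vert$, contradicting the isometry. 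You instead extract a standalone two-sided characterization --- for $y\in\Sp$, one has $\Vert y+e_i\Vert=\Vert y-e_i\Vert$ if and only if $y(i)=0$ --- proved by contraposition with a case split on whether $i$ lies in a norming set $F_0$ of $y-\varepsilon e_i$ (obtained from Fact~\ref{normeatteinte} after normalization): if $i\in F_0$ you compare the two sums over the same set via $(1+|c|)^p>\bigl|1-|c|\bigr|^p$, and if $i\notin F_0$ you use $\Vert y-\varepsilon e_i\Vert\leqslant\Vert y\Vert=1<1+|c|\leqslant\Vert y+\varepsilon e_i\Vert$, the last inequality coming from the singleton $\{i\}\in\Sc_\alpha$. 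What this buys is uniformity: no special treatment of $i=1$ or $x=\pm e_1$, no auxiliary index $j$, and no preliminary lower bound $\Vert x\pm e_i\Vert>1$; the only inputs are norm attainment, Proposition~\ref{main3}, and the oddness lemmas, so your argument is slightly shorter and more self-contained. What the paper's organization buys is that it never needs the characterization as a separate statement and keeps the entire estimate inside a single contradiction; both routes are valid.
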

	\begin{proof}
		It is enough to show that $T(x)(i)=0$. If $i=1$, then this is true by Lemma~\ref{l5}. So, let us assume that $i\geqslant 2$. If $x=\pm e_1$, then $T(x)=\pm e_1$ by Lemma \ref{l4}, and so $T(x)(i)=0$. \\Now, suppose that $x\neq \pm e_1$. Hence, there exists $j\geqslant 2$ such that $x(j)\neq 0$. Since $\{i,j\}\in\Sc_\alpha$ and $x(i)=0$, it follows that 
		\begin{align}\label{machin}\Vert x\pm e_i\Vert ^p\geqslant \vert x(j)\vert^p+1>1.\end{align} Let $G\in \Sc_\alpha$ be such that $\Vert x+e_i\Vert^p=\sum_{k\in G}\vert x(k)+e_i(k)\vert^p$. If $i\notin G$, then $\Vert x+e_i\Vert\leqslant 1$ which contradicts (\ref{machin}). Hence, $i\in G$ and \[\Vert x+e_i\Vert^p=1+\sum_{k\in G\minus \{i\}}\vert x(k)\vert^p,\] because $x(i)=0$. Since $\vert (x+e_i)(k)\vert=\vert (x-e_i)(k)\vert$ for any $k\in \N$ (because $x(i)=0$), it follows that \[\Vert x-e_i\Vert ^p=\Vert x+e_i\Vert^p=1+\sum_{k\in G\minus \{i\}}\vert x(k)\vert^p.\]
		Therefore, by Proposition~\ref{main3} and Lemma~\ref{l6}, we have 
		\[\Vert T(x)-\theta_ie_i\Vert^p=\Vert T(x)+\theta_ie_i\Vert^p=1+\sum_{k\in G\minus \{i\}}\vert x(k)\vert^p.\]
		Towards a contradiction, assume that $T(x)(i)\neq 0$. Then $\mathrm{sgn}(T(x)(i))=\theta_i$ or $\mathrm{sgn}(T(x)(i))=-\theta_i$. Suppose that $\mathrm{sgn}(T(x)(i))=\theta_i$ and let $F\in \Sc_\alpha$ be such that $\Vert T(x)-\theta_ie_i\Vert^p=\sum_{k\in F}\vert T(x)(k)-\theta_ie_i(k)\vert^p$.\\
		Recall that $\Vert T(x)-\theta_ie_i\Vert=\Vert x-e_i\Vert>1$ by (\ref{machin}). If $i\notin F$, then $\Vert T(x)-\theta_i e_i\Vert\leqslant1$ which is a contradiction. Hence, $i\in F$ and
		\[\begin{aligned}\Vert  T(x)-\theta_ie_i\Vert^p&=\vert T(x)(i)-\theta_i\vert^p+\sum_{k\in F\minus\{i\}}\vert T(x)(k)\vert^p\\&<1+ \sum_{k\in F\minus\{i\}}\vert T(x)(k)\vert^p,\end{aligned}\]
		because we are assuming that $\mathrm{sgn}(T(x)(i))=\theta_i$. However, since $\Vert T(x)-\theta_ie_i\Vert^p=1+ \sum_{k\in G\minus\{i\}}\vert x(k)\vert^p$, it follows that 
		\[\sum_{k\in G\minus\{i\}}\vert x(k)\vert^p < \sum_{k\in F\minus\{i\}}\vert T(x)(k)\vert^p. \]
		Therefore, 
		\[\begin{aligned}
			\Vert T(x)+\theta_ie_i\Vert^p&\geqslant\sum_{k\in F}\vert T(x)(k)+\theta_ie_i(k)\vert^p\\&=\vert T(x)(i)+\theta_i\vert^p+\sum_{k\in F\minus \{i\}}\vert T(x)(k)\vert^p\\&>1+\sum_{k\in F\minus \{i\}}\vert T(x)(k)\vert^p \\&> 1+ \sum_{k\in G\minus\{i\}}\vert x(k)\vert^p \\&=\Vert x+e_i\Vert^p,
		\end{aligned}\]
		which is a contradiction. In the same way, we get a contradiction if we suppose that $\mathrm{sgn}(T(x)(i))=-\theta_i$. Hence, $T(x)(i)=0$.
	\end{proof}
	% In the following, for any $x\in \Sp$, we set 
	% \[ \mathcal{A}_x^1:=\biggl\{F\in \Sc_\alpha;\;\sum_{i\in F}\vert x(i)\vert^p=1\biggr\}.\]
	% Recall that for any $x\in \Sp$, we have $\mathcal{A}_x^1\neq \emptyset$ by ~\ref{normeatteinte}.
	\begin{lemma}\label{final}
		For any $x\in \Sp$ and $i\in \N$, we have $T(x)(i)=\theta_i x(i)$. 
	\end{lemma}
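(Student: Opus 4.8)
The plan is to derive the pointwise formula entirely from Proposition~\ref{main3} (which supplies $T(e_i)=\theta_i e_i$ and $T^{-1}(e_i)=\theta_i e_i$) together with Lemma~\ref{zero}. First I would record that $T$ \emph{preserves supports}: applying Lemma~\ref{zero} to both $T$ and $T^{-1}$ shows that $x(i)=0$ if and only if $T(x)(i)=0$, so the asserted identity is automatic when $x(i)=0$, and it remains to handle $i\in\mathrm{supp}(x)$. It is convenient to compose $T$ with the diagonal linear isometry $J$ given by $Je_i=\theta_i e_i$: since $J\circ T$ is again a surjective isometry of $\Sp$ fixing every $e_i$, and $(J\circ T)(x)(i)=\theta_i T(x)(i)$, proving $(J\circ T)(x)(i)=x(i)$ yields the claim. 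Thus I reduce to the case $\theta_i=1$ for all $i$, where $T(\pm e_i)=\pm e_i$ and the goal is $T(x)(i)=x(i)$. For $i\in\mathrm{supp}(x)$, the isometry property then gives the two preserved quantities $\Vert T(x)+e_i\Vert=\Vert x+e_i\Vert$ and $\Vert T(x)-e_i\Vert=\Vert x-e_i\Vert$.

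The engine is an explicit norm computation. Writing $a:=x(i)\neq0$, $M_i:=\sup\{\sum_{k\in F\minus\{i\}}\vert x(k)\vert^p;\ i\in F\in\Sc_\alpha\}$ and $N_i:=\sup\{\sum_{k\in F}\vert x(k)\vert^p;\ i\notin F\in\Sc_\alpha\}\leqslant1$, splitting the defining supremum according to whether a set contains $i$ or not gives the clean identity $\max\{\Vert x+e_i\Vert^p,\Vert x-e_i\Vert^p\}=M_i+(1+\vert a\vert)^p$, since $(1+\vert a\vert)^p>1\geqslant N_i$ forces the sign-matched direction to dominate. Running the same computation for $T(x)$ and using that the left-hand side is preserved, I obtain $M_i+(1+\vert a\vert)^p=M_i'+(1+\vert T(x)(i)\vert)^p$, where $M_i'$ is the analogous punctured supremum for $T(x)$. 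As $t\mapsto(1+t)^p$ is strictly increasing, the magnitude equality $\vert T(x)(i)\vert=\vert a\vert$ follows \emph{once} $M_i=M_i'$. Now admissibility of a set depends only on its indices, and $\mathrm{supp}(T(x))=\mathrm{supp}(x)$, so the families of sets over which $M_i$ and $M_i'$ are taken coincide; hence $M_i=M_i'$ will hold provided $\vert T(x)(k)\vert=\vert x(k)\vert$ for every $k\neq i$. For coordinates $k$ lying in some norming set of $x$, this is already delivered by Lemma~\ref{imp} (applied to that admissible set, with $f_{i_k}=e_{i_k}$), so the magnitudes must be propagated from these ``exposed'' coordinates to the rest.

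With the moduli matched, the sign is forced: if one had $\mathrm{sgn}(T(x)(i))=-\mathrm{sgn}(a)$, then the preserved value $M_i+(1+\vert a\vert)^p$ would equal the \emph{non}-sign-matched norm of $T(x)$, namely $\max\{M_i'+(1-\vert a\vert)^p,N_i'\}$; but $M_i'=M_i$, $(1+\vert a\vert)^p>(1-\vert a\vert)^p$ and $N_i'\leqslant\Vert T(x)\Vert^p=1<(1+\vert a\vert)^p$ make this equality impossible, so $T(x)(i)=a=x(i)$, and undoing the reduction gives $T(x)(i)=\theta_i x(i)$. The main obstacle is precisely the magnitude step at a coordinate $i$ that lies in \emph{no} norming set of $x$ (so $M_i+\vert a\vert^p<1$): there Lemma~\ref{imp} does not apply to $i$, and I must instead establish $M_i=M_i'$ from magnitude preservation at the other coordinates. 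The delicate point is to show that the admissible sets witnessing $M_i$ only ever involve coordinates whose moduli are already known to be preserved, so that the propagation from exposed coordinates is well-founded and no circularity arises; controlling this interplay between $\mathrm{supp}(x)$, the admissibility constraints, and the punctured suprema is exactly where the spreading and hereditary properties of $\Sc_\alpha$ and the extension Fact~\ref{max} are needed.
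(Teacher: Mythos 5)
Your reduction via the diagonal isometry $J$, the support-preservation remark, and the closing sign argument are all sound, and your ``max identity'' $\max\{\Vert x+e_i\Vert^p,\Vert x-e_i\Vert^p\}=M_i+(1+\vert x(i)\vert)^p$ is correct. But there is a genuine gap exactly where you flag ``the main obstacle'': for a coordinate $i$ lying in no norming set of $x$ you never establish $M_i=M_i'$, and the propagation scheme you sketch is not well-founded in general. The equality $M_i=M_i'$ requires $\vert T(x)(k)\vert=\vert x(k)\vert$ for the coordinates $k$ appearing in the sets witnessing $M_i$, and nothing prevents those witnesses from containing \emph{other} non-exposed coordinates. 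Already in $X_{\Sc_1,2}$, for $x=2^{-1/2}(e_2+e_3)+\varepsilon e_{10}+\varepsilon e_{11}$ with $\varepsilon$ small, the only norming set is $\{2,3\}$, while the unique set witnessing $M_{10}$ is $\{3,10,11\}$ and the unique set witnessing $M_{11}$ is $\{3,10,11\}$ as well: each of the two unknown moduli is needed to determine the other, so the propagation from exposed coordinates never gets started. One can still extract the answer in this example by solving the coupled system of preserved quantities, but that is an additional argument you neither give nor reduce to the stated lemmas; as written, the magnitude step for non-exposed coordinates is a description of what would have to be proved, not a proof.

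The paper closes precisely this case with a different and much simpler idea, which your outline misses: truncation. If $i$ belongs to no set of $\mathcal{A}_x^1$, let $y$ agree with $x$ except that $y(i)=0$; then $y\in\Sp$ precisely because zeroing a non-exposed coordinate destroys no norming set, and $\Vert x-y\Vert=\vert x(i)\vert$. Since $T(y)(i)=0$ by Lemma~\ref{zero}, the isometry inequality $\Vert T(x)-T(y)\Vert\geqslant\vert T(x)(i)-T(y)(i)\vert$ gives $\vert x(i)\vert\geqslant\vert T(x)(i)\vert$ in one line, with no reference to any other coordinate; the reverse inequality follows by running the same dichotomy (Lemma~\ref{imp} if $i$ is exposed for $T(x)$, truncation if not) for $T^{-1}$ and $T(x)$, which is legitimate because $T(x)(i)\neq0$ by Lemma~\ref{zero}. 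If you replace your propagation step by this truncation argument, your reduction by $J$ and your sign argument can be kept essentially as they are.
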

	\begin{proof}
		Let $x\in \Sp$ and $i\in \N$. If $x(i)=0$, then $T(x)(i)=0$ by Lemma~\ref{zero}. So, in the rest of the proof we will suppose that $x(i)\neq0$. We will show first that $\vert T(x)(i)\vert =\vert x(i)\vert$. If there exists $F \in \mathcal{A}_x^1:=\{G\in \Sc_\alpha;\;\sum_{k\in G}\vert x(k)\vert^p=1\}$ such that $i\in F$, then $\vert T(x)(i)\vert=\vert x(i)\vert $ by Lemma~\ref{imp} and since $T(e_i)=\theta_i e_i $ for all $i \in \N$. 
		
		Assume now that for any $F\in \mathcal{A}_x^1$, $i\notin F$. We  define  $y\in X_{\Sc_\alpha,p}$ as follows: $y(k)=x(k)$ for all $k\in \N\minus \{i\}$ and $y(i)=0$. We have $y \in \Sp$ because for any $F\in \mathcal{A}_x^1$, $i\notin F$ and so 
		\[\sum_{k\in F}\vert y(k)\vert ^p=\sum_{k\in F}\vert x(k)\vert^p=1,\]
		and for any $G\in \Sc_\alpha\minus \mathcal{A}_x^1 $,
		\[\sum_{k\in G}\vert y(k)\vert^p\leqslant \sum_{k\in G}\vert x(k)\vert^p< 1. \]
		Since $T(y)(i)=0$ by Lemma~\ref{zero}, it follows that 
		\begin{align}\label{P1}
			\vert x(i)\vert =\Vert x-y\Vert=\Vert T(x)-T(y)\Vert \geqslant\vert T(x)(i)-T(y)(i)\vert =\vert T(x)(i)\vert. 
		\end{align}
		Since $T(x)(i)\neq 0$ by Lemma \ref{zero} (because $x(i)\neq 0$), one can apply the same argument to $T^{-1}$ and $T(x)$ instead of $T$ and $x$ to obtain that 
		\[\vert T(x)(i)\vert \geqslant \vert T^{-1}(T(x))(i)\vert,\]
		\textit{i.e.}
		\begin{align}\label{P2}
			\vert T(x)(i)\vert \geqslant \vert x(i)\vert. 
		\end{align}
		So, (\ref{P1}) and (\ref{P2}) imply that $\vert T(x)(i)\vert =\vert x(i)\vert $. Now, we will show  that $T(x)(i)=\theta_i x(i)$, \textit{i.e.} $\mathrm{sgn}(T(x)(i))=\theta_i \mathrm{sgn}(x(i))$. We denote by $\varepsilon$ the sign of $x(i)$. Towards a contradiction, assume that $\mathrm{sgn}(T(x)(i))=-\varepsilon\theta_i$. 
		
		Since $T(e_i)=\theta_i e_i$, we have by Lemma~\ref{l2} and Lemma~\ref{l6} that \[
		\Vert T(x)+\varepsilon\theta_ie_i\Vert =\Vert x+\varepsilon e_i\Vert \geqslant\vert x(i)+\varepsilon\vert>1, \]
		because $\varepsilon=\mathrm{sgn}(x(i))$. So there exists $F\in \Sc_\alpha$ such that 
		\[\Vert T(x)+\varepsilon\theta_i e_i\Vert^p=\sum_{k\in F}\vert T(x)(k)+\varepsilon\theta_ie_i(k)\vert^p>1.\]
		If $i\notin F$, then $\Vert T(x)+\varepsilon\theta_i e_i\Vert\leqslant1$ which is a contradiction. Hence $i \in F$. 
		
		Since we are assuming that  $\mathrm{sgn}(T(x)(i))=-\varepsilon\theta_i$ and since we have shown that $\vert T(x)(k)\vert=\vert x(k)\vert$ for all $k\in \N$, it follows that
		\[\begin{aligned}
			\Vert T(x)+\varepsilon\theta_i e_i\Vert^p&=\sum_{k\in F}\vert T(x)(k)+\varepsilon\theta_ie_i(k)\vert^p\\&=\vert T(x)(i)+\varepsilon \theta_i \vert^p+\sum_{k\in F\minus\{i\} }\vert T(x)(k)\vert^p\\&<1+\sum_{k\in F\minus\{i\} }\vert T(x)(k)\vert^p\\&=1+\sum_{k\in F\minus\{i\} }\vert x(k)\vert^p\\&<\vert x(i)+\varepsilon\vert^p+ \sum_{k\in F\minus\{i\} }\vert x(k)\vert^p\\&=\sum_{k\in F}\vert x(k)+\varepsilon e_i(k)\vert^p\\&\leqslant\Vert x+\varepsilon e_i\Vert^p,
		\end{aligned}\]
		which is a contradiction because $\Vert T(x)+\varepsilon\theta_ie_i\Vert =\Vert x+\varepsilon e_i\Vert$.
	\end{proof}
	
	In conclusion, we have proved Theorem~\ref{main2} for $p\in (1,\infty)$, which  immediately implies  Theorem~\ref{main}, as explained in the introduction. 
	
	\section{The case $p=1$}\label{secp1}
	In this section, our aim is to prove Theorem~\ref{main2} for  $p=1$ and  $1 \leqslant\alpha<\omega_1$ without any additional assumption on the approximating sequence $(\alpha_n)$ for $\alpha$ in the limit case. So, let us fix a transfinite Schreier sequence $(\Sc_\alpha)_{\alpha<\omega_1}$, an ordinal $1\leqslant \alpha<\omega_1$ and a surjective isometry $T:\Sp\to \Sp$ where $\Sp$ is the unit sphere of $X_{\Sc_\alpha}$. For simplicity, we will write $\Vert\,\cdot\,\Vert$ instead of $\Vert \,\cdot\,\Vert_{\Sc_\alpha}$.

	\smallskip \smallskip For any $x\in \Sp$, we set 
	\[ \mathcal{A}_x^1:=\biggl\{G\in \Sc_\alpha;\;\sum_{i\in G}\vert x(i)\vert=1\biggr\},\]
	and we call $F\in \Sc_\alpha$ a \emph{$1$-set for $x$} if $\sum_{i\in F}\vert x(i)\vert =1$ and $x(i)\neq 0$ for every $i\in F$. Note that if $G\in  \mathcal{A}_x^1 $, then by heredity, there exists a $1$-set $F$ for $x$ such that $F\subseteq G$.

	\smallskip \smallskip
	We start with the following important fact.
	\begin{fact}\label{normeatteintep1}
		Let $x\in \Sp$.
		\begin{enumerate}[label={\rm \arabic*)}]
			\item  The set of all $1$-sets of $x$ is non-empty and finite.
			\item   There exists an $\varepsilon_x>0$ $($called the $\varepsilon$-gap for $x$$)$ such that for any $F\in \Sc_\alpha\minus \mathcal{A}_x^1 $, we have $\sum_{i\in F}\vert x(i)\vert <1-\varepsilon_x$.
		\end{enumerate}  
	\end{fact}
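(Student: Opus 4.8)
The plan is to base both parts on a single compactness-plus-tail-decay mechanism, exploiting the two structural facts already recorded: $\Sc_\alpha$ is closed (hence compact) in $\{0,1\}^\N$, and $(e_i)$ is a Schauder basis of $X_{\Sc_\alpha}$. First I would record two preliminary observations. Since $(e_i)$ is a Schauder basis, the partial-sum projections $P_N$ satisfy $\Vert x-P_Nx\Vert\to 0$; because $\Vert x-P_Nx\Vert=\sup_{F\in\Sc_\alpha}\sum_{i\in F,\,i>N}\vert x(i)\vert$, this yields the \emph{tail estimate}: for every $\delta>0$ there is $N$ with $\sum_{i\in F,\,i>N}\vert x(i)\vert<\delta$ for all $F\in\Sc_\alpha$ (using heredity, so that $F\cap(N,\infty)\in\Sc_\alpha$). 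Combining the tail estimate with the fact that convergence in $\{0,1\}^\N$ is coordinatewise gives the \emph{continuity lemma}: if $F_k\to F$ in $\{0,1\}^\N$ with $F_k,F\in\Sc_\alpha$, then $\sum_{i\in F_k}\vert x(i)\vert\to\sum_{i\in F}\vert x(i)\vert$, proved by splitting at the level $N$ furnished by the tail estimate (the heads $F_k\cap[1,N]$ and $F\cap[1,N]$ agree for large $k$, while both tails are $<\delta$).

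For non-emptiness in 1), I would invoke \cite[Proposition 2.1]{ABC} to obtain $F\in\Sc_\alpha$ with $\sum_{i\in F}\vert x(i)\vert=1$, and then pass to $\{i\in F:\,x(i)\neq0\}$, which lies in $\Sc_\alpha$ by heredity and is a $1$-set. For finiteness I would argue by contradiction: given infinitely many distinct $1$-sets, compactness of $\Sc_\alpha$ produces a subsequence $F_n\to F\in\Sc_\alpha$. The continuity lemma gives $\sum_{i\in F}\vert x(i)\vert=\lim_n 1=1$, so $F\in\mathcal{A}_x^1$. Since $F$ is finite and convergence is coordinatewise, $F\subseteq F_n$ for all large $n$; then $1=\sum_{i\in F_n}\vert x(i)\vert=\sum_{i\in F}\vert x(i)\vert+\sum_{i\in F_n\minus F}\vert x(i)\vert=1+\sum_{i\in F_n\minus F}\vert x(i)\vert$, forcing $\sum_{i\in F_n\minus F}\vert x(i)\vert=0$. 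As a $1$-set has no vanishing coordinate, $F_n\minus F=\emptyset$, i.e. $F_n=F$, contradicting distinctness.

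For 2), set $s:=\sup\bigl\{\sum_{i\in F}\vert x(i)\vert:\,F\in\Sc_\alpha\minus\mathcal{A}_x^1\bigr\}$ and suppose $s=1$. Choosing $F_k\in\Sc_\alpha\minus\mathcal{A}_x^1$ with $\sum_{i\in F_k}\vert x(i)\vert\to1$ and extracting, via compactness, a limit $F_k\to F_\infty\in\Sc_\alpha$, the continuity lemma yields $\sum_{i\in F_\infty}\vert x(i)\vert=1$. For large $k$, coordinatewise convergence gives $F_\infty\subseteq F_k$, whence $\sum_{i\in F_k}\vert x(i)\vert\geqslant\sum_{i\in F_\infty}\vert x(i)\vert=1$; since also $\leqslant1$, we get $F_k\in\mathcal{A}_x^1$, a contradiction. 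Therefore $s<1$ and $\varepsilon_x:=1-s>0$ is the desired $\varepsilon$-gap. The main obstacle throughout is the possibility of \emph{mass escaping to infinity} along the sets $F_n$, which would break the passage to the limit; the tail estimate coming from the Schauder basis property is exactly what tames it and makes the continuity lemma — and hence both parts — go through.
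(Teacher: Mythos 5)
Your argument is correct in substance, but note that the paper does not actually prove this fact: its ``proof'' is a one-line citation to \cite[Lemmas 2.4, 2.5]{BDHQ}, so your self-contained argument necessarily takes a different route (essentially reconstructing the kind of argument found in that reference). The two pillars you use are both available in the paper --- compactness of $\Sc_\alpha$ in $\{0,1\}^\N$ and the Schauder basis property of $(e_i)$ --- and your continuity lemma (split at a level $N$ given by the tail estimate $\Vert x-P_Nx\Vert=\sup_{F\in\Sc_\alpha}\sum_{i\in F,\,i>N}\vert x(i)\vert<\delta$, then use that heads stabilize under coordinatewise convergence) is exactly the right mechanism; the finiteness argument (a limit $1$-set $F$ is eventually contained in the $F_n$, and the surplus $\sum_{i\in F_n\minus F}\vert x(i)\vert$ must vanish, forcing $F_n=F$ since $1$-sets have no null coordinates) and the supremum argument for part 2 both go through. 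One small slip to fix: taking $\varepsilon_x:=1-s$ is not enough, because the supremum $s$ over $\Sc_\alpha\minus\mathcal{A}_x^1$ can be \emph{attained} (for instance $x=e_1$, where every $F\in\Sc_\alpha\minus\mathcal{A}_x^1$ has sum $0=s$, and the required strict inequality $<1-\varepsilon_x=0$ fails); your own compactness argument only shows $s<1$, not that the sup is unattained, so you should take $\varepsilon_x:=(1-s)/2$, which gives $\sum_{i\in F}\vert x(i)\vert\leqslant s<1-\varepsilon_x$ for all $F\in\Sc_\alpha\minus\mathcal{A}_x^1$. Also, as you hint, the citation of \cite[Proposition 2.1]{ABC} for non-emptiness is dispensable: your part-2 compactness extraction applied to any maximizing sequence already produces a set with sum $1$, so the whole fact follows from compactness plus the basis property alone. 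What your route buys is self-containedness and a transparent identification of the only analytic obstacle (mass escaping to infinity along the $F_n$); what the paper's route buys is brevity, delegating precisely this bookkeeping to \cite{BDHQ}.
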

	\begin{proof}
		This is~\cite[Lemmas 2.4, 2.5]{BDHQ}.
	\end{proof}
	Now, we address a series of lemmas that lead us to prove Proposition~\ref{main3} first and then Theorem~\ref{main2}.
	
	\begin{lemma}\label{lemma1}
		Let $x,y \in \Sp$ and let $F\in \Sc_\alpha$ be such that  $\Vert x+y\Vert =\sum_{k\in F}\vert x(k)+y(k)\vert =2$. Then $F\in \mathcal{A}_x^1\cap \mathcal{A}_y^1$  and for any $k\in\mathrm{supp}(x)\cap \mathrm{supp}(y)\cap F$, we have $\mathrm{sgn}(x(k))=\mathrm{sgn}(y(k))$.    
	\end{lemma}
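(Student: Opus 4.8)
The plan is to extract everything from the equality case in a short chain of inequalities, exploiting that $p=1$ makes the norm additive along a fixed admissible set. First I would observe that, since $F\in\Sc_\alpha$, the very definition of $\Vert\,\cdot\,\Vert$ gives both $\sum_{k\in F}\vert x(k)\vert\leqslant\Vert x\Vert=1$ and $\sum_{k\in F}\vert y(k)\vert\leqslant\Vert y\Vert=1$. Combining this with the triangle inequality applied termwise yields
\[
2=\sum_{k\in F}\vert x(k)+y(k)\vert\leqslant\sum_{k\in F}\bigl(\vert x(k)\vert+\vert y(k)\vert\bigr)=\sum_{k\in F}\vert x(k)\vert+\sum_{k\in F}\vert y(k)\vert\leqslant 2.
\]
Since the two extremities both equal $2$, every inequality in this chain must in fact be an equality, and the proof is then just a matter of reading off the two consequences.

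The first consequence comes from the equality $\sum_{k\in F}\vert x(k)\vert+\sum_{k\in F}\vert y(k)\vert=2$ together with the fact that each of the two summands is at most $1$: this forces $\sum_{k\in F}\vert x(k)\vert=1$ and $\sum_{k\in F}\vert y(k)\vert=1$, which is precisely the statement $F\in\mathcal{A}_x^1\cap\mathcal{A}_y^1$.

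The second consequence comes from the remaining equality $\sum_{k\in F}\vert x(k)+y(k)\vert=\sum_{k\in F}\bigl(\vert x(k)\vert+\vert y(k)\vert\bigr)$. Since $\vert x(k)+y(k)\vert\leqslant\vert x(k)\vert+\vert y(k)\vert$ holds for every individual $k\in F$ and all these differences $\vert x(k)\vert+\vert y(k)\vert-\vert x(k)+y(k)\vert$ are nonnegative, equality of the sums forces termwise equality, i.e. $\vert x(k)+y(k)\vert=\vert x(k)\vert+\vert y(k)\vert$ for each $k\in F$. For real scalars this holds if and only if $x(k)$ and $y(k)$ do not have opposite signs; in particular, whenever $k\in\mathrm{supp}(x)\cap\mathrm{supp}(y)\cap F$, so that both $x(k)$ and $y(k)$ are nonzero, we obtain $\mathrm{sgn}(x(k))=\mathrm{sgn}(y(k))$, as required.

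I do not anticipate any genuine obstacle in this argument; it is a clean application of the equality case of the triangle inequality, made possible by the sublinear sup-of-$\ell_1$-sums structure of the $p=1$ norm. The only two points deserving a word of care are the passage from equality of a sum of nonnegative quantities to termwise equality, and the elementary characterization of the equality case $\vert a+b\vert=\vert a\vert+\vert b\vert$ for real numbers.
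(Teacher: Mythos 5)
Your proposal is correct and follows essentially the same route as the paper: both rest on the termwise triangle inequality together with the bounds $\sum_{k\in F}\vert x(k)\vert\leqslant 1$ and $\sum_{k\in F}\vert y(k)\vert\leqslant 1$. The only difference is presentational --- the paper phrases the two conclusions as separate arguments by contradiction, whereas you read them off directly from the equality case of a single chain of inequalities.
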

	\begin{proof}
		Towards a contradiction, assume that $F \notin \mathcal{A}_x^1 \cap \mathcal{A}_y^1 $. Then
		\[ \sum_{k\in F} \vert x(k)+y(k)\vert \leqslant  \sum_{k\in F} \vert x(k)\vert +  \sum_{k\in F} \vert y(k)\vert <2,\]
		which is a contradiction. 
		
		Now, let $i\in \mathrm{supp}(x)\cap \mathrm{supp}(y)\cap F$ and assume towards a contradiction that $\mathrm{sgn}(x(i))=-\mathrm{sgn}(y(i))$. Hence, $\vert x(i)+y(i)\vert < \vert x(i)\vert +\vert y(i)\vert $. Therefore, 
		\[\sum_{k\in F}\vert x(k)+y(k)\vert < \sum_{k\in F} \vert x(k)\vert +  \sum_{k\in F} \vert y(k) \vert =2,\]
		which is  a contradiction again.
	\end{proof}
	The proof of the next lemma is similar to that of~\cite[Lemma 2.2]{LLi}.
	\begin{lemma}\label{lemma2}
		Let $n \in \N\minus \{1\}$ and $y\in \Sp$.
		\begin{enumerate}[label={\rm \arabic*)}]
			\item  If $T(y)=-T(e_n)$ or $T^{-1}(y)=-T^{-1}(e_n)$, then $y(n)=-1$.% and $y(i)=0$ for all $i \in \N\minus \{1,n\}$.
			\item   If $T(y)=-T(-e_n)$ or $T^{-1}(y)=-T^{-1}(-e_n)$, then $y(n)=1$. %and $y(i)=0$ for all $i \in \N\minus \{1,n\}$.
		\end{enumerate}
	\end{lemma}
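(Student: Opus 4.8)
The plan is to reduce the four hypotheses to a single generic implication and then extract the conclusion from the resulting distance identity. First I would observe that it suffices to prove the following statement for an \emph{arbitrary} surjective isometry $U$ of $\Sp$: if $U(v)=-U(e_n)$ then $v(n)=-1$. Indeed, the $T^{-1}$ version of part~1) is this statement applied to $U:=T^{-1}$; and the two cases of part~2) are obtained by applying it to the surjective isometry $\tilde T(x):=T(-x)$ (note $\|\tilde T(x)-\tilde T(x')\|=\|x-x'\|$), since $T(y)=-T(-e_n)$ reads $\tilde T(-y)=-\tilde T(e_n)$, whence $(-y)(n)=-1$, i.e. $y(n)=1$. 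All ingredients below (the isometry property, Lemma~\ref{lemma1}, Fact~\ref{normeatteintep1}) are insensitive to these substitutions, so I fix $y\in\Sp$ with $T(y)=-T(e_n)$ and aim to show $y(n)=-1$.

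First I would convert the hypothesis into a metric statement: since $T$ is an isometry and $\|T(e_n)\|=1$,
\[ \|y-e_n\|=\|T(y)-T(e_n)\|=\|{-2\,T(e_n)}\|=2. \]
Applying Lemma~\ref{lemma1} to $x:=y$ and the point $-e_n$ (for which $\|y+(-e_n)\|=2$) produces a set $F\in\mathcal{A}^1_y\cap\mathcal{A}^1_{-e_n}$; since $\mathcal{A}^1_{-e_n}=\{G\in\Sc_\alpha:\,n\in G\}$, this $F$ contains $n$ and is a $1$-set for $y$. Expanding the witnessing identity $\sum_{i\in F}|y(i)-e_n(i)|=2$ against $\sum_{i\in F}|y(i)|=1$ gives $(1-|y(n)|)+|y(n)-1|=2$, which forces $y(n)\le0$ (if $y(n)>0$ the left side equals $2-2y(n)<2$). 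This portion is routine.

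The real content is upgrading $y(n)\le 0$ to $y(n)=-1$; this is precisely the $p=1$ substitute for Lemma~\ref{l1}, which made the analogous Lemma~\ref{l6} immediate when $p>1$. I would argue by contradiction, assuming $-1<y(n)\le0$, so that $|y(n)|<1$. Then every witnessing $1$-set $F\ni n$ must carry mass elsewhere: there is $m\ge2$, $m\neq n$, with $y(m)\neq0$ (and $1\notin F$, since $\{1\}\in\Sc_\alpha^{MAX}$ gives $\{1,n\}\notin\Sc_\alpha$). The strategy is to exploit the \emph{exact} antipodal identity $T(y)=-T(e_n)$, not merely $\|y-e_n\|=2$: the relations $\|y-z\|=\|T(e_n)+T(z)\|$ and $\|e_n-z\|=\|T(e_n)-T(z)\|$ hold for every $z\in\Sp$, and I would feed in test points $z\in\{\pm e_m\}$ together with shifted vectors $\pm e_j$ for $j$ chosen large (so that $j>\mathrm{supp}(y)$ and the relevant $1$-sets can be enlarged via Fact~\ref{max}), combining these with the surjectivity and injectivity of $T$ and the $\varepsilon$-gap of Fact~\ref{normeatteintep1} to keep the perturbed inequalities strict. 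The aim is to reach a configuration consistent only if the residual mass $\sum_{i\in F\setminus\{n\}}|y(i)|=1-|y(n)|$ vanishes, i.e. $|y(n)|=1$, hence $y(n)=-1$.

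The hard part will be exactly this last step, and its subtlety is structural: because $T(y)=-T(e_n)$ turns ``$-$'' into ``$+$'' on the image side, the distances from $y$ and from $e_n$ to a \emph{common} test point $z$ decouple ($\|y-z\|$ is governed by $\|T(e_n)+T(z)\|$ while $\|e_n-z\|$ is governed by $\|T(e_n)-T(z)\|$), so a naive comparison such as $\|y-z\|<\|e_n-z\|=2$ yields no contradiction on its own. The device that makes the constraints interlock is the one used in~\cite[Lemma 2.2]{LLi}; the task here is to implement it in the combinatorial setting, where Fact~\ref{max} supplies the room to extend $1$-sets and Fact~\ref{normeatteintep1} guarantees the strict gaps that drive the contradiction.
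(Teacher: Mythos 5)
Your reduction to a single generic implication (via $U:=T^{-1}$ and $\tilde T(x):=T(-x)$) is valid, and your derivation of $\Vert y-e_n\Vert=2$, of a $1$-set $F\ni n$ for $y$, and of $y(n)\leqslant 0$ is correct. But the proposal stops exactly where the lemma begins: the upgrade from $y(n)\leqslant 0$ to $y(n)=-1$ is only announced as a plan, not proved. Worse, the test points you name cannot drive that plan: at this stage of the paper nothing whatsoever is known about $T(\pm e_m)$ for $m\neq n$ (the oddness identity $T(-e_m)=-T(e_m)$ is Lemma~\ref{lemma6}, which is proved \emph{from} the present lemma), so the quantities $\Vert T(e_n)\pm T(e_m)\Vert$ arising from comparing $y$ or $e_n$ with $\pm e_m$ cannot be evaluated, and Fact~\ref{max} together with the $\varepsilon$-gap of Fact~\ref{normeatteintep1} gives no mechanism to couple the two decoupled distances you correctly identified as the obstruction.

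The missing device, which is the actual content of the paper's proof (adapted from \cite{LLi}), is the following. For each $m\in\N\minus\{1,n\}$ with $y(m)\neq 0$, introduce the auxiliary vector $z:=\frac{1}{2}e_n+\frac{\mathrm{sgn}(y(m))}{2}e_m\in\Sp$, whose \emph{unique} $1$-set is $\{n,m\}$, and invoke surjectivity a \emph{second} time to produce $u\in\Sp$ with $T(u)=-T(z)$. Then $\Vert u-z\Vert=2$, and Lemma~\ref{lemma1} (with the uniqueness of the $1$-set of $z$) forces $\{n,m\}\subset F$ for some $F\in\mathcal{A}_u^1$ together with $u(n)\leqslant 0$; this yields $\Vert u-e_n\Vert\geqslant \vert u(n)-1\vert+\sum_{k\in F\minus\{n\}}\vert u(k)\vert=2$. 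Now the two antipodal hypotheses interlock: $\Vert z-y\Vert=\Vert T(z)-T(y)\Vert=\Vert -T(u)+T(e_n)\Vert=\Vert e_n-u\Vert=2$, and applying Lemma~\ref{lemma1} once more to this equality gives, at the coordinate $m$, $\mathrm{sgn}(y(m))=-\mathrm{sgn}(z(m))=-\mathrm{sgn}(y(m))$, a contradiction. Hence $y(m)=0$ for every $m\in\N\minus\{1,n\}$, and since $\{1\}\in\Sc_\alpha^{MAX}$ the norm collapses to $\Vert y-e_n\Vert=\max\{\vert y(1)\vert,\vert 1-y(n)\vert\}$, which can equal $2$ only if $y(n)=-1$. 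Without this construction --- a doubly supported midpoint-type vector with a unique $1$-set plus a second surjectivity witness $u$ --- the decoupling problem remains unresolved, so the proposal has a genuine gap at its central step.
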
 
	\begin{proof}
		Assume that $T(y)=-T(e_n)$. Towards a contradiction, assume that there exists $m\in \N\minus \{1,n\} $ such that $y(m)\neq 0$. 
		Let us define  \[z:= \frac{1}{2}e_n+\frac{\mathrm{sgn}(y(m))}{2}e_m.\]
		It is clear that $z\in \Sp$. Hence, there exists $u\in \Sp$ such that $T(u)=-T(z)$, and so 
		\[\Vert u-z\Vert =\Vert T(u)-T(z)\Vert =2. \]
		So,  there exists $F\in \Sc_\alpha$ such that $\sum_{k\in F} \vert u(k)-z(k)\vert =2$. Since $\{n,m\}$ is the only $1$-set  of $z$, it follows by Lemma~\ref{lemma1} that $\{n,m\} \subset F$, $F \in \mathcal{A}_u^1$  and that $u(n)=0$ or $u(n)<0$. Hence, 
		\[ \Vert u -e_n \Vert \geqslant \sum_{k\in F} \vert u(k)-e_n(k)\vert= \vert u(n)-1\vert + \sum_{k\in F\minus \{n\}}\vert u(k)\vert =1+\vert u(n)\vert + \sum_{k\in F\minus \{n\}}\vert u(k)\vert=2. \]
		So, 
		\[\Vert z-y\Vert = \Vert T(z)-T(y)\Vert = \Vert -T(u)+ T(e_n)\Vert =\Vert e_n-u\Vert =2.  \]
		As before, since $\{n,m\}$ is the only $1$-set  of $z$, we get by Lemma~\ref{lemma1} that $y(m)=0$ or $\mathrm{sgn}(y(m))=-\mathrm{sgn}(z(m))$. Since we are assuming that $y(m)\neq 0$ and since $z(m)=\frac{\mathrm{sgn}(y(m))}{2}$, it follows that $\mathrm{sgn}(y(m))=-\mathrm{sgn}(z(m))=-\mathrm{sgn}(y(m)) $, which is a contradiction. Therefore, $y(m)=0$ for any $m\in \N\minus \{1,n\}$. 
		
		Now, assume that $y(n)\neq -1$. Since $y(m)=0$ for any $m\in \N\minus \{1,n\}$, it follows that $\Vert y-e_n\Vert =\max\{\vert 1-y(n)\vert, \vert y(1)\vert\} <2$, which is a contradiction because $\Vert y-e_n\Vert=\Vert T(y)-T(e_n)\Vert =\Vert -T(e_n)-T(e_n)\Vert=2 $. So, $y(n)=-1$. 
		
		The other cases are treated in the same way. 
	\end{proof}
	\begin{lemma}\label{lemma3}
		We have $T(-e_1)=-T(e_1)$ and  $T^{-1}(-e_1)=-T^{-1}(e_1)$.
	\end{lemma}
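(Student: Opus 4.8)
The plan is to mirror the argument of Lemma~\ref{l2} from the case $p>1$, replacing the now-unavailable Lemma~\ref{l1} by Lemma~\ref{lemma2}, and substituting the step that pinned down the first coordinate by a support-plus-norm argument. First I would use the surjectivity of $T$ to produce $x_1\in\Sp$ with $T(x_1)=-T(e_1)$; the goal is then to show $x_1=-e_1$. Since $T$ is an isometry, $\Vert x_1-e_1\Vert=\Vert T(x_1)-T(e_1)\Vert=\Vert -2T(e_1)\Vert=2$.

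The core of the proof is to show that $x_1(j)=0$ for every $j\in\N\minus\{1\}$. For a fixed such $j$, I would introduce auxiliary points $x_j,y_j\in\Sp$ with $T(x_j)=-T(e_j)$ and $T(y_j)=-T(-e_j)$. Because $j\geqslant 2$, Lemma~\ref{lemma2} applies and yields $x_j(j)=-1$ and $y_j(j)=1$. Using that $T$ is an isometry together with the fact that $\{1,j\}\notin\Sc_\alpha$ (which holds since $\Sc_\alpha^{MAX}=\{\{1\}\}$, so that $\Vert e_j\pm e_1\Vert=1$), I would compute
\[
\vert x_1(j)+1\vert\leqslant\Vert x_1-x_j\Vert=\Vert T(e_j)-T(e_1)\Vert=\Vert e_j-e_1\Vert=1,
\]
and likewise $\vert x_1(j)-1\vert\leqslant\Vert x_1-y_j\Vert=\Vert -e_j-e_1\Vert=1$. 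These two inequalities together force $x_1(j)=0$.

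Once $x_1$ is known to be supported on $\{1\}$, the relation $\Vert x_1\Vert=1$ gives $\vert x_1(1)\vert=1$, so $x_1=\pm e_1$; the value $\Vert x_1-e_1\Vert=2$ then rules out $x_1=e_1$, leaving $x_1=-e_1$, that is, $T(-e_1)=-T(e_1)$. The statement for $T^{-1}$ follows by the identical argument applied to $T^{-1}$, invoking the corresponding halves of Lemma~\ref{lemma2}.

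The only point requiring care — the \emph{main obstacle} — is that Lemma~\ref{lemma2} is stated only for indices $n\geqslant 2$, so it cannot be invoked directly to control the first coordinate of $x_1$. This is precisely why I determine $x_1(1)$ indirectly: I first kill all coordinates $j\geqslant 2$, then read off $\vert x_1(1)\vert=1$ from the norm and recover the sign from $\Vert x_1-e_1\Vert=2$. Everything else is a routine transcription of the $p>1$ computation, the identity $\Vert e_j\pm e_1\Vert=1$ being exactly what makes the two coordinatewise estimates collapse to $x_1(j)=0$.
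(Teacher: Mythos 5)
Your proof is correct and, at bottom, follows the same strategy as the paper's: both adapt the proof of Lemma~\ref{l2}, replacing the unavailable Lemma~\ref{l1} by Lemma~\ref{lemma2} to control the coordinates $j\geqslant 2$ of the preimage $x_1$ of $-T(e_1)$, via the same auxiliary points $x_j,y_j$ and the same identity $\Vert e_1\pm e_j\Vert=1$ (coming from the maximality of $\{1\}$ in $\Sc_\alpha$). The only real divergence is the treatment of the first coordinate and the order of operations. The paper settles $x_1(1)=-1$ \emph{first}: for $p=1$ the norm $\Vert x_1-e_1\Vert=2$ is attained on some $F\in\Sc_\alpha$; then $1\in F$ is forced, $F=\{1\}$ by maximality, and $\vert x_1(1)-1\vert=2$ gives $x_1(1)=-1$. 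Afterwards it kills each coordinate $n\geqslant 2$ with a \emph{single} auxiliary point chosen according to the sign of $x_1(n)$ (part 1) or part 2) of Lemma~\ref{lemma2}). You instead kill all coordinates $j\geqslant 2$ first, using both auxiliary points to obtain the two inequalities $\vert x_1(j)+1\vert\leqslant 1$ and $\vert x_1(j)-1\vert\leqslant 1$, and only then recover $x_1=-e_1$ from $\mathrm{supp}(x_1)\subseteq\{1\}$, $\Vert x_1\Vert=1$ and $\Vert x_1-e_1\Vert=2$. This is a sound workaround for the obstacle you correctly identified (Lemma~\ref{lemma2} says nothing about $n=1$), and it has the mild advantage of not invoking the $p=1$ norm-attainment fact at all; the paper's route, by contrast, pins down the sign of $x_1(1)$ immediately but leans on norm attainment. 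Both arguments are complete and essentially interchangeable.
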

	\begin{proof}
		Let $x\in \Sp$ be such that $T(x)=-T(e_1)$. We need to show that $x=-e_1$. Since $T(x)=-T(e_1)$, it follows that
		\[ \Vert x-e_1\Vert =\Vert T(x)-T(e_1)\Vert=2.  \]
		So, there exists $F\in \Sc_\alpha$ such that $\sum_{k\in F}\vert x(k)-e_1(k)\vert =2$. If $1\notin F$, then \[\sum_{k\in F}\vert x(k)-e_1(k)\vert =\sum_{k\in F}\vert x(k)\vert \leqslant 1,\] which is a contradiction. Hence $1\in F$ and so $F=\{1\}$ since $\{1\}\in \Sc_\alpha^{MAX}$. Therefore, $\vert 1-x(1)\vert =2$ which implies that $x(1)=-1$. 
		
		Towards a contradiction, assume that there exists $n\in \N\minus \{1\}$ such that $x(n)\neq 0$. Assume first that $x(n)> 0$ . Let $y \in \Sp$ be such that $T(y)=-T(e_n)$. By Lemma~\ref{lemma2}, we have $y(n)=-1$ and hence 
		\[\Vert  y- x\Vert \geqslant\vert y(n)-x(n)\vert =1+x(n)>1, \]
		which is a contradiction since 
		\[\Vert y-x\Vert =\Vert T(y)-T(x)\Vert =\Vert -T(e_n)+T(e_1)\Vert=\Vert e_1-e_n\Vert =1.\]
		In the same way, we get a contradiction when $x(n)<0$ (in this case, we let $y\in \Sp$ be such that $T(y)=-T(-e_n)$ and we use 2) of Lemma~\ref{lemma2}).
		
		Therefore, $x(1)=-1$ and $x(i)=0$ for all $i\in \N\minus \{1\}$, \textit{i.e.} $x=-e_1$. 
		
		Analogously, we show that $T^{-1}(-e_1)=-T^{-1}(e_1)$.
	\end{proof}
	\begin{lemma}\label{lemma4}
		There exists a sign $\theta_1$ such that $T(e_1)=\theta_1 e_1$ and $T^{-1}(e_1)=\theta_1 e_1$.     
	\end{lemma}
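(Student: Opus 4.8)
The plan is to mirror the proof of Lemma~\ref{l4}: reduce the statement to showing $T(e_1)=\pm e_1$, and obtain this by transporting through $T$ a purely metric characterization of the pair $\{\pm e_1\}$. The characterization I would use is the $p=1$ analogue of Lemma~\ref{l3}: for $u\in\Sp$ one has $u\in\{\pm e_1\}$ if and only if
\[
\min\{\Vert u+x\Vert,\Vert u-x\Vert\}\leqslant 1\qquad\text{for every } x\in\Sp .
\]
Since Lemma~\ref{l3} was proved using $p$-th powers and lives in the section where $p\in(1,\infty)$, I cannot invoke it directly and would reprove it here for $p=1$; the transfer step afterwards is then formally identical to Lemma~\ref{l4} and uses only that $T$ is a surjective isometry together with Lemma~\ref{lemma3}.

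The ``$u=\pm e_1\Rightarrow$'' direction is easy and I would dispatch it first: for $u=\pm e_1$, since $\{1\}\in\Sc_\alpha^{MAX}$, every $F\in\Sc_\alpha$ other than $\{1\}$ avoids $1$, so $\Vert e_1\pm x\Vert=\max\{|1\pm x(1)|,\sup_{F\not\ni 1}\sum_{i\in F}|x(i)|\}$ with the supremum $\leqslant\Vert x\Vert=1$; choosing the sign opposite to that of $x(1)$ makes the first term $\leqslant 1$ as well. The hard direction is the converse, and this is where I expect the main obstacle to lie: given $u\in\Sp$ with $u\neq\pm e_1$ (so $u(i)\neq 0$ for some $i\geqslant 2$), I must build $x\in\Sp$ with $\Vert u+x\Vert>1$ and $\Vert u-x\Vert>1$. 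I would split into cases. If $|u(1)|=1$, take $x=u(1)e_1-\mathrm{sgn}(u(i))e_i$, which lies in $\Sp$ because $\{1,i\}\notin\Sc_\alpha$; then $\Vert u+x\Vert\geqslant 2$ is witnessed by $\{1\}$ and $\Vert u-x\Vert\geqslant 1+|u(i)|$ by $\{i\}$. If $|u(1)|<1$ and some $j\geqslant 2$ has $u(j)=0$, take $x=e_j$ and use $\{i,j\}\in\Sc_\alpha$ to get $\Vert u\pm e_j\Vert\geqslant |u(i)|+1$. The delicate remaining case is $|u(1)|<1$ with $u(j)\neq 0$ for all $j\geqslant 2$: here I would pick a $1$-set $F$ for $u$ (Fact~\ref{normeatteintep1}), note $1\notin F$ and $b:=\max_{k\in F}|u(k)|<1$, choose (using $\Vert u-\sum_{k\leqslant n}u(k)e_k\Vert\to 0$) an index $j>F$ with $|u(j)|<1-b$, and set $x=\mathrm{sgn}(u(j))e_j$; then $\Vert u+x\Vert\geqslant 1+|u(j)|$, while spreading gives $(F\minus\{i_0\})\cup\{j\}\in\Sc_\alpha$, whence $\Vert u-x\Vert\geqslant (1-b)+(1-|u(j)|)>1$. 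The subtlety, and the reason the bare $p>1$ tail argument does not transcribe verbatim, is precisely the possibility $|u(1)|=1$ with an infinite tail, which degenerates the choice of $j$ and forces the separate $e_1$-reinforcement trick above.

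With the characterization in hand, the conclusion is routine. Writing $w:=T(e_1)$, for any $y\in\Sp$ put $x:=T^{-1}(y)$; then $\Vert w-y\Vert=\Vert e_1-x\Vert$, and using Lemma~\ref{lemma3} in the form $T(e_1)+T(x)=-(T(-e_1)-T(x))$ gives $\Vert w+y\Vert=\Vert e_1+x\Vert$. Hence $\min\{\Vert w+y\Vert,\Vert w-y\Vert\}=\min\{\Vert e_1+x\Vert,\Vert e_1-x\Vert\}\leqslant 1$, and as $y$ ranges over $\Sp$ by surjectivity, $w$ satisfies the characterizing property, so $w=\theta_1 e_1$ for a sign $\theta_1$. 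Finally, applying $T^{-1}$ to $T(e_1)=\theta_1 e_1$ yields $e_1=T^{-1}(\theta_1 e_1)$; if $\theta_1=1$ this is immediate, and if $\theta_1=-1$ then $e_1=T^{-1}(-e_1)=-T^{-1}(e_1)$ by Lemma~\ref{lemma3}, so in both cases $T^{-1}(e_1)=\theta_1 e_1$, completing the proof.
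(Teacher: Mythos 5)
Your proposal is correct and takes essentially the same approach as the paper, whose proof of this lemma consists precisely of the remark that Lemma~\ref{l3} remains valid for $p=1$ and that the transfer argument of Lemma~\ref{l4} then goes through using Lemma~\ref{lemma3}. If anything, your case analysis is more careful than the paper's: the case $\vert u(1)\vert=1$ with $u(j)\neq 0$ for all $j\geqslant 2$, which you treat with the $e_1$-reinforcement trick, is not explicitly covered by the paper's proof of Lemma~\ref{l3} for any $p\in[1,\infty)$ (there the unique norm-attaining set can be $\{1\}$, so $b=1$ and no admissible $j$ exists), so this is a refinement of the cited argument rather than a $p=1$-specific obstacle.
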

	\begin{proof}
		The proof is the same as that of Lemma~\ref{l4} since $T(-e_1)=-T(e_1)$ by Lemma~\ref{lemma3}, and since Lemma~\ref{l3} works as well for $p=1$.
	\end{proof}
	\begin{lemma}\label{lemma5}
		$T(A)=A$ where $A$ is the set of all  $x\in \Sp$ such that $1\in \mathrm{supp}(x)$.
	\end{lemma}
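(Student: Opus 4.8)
The plan is to mirror the proof of Lemma~\ref{l5}, whose argument does not actually rely on $p>1$: it uses only the structural fact that the single element of $\Sc_\alpha$ containing the index $1$ is the singleton $\{1\}$ (because $\{1\}\in\Sc_\alpha^{MAX}$ and $\Sc_\alpha$ is hereditary), together with the identities $T(e_1)=\theta_1e_1$ and $T(-e_1)=-T(e_1)$, which in the present setting are supplied by Lemmas~\ref{lemma4} and~\ref{lemma3}. Thus the $p=1$ case requires no new idea, only the substitution of the $p=1$ versions of the auxiliary lemmas.

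First I would take $x\in A$, so that $x(1)\neq0$, and set $\varepsilon:=-\mathrm{sgn}(x(1))$. Arguing by contradiction, suppose $T(x)(1)=0$. Testing the norm against the singleton $\{1\}\in\Sc_\alpha$ gives
\[ \Vert x-\varepsilon e_1\Vert\geqslant\vert x(1)-\varepsilon\vert=1+\vert x(1)\vert>1, \]
since $x(1)$ and $-\varepsilon$ share the same sign. On the other hand, by Lemmas~\ref{lemma3} and~\ref{lemma4} we have $T(\varepsilon e_1)=\varepsilon\theta_1e_1$, so $\Vert T(x)-T(\varepsilon e_1)\Vert=\Vert T(x)-\varepsilon\theta_1e_1\Vert$.

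The key computation is then that this last quantity equals exactly $1$: because $T(x)(1)=0$, the vector $T(x)-\varepsilon\theta_1e_1$ takes the value $-\varepsilon\theta_1$ at the index $1$ and agrees with $T(x)$ elsewhere; any $F\in\Sc_\alpha$ with $1\in F$ must equal $\{1\}$ (as $\{1\}\in\Sc_\alpha^{MAX}$), contributing exactly $1$, while any $F$ with $1\notin F$ contributes $\sum_{i\in F}\vert T(x)(i)\vert\leqslant\Vert T(x)\Vert=1$. Hence $\Vert T(x)-\varepsilon\theta_1e_1\Vert=1$, contradicting the isometry equality $\Vert x-\varepsilon e_1\Vert=\Vert T(x)-T(\varepsilon e_1)\Vert$. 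This forces $T(x)(1)\neq0$, \textit{i.e.} $T(x)\in A$.

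Finally, running the identical argument with $T^{-1}$ in place of $T$ (invoking the $T^{-1}$ parts of Lemmas~\ref{lemma3} and~\ref{lemma4}) shows $T^{-1}(x)\in A$ for every $x\in A$, which together with the previous paragraph yields $T(A)=A$. I do not anticipate any genuine obstacle: the delicate Minkowski-type equality analysis needed for $p>1$ elsewhere is here replaced by the trivial observation that a single coordinate of modulus $1$ at the index $1$ already realizes the norm, so for this particular statement the case $p=1$ is no harder than the case $p>1$.
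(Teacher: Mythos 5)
Your proposal is correct and coincides with the paper's own treatment: the paper proves this lemma by simply stating that the proof is the same as that of Lemma~\ref{l5}, which is exactly what you carry out, substituting Lemmas~\ref{lemma3} and~\ref{lemma4} for Lemmas~\ref{l2} and~\ref{l4}. Your explicit verification that $\Vert T(x)-\varepsilon\theta_1 e_1\Vert=1$ (using that any $F\in\Sc_\alpha$ containing $1$ equals $\{1\}$ since $\{1\}\in\Sc_\alpha^{MAX}$) is precisely the computation implicit in the paper's argument, valid for $p=1$ without modification.
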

	\begin{proof}
		The proof is the same as that of Lemma~\ref{l5}.
	\end{proof}
	\begin{lemma}\label{lemma6}
		For any $n\in \N\minus \{1\}$, we have $T(-e_n)=-T(e_n)$ and $T^{-1}(-e_n)=-T^{-1}(e_n)$.
	\end{lemma}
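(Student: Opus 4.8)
The plan is to follow the structure of the proof of Lemma~\ref{l6}, the only difference being that Lemma~\ref{l1}---which fails for $p=1$---must be replaced by Lemma~\ref{lemma2}, already established in this section. Fix $n\in\N\minus\{1\}$. Since $T$ is surjective, choose $x_n\in\Sp$ with $T(x_n)=-T(e_n)$; it then suffices to prove $x_n=-e_n$, for then $T(-e_n)=T(x_n)=-T(e_n)$.

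First I would read off the value at coordinate $n$ directly from Lemma~\ref{lemma2}: since $T(x_n)=-T(e_n)$, part 1) gives $x_n(n)=-1$. This is exactly the place where, unlike in the $p>1$ case, one cannot derive the coordinate value from the norm equality $\Vert x_n-e_n\Vert=2$. Next I would kill all coordinates $j\geqslant 2$ with $j\neq n$: because $\{n,j\}\in\Sc_\alpha$ for such $j$ and $x_n(n)=-1$, any nonzero $x_n(j)$ would force $\Vert x_n\Vert\geqslant\vert x_n(n)\vert+\vert x_n(j)\vert>1$, contradicting $x_n\in\Sp$. Hence $x_n(j)=0$ for every $j\geqslant 2$ with $j\neq n$.

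It then remains to show $x_n(1)=0$, i.e. $x_n\notin A$; here the combinatorial argument is useless since $\{1\}\in\Sc_\alpha^{MAX}$, so I would invoke Lemma~\ref{lemma5} instead. Since $e_n\notin A$ for $n\geqslant 2$ and $T(A)=A$ (equivalently $T$ maps $\Sp\minus A$ onto $\Sp\minus A$), we get $T(e_n)\notin A$, hence $-T(e_n)\notin A$, and therefore $x_n=T^{-1}(-T(e_n))\notin A$. Combining the three steps yields $x_n=-e_n$, as desired.

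Finally, the statement for $T^{-1}$ follows by the symmetric argument: pick $w_n\in\Sp$ with $T^{-1}(w_n)=-T^{-1}(e_n)$, obtain $w_n(n)=-1$ from the $T^{-1}$-version in part 1) of Lemma~\ref{lemma2}, repeat the norm argument to clear the coordinates $j\geqslant 2$ with $j\neq n$, and use Lemma~\ref{lemma5} (now that $T^{-1}(e_n)\notin A$, so $-T^{-1}(e_n)\notin A$) to clear coordinate $1$. I do not anticipate a genuine obstacle here; the only subtle point is that the value $-1$ at coordinate $n$ must come from Lemma~\ref{lemma2} rather than from a norm-attainment argument, precisely because the $p=1$ analogue of Lemma~\ref{l1} is false.
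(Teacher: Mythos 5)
Your proposal is correct and is essentially identical to the paper's proof: both obtain $x_n(n)=-1$ from Lemma~\ref{lemma2}, clear the coordinates $j\geqslant 2$, $j\neq n$ via $\{n,j\}\in\Sc_\alpha$, and clear coordinate $1$ via Lemma~\ref{lemma5}, with the $T^{-1}$ statement handled symmetrically. Your remark that Lemma~\ref{lemma2} is precisely the substitute for Lemma~\ref{l1} (which fails for $p=1$) matches the paper's own design of this section.
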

	\begin{proof}
		Let $n\in \N\minus \{1\}$ and $y \in \Sp$ be such that $T(y)=-T(e_n)$. By Lemma ~\ref{lemma2}, we have $y(n)=-1$ and hence $y(i)=0$ for all $i\in \N\minus\{1,n\}$ since otherwise we get $\Vert y\Vert >1$ because $\{i,n\}\in \Sc_\alpha$. Moreover, since $T(A)=A$ by Lemma~\ref{lemma5}, and since $e_n\in \Sp\minus A$, it follows that $T(e_n)\in \Sp\minus A$ and so $y=T^{-1}(-T(e_n))\in \Sp\minus A$. Hence, $y(1)=0$ and so $y=-e_n$. In the same way one shows that $T^{-1}(-e_n)=-T^{-1}(e_n)$.
	\end{proof}
	
	\begin{lemma}\label{lemma7}
		Let $n\in \N\minus \{1\}$ and $x\notin\{ e_1, -e_1\}$. Then 
		\[ \Vert e_n+x\Vert +\Vert e_n-x\Vert =2 \; \Longleftrightarrow \; x=\pm e_n. \]
	\end{lemma}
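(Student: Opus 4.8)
The plan is to prove the two implications separately, the reverse one being immediate and the forward one carrying all the content; throughout I treat $x$ as a point of $\Sp$, i.e. $\Vert x\Vert=1$, which is the standing assumption. For the reverse implication, if $x=\pm e_n$ then one of the two terms vanishes while the other equals $\Vert 2e_n\Vert=2$, so the identity holds; note that $\pm e_n\neq\pm e_1$ because $n\geqslant 2$, so this is consistent with the hypothesis $x\notin\{e_1,-e_1\}$.

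For the forward implication I would assume $\Vert e_n+x\Vert+\Vert e_n-x\Vert=2$ and extract information by testing the norm on a single set $H\in\Sc_\alpha$ with $n\in H$. Since $(e_n\pm x)(n)=1\pm x(n)$ and $|(e_n\pm x)(k)|=|x(k)|$ for $k\neq n$, the key estimate is that for every such $H$,
\begin{align*}
2=\Vert e_n+x\Vert+\Vert e_n-x\Vert &\geqslant |1+x(n)|+|1-x(n)|+2\sum_{k\in H\minus\{n\}}|x(k)| \\
&= 2+2\sum_{k\in H\minus\{n\}}|x(k)|,
\end{align*}
where I used $|x(n)|\leqslant\Vert x\Vert=1$ (as $\{n\}\in\Sc_\alpha$) to get $|1+x(n)|+|1-x(n)|=2$. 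Hence $\sum_{k\in H\minus\{n\}}|x(k)|=0$ for \emph{every} $H\in\Sc_\alpha$ containing $n$. Recalling that $\{n,m\}\in\Sc_\alpha$ for all $m\geqslant 2$ with $m\neq n$, the choice $H=\{n,m\}$ forces $x(m)=0$. Thus $x$ is supported on $\{1,n\}$ and I may write $x=c\,e_1+d\,e_n$ with $|d|\leqslant 1$.

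It then remains to finish by an elementary computation in the one–parameter family $c\,e_1+d\,e_n$. Since $\Sc_\alpha^{MAX}=\{\{1\}\}$, the set $\{1\}$ is maximal, so no member of $\Sc_\alpha$ contains both $1$ and $n$; consequently $\Vert x\Vert=\max(|c|,|d|)=1$ and $\Vert e_n\pm x\Vert=\max(|c|,|1\pm d|)$. As $|d|\leqslant 1$ gives $|1+d|+|1-d|=2$, the hypothesis $\max(|c|,1+d)+\max(|c|,1-d)=2$ forces $|c|\leqslant 1-|d|$; combined with $\max(|c|,|d|)=1$ this leaves only $|d|=1,\ c=0$ (giving $x=\pm e_n$) or $|d|=0,\ |c|=1$ (giving $x=\pm e_1$), and the latter is removed exactly by the assumption $x\notin\{e_1,-e_1\}$. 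Hence $x=\pm e_n$.

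The main obstacle, and the reason for the hypothesis, is the asymmetric role of the index $1$: the clean estimate of the second paragraph annihilates every coordinate of $x$ sitting at an index $\geqslant 2$ distinct from $n$, but it says nothing about $x(1)$ because $\{1,n\}\notin\Sc_\alpha$ prevents one from placing $1$ alongside $n$ in a testing set $H$. Recognizing that the residual freedom is precisely $c\,e_1+d\,e_n$, and that its equality case singles out both $\pm e_n$ and $\pm e_1$, is where the excluded cases $x=\pm e_1$ enter; this is the only delicate point, the rest being the routine $\max$–computation above.
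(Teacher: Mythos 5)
Your proof is correct, and its organization is genuinely different from the paper's. The paper proceeds by four successive reductio steps: first $x(n)\neq 0$ (using some $m\geqslant 2$ with $x(m)\neq 0$), then $\vert x(1)\vert\neq 1$, then $\vert x(n)\vert=1$ (whence $x$ vanishes off $\{1,n\}$), and finally $x(1)=0$, each step exhibiting a configuration with $\Vert x+\varepsilon e_n\Vert+\Vert x-\varepsilon e_n\Vert>2$. You compress the first three steps into one uniform estimate: testing both norms on an arbitrary $H\in\Sc_\alpha$ containing $n$ and using $\vert 1+x(n)\vert+\vert 1-x(n)\vert=2$ (valid since $\vert x(n)\vert\leqslant 1$; in fact the sum is always $\geqslant 2$, so no case distinction on $x(n)$ is ever needed), you annihilate every coordinate at an index in $\N\minus\{1,n\}$ in a single stroke. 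The endgame also differs: instead of the paper's two further contradiction arguments involving $x(1)$, you reduce to the two-parameter family $x=ce_1+de_n$ and compute the norm in closed form, $\Vert x\Vert=\max(\vert c\vert,\vert d\vert)$ and $\Vert e_n\pm x\Vert=\max(\vert c\vert,\vert 1\pm d\vert)$, so that the equality case of the resulting max-inequality visibly singles out exactly $\pm e_n$ and $\pm e_1$; this makes transparent why the hypothesis $x\notin\{e_1,-e_1\}$ is exactly what must be excluded. One cosmetic point: the fact you need (and actually use) is only that $\{1\}\in\Sc_\alpha^{MAX}$ together with heredity, which indeed implies no set of $\Sc_\alpha$ contains $1$ and $n$ simultaneously; the paper's displayed identity ``$\Sc_\alpha^{MAX}=\{\{1\}\}$'' should be read in that weaker sense, since $\Sc_\alpha^{MAX}$ has many other elements. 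Both proofs rest on the same two combinatorial inputs ($\{m,n\}\in\Sc_\alpha$ for distinct $m,n\geqslant 2$, and maximality of $\{1\}$); yours buys a shorter, case-free argument, while the paper's stays entirely within norm inequalities and never computes the norm on $\Span\{e_1,e_n\}$.
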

	\begin{proof}
		The reverse implication is obvious. For the forward implication. Since $x\in \Sp$ and $x\notin\{ e_1, -e_1\}$, it follows that there exists $m\in \N\minus\{1\} $ such that $x(m)\neq0$. 
		
		Assume first towards a contradiction that $x(n)=0$. Since $m\neq n \in \N\minus \{1\}$, it follows that $\{m,n\}\in \Sc_\alpha$ and hence 
		\[\Vert x+e_n\Vert + \Vert x-e_n\Vert\geqslant 1+\vert x(m)\vert + 1+\vert x(m)\vert>2,\]
		which is a contradiction. Hence, $x(n)\neq 0$. We denote by $\varepsilon$  the sign of $x(n)$.
		
		\smallskip
		Assume now  that $\vert x(1)\vert =1$. We have 
		\[\Vert x+ \varepsilon e_n\Vert \geqslant 1+ \vert x(n)\vert >1,\]
		and $\Vert x- \varepsilon e_n\Vert \geqslant1$ because we are assuming that $\vert x(1)\vert =1$. So, $\Vert x+ \varepsilon e_n\Vert+ \Vert x- \varepsilon e_n\Vert >2$ which is a contradiction. Hence, $\vert x(1)\vert \neq 1$. 
		
		\smallskip
		Assume that $\vert x(n)\vert\neq 1$. Then, since we have shown that $\vert x(1)\vert \neq 1$, one can find $m\in \N\minus \{1, n\}$ such that $x(m)\neq 0$, because otherwise we get that $\Vert x\Vert <1$. Since $\{m,n\}\in \Sc_\alpha$, it follows that 
		\[\Vert x+ \varepsilon e_n\Vert+ \Vert x- \varepsilon e_n\Vert\geqslant 1+\vert x(n)\vert +\vert x(m)\vert +1-\vert x(n)\vert +\vert x(m)\vert >2, \]
		which is a contradiction. Hence $\vert x(n)\vert =1$ and so $x(i)=0$ for all $i\in \N\minus \{1,n\}$. 
		
		\smallskip
		It remains to show that $x(1)=0$. Towards a contradiction, assume that $x(1)\neq 0$. Since $x(n)=\varepsilon$, we have $\Vert x+\varepsilon e_n\Vert = 2$. Moreover, $\Vert x-\varepsilon e_n\Vert \geqslant \vert x(1)\vert >0$. Hence, $\Vert x+\varepsilon e_n\Vert + \Vert x-\varepsilon e_n\Vert>2$, which is a contradiction. 
	\end{proof}
	\smallskip\smallskip
	As in the previous section, for every $i \in \N$, we set $f_i:=T(e_i)$ and $d_i:= T^{-1}(e_i)$.
	
	\begin{lemma}\label{lemma8}
		Let $n\in \N\minus \{1\}$ and $y\notin\{ e_1, -e_1\}$. Then 
		\[ \Vert y+f_n\Vert +\Vert y-f_n\Vert =2 \; \Longleftrightarrow \; y=\pm f_n, \]     
		and
		\[ \Vert y+d_n\Vert +\Vert y-d_n\Vert =2 \; \Longleftrightarrow \; y=\pm d_n.\]
	\end{lemma}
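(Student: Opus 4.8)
The plan is to reduce both equivalences to Lemma~\ref{lemma7} by transporting through the isometry $T$ (for $f_n$) and through $T^{-1}$ (for $d_n$). The reverse implications are immediate: if $y=\pm f_n$, then since $\Vert f_n\Vert=1$ one of the two summands is $\Vert 2f_n\Vert=2$ and the other is $0$, so the sum equals $2$; likewise for $d_n$.

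For the forward implication of the $f_n$ statement, I would first use surjectivity of $T$ to pick $x\in\Sp$ with $T(x)=y$. Then, invoking Lemma~\ref{lemma6} in the form $T(-e_n)=-T(e_n)=-f_n$ together with the isometry property of $T$, I rewrite the two terms as
\[
\Vert y+f_n\Vert=\Vert T(x)-T(-e_n)\Vert=\Vert x+e_n\Vert,\qquad
\Vert y-f_n\Vert=\Vert T(x)-T(e_n)\Vert=\Vert x-e_n\Vert .
\]
Thus the hypothesis $\Vert y+f_n\Vert+\Vert y-f_n\Vert=2$ becomes $\Vert x+e_n\Vert+\Vert x-e_n\Vert=2$, which is exactly the left-hand condition of Lemma~\ref{lemma7} applied to $x$. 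Before applying that lemma, I must check that $x\notin\{e_1,-e_1\}$: by Lemma~\ref{lemma4} we have $T(e_1)=\theta_1 e_1$ and $T(-e_1)=-\theta_1 e_1$, so $T(\{e_1,-e_1\})=\{e_1,-e_1\}$; were $x=\pm e_1$, we would get $y=T(x)\in\{e_1,-e_1\}$, contradicting the hypothesis $y\notin\{e_1,-e_1\}$. Hence Lemma~\ref{lemma7} yields $x=\pm e_n$, and therefore $y=T(x)=\pm f_n$, again using $T(-e_n)=-f_n$ from Lemma~\ref{lemma6}.

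The $d_n$ statement is proved in exactly the same way, replacing $T$ by $T^{-1}$ throughout: one picks $x$ with $T^{-1}(x)=y$, uses $T^{-1}(-e_n)=-d_n$ (Lemma~\ref{lemma6}) and the isometry property of $T^{-1}$ to convert the hypothesis into $\Vert x+e_n\Vert+\Vert x-e_n\Vert=2$, uses $T^{-1}(\{e_1,-e_1\})=\{e_1,-e_1\}$ (Lemma~\ref{lemma4}) to rule out $x=\pm e_1$, and concludes via Lemma~\ref{lemma7}. There is no genuine obstacle here: the entire content is the transport of Lemma~\ref{lemma7} across the (surjective) isometry, and the only point demanding care is the exclusion of the preimage from $\{\pm e_1\}$, which is precisely what the hypothesis $y\notin\{e_1,-e_1\}$ guarantees and what makes Lemma~\ref{lemma7} applicable.
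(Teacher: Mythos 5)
Your proof is correct and follows essentially the same route as the paper, which likewise deduces the statement from Lemmas~\ref{lemma6} and~\ref{lemma7} together with the invariance $T(\{e_1,-e_1\})=\{e_1,-e_1\}$ and the fact that $T$ is a surjective isometry; you merely spell out the transport through $T$ and $T^{-1}$ that the paper leaves implicit. One small attribution point: the identity $T(-e_1)=-\theta_1 e_1$ requires Lemma~\ref{lemma3} (namely $T(-e_1)=-T(e_1)$) combined with Lemma~\ref{lemma4}, not Lemma~\ref{lemma4} alone, and the paper indeed cites both.
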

	\begin{proof}
		Lemmas~\ref{lemma3} and~\ref{lemma4} imply that $T(\{ e_1, -e_1\})=\{ e_1, -e_1\}$. Hence, Lemma~\ref{lemma8} follows immediately from Lemmas~\ref{lemma6} and~\ref{lemma7} because $T$ is a surjective isometry. 
	\end{proof}
	\begin{lemma}\label{lemma9}
		Let $n \in \N\minus \{1\}$. For any $i \in \mathrm{supp}(f_n)$ $($resp. $i \in \mathrm{supp}(d_n))$, there exists a $1$-set $F$ of $f_n$ $($resp. $d_n)$ such that $i\in F$. 
	\end{lemma}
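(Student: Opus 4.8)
The plan is to reduce the existence of a $1$-set covering a given support coordinate to a purely metric condition that the surjective isometry $T$ can transport. First I would prove the following reformulation, valid for every $x\in\Sp$: if $i\in\mathrm{supp}(x)$ and $s:=\mathrm{sgn}(x(i))$, then $i$ belongs to some $1$-set of $x$ if and only if $\Vert x+s\,e_i\Vert=2$. The forward direction is immediate, since if $F$ is a $1$-set of $x$ with $i\in F$ then $\sum_{k\in F}\vert x(k)+s\,e_i(k)\vert=(1-\vert x(i)\vert)+(1+\vert x(i)\vert)=2$. For the converse, assume $i$ lies in no $1$-set of $x$; then every $F\in\Sc_\alpha$ with $i\in F$ satisfies $F\notin\mathcal{A}_x^1$ (otherwise $F\cap\mathrm{supp}(x)$ would be a $1$-set through $i$), so Fact~\ref{normeatteintep1} yields $\sum_{k\in F}\vert x(k)\vert<1-\varepsilon_x$, whence $\sum_{k\in F}\vert x(k)+s\,e_i(k)\vert<2-\varepsilon_x$, while sets avoiding $i$ contribute at most $1$; thus $\Vert x+s\,e_i\Vert<2$.

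Second, I would transport this to $x=f_n$. Fix $i_0\in\mathrm{supp}(f_n)$ and $s=\mathrm{sgn}(f_n(i_0))$, noting $i_0\neq1$ since $f_n(1)=0$ by Lemma~\ref{lemma5}. By Lemma~\ref{lemma6} we have $T(-d_{i_0})=-e_{i_0}$, so that $\Vert f_n+s\,e_{i_0}\Vert=\Vert T(e_n)-T(-s\,d_{i_0})\Vert=\Vert e_n+s\,d_{i_0}\Vert$, and likewise $\Vert f_n-s\,e_{i_0}\Vert=\Vert e_n-s\,d_{i_0}\Vert$. By the first step, the lemma for $f_n$ amounts to showing $\Vert e_n+s\,d_{i_0}\Vert=2$. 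Arguing by contradiction, if this fails then the same gap estimate forces $\Vert e_n-s\,d_{i_0}\Vert<2$ as well. But if $n$ belonged to a $1$-set $G$ of $d_{i_0}$, then choosing the sign so that the $n$-th entries add, evaluation on $G$ gives $\Vert e_n\pm s\,d_{i_0}\Vert\geq(1+\vert d_{i_0}(n)\vert)+(1-\vert d_{i_0}(n)\vert)=2$, contradicting both norms being $<2$. Hence the failure of the lemma for $f_n$ at $(n,i_0)$ forces $n$ to lie in no $1$-set of $d_{i_0}$.

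The main obstacle is that this last implication only trades the $f_n$-statement at $(n,i_0)$ against the symmetric $d_{i_0}$-statement at $(i_0,n)$: the two halves of the lemma are genuinely coupled, and the vector $\tfrac12 e_2+\tfrac12 e_3+c\,e_4$ (with $c$ small and $\{2,3,4\}\notin\Sc_\alpha$) shows that the covering property fails for general unit vectors, so surjectivity of $T$ must be used and the purely local characterization above cannot close the argument by itself. To break the symmetry I would first establish the $p=1$ analogue of Lemma~\ref{l8}, namely the support-and-sign correspondence $f_n(i_0)\neq0\iff d_{i_0}(n)\neq0$ with $\mathrm{sgn}(f_n(i_0))=\mathrm{sgn}(d_{i_0}(n))$, provable by the far-point construction of Lemma~\ref{lemma2} together with Lemma~\ref{lemma1}; this upgrades ``$n$ lies in no $1$-set of $d_{i_0}$'' into a genuine failure of the $d$-statement at $(i_0,n)$ with $n\in\mathrm{supp}(d_{i_0})$. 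I would then rule out any counterexample by treating both halves simultaneously and descending: among all pairs witnessing a failure of either half, one runs the symmetric transfer starting from the witness of smallest support-index and uses the spreading and heredity of $\Sc_\alpha$ to produce a smaller witness. The delicate point, and where I expect the real work to lie, is in making this descent genuinely strict, since the sign correspondence is exactly what must prevent the failure from merely reproducing itself.
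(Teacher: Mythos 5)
Your preparatory steps are correct: the characterization ``$i$ lies in a $1$-set of $x$ iff $\Vert x+\mathrm{sgn}(x(i))e_i\Vert=2$'' via the $\varepsilon$-gap of Fact~\ref{normeatteintep1}, the transfer $\Vert f_n\pm s e_{i_0}\Vert=\Vert e_n\pm s d_{i_0}\Vert$ via Lemma~\ref{lemma6}, and the observation (with your example $\tfrac12 e_2+\tfrac12 e_3+ce_4$) that the covering property is special to the vectors $f_n$, so surjectivity of $T$ must enter. But the proof is not complete, and the gap is exactly where you place it: the ``descent'' among failing pairs is never constructed, and there is no identified quantity that strictly decreases --- as you note yourself, the symmetric transfer between the failure of the $f$-statement at $(n,i_0)$ and the failure of the $d$-statement at $(i_0,n)$ is perfectly capable of reproducing itself, so the plan ends in a stalemate rather than a contradiction. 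There is also a circularity risk in your proposed way of breaking the symmetry: the support correspondence $f_n(m)\neq 0\iff d_m(n)\neq 0$ is Lemma~\ref{lemma11} of the paper, and its proof there \emph{uses} Lemma~\ref{lemma9} (one needs $m$ to lie in a $1$-set of $f_n$ to get $\Vert e_m+\varepsilon f_n\Vert=2$); indeed your own Step 1 shows that norm identities of this kind detect a coordinate precisely when it lies in a $1$-set, which is the statement being proved, so it is unclear that the correspondence can be obtained first by the ``far-point construction''.

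The paper closes the argument with a different and much shorter idea that your plan never touches: the special role of the first coordinate. Assuming $i\in\mathrm{supp}(f_n)$ lies in no $1$-set of $f_n$, one sets $y:=e_1+\varepsilon_{f_n}e_i$, where $\varepsilon_{f_n}$ is the $\varepsilon$-gap for $f_n$. Since $\{1\}\in\Sc_\alpha^{MAX}$, every $F\in\Sc_\alpha$ containing $i$ misses $1$ and satisfies $F\notin\mathcal{A}_{f_n}^1$, so $\sum_{k\in F}\vert y(k)\pm f_n(k)\vert\leqslant\varepsilon_{f_n}+\sum_{k\in F}\vert f_n(k)\vert<1$, while sets avoiding $i$ contribute at most $1$; hence $\Vert y+f_n\Vert=\Vert y-f_n\Vert=1$, i.e. $\Vert y+f_n\Vert+\Vert y-f_n\Vert=2$ with $y\notin\{\pm e_1,\pm f_n\}$. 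This contradicts Lemma~\ref{lemma8}, the rigidity statement that for $y\neq\pm e_1$ one has $\Vert y+f_n\Vert+\Vert y-f_n\Vert=2$ only for $y=\pm f_n$ --- and that lemma is exactly Lemma~\ref{lemma7} (a statement about the $e_n$'s, proved by direct computation) transported through the surjective isometry $T$. In other words, the paper packages the use of surjectivity once and for all into Lemma~\ref{lemma8} and then needs only the single auxiliary vector $y$; the missing endgame in your proposal is precisely this step.
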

	\begin{proof}
		Towards a contradiction, assume that there exists $i\in \mathrm{supp}(f_n)$ such that $i\notin F$ for any $1$-set   $F$  of $f_n$. From Lemma~\ref{lemma5}, we have  $\mathrm{supp}(f_n)\subset\N\minus \{1\}$ and hence $i\neq 1$. Let $y:=e_1+ \varepsilon_{f_n}e_i$ where $\varepsilon_{f_n}>0$ is the $\varepsilon$-gap for $f_n$. Note that the definition of the $\varepsilon$-gap is given in Fact~\ref{normeatteintep1}. It is clear that $y \in \Sp$. For any $F\in \Sc_\alpha$ such that $i\in F$ we have  $1\notin F $ (because $i\neq 1 $ and $\{1\}\in \Sc_\alpha^{MAX}$) and  $F \notin \mathcal{A}_{f_n}^1$, so by the definition of $\varepsilon_{f_n}$, it follows that 
		\[\sum_{k\in F}\vert y(k)\pm f_n(k)\vert = \vert y(i)\pm f_n(i)\vert+\sum_{k\in F\minus \{i\}}\vert f_n(k)\vert \leqslant \varepsilon_{f_n}+ \sum_{k\in F}\vert f_n(k)\vert< 1.\]
		For any $G\in  \Sc_\alpha$ such that $1,i\notin G$, we have  
		\[\sum_{k\in G}\vert y(k)\pm f_n(k)\vert = \sum_{k\in G}\vert f_n(k)\vert\leqslant1. \]
		Therefore, $\Vert y \pm  f_n\Vert =1$ because $\vert y(1)\pm f_n(1)\vert =1$. So, $\Vert y+f_n\Vert +\Vert y-f_n\Vert =2$, where $y\notin \{\pm e_1, \pm f_n\}$ (because $1\notin \mathrm{supp}(f_n)$), which contradicts Lemma~\ref{lemma8}. 
		
		In the same way one shows that for any $i \in \mathrm{supp}(d_n)$, there exists a $1$-set $F$ of $d_n$ such that $i\in F$.      \end{proof}
	\begin{lemma}\label{lemma10}
		For any $n\in \N\minus \{1\}$, $f_n$ and $d_n$ belong to $c_{00}$. 
	\end{lemma}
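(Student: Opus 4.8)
The plan is to read off the finiteness of $\mathrm{supp}(f_n)$ and $\mathrm{supp}(d_n)$ directly from the two preceding results, Lemma~\ref{lemma9} and Fact~\ref{normeatteintep1}, which between them already carry all of the work. No new estimate should be needed at this stage.

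First I would fix $n\in\N\minus\{1\}$ and observe that Lemma~\ref{lemma9} says exactly that each index $i\in\mathrm{supp}(f_n)$ belongs to some $1$-set of $f_n$; consequently
\[\mathrm{supp}(f_n)\subseteq\bigcup\{F:\;F\text{ is a }1\text{-set of }f_n\}.\]
By part~1) of Fact~\ref{normeatteintep1} there are only finitely many $1$-sets of $f_n$, and each such $F$ lies in $\Sc_\alpha$ and is therefore a finite subset of $\N$. Hence the right-hand side above is a finite union of finite sets, so $\mathrm{supp}(f_n)$ is finite and $f_n\in c_{00}$. Replacing $f_n$ by $d_n$ throughout and invoking the corresponding statements for $d_n$ in Lemma~\ref{lemma9} and Fact~\ref{normeatteintep1} gives $d_n\in c_{00}$ in exactly the same way.

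In this argument there is essentially no obstacle remaining: the delicate input---that every support index sits in an exact norming ($1$-)set, and that only finitely many such norming sets occur---has already been isolated in Lemma~\ref{lemma9} and Fact~\ref{normeatteintep1}. The only point to verify here is the elementary set-theoretic fact that a finite union of finite sets is finite, so I expect the formal write-up to occupy just a few lines.
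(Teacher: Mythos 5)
Your proposal is correct and follows the paper's proof essentially verbatim: both arguments combine Lemma~\ref{lemma9} (every support index lies in some $1$-set) with Fact~\ref{normeatteintep1}~1) (there are only finitely many $1$-sets, each finite), concluding that $\mathrm{supp}(f_n)$ is contained in a finite union of finite sets, and treat $d_n$ symmetrically. The only cosmetic difference is that the paper phrases this via the set $B$ of all indices belonging to some $1$-set and notes $\mathrm{supp}(f_n)=B$, whereas you only use the inclusion, which suffices.
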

	\begin{proof}
		By   Fact~\ref{normeatteintep1}, there exists a finite number of $1$-sets for $f_n$, and every $1$-set of $f_n$ is a finite subset of $\N$. Hence, $B:=\{i\in \N; \;  i\in F \;\text{for some $1$-set $F$ of $f_n$}\}$ is a finite set. By Lemma~\ref{lemma9}, we have $\mathrm{supp}(f_n)= B $ and hence $f_n\in c_{00}$. In the same way one shows that $d_n\in c_{00}$.
	\end{proof}
	\begin{lemma}\label{lemma11}
		Let $n,m\in \N\minus \{1\}$. Then, $f_n(m)\neq0 $ if and only if $d_m(n)\neq0$. 
	\end{lemma}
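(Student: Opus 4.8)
The plan is to follow the skeleton of the proof of Lemma~\ref{l8}, but to replace its strict-convexity step—precisely the ingredient that collapses at $p=1$—by an argument that reads off $d_m(n)\neq 0$ from the \emph{gap} between two norms rather than from a single strict inequality. The first ingredient I would record is that, for \emph{every} sign $\varepsilon$,
\[\Vert f_n+\varepsilon e_m\Vert=\Vert e_n+\varepsilon d_m\Vert\qquad\text{and}\qquad \Vert f_n-\varepsilon e_m\Vert=\Vert e_n-\varepsilon d_m\Vert.\]
Indeed, writing $\varepsilon e_m=-T(w)$ with $w=\mp d_m$ according to the sign of $\varepsilon$ (using $T^{-1}(-e_m)=-d_m$ from Lemma~\ref{lemma6}), the fact that $T$ is a surjective isometry gives $\Vert f_n+\varepsilon e_m\Vert=\Vert T(e_n)-T(w)\Vert=\Vert e_n-w\Vert=\Vert e_n+\varepsilon d_m\Vert$, and likewise for the minus sign.

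Next I would compute both norms on the $f_n$ side. Assuming $f_n(m)\neq0$ and setting $\varepsilon:=\mathrm{sgn}(f_n(m))$, Lemma~\ref{lemma9} provides a $1$-set $F$ of $f_n$ with $m\in F$. Summing over $F$ and using $\vert f_n(m)+\varepsilon\vert=1+\vert f_n(m)\vert$ yields $\sum_{k\in F}\vert f_n(k)+\varepsilon e_m(k)\vert=2$, so $\Vert f_n+\varepsilon e_m\Vert=2$, the reverse inequality being the triangle inequality. For the opposite sign, fix any $G\in\Sc_\alpha$: if $m\notin G$ the relevant sum is at most $1$, while if $m\in G$ then $\vert f_n(m)-\varepsilon\vert=1-\vert f_n(m)\vert$ together with $\sum_{k\in G\minus\{m\}}\vert f_n(k)\vert\le1$ forces the sum to be at most $2-\vert f_n(m)\vert$. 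Taking the supremum over $G$ gives $\Vert f_n-\varepsilon e_m\Vert\le 2-\vert f_n(m)\vert<2$.

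Combining these with the transfer identities, I obtain $\Vert e_n+\varepsilon d_m\Vert=2$ while $\Vert e_n-\varepsilon d_m\Vert<2$. The key dichotomy is then immediate: if one had $d_m(n)=0$, then $\vert e_n(k)+\varepsilon d_m(k)\vert=\vert e_n(k)-\varepsilon d_m(k)\vert$ for every $k\in\N$ (the $d_m$-term vanishes at $k=n$, and the $e_n$-term vanishes for $k\neq n$); since $\Vert\,\cdot\,\Vert$ depends only on the moduli of the coordinates, this would force $\Vert e_n+\varepsilon d_m\Vert=\Vert e_n-\varepsilon d_m\Vert$, contradicting $2>\Vert e_n-\varepsilon d_m\Vert$. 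Hence $d_m(n)\neq0$, proving $f_n(m)\neq0\Rightarrow d_m(n)\neq0$. The converse follows by applying this implication to the surjective isometry $T^{-1}$, for which the vectors $T^{-1}(e_i)=d_i$ play the role of the $f_i$ and the vectors $T(e_i)=f_i$ play the role of the $d_i$; after relabelling this reads $d_m(n)\neq0\Rightarrow f_n(m)\neq0$.

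The delicate point—and the reason the case $p=1$ is treated separately from $p>1$—is that at $p=1$ the bound $\Vert f_n+\varepsilon e_m\Vert=2$ no longer \emph{strictly} exceeds $2$, so the single clean contradiction used in Lemma~\ref{l8} is simply not available. The argument survives only because I simultaneously control $\Vert f_n-\varepsilon e_m\Vert$ and detect the nonvanishing of $d_m(n)$ through the strict separation between the two norms. The main verification to carry out carefully is therefore the bound $\Vert f_n-\varepsilon e_m\Vert<2$, which is where the combinatorial structure (via the $1$-set of Lemma~\ref{lemma9} and heredity of $\Sc_\alpha$) enters.
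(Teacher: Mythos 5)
Your proof is correct and follows essentially the same route as the paper: both establish $\Vert f_n+\varepsilon e_m\Vert=2$ via the $1$-set supplied by Lemma~\ref{lemma9}, show $\Vert f_n-\varepsilon e_m\Vert<2$, transfer both norms through $T$ using Lemma~\ref{lemma6}, and read off $d_m(n)\neq 0$ from the resulting gap. The only differences are cosmetic and harmless: where you bound $\Vert f_n-\varepsilon e_m\Vert$ by direct computation and conclude via $1$-unconditionality (if $d_m(n)=0$, the vectors $e_n\pm\varepsilon d_m$ have the same coordinate moduli, hence equal norms), the paper instead invokes Lemma~\ref{lemma1} together with Fact~\ref{normeatteintep1} to extract a $1$-set of $d_m$ containing $n$ and derives the same contradiction.
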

	\begin{proof}
		Let $f_n(m)\neq0 $ and $\varepsilon$ be the sign of $f_n(m)$. Since $m\in \mathrm{supp}(f_n)$, Lemma~\ref{lemma9} implies that there exists $F \in \Sc_\alpha$ such that $m\in F $ and $\sum_{k\in F}\vert f_n(k)\vert =1$. It follows that  \begin{align}\label{a1}\Vert e_m+\varepsilon f_n\Vert =2.\end{align} 
		
		Since $\{m\}$ is the only $1$-set of $e_m$ and $\mathrm{sgn}(-\varepsilon f_n(m))=-1$, Lemma~\ref{lemma1} implies that \begin{align}\label{a2}\Vert e_m-\varepsilon f_n\Vert <2 . \end{align}
		By Lemma~\ref{lemma6}, (\ref{a1}) and (\ref{a2}) imply that $\Vert d_m+ \varepsilon e_n\Vert=2$ and $\Vert d_m- \varepsilon e_n\Vert<2$.
		
		Towards a contradiction, assume that $d_m(n)=0$. Since $\Vert d_m+ \varepsilon e_n\Vert=2$, Lemma~\ref{lemma1} and Fact~\ref{normeatteintep1} imply that there exists $F \in \Sc_\alpha$ such that $n\in F$ and $\sum_{k\in F}\vert d_m(k)\vert=1$. Hence, since $d_m(n)=0$, it follows that 
		\[\Vert d_m- \varepsilon e_n\Vert\geqslant\sum_{k\in F} \vert d_m(k)- \varepsilon e_n(k)\vert =1+\sum_{k\in F}\vert d_m(k)\vert=2,\]
		which is a contradiction because  $\Vert d_m- \varepsilon e_n\Vert<2$. 
		
		In the same way one shows the reverse implication. 
	\end{proof}
	\begin{lemma}\label{lemma12}
		For every $k \in \N\minus\{1\}$, there exists $n>k$ such that $k<\mathrm{supp}(f_n)$ and $k<\mathrm{supp}(d_n)$.   
	\end{lemma}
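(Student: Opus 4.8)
The plan is to argue exactly as in the proof of Lemma~\ref{l9}, since the statement is identical and the case $p=1$ now has all the ingredients it needs available: the finiteness of supports from Lemma~\ref{lemma10} and the symmetry relation from Lemma~\ref{lemma11}. First I would fix $k\in\N\minus\{1\}$ and argue by contradiction, assuming that for \emph{every} $n>k$ we have $\mathrm{supp}(f_n)\cap\{2,\dots,k\}\neq\emptyset$ or $\mathrm{supp}(d_n)\cap\{2,\dots,k\}\neq\emptyset$. (Recall that $\mathrm{supp}(f_n),\mathrm{supp}(d_n)\subset\N\minus\{1\}$ by Lemma~\ref{lemma5}, so the condition $k<\mathrm{supp}(f_n)$ amounts to $\mathrm{supp}(f_n)\cap\{2,\dots,k\}=\emptyset$, and likewise for $d_n$.)

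Since there are infinitely many integers $n>k$, at least one of the two alternatives must hold for infinitely many of them; without loss of generality suppose $\mathrm{supp}(f_n)\cap\{2,\dots,k\}\neq\emptyset$ for infinitely many $n>k$. As $\{2,\dots,k\}$ is a finite set, the pigeonhole principle produces a single index $a\in\{2,\dots,k\}$ such that $f_n(a)\neq0$ for infinitely many $n>k$.

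Now I would invoke Lemma~\ref{lemma11}, the $p=1$ analogue of Lemma~\ref{l8}: for $a,n\in\N\minus\{1\}$ we have $f_n(a)\neq0$ if and only if $d_a(n)\neq0$. Applying this to the infinitely many $n$ just found yields $d_a(n)\neq0$ for infinitely many $n$, i.e.\ $\mathrm{supp}(d_a)$ is infinite. This contradicts Lemma~\ref{lemma10}, which asserts $d_a\in c_{00}$. Hence the assumption fails, and the desired $n>k$ with $k<\mathrm{supp}(f_n)$ and $k<\mathrm{supp}(d_n)$ must exist.

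I do not expect any genuine obstacle here: this lemma is a verbatim transcription of Lemma~\ref{l9} into the $p=1$ setting, with the two cited facts (Lemmas~\ref{l8} and~\ref{l7}) replaced by their already-established $p=1$ counterparts (Lemmas~\ref{lemma11} and~\ref{lemma10}). The only point worth a line of care is the symmetric treatment of $f_n$ and $d_n$, which is why the contradiction hypothesis is phrased as a disjunction and the ``without loss of generality'' step is used to reduce to the $f_n$ case.
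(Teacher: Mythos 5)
Your proof is correct and is exactly the paper's argument: the paper proves Lemma~\ref{lemma12} by repeating the proof of Lemma~\ref{l9} with Lemmas~\ref{lemma5}, \ref{lemma10} and \ref{lemma11} in place of Lemmas~\ref{l5}, \ref{l7} and \ref{l8}, which is precisely the contradiction-plus-pigeonhole argument you give.
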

	\begin{proof}
		The proof is the same as that of Lemma~\ref{l9} (we use Lemmas~\ref{lemma5},~\ref{lemma10} and ~\ref{lemma11} instead of~\ref{l5}, ~\ref{l7} and ~\ref{l8} respectively).   \end{proof} 
	%\begin{lemma}\label{lemma13}    
	%Let $n\in \N\minus \{1\}$ and $y \in \Sp$. Then 
	%  \[ \Vert y+f_n\Vert =\Vert y-f_n\Vert =1 \; \Longleftrightarrow \; y=\pm e_1.  \]     
	%  \end{lemma}
%   \begin{proof}
	%  First, we need the following fact. 
	%    \begin{fact}\label{lemma14}
		%     Let $n\in \N\minus \{1\}$ and $x\in \Sp$. Then 
		%\[ \Vert x+e_n\Vert =\Vert x-e_n\Vert =1 \; \Longleftrightarrow \; x=\pm e_1.  \]      
		%  \end{fact}
	%  \begin{proof}[Proof of Fact~\ref{lemma14}]
		%   The reverse implication is obvious. For the forward implication, assume towards a contradiction that $x\notin \{e_1,-e_1\}$. Then, there exists $m\in \N\minus \{1\}$ such that $x(m)\neq0$ because $x\in \Sp$. If $m=n$, then $\Vert x+\mathrm{sgn}(x(m))e_n\Vert \geqslant1+\vert x(m)\vert >1$, which is a contradiction. If $m\neq n$, then $\{m,n\}\in \Sc_\alpha$ and hence $\Vert x+e_n\Vert \geqslant 1 +\vert x(m)\vert >1$, which is again a contradiction. Therefore $x=\pm e_1$. 
		%\end{proof}
		% Lemmas~\ref{lemma3} and~\ref{lemma4} imply that $T(\{ e_1, -e_1\})=\{ e_1, -e_1\}$. Hence, Lemma~\ref{lemma13} follows immediately from Fact~\ref{lemma14} and Lemma~\ref{lemma6}. 
		% \end{proof}
	\begin{lemma}\label{lemma15}
		Let $n\in  \N\minus \{1\}$. \begin{enumerate}[label={\rm \arabic*)}]
			\item If $2\notin \mathrm{supp}(f_n)$, then there exists $j\in \N\minus \{1\}$ such that $f_n=\pm e_j$. 
			\item If $2\notin \mathrm{supp}(d_n)$, then there exists $j\in \N\minus \{1\}$ such that $d_n=\pm e_j$. \end{enumerate}
	\end{lemma}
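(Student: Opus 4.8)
The plan is to prove part~1); part~2) is symmetric, obtained by applying the same argument to $T^{-1}$ and $d_n$ via the second equivalence in Lemma~\ref{lemma8}. Write $F_0:=\mathrm{supp}(f_n)$. By Lemma~\ref{lemma5} and the hypothesis we have $F_0\subseteq\N\minus\{1,2\}$, so $\min F_0\geqslant 3$, and $F_0$ is finite by Lemma~\ref{lemma10}. If $|F_0|=1$, say $F_0=\{j\}$, then $\|f_n\|=|f_n(j)|=1$ and $f_n=\pm e_j$; so I would suppose $|F_0|\geqslant 2$ and seek a contradiction by producing some $y\in\Sp$ with $y\notin\{\pm e_1,\pm f_n\}$ and $\|y+f_n\|+\|y-f_n\|=2$, which Lemma~\ref{lemma8} forbids. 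Since $\|y+f_n\|+\|y-f_n\|\geqslant 2\|f_n\|=2$ always holds, the only task is to achieve the reverse inequality. I would distinguish two cases according to whether $F_0\in\Sc_\alpha$.

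Suppose first $F_0\in\Sc_\alpha$. Then $\sum_{i\in F_0}|f_n(i)|=\|f_n\|=1$, and every subset of $F_0$ belongs to $\Sc_\alpha$ by heredity. Choose a partition $F_0=P\sqcup M$ into nonempty pieces (possible since $|F_0|\geqslant 2$) and set $y:=\sum_{i\in P}f_n(i)e_i-\sum_{i\in M}f_n(i)e_i$. Since $|y(i)|=|f_n(i)|$ for every $i$, we have $\|y\|=1$; moreover $y+f_n$ is supported on $P$ and $y-f_n$ on $M$, so $\|y+f_n\|=2\sum_{i\in P}|f_n(i)|$ and $\|y-f_n\|=2\sum_{i\in M}|f_n(i)|$, whence $\|y+f_n\|+\|y-f_n\|=2$. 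As $P,M\neq\emptyset$ we get $y\neq\pm f_n$, and $y\neq\pm e_1$ because $1\notin F_0$: this is the desired contradiction.

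Suppose now $F_0\notin\Sc_\alpha$. The crux, and the step I expect to be the main obstacle, is the combinatorial claim that $\{2\}\cup F'\notin\Sc_\alpha$ for every $1$-set $F'$ of $f_n$. To prove it, assume $\{2\}\cup F'\in\Sc_\alpha$ for some $1$-set $F'$ (so $F'\subseteq F_0$, $2\notin F'$, $\min F'\geqslant 3$). For any $m\in F_0\minus F'$ (note $m\geqslant 3$ and $m\notin F'$), the relabeling that fixes $F'$ and sends $2\mapsto m$ satisfies the spreading condition $\sigma(k)\geqslant k$, so $\{m\}\cup F'=\sigma(\{2\}\cup F')\in\Sc_\alpha$; but then $\|f_n\|\geqslant\sum_{i\in F'}|f_n(i)|+|f_n(m)|=1+|f_n(m)|>1$, which is absurd. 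Hence $F_0\minus F'=\emptyset$, i.e. $F_0=F'\in\Sc_\alpha$, contradicting the case hypothesis. This proves the claim, and it is precisely here that the availability of the coordinate $2$ below $\min F_0$, i.e. $2\notin\mathrm{supp}(f_n)$, is used.

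Granting the claim, I would anchor at $e_1$: let $\varepsilon_{f_n}>0$ be the $\varepsilon$-gap of Fact~\ref{normeatteintep1} and put $y:=e_1+\varepsilon e_2$ with $0<\varepsilon<\varepsilon_{f_n}\,(<1)$. Because $\{1\}\in\Sc_\alpha^{MAX}$ one checks $\|y\|=1$. For $F\in\Sc_\alpha$ containing $1$ necessarily $F=\{1\}$, giving $\sum_{i\in F}|y(i)\pm f_n(i)|=1$; for $F\in\Sc_\alpha$ with $1\notin F$, using $f_n(2)=0$ we get $\sum_{i\in F}|y(i)\pm f_n(i)|=\varepsilon\,\ind[2\in F]+\sum_{i\in F\cap F_0}|f_n(i)|$, and this is $\leqslant 1$: if $2\notin F$ it is at most $\|f_n\|=1$, while if $2\in F$ the claim shows $F\cap F_0$ is not a $1$-set, so by the $\varepsilon$-gap its $f_n$-mass is $<1-\varepsilon_{f_n}$ and the total is $<1$. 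Therefore $\|y+f_n\|=\|y-f_n\|=1$, so $\|y+f_n\|+\|y-f_n\|=2$ with $y\notin\{\pm e_1,\pm f_n\}$, again contradicting Lemma~\ref{lemma8}. In both cases $|F_0|\geqslant 2$ is impossible, so $f_n=\pm e_j$ for some $j\in\N\minus\{1\}$.
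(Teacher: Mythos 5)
Your proof is correct, and it rests on exactly the same ingredients as the paper's own argument: the rigidity statement of Lemma~\ref{lemma8}, the anchor vector $e_1+\varepsilon e_2$ controlled by the $\varepsilon$-gap of Fact~\ref{normeatteintep1}, the spreading relabeling $2\mapsto m$ that pushes $2$ onto the support, and a sign-flip vector supported on a $1$-set. The difference is purely organizational: the paper argues sequentially (the anchor first produces a $1$-set $F_0$ with $\{2\}\cup F_0\in\Sc_\alpha$, then spreading forces $F_0=\mathrm{supp}(f_n)$, then a one-coordinate sign flip forces $\vert F_0\vert=1$), whereas you split into the dichotomy $\mathrm{supp}(f_n)\in\Sc_\alpha$ or not, running the sign flip (over an arbitrary partition) in the first case and the spreading plus anchor arguments, in contrapositive form, in the second.
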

	\begin{proof}
		Let  $n\in  \N\minus \{1\}$ be such that $2\notin \mathrm{supp}(f_n)$. We claim that there exists a $1$-set $F$ of $f_n$ such that $\{2\}\cup F \in \Sc_\alpha$. Towards a contradiction, assume that $\{2\} \cup F \notin \Sc_\alpha$ for every $1$-set $F$ of $f_n$. We define $y:= e_1+ \varepsilon_{f_n} e_2$, where   $\varepsilon_{f_n}$ is the $\varepsilon$-gap for $f_n$. 
		
		For any $F \in \mathcal{A}_{f_n}^1$, we have $1\notin F$ (by Lemma~\ref{lemma5} and since $\{1\}\in \Sc_\alpha^{MAX}$) and  $2\notin F$ (since otherwise we get $\{2\}\cup F'\in \Sc_\alpha$ for some $1$-set $F'$ of $f_n$) and hence 
		\[ \sum_{k\in F}\vert y (k)\pm f_n(k)\vert =\sum_{k\in F}\vert f_n(k)\vert =1.\]
		For any $G\in \Sc_\alpha\minus \mathcal{A}_{f_n}^1$, we have 
		\[\begin{aligned}&\sum_{k\in G}\vert y (k)\pm f_n(k)\vert =\vert y(1)\vert =1 &&\quad\text{if  $G=\{1\}$}, \\
			&\sum_{k\in G}\vert y (k)\pm f_n(k)\vert = \sum_{k\in G}\vert  f_n(k)\vert <1 &&\quad\text{if $1,2\notin G$},\\ 
			&\sum_{k\in G}\vert y (k)\pm f_n(k)\vert= \varepsilon_{f_n}+\sum_{k\in G}\vert  f_n(k)\vert<1 &&\quad\text{if $2\in G$}.\end{aligned}\]
		Therefore, $\Vert y \pm  f_n\Vert =1$. So, $\Vert y+f_n\Vert +\Vert y-f_n\Vert =2$ and $y\notin \{\pm e_1, \pm f_n\}$ (because $1\notin \mathrm{supp}(f_n)$), which contradicts Lemma~\ref{lemma8} and proves our claim. 
		
		%Therefore, $\Vert y\pm f_n\Vert=1$ and $y\notin\{e_1,-e_1\}$, which contradicts Lemma~\ref{lemma14} and proves our claim. 
		
		Choose now  a $1$-set $F_0$ of $f_n$ such that $\{2\}\cup F_0 \in \Sc_\alpha$. We will show that $F_0=\mathrm{supp}(f_n)$. Since $F_0$ is a $1$-set of $f_n$, we know that $F_0\subseteq\mathrm{supp}(f_n)$. Towards a contradiction, assume that there exists $j\in \mathrm{supp}(f_n)\minus F_0$. Since $\{2\}\cup F_0 \in \Sc_\alpha$ and $j>2$ (because $1,2\notin \mathrm{supp}(f_n)$), it follows that $\{j\}\cup F_0\in \Sc_\alpha$ because $\Sc_\alpha$ is spreading. Hence, \[\Vert f_n\Vert \geqslant \vert f_n(j)\vert +\sum_{k\in F_0}\vert f_n(k)\vert= 1+\vert f_n(j)\vert>1, \] which is a contradiction. Therefore, $\mathrm{supp}(f_n)=F_0$. 
		
		It remains to show that $F_0$ is a singleton. Towards a contradiction, assume that  $F_0$ contains at least two elements, and let us fix $i_0\in F_0$. Let $z:= -f_n(i_0)e_{i_0}+ \sum_{i\in F_0\minus \{i_0\}}f_n(i)e_i$. Since $\sum_{k\in F_0} \vert f_n(k)\vert =1$, it follows that $z\in \Sp$. Moreover, 
		\[\Vert z+ f_n\Vert +\Vert z- f_n\Vert = 2 \sum_{i\in F_0\minus \{i_0\}}\vert f_n(i)\vert + 2  \vert f_n(i_0)\vert = 2 \sum_{i\in F_0} \vert f_n(i)\vert=2,\]
		which contradicts Lemma~\ref{lemma8} because $z\notin \{\pm e_1,\pm f_n\}$. Therefore, there exists $j\in \N\minus \{1\}$ such that $\mathrm{supp}(f_n)=F_0=\{j\}$ and hence $f_n =\pm e_j$.  
		
		In the same way, we show 2).
	\end{proof}
	\begin{lemma}\label{lemma16}
		For any $k\geqslant 2$, there exist $i,j>k$ such that $f_i=\pm e_j$ and $d_j=\pm e_i$.
	\end{lemma}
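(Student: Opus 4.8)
The plan is to exploit Lemma~\ref{lemma15}, which is the $p=1$ substitute for the parallelogram-type argument used in Lemma~\ref{l10} for $p\neq 2$: once the support of some $f_i$ is pushed entirely above $2$, that lemma forces $f_i$ to be a single basis vector up to sign. Concretely, I would first invoke Lemma~\ref{lemma12} with the given $k$ to produce an index $i>k$ such that $k<\mathrm{supp}(f_i)$. Since $k\geqslant 2$, this inequality already gives $2\notin\mathrm{supp}(f_i)$, so part~1) of Lemma~\ref{lemma15} applies and yields $f_i=\pm e_j$ for some $j\in\N\minus\{1\}$.

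Next I would verify the index inequalities. As $\mathrm{supp}(f_i)=\{j\}$ and $k<\mathrm{supp}(f_i)$, we get $j>k$; and $i>k$ holds by the choice coming from Lemma~\ref{lemma12}. Thus $i,j>k$ and $f_i=\pm e_j$, which is exactly the first half of the claim, while the $d$-part of Lemma~\ref{lemma12} is not even needed.

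Finally, to obtain $d_j=\pm e_i$, I would transport the relation $f_i=\pm e_j$ through $T^{-1}$ using Lemma~\ref{lemma6}. Writing $T(e_i)=\theta e_j$ with $\theta\in\{\pm 1\}$ and applying $T^{-1}$ gives $e_i=T^{-1}(\theta e_j)$; when $\theta=1$ this reads $e_i=d_j$, and when $\theta=-1$ Lemma~\ref{lemma6} (legitimate since $j>k\geqslant 2$) gives $T^{-1}(-e_j)=-d_j$, hence $d_j=-e_i$. In either case $d_j=\pm e_i$, completing the proof.

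I do not expect any genuine obstacle here, since the entire difficulty has been absorbed into Lemma~\ref{lemma15}. The only points to keep straight are that the hypothesis $k\geqslant 2$ is precisely what guarantees $2\notin\mathrm{supp}(f_i)$ so that Lemma~\ref{lemma15} is applicable, and that Lemma~\ref{lemma6} is only available for indices in $\N\minus\{1\}$, which is why one must record $j>k\geqslant 2$ before using it to pass from $f_i$ to $d_j$.
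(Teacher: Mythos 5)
Your proposal is correct and follows exactly the paper's own argument: Lemma~\ref{lemma12} produces $i>k$ with $k<\mathrm{supp}(f_i)$, the hypothesis $k\geqslant 2$ gives $2\notin\mathrm{supp}(f_i)$ so Lemma~\ref{lemma15} yields $f_i=\pm e_j$ with $j>k$, and Lemma~\ref{lemma6} converts this into $d_j=\pm e_i$. Your write-up merely spells out the sign bookkeeping in the last step in slightly more detail than the paper does.
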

	\begin{proof}
		Let $k\in \N\minus \{1\}$. By Lemma  ~\ref{lemma12}, there exists $i>k$ such that $k<\mathrm{supp}(f_i)$. Since $k\geqslant 2$, it follows that $2\notin \mathrm{supp}(f_i)$. Hence, by Lemma~\ref{lemma15}, there exists $j$ such that $f_i=\pm e_j$, and so $j>k$ because $k<\mathrm{supp}(f_i)$. This implies, by Lemma~\ref{lemma6}, that $d_j=\pm e_i$.   
	\end{proof}
	\begin{lemma}\label{nonmaxSalpha}
		If $n\in \N\minus \{1\}$, then $f_n$ has  a non-maximal $1$-set \textit{i.e.} there exists a $1$-set $F$ of $f_n$ such that $F\in  \Sc_\alpha\minus \Sc_\alpha^{MAX}$.
	\end{lemma}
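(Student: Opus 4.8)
The plan is to proceed by contradiction, assuming that \emph{every} $1$-set of $f_n$ belongs to $\Sc_\alpha^{MAX}$, and to build from this a vector that violates Lemma~\ref{lemma8}. By Fact~\ref{normeatteintep1} the collection of $1$-sets of $f_n$ is nonempty and finite, so the assertion is precisely that they cannot all be maximal. The construction mimics the one in Lemma~\ref{lemma9}: I anchor a test vector at the coordinate $1$ and add a controlled sliver of mass governed by the $\varepsilon$-gap of Fact~\ref{normeatteintep1}; the novelty is that I place that sliver \emph{past} the support of $f_n$ instead of inside it.

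Concretely, recall that $f_n(1)=0$ (because $e_n\notin A$ and $T(A)=A$ by Lemma~\ref{lemma5}) and that $f_n\in c_{00}$ by Lemma~\ref{lemma10}; fix $j>\mathrm{supp}(f_n)$ and let $\varepsilon_{f_n}$ be the $\varepsilon$-gap for $f_n$. I would set $y:=e_1+\varepsilon_{f_n}e_j$ and verify that $\|y+f_n\|=\|y-f_n\|=1$ by examining $\sum_{k\in G}|y(k)\pm f_n(k)|$ for $G\in\Sc_\alpha$. If $1\in G$ then $G=\{1\}$ (as $\{1\}\in\Sc_\alpha^{MAX}$), so the sum is $1$; if $1,j\notin G$ the sum is $\sum_{k\in G}|f_n(k)|\leqslant1$; and if $j\in G$ but $1\notin G$, then since $f_n(j)=0$ the sum equals $\varepsilon_{f_n}+\sum_{k\in G\minus\{j\}}|f_n(k)|$.

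The heart of the argument is to bound this last expression strictly below $1$, and this is exactly where the maximality hypothesis is consumed. I claim $G\minus\{j\}\notin\mathcal{A}^1_{f_n}$: otherwise $G\minus\{j\}$ would contain a $1$-set $F$ of $f_n$ by heredity, whence $F\cup\{j\}\subseteq G$ would lie in $\Sc_\alpha$; but $F\subseteq\mathrm{supp}(f_n)$ gives $j>F$, so $F\subsetneq F\cup\{j\}\in\Sc_\alpha$ would exhibit $F$ as a non-maximal $1$-set, contradicting the assumption. Given the claim, the definition of the $\varepsilon$-gap yields $\sum_{k\in G\minus\{j\}}|f_n(k)|<1-\varepsilon_{f_n}$, so the sum is $<1$. (One checks $\|y\|=1$ likewise, using $\varepsilon_{f_n}<1$, which follows by applying Fact~\ref{normeatteintep1} to the empty set; hence $y\in\Sp$.)

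Thus $\|y+f_n\|+\|y-f_n\|=2$ while $y\notin\{\pm e_1,\pm f_n\}$ (indeed $y(1)=1\neq0=f_n(1)$ and $y(j)=\varepsilon_{f_n}\neq0$ with $j\neq1$), which contradicts Lemma~\ref{lemma8}. This forces some $1$-set of $f_n$ to lie in $\Sc_\alpha\minus\Sc_\alpha^{MAX}$, as desired. I expect the only genuinely substantive point to be the claim $G\minus\{j\}\notin\mathcal{A}^1_{f_n}$, which converts the maximality hypothesis into the contradiction; the remaining verifications are the routine case-checks above and need not be expanded.
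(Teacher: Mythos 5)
Your proof is correct, and it takes a genuinely different route from the paper's. The paper argues directly: by Lemma~\ref{lemma16} it chooses $i,j>\{n\}\cup\mathrm{supp}(f_n)$ with $f_i=\theta e_j$, transfers $\Vert e_n+e_i\Vert=2$ through the isometry (using Lemma~\ref{lemma6}) to get $\Vert f_n+\theta e_j\Vert=2$, and then Lemma~\ref{lemma1} yields $G\in\mathcal{A}_{f_n}^1$ with $j\in G$; the set $F:=G\cap\mathrm{supp}(f_n)$ is then a $1$-set of $f_n$ which is non-maximal because $j\notin F$ and $F\cup\{j\}\subseteq G\in\Sc_\alpha$. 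You instead argue by contradiction and never invoke Lemma~\ref{lemma16} or Lemma~\ref{lemma1}: you adapt the anchor-plus-sliver construction from Lemma~\ref{lemma9}, placing the sliver $\varepsilon_{f_n}e_j$ past $\mathrm{supp}(f_n)$, and your key claim --- that $G\minus\{j\}\notin\mathcal{A}_{f_n}^1$ whenever $j\in G$ and $1\notin G$ --- is exactly where the assumed maximality of all $1$-sets is consumed (a $1$-set inside $G\minus\{j\}$ would be strictly contained in the member $F\cup\{j\}$ of $\Sc_\alpha$); the $\varepsilon$-gap then forces $\Vert y\pm f_n\Vert=1$, contradicting Lemma~\ref{lemma8}. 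Both arguments are sound and of comparable length, but they buy different things: the paper's proof is constructive (it exhibits the non-maximal $1$-set explicitly) at the cost of resting on Lemma~\ref{lemma16}, hence on the chain of Lemmas~\ref{lemma11}, \ref{lemma12} and \ref{lemma15}; yours is lighter on dependencies (only Lemmas~\ref{lemma5}, \ref{lemma8}, \ref{lemma10} and Fact~\ref{normeatteintep1}, none of which depend on the statement being proved, so there is no circularity) and stays entirely ``local'' to $f_n$ and the combinatorics of $\Sc_\alpha$, but it is a pure existence argument. Your side verifications are also in order: $y\in\Sp$, the bound $\varepsilon_{f_n}<1$ (your trick of applying Fact~\ref{normeatteintep1} to the empty set works, since $\emptyset\in\Sc_\alpha$ by heredity), and $y\neq\pm f_n$ because $y(1)=1\neq 0=f_n(1)$ by Lemma~\ref{lemma5}.
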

	\begin{proof}
		By Lemma~\ref{lemma16}, we can choose $i,j>\{n\}\cup \mathrm{supp}(f_n)$ such that $f_i=\theta e_j$ where $\theta$ is a sign. Since $\Vert e_n+e_i\Vert =2$, it follows that $\Vert f_n+\theta e_j\Vert=2$ and so by Lemma~\ref{lemma1} there exists $G\in \mathcal{A} _{f_n}^1$ such that $j\in G$ and $\sum_{k\in G}\vert f_n(k)+\theta e_j(k)\vert=2$. Then $F:=G\cap \mathrm{supp}(f_n)$ is a $1$-set of $f_n$ and $j\notin F$ because $j>\mathrm{supp}(f_n)$. Hence $F\in \Sc_\alpha\minus \Sc_\alpha^{MAX}$ since  $F\cup \{j\}\in \Sc_\alpha$ (because  $F\cup \{j\}\subseteq G \in \Sc_\alpha$ and $\Sc_\alpha$ is hereditary). 
	\end{proof}
	
	\begin{lemma}\label{nonmaxS1}
		Assume that $\alpha=1$. 
		Let $x\in \Sp$ and $F$ be a non-maximal $1$-set of $x$. Then, 
		$F$ is the only non-maximal  $1$-set of $x$ and it has the form 
		\[F=[\min (F), \infty)\cap \mathrm{supp}(x). \]
	\end{lemma}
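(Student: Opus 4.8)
The plan is to exploit the very explicit description of $\Sc_1$ and of its maximal elements. Since $\alpha=1$, a non-empty set $G\in\Sc_1$ is maximal for inclusion precisely when $\vert G\vert=\min(G)$: if $\vert G\vert=\min(G)$ then any proper superset $H$ has $\vert H\vert>\min(G)\geqslant\min(H)$, so $H\notin\Sc_1$, whereas if $\vert G\vert<\min(G)$ then $G\cup\{l\}\in\Sc_1$ for $l>\max(G)$. Consequently the hypothesis that $F$ is a \emph{non-maximal} $1$-set of $x$ is equivalent to the single numerical inequality $\vert F\vert<\min(F)=:m$, and I will use only this fact. First I record the trivial inclusion: because $F$ is a $1$-set we have $F\subseteq\mathrm{supp}(x)$, and because $m=\min(F)$ we have $F\subseteq[m,\infty)$, so $F\subseteq[m,\infty)\cap\mathrm{supp}(x)$.

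The heart of the argument is the reverse inclusion. Suppose, towards a contradiction, that there is some $j\in[m,\infty)\cap\mathrm{supp}(x)$ with $j\notin F$. Then $F\cup\{j\}$ has cardinality $\vert F\vert+1$ and minimum still equal to $m$ (since $j\geqslant m$); by non-maximality $\vert F\vert+1\leqslant m=\min(F\cup\{j\})$, whence $F\cup\{j\}\in\Sc_1$. But then
\[ \Vert x\Vert\geqslant\sum_{i\in F\cup\{j\}}\vert x(i)\vert=\sum_{i\in F}\vert x(i)\vert+\vert x(j)\vert=1+\vert x(j)\vert>1, \]
because $x(j)\neq0$, contradicting $\Vert x\Vert=1$. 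Hence no such $j$ exists and $F=[m,\infty)\cap\mathrm{supp}(x)$. I want to stress that this is the one place where the choice $\alpha=1$ is genuinely used: non-maximality of $F$ in $\Sc_1$ leaves exactly enough room ($\vert F\vert+1\leqslant m$) to adjoin \emph{any} further support point lying above $m$, whether or not it exceeds $\max(F)$. In particular Fact~\ref{max}, which only adjoins points beyond $\max(F)$, does not by itself suffice, and the explicit combinatorics of $\Sc_1$ must be invoked for ``interior'' support points.

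Finally, uniqueness follows formally. If $F'$ is another non-maximal $1$-set of $x$, the same argument gives $F'=[m',\infty)\cap\mathrm{supp}(x)$ with $m'=\min(F')$. Assuming without loss of generality $m\leqslant m'$, we get $[m',\infty)\subseteq[m,\infty)$ and hence $F'\subseteq F$. Since $\sum_{i\in F}\vert x(i)\vert=\sum_{i\in F'}\vert x(i)\vert=1$ while $\vert x(i)\vert>0$ for every $i\in F$, the inclusion $F'\subseteq F$ must be an equality, for otherwise the mass carried by $F$ would strictly exceed that carried by $F'$. Thus $F=F'$, so $F$ is the unique non-maximal $1$-set of $x$. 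The whole proof is short; the only substantive step is the reverse inclusion, and I anticipate no real obstacle beyond keeping the cardinality–minimum bookkeeping for $\Sc_1$ straight.
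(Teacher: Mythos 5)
Your proof is correct and follows essentially the same route as the paper: adjoin a support point $j>\min(F)$ to $F$, observe the enlarged set stays in $\Sc_1$, and derive $\Vert x\Vert>1$; then get uniqueness from the nesting of the two sets. The only difference is cosmetic — where the paper invokes Fact~\ref{factS1} without proof, you verify the same statement inline via the cardinality--minimum characterization of $\Sc_1^{MAX}$.
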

	We will need the following simple fact concerning $\Sc_1$. 
	\begin{fact}\label{factS1}
		Let $F\in \Sc_1\minus \Sc_1^{MAX}$. Then $F\cup \{n\}\in \Sc_1$ for any $n>\min(F)$. 
	\end{fact}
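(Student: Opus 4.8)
The plan is to first describe exactly which members of $\Sc_1$ are non-maximal, and then verify the conclusion by a direct cardinality count. Recall that, by definition, a nonempty finite set $F$ belongs to $\Sc_1$ precisely when $\vert F\vert\leqslant \min(F)$. The key observation I would establish is that a nonempty $F\in\Sc_1$ is \emph{maximal} if and only if $\vert F\vert=\min(F)$; equivalently, membership in $\Sc_1\minus\Sc_1^{MAX}$ forces the \emph{strict} inequality $\vert F\vert<\min(F)$, that is $\vert F\vert+1\leqslant\min(F)$.

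To prove this characterization, I would suppose $\vert F\vert=\min(F)$ and consider any $m\notin F$, showing $F\cup\{m\}\notin\Sc_1$ in both cases. If $m>\min(F)$, then $\min(F\cup\{m\})=\min(F)$ while $\vert F\cup\{m\}\vert=\vert F\vert+1=\min(F)+1>\min(F)$, so the defining inequality fails. If instead $m<\min(F)$, then $\min(F\cup\{m\})=m\leqslant\min(F)-1$, whereas $\vert F\cup\{m\}\vert=\min(F)+1>m$, and again the inequality fails. Hence no element can be appended, so $F$ is maximal; contrapositively, a non-maximal nonempty $F$ satisfies $\vert F\vert<\min(F)$. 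This case analysis — in particular the subcase $m<\min(F)$, where the minimum itself drops — is the only point that needs genuine care; everything else is bookkeeping.

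With the characterization in hand, let $F\in\Sc_1\minus\Sc_1^{MAX}$ and fix $n>\min(F)$. If $n\in F$, then $F\cup\{n\}=F\in\Sc_1$ trivially. Otherwise $n\notin F$, and since $n>\min(F)$ we have $\min(F\cup\{n\})=\min(F)$; combining this with $\vert F\cup\{n\}\vert=\vert F\vert+1\leqslant\min(F)=\min(F\cup\{n\})$, where the middle inequality is exactly the strict bound $\vert F\vert+1\leqslant\min(F)$ furnished by the characterization, yields $F\cup\{n\}\in\Sc_1$, as desired. Note that this sharpens Fact~\ref{max} for the specific family $\Sc_1$, since here one may append any $n$ above $\min(F)$ rather than only any $n$ above $\max(F)$.
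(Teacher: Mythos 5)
Your proof is correct. Note that the paper gives no proof at all of Fact~\ref{factS1} — it is stated as a ``simple fact'' about $\Sc_1$ — and your argument is exactly the verification it leaves implicit: since $\Sc_1$ consists of the nonempty $F$ with $\vert F\vert\leqslant\min(F)$ (together with $\emptyset$), a nonempty non-maximal member must satisfy the strict inequality $\vert F\vert<\min(F)$, and then appending any $n>\min(F)$ leaves the minimum unchanged while the cardinality stays within the bound. Your closing observation is also accurate: this sharpens Fact~\ref{max} for the family $\Sc_1$ (one may append above $\min(F)$ rather than above $\max(F)$), and it is precisely this stronger form that the paper's subsequent proofs of Lemmas~\ref{nonmaxS1} and~\ref{intersectionS1} exploit.
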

	\begin{proof}[Proof of Lemma~\ref{nonmaxS1}]
		Since $x$ has a non-maximal $1$-set, it follows that  $x\in c_{00}$ since otherwise all the $1$-sets of $x$ would  be maximal. Assume towards a contradiction that there exists $n\in \mathrm{supp}(x)\minus F$ such that $n>\min(F)$. By Fact~\ref{factS1}, $F\cup \{n\}\in \Sc_1$ and so 
		\[\Vert x\Vert \geqslant \sum_{k\in F}\vert x(k)\vert + \vert x(n)\vert =1+\vert x(n)\vert >1,\] which is a contradiction. Hence, $F=[\min (F), \infty)\cap \mathrm{supp}(x)$. 
		
		Assume now that there exists  a non-maximal $1$-set $F'$ of $x$, distinct from $F$. Then $F'=[\min (F'), \infty)\cap \mathrm{supp}(x)$. Since $F'\neq F$, it follows  that $\min(F')>\min(F)$ or $\min(F')<\min(F)$ which contradicts the fact that $F$ and $F'$ are $1$-sets of $x$. 
	\end{proof}
	\begin{remark}
		When $\alpha=1$,  for any  $k\in \N\minus \{1\}$, $f_k$ has exactly one non-maximal $1$-set  by Lemmas~\ref{nonmaxSalpha} and~\ref{nonmaxS1}. 
	\end{remark}
	\begin{lemma}\label{intersectionS1}
		Assume that $\alpha=1$. 
		Let $k \in \N\minus \{1\}$ and let $F$ be the non-maximal $1$-set of $f_k$. If $G$ is a $1$-set of $f_k$ then $F\cap G\neq \emptyset$. 
	\end{lemma}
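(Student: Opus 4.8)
The plan is to argue by contradiction: suppose $G$ is a $1$-set of $f_k$ with $F\cap G=\emptyset$. By Lemma~\ref{nonmaxS1} we have $F=[\min F,\infty)\cap\mathrm{supp}(f_k)$, so every element of $\mathrm{supp}(f_k)$ lying outside $F$ is strictly below $\min F$. Since $G\subseteq\mathrm{supp}(f_k)\minus F$, this forces $\max G<\min F$; that is, the whole block $G$ sits below the whole block $F$. Thus I would work with two disjoint $1$-sets $G<F$ and try to turn this separation into a contradiction.

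The first (easy) case is when $G$ is itself \emph{non-maximal} in $\Sc_1$, i.e. $G\in\Sc_1\minus\Sc_1^{MAX}$ (equivalently $|G|<\min G$). Then $G$ is a non-maximal $1$-set of $f_k$ distinct from $F$ — both are non-empty and they are disjoint — which contradicts the uniqueness of the non-maximal $1$-set recorded in the Remark after Lemma~\ref{nonmaxS1}. Hence I may assume that $G$ is \emph{maximal}, so $|G|=\min G=:g$, and write $G=\{g_1<\dots<g_g\}$ with $g_1=g$.

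For maximal $G$ the natural tool is an insertion argument driven by $\|f_k\|=1$. For $1\le t\le g$, delete the top $t$ elements of $G$ and, using that $\Sc_1$ is spreading (and Fact~\ref{factS1}), reinsert $t$ elements of $F$, all of which exceed $\max G$; the resulting set still has minimum $g$ and cardinality $g$, hence lies in $\Sc_1$, and its weight is $\le 1$. Choosing the heaviest available coordinates of $F$, this bounds a partial weight of $F$ by a partial weight of $G$. When $|F|<|G|$, taking $t=|F|$ makes the inserted weight equal the full weight $1$ of $F$, which forces the $|G|-|F|\ge 1$ surviving bottom coordinates of $G$ to carry weight $0$ — impossible, since they lie in $\mathrm{supp}(f_k)$. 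A symmetric truncation from the bottom of $G$, reinserting as many elements of $F$ as the minimum permits, closes some further configurations; this disposes of the ``unbalanced'' regimes.

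The main obstacle I anticipate is the \emph{balanced} regime $|F|\ge|G|$. Here the truncation bookkeeping no longer closes, and in fact the purely local combinatorics of a single vector cannot suffice: one can exhibit unit vectors of $X_{\Sc_1}$ that possess a unique non-maximal $1$-set together with a \emph{disjoint} maximal $1$-set. Consequently the finishing argument must genuinely use that $f_k=T(e_k)$ rather than only the norm of $f_k$. The route I would pursue is to transport the constraint to the preimage vectors: by Lemma~\ref{lemma11} every $m\in\mathrm{supp}(f_k)$ forces $k\in\mathrm{supp}(d_m)$, and combining the resulting rigidity with Lemma~\ref{lemma9} and the degenerate-sum criterion of Lemma~\ref{lemma8} should exclude the remaining flat configurations. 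Producing, in this balanced case, an explicit witness on the sphere that contradicts the global bijectivity of $T$ is where I expect the real difficulty to lie.
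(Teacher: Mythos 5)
Your reduction is sound as far as it goes, but it stops short of the actual content of the lemma. Concretely: the observation that disjointness plus Lemma~\ref{nonmaxS1} forces $G<F$ is correct; the case where $G$ is non-maximal does contradict the uniqueness of the non-maximal $1$-set; and your insertion argument for maximal $G$ with $\vert F\vert <\vert G\vert$ is valid (the bottom $\vert G\vert -\vert F\vert$ elements of $G$ together with all of $F$ form a set of cardinality $\min G$, hence a member of $\Sc_1$, whose weight would exceed $1$ unless those bottom coordinates vanish). Your diagnosis of the remaining case is also accurate: a vector such as $\tfrac12(e_2+e_3)+\tfrac14(e_{100}+e_{101}+e_{102}+e_{103})$ is a unit vector of $X_{\Sc_1}$ with a unique non-maximal $1$-set disjoint from a maximal $1$-set, so no argument using only $\Vert f_k\Vert =1$ can finish. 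But for that balanced case $\vert F\vert \geqslant \vert G\vert$ --- which is the heart of the lemma, and which you yourself flag as ``where I expect the real difficulty to lie'' --- you offer only the hope that Lemmas~\ref{lemma11}, \ref{lemma9} and \ref{lemma8} ``should exclude'' the configuration. That is a research direction, not a proof; the proposal has a genuine gap precisely at the step that matters.

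The missing idea is an explicit witness vector, and with it the paper in fact needs no case distinction at all. Writing $r:=\max\mathrm{supp}(f_k)\in F$ and padding $F$ to a maximal set $F'=F\cup\{r_1<\dots<r_s\}$, one defines $z$ to equal $-f_k$ on $G$, to equal $f_k$ on $F\minus\{r\}$, and to spread the weight $f_k(r)$ equally over $r,r_1,\dots,r_s$. Then $z\in\Sp$ and $\Vert z\pm f_k\Vert =2$, hence $\Vert T^{-1}(z)\pm e_k\Vert =2$ by Lemma~\ref{lemma6}; Fact~\ref{lemma20}, applied with an index $n>k$ chosen by Lemma~\ref{lemma16} so that $f_n=\pm e_m$ with $m>r_s$, yields $\Vert z+e_m\Vert =2$ or $\Vert z-e_m\Vert =2$. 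By Lemma~\ref{lemma1} this forces $z$ to have a $1$-set $H''$ with $H''\cup\{m\}\in \Sc_1$, i.e.\ a non-maximal $1$-set; but by Lemma~\ref{nonmaxS1} applied to $z$, any such set and the $1$-set $F'$ are both terminal segments of $\mathrm{supp}(z)$, so $H''=F'$, contradicting $F'\in\Sc_1^{MAX}$. The two ingredients your sketch lacks are exactly these: a concrete sphere element on which the sign-flip over $G$ and the equal-weight padding over $F'$ are simultaneously visible, and the ``push to a far index'' mechanism (Lemma~\ref{lemma16} plus Fact~\ref{lemma20}) that converts $\Vert z\pm f_k\Vert=2$ into the existence of a non-maximal $1$-set of $z$. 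Without them, transporting support constraints through Lemma~\ref{lemma11} gives no contradiction in the balanced regime.
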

	First we need the following fact that is true for any  $1\leqslant\alpha<\omega_1$.
	\begin{fact}\label{lemma20}
		Let $i<j \in \N\minus \{1\}$ and $x\in \Sp$. If $\Vert x+e_i\Vert =\Vert x-e_i\Vert =2$, then $\Vert x+e_j\Vert =2$ or $\Vert x-e_j\Vert =2$.
	\end{fact}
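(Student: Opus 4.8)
The plan is to reduce the whole statement to the single task of producing one set $H\in\Sc_\alpha$ with $j\in H$ and $\sum_{k\in H}\vert x(k)\vert=1$. Indeed, once such an $H$ is available, I would pick the sign $\varepsilon:=+1$ if $x(j)\geqslant 0$ and $\varepsilon:=-1$ if $x(j)<0$; then $\sum_{k\in H}\vert x(k)+\varepsilon e_j(k)\vert=\vert x(j)+\varepsilon\vert+\sum_{k\in H\minus\{j\}}\vert x(k)\vert=(1+\vert x(j)\vert)+(1-\vert x(j)\vert)=2$, so $\Vert x+\varepsilon e_j\Vert=2$ (it is $\leqslant\Vert x\Vert+\Vert e_j\Vert=2$ anyway). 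This is exactly ``$\Vert x+e_j\Vert=2$ or $\Vert x-e_j\Vert=2$''. So everything comes down to locating $H$.

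First I would exploit the hypothesis at $i$. By Fact~\ref{normeatteintep1} and Lemma~\ref{lemma1} applied to $x$ and $e_i$, the equality $\Vert x+e_i\Vert=2$ yields a set $F\in\mathcal{A}_x^1$ with $i\in F$ for which $\mathrm{sgn}(x(i))=+1$ whenever $x(i)\neq0$; symmetrically, applying Lemma~\ref{lemma1} to $x$ and $-e_i$, the equality $\Vert x-e_i\Vert=2$ forces $\mathrm{sgn}(x(i))=-1$ whenever $x(i)\neq0$. These two are incompatible, so necessarily $x(i)=0$. Setting $F_0:=F\minus\{i\}$, heredity of $\Sc_\alpha$ gives $F_0\in\Sc_\alpha$, and since $x(i)=0$ we keep $\sum_{k\in F_0}\vert x(k)\vert=\sum_{k\in F}\vert x(k)\vert=1$; thus $F_0\in\mathcal{A}_x^1$ with $i\notin F_0$ and $F_0\cup\{i\}=F\in\Sc_\alpha$.

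Next I would split on the position of $j$. If $j\in F_0$, then $H:=F_0$ already does the job. If $j\notin F_0$, I would invoke that $\Sc_\alpha$ is spreading: since $i<j$, the set $F_0\cup\{j\}$ is a spread of $F_0\cup\{i\}=F$ — only the element $i$ has been raised to the strictly larger value $j$, so the increasing enumeration of $F_0\cup\{j\}$ dominates that of $F$ termwise — whence $F_0\cup\{j\}\in\Sc_\alpha$. Then $\sum_{k\in F_0\cup\{j\}}\vert x(k)\vert=1+\vert x(j)\vert$ must be $\leqslant\Vert x\Vert=1$, which pins down $x(j)=0$; consequently $F_0\cup\{j\}\in\mathcal{A}_x^1$ contains $j$, and $H:=F_0\cup\{j\}$ works. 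In both cases the reduction of the first paragraph concludes.

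The step I expect to be the crux is the spreading argument in the case $j\notin F_0$: one must notice that raising the index $i$ to the strictly larger $j$ keeps the set inside $\Sc_\alpha$ (this is precisely where the hypothesis $i<j$ enters), and then realize that the mass constraint $\Vert x\Vert=1$ automatically forces $x(j)=0$ rather than requiring any further sign analysis. By contrast, the extraction of $F_0$ and the deduction $x(i)=0$ are routine consequences of Lemma~\ref{lemma1} and Fact~\ref{normeatteintep1}, and the final sign bookkeeping is the elementary computation recorded in the first paragraph.
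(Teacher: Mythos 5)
Your proof is correct and follows essentially the same route as the paper: both deduce $x(i)=0$ from the two sign conditions of Lemma~\ref{lemma1}, extract $F\in\mathcal{A}_x^1$ containing $i$, and use the spreading property to replace $i$ by the larger index $j$ in $F\minus\{i\}$ before checking the norm equality. The only difference is that you spell out the final verification (the case split on $j\in F_0$ versus $j\notin F_0$, and the observation that $x(j)=0$ in the latter case) which the paper leaves as ``easy to verify.''
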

	\begin{proof}[Proof of Fact~\ref{lemma20}]
		Towards a contradiction, assume that $i\in \mathrm{supp}(x)$. Since $\{i\}$ is the only $1$-set of $e_i$ and since  $\Vert x+e_i\Vert =\Vert x-e_i\Vert =2$, it follows by Lemma~\ref{lemma1} that $x(i)>0$ and $x(i)<0$ which is a contradiction. Hence, $x(i)=0$. 
		
		Since, $\Vert x+e_i\Vert =2$, there exists $F \in \mathcal{A}_{x}^1$ such that $i \in F$ and $\sum_{k\in F }\vert x(k)+e_i(k)\vert =2$. This implies that 
		\[\sum_{k\in F\minus \{i\}}\vert x(k)\vert =1,\] 
		because $x(i)=0$. 
		
		Since $\Sc_\alpha$ is spreading and $j>i$, we have $(F\minus \{i\})\cup \{j\}\in \Sc_\alpha$, and so it is easy to verify that $\Vert x+e_j\Vert =2 $ or $\Vert x-e_j\Vert =2 $.  
	\end{proof}
	\begin{proof}[Proof of Lemma ~\ref{intersectionS1}]
		Assume towards a contradiction that $F \cap G =\emptyset$. Since $F$ is the non-maximal $1$-set of $f_k$, it follows by Lemma~\ref{nonmaxS1} that $r:=\max\mathrm{supp}(f_k)\in F$. Since $F \in \Sc_1\minus \Sc_1^{MAX}$, it follows  that there exist $r_1,\dots, r_s \in \N$ where $s\geqslant1$, $r<r_1<\dots<r_s$ and $F':=F\cup \{r_1,\dots, r_s\}\in \Sc_1^{MAX}$. Now, let us define $z\in X_{\Sc_1}$ as follows: 
		\[z:=-\sum_{i\in G}f_k(i)e_i+\sum_{i\in F\minus \{r\}}f_k(i)e_i+ \frac{f_k(r)}{s+1}e_r+\sum_{i=1}^s\frac{f_k(r)}{s+1}e_{r_i}.\]
		We will show first that $z\in \Sp$. Since $\sum_{i\in G}\vert z(i)\vert =1$ and $G\in \Sc_1$, it is enough to show that $\sum_{i\in H'}\vert z(i)\vert \leqslant1$ for every $H'\in \Sc_1$. So let us fix $H'\in \Sc_1$. Also, let us set $H:=F \cup G$. We have
		\begin{align}\label{sjs} \sum_{i\in H'\minus H}\vert z(i)\vert \leqslant \sum_{i=1}^s\vert z(r_i)\vert= \frac{s}{s+1}\vert f_k(r)\vert =\vert f_k(r)\vert -\frac{\vert f_k(r)\vert}{s+1}=\vert f_k(r)\vert-\vert z(r)\vert.\end{align}
		If $r\in H'$, then \[\begin{aligned}
			\sum_{i\in H'}\vert z(i)\vert &=\sum_{i\in H'\cap(H\minus \{r\})}\vert z(i)\vert +\vert z(r)\vert +\sum_{i\in H'\minus H}\vert z(i)\vert \\&\leqslant \sum_{i\in H'\cap(H\minus \{r\})}\vert z(i)\vert+ \vert f_k(r)\vert \qquad\text{by (\ref{sjs})}\\&=\sum_{i\in H'\cap H}\vert f_k(i)\vert \leqslant 1,
		\end{aligned}\]
		because $H'\cap H\in \Sc_1$ (since $H'\cap H\subseteq H'\in\Sc_1 $ and $\Sc_1$ is hereditary) and $\Vert f_k\Vert=1$.

		Now, we will assume that $r\notin H'$ and we will distinguish two cases: $H'\cap H\in \Sc_1^{MAX}$ or $H'\cap H\in \Sc_1\minus\Sc_1^{MAX}$.
		
		If $H'\cap H\in \Sc_1^{MAX}$, then $H'=H'\cap H$ (because $H'\cap H\subseteq H'$ and $H'\in \Sc_1$). Hence, since we are assuming that $r\notin H'$, it follows that 
		\[\sum_{i\in H'}\vert z(i)\vert = \sum_{i\in H'\cap H}\vert z(i)\vert= \sum_{i\in H'\cap H}\vert f_k(i)\vert\leqslant 1. \]
		
		If $H'\cap H\in \Sc_1\minus \Sc_1^{MAX}$, then $(H'\cap H)\cup \{r\}\in \Sc_1$ (by Fact~\ref{factS1} since $r>H'\cap H$). Hence, since $\Vert f_k\Vert=1$, we have 
		\begin{align}\label{sj} \sum_{i\in H'\cap H}\vert f_k(i)\vert +\vert f_k(r)\vert \leqslant 1. \end{align}
		So, since $r\notin H'$, it follows that 
		\[\begin{aligned} \sum_{i\in H'}\vert z(i)\vert &=\sum_{i\in H'\cap H} \vert z(i)\vert +\sum_{i\in H'\minus H}\vert z(i)\vert \\&=\sum_{i\in H'\cap H} \vert f_k(i)\vert +\sum_{i\in H'\minus H}\vert z(i)\vert\\&\leqslant 1-\vert f_k(r)\vert + \sum_{i\in H'\minus H}\vert z(i)\vert \qquad \text{by (\ref{sj})}\\&\leqslant 1-\vert z(r)\vert <1 \qquad\text{by (\ref{sjs})}.
		\end{aligned}\]
		Therefore, $\Vert z\Vert =1$. 
		
		Moreover, $\Vert z\pm f_k\Vert =2$ because 
		\[ \Vert z-f_k\Vert \geqslant \sum_{i\in G}\vert z(i)-f_k(i)\vert =2\sum_{i\in G}\vert f_k(i)\vert =2\]
		and 
		\[\Vert z+f_k\Vert \geqslant \sum_{i\in F'}\vert z(i)+f_k(i)\vert =2\sum_{i\in F}\vert f_k(i)\vert =2. \]
		
		Choose now $n,m > \max\{r_s,k\}$ such that $f_n=\pm e_m$ by Lemma~\ref{lemma16}. 
		
		Since $\Vert z\pm f_k\Vert=2$, it follows that $\Vert T^{-1}(z)\pm e_k\Vert =2$ by Lemma~\ref{lemma6}. Hence, by Fact~\ref{lemma20} and since $k<n$, we have $\Vert T^{-1}(z)+ e_m\Vert =2$  or $\Vert T^{-1}(z)- e_n\Vert =2$. So,  $\Vert z + f_n\Vert =2$ or $\Vert z-f_n\Vert =2$ by Lemma~\ref{lemma6}. Since $f_n=\pm e_m$, it follows that  \begin{align}\label{hypothese}
			\Vert z-e_m\Vert =2 \qquad\text{or} \qquad \Vert z+e_m\Vert=2.
		\end{align}
		By Lemma~\ref{lemma1} and Fact~\ref{normeatteintep1}, and since $m>r_{s}$, (\ref{hypothese}) implies that there exists a $1$-set $H''$ of $z$ such that $H''\cup \{m\}\in \Sc_1$. Hence, $H''$ is a non maximal $1$-set  of $z$. By Lemma~\ref{nonmaxS1}, 
		\[H''=[\min (H''), \infty)\cap \mathrm{supp}(z).\]
		Note also that $F'$ is a $1$-set of $z$ such that 
		\[F'= [\min (F'), \infty)\cap \mathrm{supp}(z)\]
		since  $F=[\min (F), \infty)\cap \mathrm{supp}(f_k)$ because $F$ is the non-maximal $1$-set of $f_k$.
		
		It follows that if $\min (H'')>\min (F')$ or $\min (H'')<\min (F')$, then we have a contradiction with the fact that $F'$ and $H''$ are $1$-sets of $z$. Therefore, $\min (H'')=\min (F')$ and $H''=F'$ which is a contradiction because $F'\in \Sc_1^{MAX}$ and $H''\in \Sc_1\minus \Sc_1^{MAX}$. 
	\end{proof}

					Recall that $A$ is the set of all  $x\in \Sp$ such that $1\in \mathrm{supp}(x)$.
					
					\begin{lemma}\label{simple}
						Assume that $\alpha=1$. Let $x,y \in \Sp\minus A$ be such that  $\Vert x\pm e_2\Vert=\Vert y\pm e_2\Vert=2$. Then $x= y$ or $x=-y$ or $\Vert x+y\Vert=\Vert x-y\Vert=2$.   
					\end{lemma}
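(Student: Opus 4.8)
The plan is to show that the two hypotheses pin down $x$ and $y$ completely: each must equal $\pm e_j$ for some $j\geqslant 3$, after which the trichotomy in the conclusion falls out by a one-line case analysis. The only real content is an observation about $\Sc_1$, namely that every member of $\Sc_1$ containing the index $2$ has at most two elements; this collapses the condition $\Vert z\pm e_2\Vert=2$ to the statement that $z$ is a signed basis vector.

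First I would prove the characterization: if $z\in \Sp\minus A$ satisfies $\Vert z+e_2\Vert=\Vert z-e_2\Vert=2$, then $z=\pm e_j$ for some $j\geqslant 3$. Since the norm is attained on $\Sc_1$ (by Fact~\ref{normeatteintep1} applied to $(z+e_2)/2\in\Sp$), choose $F\in\Sc_1$ with $\sum_{k\in F}\vert z(k)+e_2(k)\vert=2$. Lemma~\ref{lemma1} gives $F\in\mathcal{A}_z^1\cap\mathcal{A}_{e_2}^1$, and since $\mathcal{A}_{e_2}^1=\{G\in\Sc_1:2\in G\}$, we have $2\in F$ and $\sum_{k\in F}\vert z(k)\vert=1$. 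I would first record that $z(2)=0$: if not, then $2\in\mathrm{supp}(z)\cap\mathrm{supp}(e_2)\cap F$, so the sign clause of Lemma~\ref{lemma1} forces $\mathrm{sgn}(z(2))=+1$; applying the same lemma to $\Vert z-e_2\Vert=2$ (with $-e_2$ in place of $e_2$, using its own attaining set, which again contains $2$) forces $\mathrm{sgn}(z(2))=-1$, a contradiction.

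With $z(2)=0$ in hand, the structure of $\Sc_1$ does the rest. Since $2\in F\in\Sc_1$ and $z(1)=0$, the index $1$ cannot lie in $F$ (otherwise $\min F=1$ would force $\vert F\vert\leqslant 1$, impossible as $\{1,2\}\subseteq F$), so $\min F=2$ and $\vert F\vert\leqslant 2$; hence $F=\{2,j\}$ with $j\geqslant 3$. Then $1=\sum_{k\in F}\vert z(k)\vert=\vert z(j)\vert$. Finally $\vert z(j)\vert=1$ forces $z=\pm e_j$: if some $j'\geqslant 3$ with $j'\neq j$ had $z(j')\neq 0$, then $\{j,j'\}\in\Sc_1$ would give $\Vert z\Vert\geqslant\vert z(j)\vert+\vert z(j')\vert>1$, contradicting $z\in\Sp$; together with $z(1)=z(2)=0$ this leaves $z=\pm e_j$. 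Applying this to both $x$ and $y$ yields $x=\pm e_j$ and $y=\pm e_{j'}$ with $j,j'\geqslant 3$.

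It then remains to compare $j$ and $j'$. If $j=j'$, then $x,y\in\{e_j,-e_j\}$, so $x=y$ or $x=-y$. If $j\neq j'$, then $x$ and $y$ have disjoint supports contained in $\{3,4,\dots\}$, and $\{j,j'\}\in\Sc_1$, whence $\Vert x\pm y\Vert\geqslant\vert x(j)\vert+\vert y(j')\vert=2$, while $\Vert x\pm y\Vert\leqslant\Vert x\Vert+\Vert y\Vert=2$; thus $\Vert x+y\Vert=\Vert x-y\Vert=2$. I do not expect a genuine obstacle here: the whole argument hinges on the elementary remark that $2\in F\in\Sc_1$ forces $\vert F\vert\leqslant 2$, and once $x,y$ are identified as signed basis vectors the conclusion is immediate.
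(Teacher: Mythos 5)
Your proof is correct and follows essentially the same route as the paper: both use the sign clause of Lemma~\ref{lemma1} applied to $\Vert x\pm e_2\Vert=2$ to get $x(2)=y(2)=0$, then use the structure of $\Sc_1$ (any $F\in\Sc_1$ containing $2$ has $\min F=2$ and $\vert F\vert\leqslant 2$) to identify the attaining set as $\{2,m\}$, conclude $x=\pm e_m$ and $y=\pm e_n$, and finish with the same case analysis on $m,n$. The only differences are cosmetic: you justify norm attainment via Fact~\ref{normeatteintep1} applied to $(z+e_2)/2$ (the paper leaves this implicit), and you should state explicitly that $F\neq\{2\}$ — immediate since $z(2)=0$ while $\sum_{k\in F}\vert z(k)\vert=1$ — before writing $F=\{2,j\}$, as the paper does.
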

					\begin{proof}
						Assume towards a contradiction that $x(2)\neq 0$. Since $\Vert x\pm e_2\Vert =2$, it follows by Lemma~\ref{lemma1} that $x(2)>0$ and $x(2)<0$ which is a contradiction. Hence, $x(2)=0$ and in the same way we have $y(2)=0$. 
						
						Now, we will show that $x=\pm e_m$ for some $m\in \N\minus \{1,2\}$. By Lemma~\ref{lemma1} again and since $\Vert x+ e_2\Vert =2$, there exists $F\in \mathcal{A}_{x}^1$ such that $2\in F$. Note that $F\neq \{2\}$ because $x(2)=0$. So, $F=\{2,m\}$ for some $m>2$ because $F\in \Sc_1$. It follows that $\vert x(m)\vert=1$. Consequently, $x(i)=0$ for all $i\in \N\minus \{1,m\}$ because $\Vert x\Vert =1$ and $x(1)=0$ because $x\in \Sp\minus A$. Therefore, $x=\pm e_m$. 
						
						In the same way, we show that  $y=\pm e_n$ for some $n\in \N\minus \{1,2\}$.
						
						If $n=m$ then $x=y$ or $x=-y$. And if $n\neq m$, then $\Vert x+y\Vert=\Vert x-y\Vert=2$. 
					\end{proof}
					\begin{lemma}\label{lemma23}
						Assume that $\alpha\geqslant2$.  Let $i,j \in \N\minus \{1\}$ be such that $i\neq j$ and let $x \in \Sp\minus A $. 
						Then  
						\[ (\Vert x+e_i\Vert =\Vert x+e_j\Vert =2 \,\;\text{ and }\,\; \Vert x- e_i\Vert =\Vert x-e_j\Vert =1 )\Longleftrightarrow \; x=\frac{1}{2}e_i+\frac{1}{2}e_j.  \]  
					\end{lemma}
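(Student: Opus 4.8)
The reverse implication is a direct computation. If $x=\tfrac12 e_i+\tfrac12 e_j$ then, since $i,j\geqslant 2$ force $\{i,j\}\in\Sc_\alpha$, we get $\Vert x\Vert=1$ and $1\notin\mathrm{supp}(x)$, so $x\in\Sp\minus A$; moreover $\sum_{k\in\{i,j\}}\vert x(k)+e_i(k)\vert=\tfrac32+\tfrac12=2$ gives $\Vert x+e_i\Vert=2$ (and symmetrically $\Vert x+e_j\Vert=2$), while $x-e_i=-\tfrac12 e_i+\tfrac12 e_j$ is supported on $\{i,j\}\in\Sc_\alpha$ with total mass $1$, whence $\Vert x-e_i\Vert=1$ (and symmetrically $\Vert x-e_j\Vert=1$).

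For the forward implication the plan is first to pin down the coordinates $x(i),x(j)$ and then to use $\alpha\geqslant 2$ to kill all the others. I would start by proving $x(i)>0$. If $x(i)=0$ then $\vert(x+e_i)(k)\vert=\vert(x-e_i)(k)\vert$ for every $k$, so $\Vert x+e_i\Vert=\Vert x-e_i\Vert$, contradicting $2\neq 1$; hence $x(i)\neq 0$. Since $\Vert x+e_i\Vert=2$ is attained on some $F\in\Sc_\alpha$ (the norm is attained, by Fact~\ref{normeatteintep1}), Lemma~\ref{lemma1} applied to $x$ and $e_i$ yields $F\in\mathcal{A}_x^1$, $i\in F$, and $\mathrm{sgn}(x(i))=\mathrm{sgn}(e_i(i))=+1$; thus $x(i)>0$, and symmetrically $x(j)>0$.

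Next I would read off the value $1/2$ from $\Vert x-e_i\Vert=1$. With the $F\in\mathcal{A}_x^1$ containing $i$ found above, and using $0<x(i)\leqslant 1$,
\[ 1=\Vert x-e_i\Vert\geqslant\sum_{k\in F}\vert x(k)-e_i(k)\vert=(1-x(i))+\sum_{k\in F\minus\{i\}}\vert x(k)\vert=2\bigl(1-x(i)\bigr), \]
so $x(i)\geqslant\tfrac12$, and likewise $x(j)\geqslant\tfrac12$. Since $\{i,j\}\in\Sc_\alpha$ and $\Vert x\Vert=1$ give $x(i)+x(j)\leqslant 1$, these two bounds force $x(i)=x(j)=\tfrac12$.

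It remains to rule out further support. Suppose $x(m)\neq 0$ for some $m\notin\{1,i,j\}$; as $x\in\Sp\minus A$ gives $x(1)=0$, necessarily $m\geqslant 2$, so $\{i,j,m\}$ is a three-element subset of $\{2,3,\dots\}$ and therefore lies in $\Sc_\alpha$ because $\alpha\geqslant 2$. Then $\Vert x\Vert\geqslant\vert x(i)\vert+\vert x(j)\vert+\vert x(m)\vert=1+\vert x(m)\vert>1$, a contradiction, so $x=\tfrac12 e_i+\tfrac12 e_j$. The hypothesis $\alpha\geqslant 2$ is used exactly here---for $\alpha=1$ a set like $\{2,j,m\}$ need not belong to $\Sc_1$---and the only delicate point is the correct handling of signs through Lemma~\ref{lemma1} in the second step.
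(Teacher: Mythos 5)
Your proof is correct and takes essentially the same route as the paper's: norm attainment plus Lemma~\ref{lemma1} give $F\in\mathcal{A}_x^1$ with $i\in F$ and $x(i)>0$, the bound $x(i)\geqslant\frac{1}{2}$ is extracted from $\Vert x-e_i\Vert=1$ exactly as in the paper, and the extra coordinates are eliminated via $\{i,j,m\}\in\Sc_\alpha$ for $\alpha\geqslant 2$. The only (harmless) difference is your symmetry observation $\Vert x+e_i\Vert=\Vert x-e_i\Vert$ when $x(i)=0$ to rule out $x(i)=0$, where the paper instead derives the contradiction $\Vert x-e_i\Vert\geqslant 2$.
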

					\begin{proof}
						
						The reverse implication is obvious since $\{i,j\}\in \Sc_\alpha$. 
						
						For the forward implication, since $\Vert x+e_i\Vert =2$, it follows by Lemma~\ref{lemma1} and Fact~\ref{normeatteintep1} that there exists $F\in \mathcal{A}_{x}^1$ such that $i\in F$ and $\sum_{k\in F}\vert x(k)+e_i(k)\vert=2$. 
						
						If $x(i)=0$, then $F\minus \{i\}\in \mathcal{A}_{x}^1$ and so  \[\begin{aligned}
							\Vert x-e_i\Vert &\geqslant \sum_{k\in F}\vert x(k)-e_i(k)\vert \\&= \sum_{k\in F\minus \{i\}}\vert x(k)\vert +1=2,
						\end{aligned}\]
						which is a contradiction, because $\Vert x-e_i\Vert =1$. 
						Hence $x(i)\neq 0$ and so $x(i)>0$ by Lemma~\ref{lemma1}.
						
						Since $\Vert x-e_i\Vert =1$ and $\sum_{k\in F\minus \{i\}}\vert x(k)\vert =1- x(i) $ (because $F\in \mathcal{A}_x^1$), it follows that 
						\[ \begin{aligned}
							\sum_{k\in F}\vert x(k)-e_i(k)\vert \leqslant 1 &\implies 1-x(i)+\sum_{k\in F\minus \{i\}}\vert x(k)\vert \leqslant 1\\&\implies 2-2 x(i)\leqslant 1\\&\implies  x(i) \geqslant \frac{1}{2}\cdot
						\end{aligned}\]
						
						In the same way, we show that  $ x(j)\geqslant \frac{1}{2}\cdot$ 
						
						If $ x(i)>\frac{1}{2}$ or $ x(j) >\frac{1}{2}$, then since $\{i,j\}\in \Sc_\alpha$ we get
						\[\Vert x\Vert \geqslant x(i) + x(j) >1,\]
						which is a contradiction because $x\in \Sp$. 
						Hence $x(i)=x(j)=\frac{1}{2}\cdot$
						
						Assume now towards a contradiction that there exists $m\in\N\minus\{1,i,j\}$ such that $x(m)\neq0$. Since $\alpha\geqslant 2$, it follows that $\{m,i,j\}\in \Sc_\alpha$ and so 
						\[\Vert x\Vert \geqslant \vert x(m)\vert +\frac{1}{2}+\frac{1}{2}>1,\]
						which is a contradiction. Hence, $x(m)=0$ for all $m\in\N\minus\{1,i,j\}$ and $x(1)=0 $ because $x\in \Sp\minus A$. Therefore, $x=\frac{1}{2}e_i+\frac{1}{2}e_j$. 
					\end{proof}  
					
					\begin{lemma}\label{treslong}
						We have $d_2=\pm e_j$ for some $j\in \N\minus\{1\}$.
					\end{lemma}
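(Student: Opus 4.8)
The goal is to force $\mathrm{supp}(d_2)$ to be a single point. By part~2) of Lemma~\ref{lemma15}, if $2\notin\mathrm{supp}(d_2)$ then $d_2=\pm e_j$ and there is nothing to prove, so I would first reduce to $2\in\mathrm{supp}(d_2)$. Lemma~\ref{lemma11} gives $f_m(2)\neq0\Leftrightarrow d_2(m)\neq0$, i.e. $m\in\mathrm{supp}(d_2)\Leftrightarrow 2\in\mathrm{supp}(f_m)$, so $\mathrm{supp}(d_2)=\{m\in\N\minus\{1\};\ 2\in\mathrm{supp}(f_m)\}$ and in particular $2\in\mathrm{supp}(d_2)\Leftrightarrow 2\in\mathrm{supp}(f_2)$. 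If some $f_m$ equals $\pm e_2$, then applying $T^{-1}$ together with Lemma~\ref{lemma6} yields $d_2=\pm e_m$ and we are done; so I may assume no image is $\pm e_2$. Arguing by contradiction, suppose $\mathrm{supp}(d_2)$ contains a second index $b\neq2$. Through the dictionary above this says precisely that $2\in\mathrm{supp}(f_2)\cap\mathrm{supp}(f_b)$ with $f_2,f_b\neq\pm e_2$ and $b\neq2$, the $p=1$ analogue of the forbidden configuration handled for $p=2$ in Lemma~\ref{Fact5}.

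For $\alpha\geqslant2$ I would contradict the uniqueness in Lemma~\ref{lemma23}. Since $\{2,b\}\in\Sc_\alpha$, the vector $v:=\frac12 e_2+\frac12 e_b$ lies in $\Sp\minus A$ and, $\{2,b\}$ being a $1$-set for it, satisfies $\|v+e_2\|=\|v+e_b\|=2$ and $\|v-e_2\|=\|v-e_b\|=1$; by Lemma~\ref{lemma23} it is the \emph{only} element of $\Sp\minus A$ with these four properties. Transporting through $T$, and using $T(-e_n)=-f_n$ (Lemma~\ref{lemma6}), a vector $u\in\Sp\minus A$ satisfies those four conditions if and only if $w:=T(u)$ satisfies $\|w+f_2\|=\|w+f_b\|=2$ and $\|w-f_2\|=\|w-f_b\|=1$. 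The plan is to exhibit, using that $f_2$ and $f_b$ are non-trivial vectors sharing the coordinate $2$, a second vector $w'\neq T(v)$ fulfilling these last four equalities (the shared coordinate should leave room to redistribute mass off the index $2$ while preserving all four norms); then $u':=T^{-1}(w')\neq v$ would be a second element of $\Sp\minus A$ satisfying the hypotheses of Lemma~\ref{lemma23}, contradicting its uniqueness clause.

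For $\alpha=1$ the index $2$ is special (any $1$-set meeting $2$ is $\{2\}$ or $\{2,l\}$) and Lemma~\ref{lemma23} is unavailable, so here I would transfer Lemma~\ref{simple} instead. The set $\{x\in\Sp\minus A;\ \|x+e_2\|=\|x-e_2\|=2\}$ equals $\{\pm e_m;\ m\geqslant3\}$, so applying $T^{-1}$ (again via Lemma~\ref{lemma6}) gives $\|d_2+d_m\|=\|d_2-d_m\|=2$ for every $m\geqslant3$. Choosing, by Lemma~\ref{lemma16}, indices $m$ with $d_m=\pm e_i$ and $i$ arbitrarily large shows that $d_2$ has a non-maximal $1$-set, hence by Lemma~\ref{nonmaxS1} a unique one, of the form $F_0=[\min F_0,\infty)\cap\mathrm{supp}(d_2)$. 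If $\min F_0=2$ then $|F_0|<\min F_0$ forces $F_0=\{2\}$, whence $d_2=\pm e_2$ and we are done. Otherwise $\min F_0\geqslant3$, so $2\in\mathrm{supp}(d_2)\minus F_0$; since $2$ lies in some $1$-set $\{2,l\}$ of $d_2$, the $d$-version of Lemma~\ref{intersectionS1} (same proof with $T^{-1}$ in place of $T$) forces $\{2,l\}\cap F_0\neq\emptyset$, i.e. $l\in F_0$. From here I would build an explicit witness $z\in\Sp$ out of the two overlapping $1$-sets $\{2,l\}$ and $F_0$, in the spirit of the proof of Lemma~\ref{intersectionS1}, realising $\|z\pm d_2\|=2$ with $z\notin\{\pm e_1,\pm d_2\}$, contradicting the transferred form of Lemma~\ref{simple} (equivalently Lemma~\ref{lemma8}).

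The main obstacle in both cases is the same: converting the single shared coordinate $2$ into a genuine contradiction requires constructing the auxiliary sphere vector ($w'$ for $\alpha\geqslant2$, $z$ for $\alpha=1$) and then checking $\|\cdot\|=1$ by bounding $\sum_{i\in H}|\cdot(i)|\leqslant1$ over every $H\in\Sc_\alpha$. This delicate $1$-set bookkeeping, sharpened for $\alpha=1$ by Fact~\ref{normeatteintep1} and the rigidity of Lemmas~\ref{nonmaxS1} and~\ref{intersectionS1}, is where the length of the proof resides.
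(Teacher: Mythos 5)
Your skeleton coincides with the paper's (reduction to $2\in\mathrm{supp}(d_2)$ via Lemma~\ref{lemma15}, the dictionary of Lemma~\ref{lemma11}, and the case split using Lemma~\ref{lemma23} for $\alpha\geqslant2$ and Lemmas~\ref{simple},~\ref{nonmaxS1},~\ref{intersectionS1} for $\alpha=1$), but in both cases you stop exactly where the real work starts, and the contradictions you sketch do not go through as formulated. For $\alpha\geqslant2$: you want a second vector $w'\neq T(v)$ with $\Vert w'+f_2\Vert=\Vert w'+f_b\Vert=2$ and $\Vert w'-f_2\Vert=\Vert w'-f_b\Vert=1$. No construction is offered, and none is available from your data: $f_2$ and $f_b$ are unknown vectors of which you only know that they share the coordinate $2$, so the four norms of no concrete candidate can be computed; ``redistributing mass off the index $2$'' is not an argument. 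The paper does something genuinely different at this point: it introduces a \emph{third} index, picking $n,m>\{k\}\cup\mathrm{supp}(f_k)\cup\mathrm{supp}(f_2)$ with $f_n=\theta e_m$ (Lemma~\ref{lemma16}), and applies Lemma~\ref{lemma23} to the pair $(f_k,f_n)$, for which $z:=\frac12 f_k+\frac12 f_n$ demonstrably satisfies the four equalities (this is checkable precisely because $f_n$ is a signed basis vector supported beyond $\mathrm{supp}(f_k)$, and $f_k$ has a non-maximal $1$-set by Lemma~\ref{nonmaxSalpha}). This pins down $T^{-1}(z)=\frac12 e_k+\frac12 e_n$; then $\{2,k,n\}\in\Sc_\alpha$ (this is where $\alpha\geqslant2$ enters) gives $\Vert z\pm f_2\Vert=2$, and a sign analysis at $r=\max\mathrm{supp}(f_k)$ produces the contradiction. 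That analysis needs $f_2(r)\neq0$, i.e.\ the containment $\mathrm{supp}(f_k)\subseteq\mathrm{supp}(f_2)$, which the paper establishes as a preliminary step (formula (\ref{suppfk})) and which your proposal never mentions.

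For $\alpha=1$ your preparatory reductions are correct (the $d$-versions of Lemmas~\ref{nonmaxSalpha} and~\ref{intersectionS1} do hold by symmetric proofs, and $\min F_0=2$ indeed forces $d_2=\pm e_2$), but the endgame is broken, not merely missing. A vector $z\in\Sp\minus A$ with $\Vert z+d_2\Vert=\Vert z-d_2\Vert=2$ and $z\notin\{\pm e_1,\pm d_2\}$ contradicts nothing: Lemma~\ref{lemma8} requires the \emph{sum} $\Vert z+d_2\Vert+\Vert z-d_2\Vert$ to equal $2$ (your $z$ gives $4$), and the transferred Lemma~\ref{simple} only says that such a $z$ must lie in $\{\pm d_m;\ m\geqslant3\}$ --- membership you cannot exclude, because the $d_m$ are unknown (excluding it for every $m$ is essentially the statement being proved). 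Compare with the paper's Case 1: from $\Vert f_k\pm f_2\Vert=2$ it extracts two disjoint maximal $1$-sets $F,G$ common to $f_k$ and $f_2$ (using (\ref{suppfk}) and Lemma~\ref{intersectionS1}), builds $z$ as a signed average over $F\cup G$, uses Lemma~\ref{simple} \emph{not} to get a contradiction but to conclude $z=f_k$, and only then reaches the contradiction with Lemma~\ref{intersectionS1} through a further analysis of the non-maximal $1$-set of $f_k$ against $F=\{2,m\}$. In short: you have the right toolbox and the right frame, but the two deferred constructions together with the containment (\ref{suppfk}) constitute essentially the whole proof, and the final contradictions you intend to draw would have to be redesigned, not just filled in.
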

					\begin{proof}
						If $2\notin \mathrm{supp}(d_2)$, then $d_2=\pm e_j$ for some $j\in \N\minus\{1\}$, by Lemma~\ref{lemma15}. 
						
						\smallskip
						In the rest of the proof, we will suppose that $2\in \mathrm{supp}(d_2)$ and we will assume towards a contradiction that there exists $k \in \mathrm{supp}(d_2) $ such that $k\neq 2$. Note that $k\in \N\minus \{1\}$ by Lemma~\ref{lemma5}. 
						
						We will show first that 
						\begin{align}\label{suppfk}
							\mathrm{supp}(f_k)\subseteq\mathrm{supp}(f_2).
						\end{align}
						Towards a contradiction, assume that there exists $k'\in \mathrm{supp}(f_k)$ such that $k'\notin \mathrm{supp}(f_2)$. We have $k'\in \N\minus \{1\}$ by Lemma~\ref{lemma5} and $2\notin \mathrm{supp}(d_{k'})$ by Lemma~\ref{lemma11}. Hence, $d_{k'}=\pm e_j$ for some $j\in \N\minus \{1\}$ by Lemma~\ref{lemma15}.   Since $k'\in \mathrm{supp}(f_k)$, we have $\Vert e_{k'}+f_k\Vert <2$ or  $\Vert e_{k'}-f_k\Vert <2$ by Lemma~\ref{lemma1}. Hence, $\Vert d_{k'}+e_k\Vert<2$ or  $\Vert d_{k'}-e_k\Vert<2$ by Lemma \ref{lemma6}, and so $\Vert e_j+e_k\Vert <2$ or $\Vert e_j-e_k\Vert <2$. This implies that $j=k$ since $j,k\in \N\minus \{1\}$ and hence $\{j,k\}\in \Sc_\alpha$. Therefore, $d_{k'}=\pm e_k$ which means $f_k=\pm e_{k'}$ by Lemma~\ref{lemma6}. However, since $k\in \mathrm{supp}(d_2)$, it follows by Lemma~\ref{lemma11} that $2\in \mathrm{supp}(f_k)$. So, $k'=2$ which is a contradiction because $k'\notin \mathrm{supp}(f_2)$ and $2\in \mathrm{supp}(f_2)$ (by Lemma~\ref{lemma11} since we are assuming that $2\in \mathrm{supp}(d_2)$). This proves (\ref{suppfk}).
						
						\smallskip
						Now, we will distinguish two cases. 
						
						\smallskip
						\textbf{Case 1:} $\alpha=1$. 
						
						\smallskip
						Since $\Vert e_k\pm e_2\Vert=2$, it follows that $\Vert f_k\pm f_2\Vert=2$. Hence, by Lemma~\ref{lemma1}, there exists $F,G\in \mathcal{A}_{f_k}^1\cap \mathcal{A}_{f_2}^1$ such that 
						\begin{align}\label{mm1}
							\sum_{i\in F}\vert f_k(i)+f_2(i)\vert =2
						\end{align}
						and 
						\begin{align}\label{mm2}
							\sum_{i\in G}\vert f_k(i)- f_2(i)\vert =2.
						\end{align}
						Hence, $F':=F\cap \mathrm{supp}(f_k)$ and $G':=G \cap \mathrm{supp}(f_k)$ are two $1$-sets of $f_k$. 
						
						If there exists $i_0\in F'\cap G'$, then, by Lemma~\ref{lemma1} and since $\mathrm{supp}{f_k}\subseteq\mathrm{supp}(f_2)$ (by (\ref{suppfk})),  (\ref{mm1}) implies that 
						$\mathrm{sgn}(f_k(i_0))=\mathrm{sgn}(f_2(i_0))$ and (\ref{mm2}) implies that 
						$\mathrm{sgn}(f_k(i_0))=-\mathrm{sgn}(f_2(i_0))$
						which is a contradiction. Hence, $F'\cap G'=\emptyset$ and so by Lemma~\ref{intersectionS1}, $F', G'\in \Sc_1^{MAX}$. Therefore, $F'=F$ and $G'=G$.   In summary, 
						\begin{align}\label{resume}
							\text{$F,G\in \Sc_1^{MAX}$ are two $1$-sets of $f_k$ and $f_2$ such that $F\cap G=\emptyset$.}   
						\end{align}
						Now, let us define $z\in X_{\Sc_1}$ as follows: 
						\[z:=\sum_{i\in F}\frac{f_k(i)+f_2(i)}{2}e_i+\sum_{i\in G}\frac{f_k(i)-f_2(i)}{2}e_i.\]
						Since, by Lemma~\ref{lemma1},  $\mathrm{sgn}(f_k(i))=\mathrm{sgn}(f_2(i))$ for all $i\in F$, and $\mathrm{sgn}(f_k(i))=-\mathrm{sgn}(f_2(i))$ for all $i\in G$, it follows that for any $i\in F\cup G$, we have 
						\begin{align}\label{ll}\vert z(i)\vert =\frac{\vert f_k(i)\vert+\vert f_2(i)\vert}{2}\cdot\end{align}
						Now, we will show that $z\in \Sp$. By (\ref{mm1}) and (\ref{mm2}), we have  
						\[\sum_{i\in F}\vert z(i)\vert =\sum_{i\in G}\vert z(i)\vert=1.\]
						Moreover, for any $H \in \Sc_1$, we have by (\ref{ll}) and since $F\cap G=\emptyset$ that
						\[\begin{aligned}\sum_{i\in H}\vert z(i)\vert &=\sum_{i \in H\cap F}\vert z(i)\vert +
							\sum_{i \in H\cap G}\vert z(i)\vert \\&=\frac{1}{2}\sum_{i \in H\cap F}(\vert f_k(i)\vert +\vert f_2(i)\vert )+\frac{1}{2}
							\sum_{i \in H\cap G}(\vert f_k(i)\vert +\vert f_2(i)\vert )\\&=\frac{1}{2}\sum_{i\in H\cap(F\cup G)}\vert f_k(i)\vert +\frac{1}{2}\sum_{i\in H\cap(F\cup G)}\vert f_2(i)\vert \leqslant\frac{1}{2}+\frac{1}{2}=1,\end{aligned}\]
						because $H\cap (F\cup G)\in \Sc_1$ (since $H\in \Sc_1$ and $\Sc_1$ is hereditary). 
						
						Therefore $z\in \Sp$. 
						
						Since $\mathrm{sgn}(f_k(i))=\mathrm{sgn}(f_2(i))$ for all $i\in F$, it follows that 
						\[\begin{aligned}
							\Vert z+f_2\Vert &\geqslant \sum_{i\in F}\vert z(i)+f_2(i)\vert \\&=\frac{1}{2}\sum_{i\in F}\vert f_k(i)+3f_2(i)\vert \\&=\frac{1}{2}\sum_{i\in F}\vert f_k(i)\vert +\frac{3}{2}\sum_{i\in F}\vert f_2(i)\vert =2,
						\end{aligned}\]
						and since $\mathrm{sgn}(f_k(i))=-\mathrm{sgn}(f_2(i))$ for all $i\in G$, we get in the same way that 
						\[\Vert z-f_2\Vert \geqslant\sum_{i\in G}\vert z(i)-f_2(i)\vert=2. \]
						Hence, $\Vert z\pm f_2\Vert =2$. So, $x:=T^{-1}(z)$ satisfies  $\Vert x \pm e_2\Vert =2=\Vert e_k\pm e_2\Vert$, and $x\in \Sp\minus A$ by Lemma~\ref{lemma5}. By Lemma~\ref{simple}, it follows that 
						\[x=e_k \qquad\text{or}\qquad x=-e_k \qquad\text{or}\qquad \Vert x+e_k\Vert =\Vert x-e_k\Vert =2. \]
						Hence 
						\[z=f_k \qquad\text{or}\qquad z=-f_k \qquad\text{or}\qquad \Vert z+f_k\Vert =\Vert z-f_k\Vert =2. \]
						However, $z\neq -f_k$, and $\Vert z-f_k\Vert <2$ since for any $i\in F\cup G$, $\mathrm{sgn}(f_k(i))=\mathrm{sgn}(z(i))$. Therefore, $z=f_k$ and this implies that 
						\[\begin{aligned} &f_2(i)=f_k(i) &&\qquad\text{for all}\qquad i\in F,\\
							&f_2(i)=-f_k(i) &&\qquad\text{for all}\qquad i\in G,\end{aligned}\]
						and \[f_k=\sum_{i\in F}f_k(i)e_i+\sum_{i\in G}f_k(i)e_i\]
						where $F,G$ are two maximal $1$-sets of $f_k$.

						Recall now that $k\in \mathrm{supp}(d_2)$. Hence, by Lemma~\ref{lemma11}, $2\in \mathrm{supp}(f_k)= F\cup G$. Without loss of generality, we will assume that $2\in F$. Since $F\in \Sc_1^{MAX}$, there exists $m\in \N\minus\{1\}$ such that 
						\[F=\{2,m\}. \]
						
						Let $H'$ be the non-maximal $1$-set of $f_k$ (which is well defined by Lemmas~\ref{nonmaxSalpha} and~\ref{nonmaxS1}). Then 
						\begin{align}\label{ccc}H'=[\min(H'),\infty)\cap\mathrm{supp}(f_k). \end{align}
						By Lemma~\ref{intersectionS1}, $H'\cap F\neq  \emptyset$. Since $F=\{2,m\}$, it follows that $2\in H'$ or $m\in H'$. If $2\in H'$, then by (\ref{ccc}), $H'=F\cup G$ and hence since $F\cap G=\emptyset$ we have\[\sum_{i\in H'}\vert f_k(i)\vert=\sum_{i\in F}\vert f_k(i)\vert +\sum_{i\in G}\vert f_k(i)\vert =2,\]
						which is a contradiction. Hence, $2\notin H'$, and so $m\in H'$ and we have by (\ref{ccc}) that 
						\[[m,\infty)\cap\mathrm{supp}(f_k)\subseteq H',\]
						which implies that 
						\begin{align}\label{ccc1}
							[m,\infty)\cap G\subseteq H',  
						\end{align}
						since $\mathrm{supp}(f_k)=F\cup G$. 
						
						If $G\subset H'$, then $G\cup \{m\}\subseteq H'\in \Sc_1$, and hence $G\cup\{m\}\in \Sc_1$ because $\Sc_1$ is hereditary, which is a contradiction since $G\in \Sc_1^{MAX}$ and $m\notin G$. So, there exists $n\in G$ such that $n\notin H'$. Therefore, by (\ref{ccc1}), $n<m$. 
						
						We will show that $\vert f_k(n)\vert =\vert f_k(m)\vert $. Towards a contradiction, assume first that $\vert f_k(n)\vert <\vert f_k(m)\vert $. Since $n<m$ and $\Sc_1$ is spreading, it follows that $(G\minus\{n\})\cup\{m\}\in \Sc_1$. Moreover, since $G$ is a $1$-set of $f_k$, it follows that 
						\[\sum_{i \in G\minus \{n\}}\vert f_k(i)\vert =1-\vert f_k(n)\vert.\]
						Hence, 
						\[\begin{aligned}\Vert f_k\Vert &\geqslant \sum_{i\in (G\minus\{n\})\cup\{m\}}\vert f_k(i)\vert\\&=\sum_{i\in G\minus \{n\} }\vert f_k(i)\vert +\vert f_k(m)\vert \\&=1-\vert f_k(n)\vert +\vert f_k(m)\vert>1,
						\end{aligned}\]
						since we are assuming that $\vert f_k(n)\vert <\vert f_k(m)\vert $, which is a contradiction. Hence, $\vert f_k(m)\vert \leqslant \vert f_k(n)\vert $.
						
						Assume now that $\vert f_k(m)\vert< \vert f_k(n)\vert $. Since $F=\{2,m\}$ is a $1$-set of $f_k$, it follows that 
						\[\vert f_k(2)\vert =1-\vert f_k(m)\vert,\]
						and so since $\{2,n\}\in \Sc_1$, we have  \[\Vert f_k\Vert \geqslant \vert f_k(2)\vert+\vert  f_k(n)\vert=1-\vert  f_k(m)\vert +\vert  f_k(n)\vert>1, \]
						which is a contradiction. Hence $\vert f_k(n)\vert =\vert f_k(m)\vert$ and so 
						\[\vert f_k(2)\vert+ \vert f_k(n)\vert =\vert f_k(2)\vert+\vert f_k(m)\vert =1,\]
						which implies that $\{2,n\}$ is a $1$-set of $f_k$. However, $\{2,n\}\cap H'=\emptyset$, which contradicts Lemma~\ref{intersectionS1} because $H'$ is the non-maximal $1$-set of $f_k$.

						\smallskip
						\textbf{Case 2:} $\alpha\geqslant2$.
						
						\smallskip
						By Lemma~\ref{lemma12}, we may choose $n,m> \{k\}\cup \mathrm{supp}(f_k)\cup \mathrm{supp}(f_2)$ such that $f_n=\theta e_m$ where $\theta=\pm 1$. Let us define $z\in X_{\Sc_\alpha}$ as follows: 
						\[z:=\frac{1}{2}f_k+\frac{1}{2}f_n. \]
						
						Let us show that 
						\begin{align}\label{mmm} 
							z\in \Sp, \quad \Vert z+ f_k\Vert =\Vert z+f_n\Vert =2\quad \text{and}\quad \Vert z- f_k\Vert =\Vert z-f_n\Vert =1. \end{align}
						By Lemma~\ref{nonmaxSalpha}, there exists a non maximal $1$-set $F_0$ of $f_k$. Since $F_0\in \Sc_\alpha\minus \Sc_\alpha^{MAX}$ and $m>F_0$ (because $m>\mathrm{supp}(f_k)$), it follows by Fact~\ref{max} that  $F_0\cup \{m\}\in \Sc_\alpha$.
						
						We have 
						\[\Vert z\Vert \leqslant\frac{1}{2}\Vert f_k\Vert + \frac{1}{2}\Vert f_n\Vert=1.\]
						Moreover, since $F_0$ is a $1$-set of $f_k$, $F_0\cup \{m\}\in \Sc_\alpha$ and $f_n=\theta e_m$, it follows that  
						\[\Vert z\Vert\geqslant \frac{1}{2}\sum_{i\in F_0}\vert f_k(i)\vert+\frac{1}{2}\vert f_n(m)\vert =1.\]
						Hence, $z\in \Sp$. 
						
						In the same way, we have  
						\[\Vert z+f_k\Vert =\biggl\Vert \frac{3}{2}f_k+\frac{1}{2}f_n\biggr\Vert \geqslant \frac{3}{2}\sum_{i\in F_0}\vert f_k(i)\vert +\frac{1}{2}\vert f_n(m)\vert =2,\]
						and  
						\[\Vert z+f_n\Vert =\biggl\Vert \frac{1}{2}f_k+\frac{3}{2}f_n\biggr\Vert \geqslant \frac{1}{2} \sum_{i\in F_0}\vert f_k(i)\vert +\frac{3}{2}\vert f_n(m)\vert =2. \]
						So, 
						$\Vert z+ f_k\Vert =\Vert z+f_n\Vert =2$. 
						
						Finally, since $\mathrm{supp}(f_n)=\{m\}>\mathrm{supp}(f_k)$, it follows that 
						\[\Vert z-f_k\Vert =\biggl\Vert -\frac{1}{2}f_k+\frac{1}{2}f_n\biggr\Vert= \biggl\Vert \frac{1}{2}f_k+\frac{1}{2}f_n\biggr\Vert=\Vert z\Vert=1,\]
						and \[\Vert z-f_n\Vert =\biggl\Vert \frac{1}{2}f_k-\frac{1}{2}f_n\biggr\Vert=\biggl\Vert \frac{1}{2}f_k+\frac{1}{2}f_n\biggr\Vert=\Vert z\Vert=1.\]
						
						So, we have shown (\ref{mmm}). By Lemma~\ref{lemma6}, it follows that 
						\[\Vert T^{-1}(z)+ e_k\Vert =\Vert T^{-1}(z)+e_n\Vert =2\qquad \text{and}\qquad \Vert T^{-1}(z)- e_k\Vert =\Vert T^{-1}(z)-e_n\Vert =1.\]
						Moreover, $T^{-1}(z)\in \Sp\minus A$ by Lemma~\ref{lemma5}.
						Therefore, by Lemma~\ref{lemma23}, we have 
						\[T^{-1}(z)=\frac{1}{2}e_k+\frac{1}{2}e_n.\]
						Since $\alpha\geqslant2$, it follows that $\{2,k,n\}\in \Sc_\alpha$ and so 
						\[\Vert T^{-1}(z)+e_2\Vert =\Vert T^{-1}(z)-e_2\Vert=2. \]
						This implies by Lemma~\ref{lemma6} that \[\Vert z+ f_2\Vert =\Vert z- f_2\Vert=2. \]
						So, there exist $H_1, H_2\in\Sc_\alpha$ such that
						\begin{align}\label{mmmm}\sum_{i\in H_1}\vert z(i)+f_2(i)\vert =2 \qquad\text{and}\qquad \sum_{i\in H_2}\vert z(i)-f_2(i)\vert =2 .\end{align}
						Now, we will show that 
						\begin{align}\label{mmm1} r:= \max\mathrm{supp}(f_k)\in H_1. \end{align}
						
						Assume first towards a contradiction that $m\notin H_1$. Then 
						\[\begin{aligned}\sum_{i\in H_1}\vert z(i)+f_2(i)\vert &= \sum_{i\in H_1}\biggl\vert \frac{1}{2}f_k(i)+f_2(i)\biggr\vert\\&\leqslant \frac{1}{2}\sum_{i\in H_1}\vert f_k(i)\vert + \sum_{i\in H_1}\vert f_2(i)\vert\leqslant \frac{3}{2},\end{aligned} \]
						which is a contradiction. 
						Hence $m\in H_1$. 
						
						Now, we observe that $H'_1:=H_1\cap \mathrm{supp}(f_k)$ is a $1$-set of $f_k$. Indeed, if $\sum_{i\in H_1}\vert f_k(i)\vert <1$, then since $m>\mathrm{supp}(f_k)\cup \mathrm{supp}(f_2)$, we have 
						\[\begin{aligned}\sum_{i\in H_1}\vert z(i)+f_2(i)\vert &=\sum_{i\in H_1\minus \{m\}}\vert z(i)+f_2(i)\vert + \vert z(m)\vert \\&\leqslant \sum_{i\in H_1\minus\{m\}}\vert z(i)\vert + \sum_{i\in H_1\minus\{m\}}\vert f_2(i)\vert+\frac{1}{2}\\&\leqslant \frac{1}{2}\sum_{i\in H_1\minus \{m\}}\vert f_k(i)\vert +\frac{3}{2}\\&=\frac{1}{2}\sum_{i\in H_1}\vert f_k(i)\vert +\frac{3}{2}<2,
						\end{aligned}\]
						which contradicts (\ref{mmmm}). Hence $H_1':=H_1\cap \mathrm{supp}(f_k)$ is a $1$-set of $f_k$. Since $ H_1\in \Sc_\alpha$ and $\Sc_\alpha$ is hereditary, it follows that $H'_1\cup\{m\}\in \Sc_\alpha$ and hence $H'_1\in \Sc_\alpha\minus \Sc_\alpha^{MAX}$.
						
						Finally,  assume towards a contradiction that $r:=\max\mathrm{supp}(f_k)\notin  H_1$. Since $H'_1\in \Sc_\alpha\minus \Sc_\alpha^{MAX}$ and $r>H'_1$, it follows by Fact~\ref{max} that 
						\[H'_1\cup \{r\}\in\Sc_\alpha,\] and so since $H'_1$ is a $1$-set of $f_k$, we get 
						\[\Vert f_k\Vert \geqslant\sum_{i\in H'_1\cup \{r\}}\vert f_k(i)\vert =\sum_{i\in H'_1}\vert f_k(i)\vert+\vert f_k(r)\vert >1, \]
						which is a contradiction. This proves (\ref{mmm1}). 
						
						In the same way, we show that 
						\begin{align}\label{mmm2}
							r\in H_2. 
						\end{align}
						Note that $f_2(r)\neq0$ because $\mathrm{supp}(f_k)\subseteq\mathrm{supp}(f_2)$ by (\ref{suppfk}). Hence, by Lemma~\ref{lemma1} and (\ref{mmmm}), (\ref{mmm1}) implies that 
						\[\mathrm{sgn}(f_2(r))=\mathrm{sgn}(z(r))=\mathrm{sgn}(f_k(r))\]
						and (\ref{mmm2}) implies that 
						\[\mathrm{sgn}(f_2(r))=-\mathrm{sgn}(z(r))=-\mathrm{sgn}(f_k(r))\]
						which is a contradiction.

						\bigskip
						
						\medskip
						
						So, in both cases, we get a contradiction, which means that when $2\in \mathrm{supp}(d_2)$, there is no $k\in \mathrm{supp}(d_2)$ such that $k\neq 2$, \textit{i.e.} $\mathrm{supp}(d_2)=\{2\}$. This completes the proof. 
					\end{proof}
					
					\begin{lemma}\label{iii}
						For any $k\in \N\minus \{1\}$, there exists $k'\in \N\minus \{1\}$ such that $f_k=\pm e_{k'}$.
					\end{lemma}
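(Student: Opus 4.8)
The plan is to reduce the statement to the two structural lemmas already at our disposal, namely Lemma~\ref{lemma15} and Lemma~\ref{treslong}, and to split on whether the index $2$ lies in $\mathrm{supp}(f_k)$. Fix $k\in\N\minus\{1\}$. The first case, $2\notin\mathrm{supp}(f_k)$, is immediate: the first part of Lemma~\ref{lemma15} applies verbatim and produces a sign together with an index $k'\in\N\minus\{1\}$ such that $f_k=\pm e_{k'}$, so there is nothing further to do.

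The substance lies in the complementary case $2\in\mathrm{supp}(f_k)$, and here the preparatory lemmas do all the work. Since $f_k(2)\neq 0$, Lemma~\ref{lemma11} gives $d_2(k)\neq 0$, that is, $k\in\mathrm{supp}(d_2)$. On the other hand, Lemma~\ref{treslong} asserts that $d_2=\pm e_j$ for some $j\in\N\minus\{1\}$, whence $\mathrm{supp}(d_2)=\{j\}$ and therefore $k=j$. Consequently $d_2=T^{-1}(e_2)=\pm e_k$. Applying $T$ to this identity and using Lemma~\ref{lemma6} to carry the sign across (i.e.\ the relation $T(-e_k)=-T(e_k)$), we obtain $e_2=\pm f_k$, so that $f_k=\pm e_2$. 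Thus in this case $f_k=\pm e_{k'}$ with $k'=2$, which finishes the argument.

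I do not anticipate any genuine obstacle: all the difficulty has been front-loaded into Lemma~\ref{treslong}, whose long case analysis establishes that $d_2$ is a signed basis vector, and into Lemma~\ref{lemma15}. The only point requiring care is the bookkeeping of signs when passing from $d_2=\pm e_k$ to $f_k=\pm e_2$ through $T$, and this is exactly the role of Lemma~\ref{lemma6}.
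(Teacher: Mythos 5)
Your proof is correct and follows essentially the same route as the paper: both arguments rest on Lemma~\ref{treslong} ($d_2=\pm e_j$), Lemma~\ref{lemma11}, Lemma~\ref{lemma15}, and Lemma~\ref{lemma6}, with your case split on $2\in\mathrm{supp}(f_k)$ being just the contrapositive reorganization of the paper's split on whether $k=j$.
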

					\begin{proof}
						By Lemma~\ref{treslong}, we know that $d_2=\pm e_j$ for some $j\in \N\minus\{1\} $. If $k=j$, then $f_k=\pm e_2$  by Lemma~\ref{lemma6}. If $k\neq j$, then $k\notin \mathrm{supp}(d_2)$ and hence by Lemma~\ref{lemma11},  $2\notin \mathrm{supp}(f_k)$. So by Lemma~\ref{lemma15}, $f_k=\pm e_{k'}$ for some $k'\in \N\minus \{1\}$. 
					\end{proof}
					
					The next lemma is Proposition~\ref{main3} for $p=1$.
					
					\begin{lemma}\label{cle}
						For any $k\in \N$, there exists a sign $\theta_k$ such that $f_k=\theta_k e_k$ and $d_k=\theta_k e_k$.\end{lemma}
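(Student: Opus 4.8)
The plan is to run, for $p=1$, the same scheme used for $p=2$ in the proof of Lemma~\ref{Fact6}, with Fact~\ref{lemma20} playing the role that the convexity inequality played there and with a witness built exactly as in Lemma~\ref{Fact4}. The case $k=1$ is Lemma~\ref{lemma4}, so fix $k\geqslant 2$. First I would encode the earlier lemmas as a permutation: by Lemma~\ref{iii} there is a map $\pi\colon\N\minus\{1\}\to\N\minus\{1\}$ with $f_k=\pm e_{\pi(k)}$, and it is injective because $T$ is. It is surjective, for if some $m\geqslant 2$ lay outside its range then $f_k(m)=0$ for all $k\geqslant 2$, whence $d_m(k)=0$ for all $k\geqslant 2$ by Lemma~\ref{lemma11} and $d_m(1)=0$ by Lemma~\ref{lemma5}, so $d_m=0$, which is impossible. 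Hence $\pi$ is a bijection, and Lemma~\ref{lemma6} rewrites $f_k=\pm e_{\pi(k)}$ as $d_m=\pm e_{\sigma(m)}$ with $\sigma:=\pi^{-1}$. Since a strictly increasing bijection of $\N\minus\{1\}$ is the identity, it will suffice to show that $\sigma$ is strictly increasing: then $\pi$ is the identity too, so $f_k=\pm e_k$ and $d_k=\pm e_k$, and Lemma~\ref{lemma6} forces the two signs to coincide, producing $\theta_k$.

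To show that $\sigma$ is strictly increasing, fix $i>j\geqslant 2$ and assume toward a contradiction that $\sigma(i)<\sigma(j)$ (injectivity excludes equality). Feeding the pair $\sigma(i)<\sigma(j)$ into Fact~\ref{lemma20} and recalling $d_i=\pm e_{\sigma(i)}$, $d_j=\pm e_{\sigma(j)}$, every $y\in\Sp$ must satisfy
\[
\Vert y+d_i\Vert=\Vert y-d_i\Vert=2\ \Longrightarrow\ \Vert y+d_j\Vert=2\ \text{ or }\ \Vert y-d_j\Vert=2.
\]
I would then contradict this with a single explicit $y$. The construction of Lemma~\ref{Fact4} carries over to $p=1$ with no change of idea (its $\ell_2$ norm evaluations simply become $\ell_1$ ones): from a maximal block $\{i,i+2,\dots,i+m\}\in\Sc_\alpha^{MAX}$ supplied by Fact~\ref{max}, Fact~\ref{Fact3} yields $\{j\}\cup\{i+2,\dots,i+m\}\notin\Sc_\alpha$, and $x:=\sum_{k=2}^m a_k e_{i+k}$ with $a_k\neq 0$ and $\sum_k\vert a_k\vert=1$ lies in $\Sp$ with $\Vert x+e_i\Vert=\Vert x-e_i\Vert=2$ while $\Vert x\pm e_j\Vert<2$. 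Putting $y:=T^{-1}(x)$ and using $T^{-1}(-e_i)=-d_i$, $T^{-1}(-e_j)=-d_j$ (Lemma~\ref{lemma6}) gives $\Vert y\pm d_i\Vert=\Vert x\pm e_i\Vert=2$ together with $\Vert y\pm d_j\Vert=\Vert x\pm e_j\Vert<2$, contradicting the implication above. Therefore $\sigma(i)>\sigma(j)$, and the lemma follows as explained.

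The heart of the argument is this monotonicity step, and the one ingredient in it not already recorded in the present section is the $\ell_1$-witness $x$, i.e. the $p=1$ analogue of Lemma~\ref{Fact4}. Its combinatorial backbone (the maximal block together with Fact~\ref{Fact3}) is identical to the $p=2$ case; what must be rechecked are the strict inequalities $\Vert x\pm e_j\Vert<2$ in the norm $\Vert\cdot\Vert$. These hold because any $F\in\Sc_\alpha$ with $j\in F$ fails to contain all of $\mathrm{supp}(x)=\{i+2,\dots,i+m\}$ (otherwise $\{j\}\cup\mathrm{supp}(x)\in\Sc_\alpha$), so $\sum_{k\in F\minus\{j\}}\vert x(k)\vert<1$ and hence $\sum_{k\in F}\vert x(k)\pm e_j(k)\vert<2$. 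The only point to watch is that $\mathrm{supp}(x)$ has at least two elements, which is automatic: since $i>j\geqslant 2$ we have $\min\{i,i+2,\dots,i+m\}=i\geqslant 3$, and a two-element block with minimum $\geqslant 3$ is never maximal in $\Sc_\alpha$, so the block produced by Fact~\ref{max} has at least three points.
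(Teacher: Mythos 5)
Your proposal is correct and follows essentially the same route as the paper: the paper's proof of this lemma is simply ``similar to that of Lemma~\ref{Fact6}'', i.e.\ encode the $f_k$'s as a signed permutation, prove surjectivity, and prove strict monotonicity by contradicting a Fact~\ref{Fact4}-type witness, which is exactly your scheme. Your two supporting ingredients --- invoking Fact~\ref{lemma20} (already proved for $p=1$) in place of the inline computation of Lemma~\ref{Fact6}, and checking that the witness of Lemma~\ref{Fact4} transfers verbatim to the $\ell_1$-type norm --- are precisely the details the paper leaves implicit, and both are verified correctly.
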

					\begin{proof}
						The proof is similar to that of Lemma~\ref{Fact6}.  
					\end{proof}
					
					% \begin{lemma}\label{cle}
						%   For any $k\in \N$, there exists a sign $\theta_k$ such that $f_k=\theta_k e_k$ (equivalently $d_k=\theta_k e_k$).    
						%  \end{lemma}
					\begin{lemma}\label{zero'}
						Let $x\in \Sp$ and $i\in \N$. If $x(i)=0$, then $T(x)(i)=0$ and  $T^{-1}(x)(i)=0$.
					\end{lemma}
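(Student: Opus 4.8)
The plan is to follow, essentially verbatim, the argument used for Lemma~\ref{zero} in the case $p\in(1,\infty)$, replacing every $p$-power by the corresponding sum of absolute values and supplying norming sets through Fact~\ref{normeatteintep1} rather than Fact~\ref{normeatteinte}. As in that proof, it suffices to prove $T(x)(i)=0$: the assertion for $T^{-1}$ follows by symmetry, since $T^{-1}$ is again a surjective isometry satisfying $T^{-1}(e_i)=\theta_i e_i$ and $T^{-1}(-e_i)=-\theta_i e_i$ (Lemmas~\ref{cle} and~\ref{lemma6}) together with $T^{-1}(A)=A$ (Lemma~\ref{lemma5}).

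First I would dispose of the two easy cases. If $i=1$ and $x(1)=0$, then $x\in\Sp\minus A$; since $T(A)=A$ by Lemma~\ref{lemma5} and $T$ is a bijection of $\Sp$, we get $T(\Sp\minus A)=\Sp\minus A$, whence $T(x)(1)=0$. If $i\geqslant2$ and $x=\pm e_1$, then $T(x)=\pm e_1$ by Lemma~\ref{lemma4}, so again $T(x)(i)=0$.

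For the remaining case $i\geqslant2$ and $x\neq\pm e_1$, I would first note that, since $\{1\}\in\Sc_\alpha^{MAX}$ and $\Vert x\Vert=1$, the condition $x\neq\pm e_1$ forces some $j\geqslant2$ with $x(j)\neq0$, and $j\neq i$ because $x(i)=0$. As $\{i,j\}\in\Sc_\alpha$, this yields $\Vert x\pm e_i\Vert\geqslant 1+\vert x(j)\vert>1$; moreover $\Vert x+e_i\Vert=\Vert x-e_i\Vert=:c$ because $\vert(x+e_i)(k)\vert=\vert(x-e_i)(k)\vert$ for every $k$ (using $x(i)=0$). Translating through $T$ and using $f_i=\theta_i e_i$ (Lemma~\ref{cle}) and $T(-e_i)=-\theta_i e_i$ (Lemma~\ref{lemma6}), I get $\Vert T(x)+\theta_i e_i\Vert=\Vert T(x)-\theta_i e_i\Vert=c>1$. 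Choosing, via Fact~\ref{normeatteintep1}, a set $G\in\Sc_\alpha$ norming $x+e_i$ forces $i\in G$ and gives $c=1+\sum_{k\in G\minus\{i\}}\vert x(k)\vert$.

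Finally I would argue by contradiction: assume $T(x)(i)\neq0$, say $\mathrm{sgn}(T(x)(i))=\theta_i$ (the opposite sign being symmetric). Picking $F\in\Sc_\alpha$ norming $T(x)-\theta_i e_i$ forces $i\in F$, and since $\vert T(x)(i)-\theta_i\vert=1-\vert T(x)(i)\vert<1$ I obtain $c<1+\sum_{k\in F\minus\{i\}}\vert T(x)(k)\vert$, hence $\sum_{k\in G\minus\{i\}}\vert x(k)\vert<\sum_{k\in F\minus\{i\}}\vert T(x)(k)\vert$. Then, because $\vert T(x)(i)+\theta_i\vert=1+\vert T(x)(i)\vert>1$, the same set $F$ gives $\Vert T(x)+\theta_i e_i\Vert>1+\sum_{k\in F\minus\{i\}}\vert T(x)(k)\vert>c=\Vert T(x)+\theta_i e_i\Vert$, a contradiction. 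I do not expect a genuine obstacle here: the whole argument is built from norming sets and pointwise sign comparisons, so it transfers from $p>1$ to $p=1$ once Minkowski's inequality is replaced by the trivial triangle inequality for sums; the only point requiring a line of care is the attainment of the suprema defining $\Vert T(x)\pm\theta_i e_i\Vert$, which I would secure by rescaling these vectors to the unit sphere and invoking Fact~\ref{normeatteintep1}.
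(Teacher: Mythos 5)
Your proposal is correct and follows essentially the same route as the paper: the paper's own proof of Lemma~\ref{zero'} simply says it is the proof of Lemma~\ref{zero} with Lemmas~\ref{lemma5} and~\ref{lemma6} (and Lemma~\ref{cle}) in place of their $p>1$ counterparts, which is exactly the adaptation you carry out, including the correct handling of the sign dichotomy and the norming sets. Your closing remark about securing norm attainment for $\Vert T(x)\pm\theta_i e_i\Vert$ by rescaling and invoking Fact~\ref{normeatteintep1} is a valid (and slightly more careful) justification of a step the paper uses implicitly.
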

					\begin{proof}
						The proof is the same as that of Lemma~\ref{zero}. We use Lemmas~\ref{lemma5} and~\ref{lemma6} instead of ~\ref{l5} and~\ref{l6} respectively.   
					\end{proof}
					
					\begin{lemma}\label{long}
						Let $x \in \Sp$ and $i\in \N$. If there exists a $1$-set $F$ of $x$ such that $i\in F$, then \[\vert T(x)(i)\vert =\vert T^{-1}(x)(i)\vert =\vert x(i)\vert.\]
					\end{lemma}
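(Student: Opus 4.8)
The plan is to reduce everything to the one-sided estimate
\[
\text{$i$ lies in a $1$-set of $x$}\ \Longrightarrow\ \vert T(x)(i)\vert\leqslant\vert x(i)\vert ,
\]
valid for $T$ and, by the symmetric role of $T^{-1}$, for $T^{-1}$. First I would record that $i$ also lies in a $1$-set of $T(x)$: writing $\varepsilon=\mathrm{sgn}(x(i))$ and using $i\in F$, one checks $\Vert x+\varepsilon e_i\Vert=2$ (attained on $F$), whence $\Vert T(x)+\varepsilon\theta_i e_i\Vert=2$ by Lemmas~\ref{cle} and~\ref{lemma6} and the isometry property. Lemma~\ref{lemma1} together with Fact~\ref{normeatteintep1} then produces $F_0\in\mathcal{A}^1_{T(x)}$ with $i\in F_0$ and $\mathrm{sgn}(T(x)(i))=\varepsilon\theta_i$; since $x(i)\neq0$ forces $T(x)(i)\neq0$ by Lemma~\ref{zero'}, the nonzero coordinates of $T(x)$ inside $F_0$ form a $1$-set of $T(x)$ through $i$. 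Granting the inequality above in general, applying it to $T^{-1}$ and to the point $T(x)$ yields $\vert x(i)\vert=\vert T^{-1}(T(x))(i)\vert\leqslant\vert T(x)(i)\vert$, hence equality; the assertion for $T^{-1}(x)$ is obtained by rerunning the argument with $T$ replaced by $T^{-1}$, which satisfies the same hypotheses since $d_k=\theta_k e_k$ by Lemma~\ref{cle}.

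The engine is an exact norm formula. For $k\in F$ set $\varepsilon_k=\mathrm{sgn}(x(k))$ and $N_k=\sup\{\sum_{l\in G}\vert T(x)(l)\vert:\,G\in\Sc_\alpha,\ k\notin G\}\leqslant1$. Because $k$ lies in a $1$-set of $x$ one has $\sup_{G\ni k}\sum_{l\in G\minus\{k\}}\vert x(l)\vert=1-\vert x(k)\vert$, so $\Vert x-\varepsilon_k e_k\Vert=\max(2-2\vert x(k)\vert,\,N'_k)$ with $N'_k=\sup_{G\not\ni k}\sum_{l\in G}\vert x(l)\vert\leqslant1$. Transporting through the isometry gives $\Vert x-\varepsilon_k e_k\Vert=\Vert T(x)-\varepsilon_k\theta_k e_k\Vert$, and, using $\mathrm{sgn}(T(x)(k))=\varepsilon_k\theta_k$ and that $k$ lies in a $1$-set of $T(x)$ (from the previous paragraph), the same computation applied to $T(x)$ gives $\Vert T(x)-\varepsilon_k\theta_k e_k\Vert=\max(2-2\vert T(x)(k)\vert,\,N_k)$. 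Comparing the two expressions is the crux.

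The decisive point is a dichotomy at the threshold $\tfrac12$. If $\vert x(k)\vert<\tfrac12$, then $\Vert x-\varepsilon_k e_k\Vert=2-2\vert x(k)\vert>1\geqslant N_k$, so the $\max$ forces $2-2\vert T(x)(k)\vert=2-2\vert x(k)\vert$, i.e.\ $\vert T(x)(k)\vert=\vert x(k)\vert$ exactly; if $\vert x(k)\vert=\tfrac12$, the same comparison yields only the unconditional bound $\vert T(x)(k)\vert\geqslant\tfrac12$. Now fix the given $i$. If $\vert x(i)\vert<\tfrac12$ we are done by the first case. If $\vert x(i)\vert\geqslant\tfrac12$, then every other $k\in F$ satisfies $\vert x(k)\vert\leqslant1-\vert x(i)\vert\leqslant\tfrac12$, so the dichotomy gives $\vert T(x)(k)\vert\geqslant\vert x(k)\vert$ for all $k\in F\minus\{i\}$ (equality when $\vert x(k)\vert<\tfrac12$, and $\geqslant\tfrac12$ in the at most one equal-halves boundary case). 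Since $F\in\Sc_\alpha$ we have $\sum_{k\in F}\vert T(x)(k)\vert\leqslant\Vert T(x)\Vert=1$, whence
\[
\vert T(x)(i)\vert\leqslant 1-\sum_{k\in F\minus\{i\}}\vert T(x)(k)\vert\leqslant 1-\sum_{k\in F\minus\{i\}}\vert x(k)\vert=\vert x(i)\vert ,
\]
which is the required one-sided inequality.

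The step I expect to be the main obstacle is exactly this passage through the regime $\vert x(i)\vert\geqslant\tfrac12$: there a single perturbation $x\pm\varepsilon e_i$ no longer detects $\vert x(i)\vert$, since $\Vert x-\varepsilon e_i\Vert$ saturates at a value $\leqslant1$ that the competing quantity $N'_i$ may realize, so the magnitude cannot be read off coordinatewise as for $p>1$ in Lemma~\ref{imp}. The resolution sidesteps this by pinning the small coordinates of the $1$-set $F$ exactly and using the normalization $\sum_{k\in F}\vert T(x)(k)\vert\leqslant1$ to squeeze the large coordinate, with the symmetric $T^{-1}$ argument supplying the reverse inequality; the delicate boundary value $\vert x(k)\vert=\tfrac12$ is absorbed by the unconditional lower bound $\vert T(x)(k)\vert\geqslant\tfrac12$ furnished by the same $\max$-formula.
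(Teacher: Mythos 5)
Your proof is correct, and it rests on the same pillars as the paper's own argument: transporting the perturbations $x\pm\varepsilon e_k$ through $T$ using Lemmas~\ref{cle}, \ref{lemma3} and~\ref{lemma6}; producing a $1$-set of $T(x)$ through $k$ with matching sign via Lemma~\ref{lemma1}, Fact~\ref{normeatteintep1} and Lemma~\ref{zero'}; the threshold $\frac{1}{2}$; the squeeze $\sum_{k\in F}\vert T(x)(k)\vert\leqslant 1$ over the $1$-set $F\in\Sc_\alpha$; and the $T^{-1}$ bootstrap for the reverse inequality. What differs, and what makes your version tighter, is the decomposition. The paper runs three interdependent cases (Case 1: $\vert x(i)\vert<\frac{1}{2}$; Case 2.1: $F=\{i,j\}$ with both coordinates equal to $\frac{1}{2}$; Case 2.2: $\vert x(i)\vert\geqslant\frac{1}{2}$ with small companions), and each later case is bootstrapped through earlier ones applied to $T^{-1}$ and $T(x)$, including a sub-argument in Case 2.2 ruling out that the $1$-set of $T(x)$ is of Case 2.1 type before the reverse inequality can be extracted. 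Your exact identity $\Vert x-\varepsilon_k e_k\Vert=\max\bigl(2-2\vert x(k)\vert,\,N'_k\bigr)$ is a two-sided packaging of estimates the paper proves one inequality at a time; it yields $\vert T(x)(k)\vert=\vert x(k)\vert$ outright when $\vert x(k)\vert<\frac{1}{2}$ (where the paper still needs the $T^{-1}$ bootstrap), and the boundary $\vert x(k)\vert=\frac{1}{2}$ is absorbed into the unconditional bound $\vert T(x)(k)\vert\geqslant\frac{1}{2}$, so the paper's Cases 2.1 and 2.2 collapse into a single squeeze followed by one black-box application of the one-sided inequality to $T^{-1}$. Two small points to tighten: your first paragraph states the sign-matching and $1$-set facts only for the index $i$, while the second paragraph invokes them for every $k\in F$ --- the argument does apply verbatim to each $k\in F$ (each such $k$ has $x(k)\neq 0$), but this should be said explicitly; and in the degenerate situation $1\in F$ (which forces $F=\{1\}$ since $\{1\}$ is maximal in $\Sc_\alpha$) the sign flip $T(-e_1)=-T(e_1)$ comes from Lemma~\ref{lemma3} rather than Lemma~\ref{lemma6}.
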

					\begin{proof}
						Let $i\in \N$ and $F$ be a $1$-set of $x$ such that $i\in F$. Then 
						\begin{align}\label{mmmm1}\sum_{k\in F\minus \{i\}}\vert x(k)\vert =1-\vert x(i)\vert .\end{align}
						We will distinguish two cases:
						
						\smallskip \textbf{Case 1:} $\vert x(i)\vert <\frac{1}{2}\cdot$
						
						\smallskip We will show first that \begin{align}\label{jjjj1}\vert T(x)(i)\vert \geqslant \vert x(i)\vert.\end{align}
						
						\smallskip
						Let $\theta$ be the sign of $x(i)$. We have by (\ref{mmmm1}) that 
						\[\begin{aligned}\Vert x+\theta e_i\Vert &\geqslant \sum_{k\in F}\vert x(k)+\theta e_i(k)\vert \\&=\sum_{k\in F\minus \{i\}}\vert x(k)\vert +\vert x(i)+\theta\vert \\&=1-\vert x(i)\vert +\vert x(i)\vert +1=2,
						\end{aligned}\]
						and 
						\[\begin{aligned}
							\Vert x-\theta e_i\Vert &\geqslant  \sum_{k\in F}\vert x(k)-\theta e_i(k)\vert \\&=\sum_{k\in F\minus \{i\}}\vert x(k)\vert +\vert x(i)-\theta\vert \\&=1-\vert x(i)\vert +1-\vert x(i)\vert \\&=2-2\vert x(i)\vert >1,
						\end{aligned}\]
						since $\vert x(i)\vert <\frac{1}{2}\cdot$
						
						Moreover, for any $F'\in \Sc_\alpha$, if $i\notin F'$, then 
						\[\sum_{k\in F'}\vert  x(k)-\theta e_i(k)\vert =\sum_{k\in F'}\vert  x(k)\vert \leqslant 1, \]
						and if $i\in F'$, then 
						\[\sum_{k\in F'\minus \{i\}}\vert x(k)\vert = \sum_{k\in F'}\vert x(k)\vert -\vert x(i)\vert \leqslant 1-\vert x(i)\vert \] and so 
						\[\sum_{k\in F'}\vert  x(k)-\theta e_i(k)\vert=\sum_{k\in F'\minus \{i\}}\vert x(k)\vert+\vert x(i)- \theta\vert \leqslant 1-\vert x(i)\vert +1-\vert x(i)\vert =2-2\vert x(i)\vert.  \]
						Therefore, 
						\[\Vert  x+\theta e_i\Vert =2 \qquad\text{and}\qquad \Vert  x-\theta e_i\Vert=2-2\vert x(i)\vert >1.\]
						Hence, by Lemma~\ref{lemma6} and since $f_i=\theta_i e_i$ (by Lemma~\ref{cle}), it follows that 
						\begin{align}\label{jjjj}\Vert T(x)+\theta \theta_i e_i\Vert =2\qquad \text{and}\qquad \Vert T(x)-\theta \theta_i e_i\Vert =2-2\vert x(i)\vert >1.  \end{align}
						So, by Lemma~\ref{lemma1}, there exists $G\in \mathcal{A}_{T(x)}^1$ such that $i\in G$,
						\[\sum_{k\in G}\vert T(x)(k)+\theta\theta_i e_i(k)\vert =2, \]
						and $T(x)(i)=0$ or $\mathrm{sgn}(T(x)(i))=\theta \theta_i$. 
						
						Since $x(i)\neq 0$ (because $i\in F$ and $F$ is a $1$-set of $x$), it follows by Lemma~\ref{zero'} that $T(x)(i)\neq 0$. Hence, $\mathrm{sgn}(T(x)(i))=\theta \theta_i$.
						%If $T(x)(i)=0$, then $\sum_{k\in G}\vert T(x)(k)-\theta\theta_i e_i(k)\vert =2$, which is a contradiction because $\Vert T(x)-\theta \theta_i e_i\Vert =2-2\vert x(i)\vert <2$. (Note that $x(i)\neq 0$ because $i\in F$ where $F$ is a $1$-set of $x$.) 
						
						Since $G\in \mathcal{A}_{T(x)}^1$, it follows that 
						\[\sum_{k\in G\minus \{i\}}\vert T(x)(k)\vert =1-\vert  T(x)(i)\vert, \]
						and so 
						\[\begin{aligned}
							\sum_{k\in G}\vert T(x)(k)-\theta\theta_i e_i(k)\vert &=\sum_{k\in G\minus \{i\}}\vert T(x)(k)\vert+\vert T(x)(i)-\theta\theta_i\vert \\&=1-\vert  T(x)(i)\vert+1-\vert T(x)(i)\vert\\& =2-2\vert T(x)(i)\vert.
						\end{aligned}\]
						Hence, 
						\[2-2\vert x(i)\vert =\Vert T(x)-\theta\theta_ie_i\Vert \geqslant  \sum_{k\in G}\vert T(x)(k)-\theta\theta_i e_i(k)\vert=2-2\vert T(x)(i)\vert. \]
						This proves (\ref{jjjj1}).

						Now, we will show that $\vert T(x)(i)\vert <\frac{1}{2}\cdot$ Towards a contradiction, assume that $\vert T(x)(i)\vert \geqslant\frac{1}{2}\cdot$ Let $G'\in \Sc_\alpha$. If $i\notin G'$, then 
						\[\sum_{i\in G'}\vert  T(x)(k)-\theta\theta_i e_i(k)\vert=\sum_{i\in G'}\vert  T(x)(k)\vert \leqslant 1.\]
						If $i\in G'$, then 
						\[\sum_{k\in G'\minus \{i\}}\vert T(x)(k)\vert =\sum_{k\in G'}\vert T(x)(k)\vert-\vert T(x)(i)\vert \leqslant 1-\vert T(x)(i)\vert, \]
						and since $\mathrm{sgn}(T(x)(i))=\theta \theta_i$, it follows that  
						\[\begin{aligned}\sum_{i\in G'}\vert  T(x)(k)-\theta\theta_i e_i(k)\vert&=\sum_{k\in G'\minus \{i\}}\vert T(x)(k)\vert+\vert T(x)(i)-\theta \theta_i\vert \\&\leqslant 1-\vert T(x)(i)\vert+1-\vert T(x)(i)\vert\\&=2-2\vert T(x)(i)\vert \leqslant1,\end{aligned}\]
						since we are assuming that $\vert T(x)(i)\vert \geqslant\frac{1}{2}\cdot$ Therefore, 
						$\Vert T(x)-\theta \theta_i e_i\Vert \leqslant1, $
						which contradicts (\ref{jjjj}). Hence, $\vert T(x)(i)\vert <\frac{1}{2}\cdot$
						
						So, we have $\vert T(x)(i)\vert <\frac{1}{2}$ and $G\cap \mathrm{supp}(T(x))$ is a $1$-set of $T(x)$ that contains $i$ (because $G\in \mathcal{A}_{T(x)}^1$ and $T(x)(i)\neq 0$). Hence, if we repeat the same proof for $T^{-1}$ and $T(x)$ instead of $T$ and $x$, we obtain that  
						\[\vert T^{-1}(T(x))(i)\vert \geqslant \vert T(x)(i)\vert,  \]
						\textit{i.e.}
						\begin{align}\label{jjjj2}
							\vert x(i)\vert \geqslant  \vert T(x)(i)\vert. 
						\end{align}
						Hence, (\ref{jjjj1}) and (\ref{jjjj2}) imply that $\vert T(x)(i)\vert=\vert x(i)\vert$. Similarly, we show that $\vert T^{-1}(x)(i)\vert=\vert x(i)\vert$. 
						
						\medskip
						\textbf{Case 2:} $\vert x(i)\vert \geqslant\frac{1}{2}\cdot$
						
						\smallskip Let us  distinguish the following two subcases: 
						
						\smallskip \textbf{Case 2.1:} $F=\{i,j\}$ where $j\neq i\in \N\minus \{1\}$ and $\vert x(i)\vert=\vert x(j)\vert=\frac{1}{2}\cdot$
						
						\smallskip
						Let $\theta$ be the sign of $x(i)$.  Since $F=\{i,j\}\in \Sc_\alpha$, it follows that 
						\[\Vert x+\theta e_i\Vert \geqslant \vert x(i)+\theta \vert +\vert x(j)\vert =1+\vert x(i)\vert  +\vert x(j)\vert=2. \]
						%and 
						%\[\Vert x-\theta e_i\Vert \geqslant \vert x(i)-\theta \vert +\vert x(j)\vert =1-\vert x(i)\vert  +\vert x(j)\vert=1.  \]
						%Moreover, for any $F'\in \Sc_\alpha$, if $i\notin F'$, then 
						%\[\sum_{k\in F'}\vert x(k)-\theta e_i(k)\vert =\sum_{k\in F'}\vert x(k)\vert \leqslant 1,\]
						%and, if $i\in F'$ then 
						%\[\sum_{k\in F'}\vert x(k)-\theta e_i(k)\vert =\sum_{k\in F'\minus\{i\}}\vert x(k)\vert+\vert x(i)-\theta\vert \leqslant 1-\vert x(i)\vert +1-\vert x(i)\vert =1. \]
						Hence, $\Vert x+\theta e_i\Vert=2$ and so by Lemma~\ref{lemma6} and Lemma~\ref{cle}, 
						%\[\Vert x+\theta e_i\Vert=2\qquad\text{and}\qquad \Vert x-\theta e_i\Vert=1.  \]
						%This implies that 
						\[\Vert T(x)+\theta\theta_ie_i\Vert =2. \]
						So by Lemma~\ref{lemma1}, there exists $G\in \mathcal{A}_{T(x)}^1$ such that $i\in G$ and 
						\[\sum_{k\in G}\vert T(x)(k)+\theta\theta_ie_i(k)\vert =2. \]
						%and $T(x)(i)=0$ or $\mathrm{sgn}(T(x)(i))=\theta\theta_i$. 
						
						Since $x(i)\neq 0$, it follows by Lemma~\ref{zero'} that $T(x)(i)\neq 0$. Hence $G\cap \mathrm{supp}(T(x))$ is a $1$-set of $T(x)$ that contains $i$. This implies that  if $\vert T(x)(i)\vert <\frac{1}{2}$, then by Case 1, \[\vert T(x)(i)\vert =\vert T^{-1}(T(x))(i)\vert =\vert x(i)\vert =\frac{1}{2},\]
						which is a contradiction. Hence $\vert T(x)(i)\vert \geqslant \frac{1}{2}\cdot$ Similarly, we show that $\vert T(x)(j)\vert \geqslant \frac{1}{2}\cdot$
						
						Since $\{i,j\}\in \Sc_\alpha$ and $\Vert T(x)\Vert =1$, it follows that \[\vert T(x)(i)\vert =\vert T(x)(j)\vert=\frac{1}{2}\cdot\]
						
						In the same way, we show that $\vert T^{-1}(x)(i)\vert =\frac{1}{2}\cdot$
						
						\smallskip
						\textbf{Case 2.2:} $\vert x(i)\vert \geqslant\frac{1}{2}$ and $\vert x(k)\vert<\frac{1}{2}$ for any $k\in F\minus \{i\}$.
						
						\smallskip   
						As in the previous case, it is easy to show that \[\Vert T(x)+\theta\theta_ie_i\Vert =2,\]
						where $\theta$ is the sign of $x(i)$. Hence, by Lemma~\ref{lemma1}, there exists $G\in \mathcal{A}_{T(x)}$ such that $i\in G$ and \[\sum_{k\in G}\vert T(x)(k)+\theta\theta_ie_i(k)\vert =2. \]
						Moreover, $T(x)(i)\neq0$ by Lemma~\ref{zero'}. 
						Now, we will show that 
						\begin{align}\label{llll}
							\frac{1}{2}\leqslant \vert T(x)(i)\vert \leqslant\vert x(i)\vert.  
						\end{align}
						Towards a contradiction, assume that $\vert T(x)(i)\vert <\frac{1}{2}\cdot$ Since $G':=G\cap \mathrm{supp}(T(x))$ is a $1$-set of $T(x)$ that contains $i$, it follows by Case 1 that \[\vert x(i)\vert =\vert T^{-1}(T(x))(i)\vert =\vert T(x)(i)\vert <\frac{1}{2},\]
						which is a contradiction since $\vert x(i)\vert \geqslant\frac{1}{2}\cdot$ Hence $\vert T(x)(i)\vert \geqslant \frac{1}{2}\cdot$
						
						For any $k\in F\minus\{i\}$, we have $\vert x(k)\vert <\frac{1}{2}$, and hence by Case 1, $\vert T(x)(k)\vert =\vert x(k)\vert $. Assume towards a contradiction that $\vert T(x)(i)\vert >\vert x(i)\vert$. Since $F$ is a $1$-set of $x$, it follows that 
						\[ \begin{aligned}
							\Vert T(x)\Vert &\geqslant  \sum_{k\in F}\vert T(x)(k)\vert \\&=\vert T(x)(i)\vert + \sum_{k\in F\minus \{i\}}\vert T(x)(k)\vert \\&>\vert x(i)\vert + \sum_{k\in F\minus \{i\}}\vert T(x)(k)\vert=1,
						\end{aligned}\]
						which is a contradiction. So, we have shown (\ref{llll}).
						
						Note that if $\vert x(i)\vert =\frac{1}{2}$, then (\ref{llll}) implies that $\vert T(x)(i)\vert =\frac{1}{2}\cdot$ 
						
						So, we will now suppose that $\vert x(i)\vert >\frac{1}{2}\cdot$
						
						If $G':=G\cap \mathrm{supp}(T(x)) =\{i,j\}$ where $j\neq i$ and $\vert T(x)(j)\vert =\frac{1}{2}$, then $\vert T(x)(i)\vert =\frac{1}{2}$ because $G'$ is a $1$-set of $T(x)$. Hence by Case 2.1, we have 
						\[\vert x(i)\vert =\vert T^{-1}(T(x))(i)\vert=\vert T(x)(i)\vert =\frac{1}{2},\]
						which is a contradiction since $\vert x(i)\vert >\frac{1}{2}\cdot$ Hence, for any $k\in G'\minus \{i\}$, we have $\vert T(x)(k)\vert <\frac{1}{2}\cdot$ So if we repeat the same argument (for $T^{-1}$ and $T(x)$ instead of $T$ and $x$) we obtain 
						\[\vert T^{-1}(T(x))(i)\vert \leqslant \vert T(x)(i)\vert, \]
						\textit{i.e.}
						\begin{align}\label{llll1}
							\vert x(i)\vert  \leqslant \vert T(x)(i)\vert.
						\end{align}
						Hence, (\ref{llll}) and (\ref{llll1}) imply that $\vert T(x)(i)\vert=\vert x(i)\vert$. Similarly, we show that $\vert T^{-1}(x)(i)\vert=\vert x(i)\vert$.
					\end{proof}
					
					\begin{lemma}\label{final'}
						For any $x\in \Sp$ and $i\in \N$, we have $T(x)(i)=\theta_i x(i)$. 
					\end{lemma}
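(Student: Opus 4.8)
The plan is to follow the structure of Lemma~\ref{final}, the corresponding statement for $p\in(1,\infty)$, replacing each tool used there by its $p=1$ counterpart proved above. First I would dispose of the trivial case: if $x(i)=0$ then $T(x)(i)=0=\theta_i x(i)$ by Lemma~\ref{zero'}. So from now on assume $x(i)\neq 0$; note that then $T(x)(i)\neq 0$, since otherwise Lemma~\ref{zero'} applied to $T^{-1}$ and the point $T(x)$ would force $x(i)=T^{-1}(T(x))(i)=0$. The argument then splits into two independent parts: establishing the modulus equality $\vert T(x)(i)\vert=\vert x(i)\vert$, and then fixing the sign.

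For the modulus equality I would argue coordinatewise, so that in fact $\vert T(x)(k)\vert=\vert x(k)\vert$ holds for every $k\in\N$ (the coordinates with $x(k)=0$ being covered by Lemma~\ref{zero'}). Fix $i$ with $x(i)\neq 0$. If some element of $\mathcal{A}_x^1$ contains $i$, then, since $x(i)\neq 0$, intersecting it with $\mathrm{supp}(x)$ yields a $1$-set of $x$ containing $i$, and Lemma~\ref{long} gives $\vert T(x)(i)\vert=\vert x(i)\vert$ directly. Otherwise no element of $\mathcal{A}_x^1$ contains $i$, and I would define $y$ by $y(k)=x(k)$ for $k\neq i$ and $y(i)=0$; using Fact~\ref{normeatteintep1} (every $G\in\Sc_\alpha\minus\mathcal{A}_x^1$ satisfies $\sum_{k\in G}\vert x(k)\vert<1$) one checks $\Vert y\Vert=1$, so $y\in\Sp$. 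Since $T(y)(i)=0$ by Lemma~\ref{zero'} and $\Vert x-y\Vert=\vert x(i)\vert$, the isometry property gives $\vert x(i)\vert=\Vert T(x)-T(y)\Vert\geqslant\vert T(x)(i)\vert$. For the reverse inequality I would run the symmetric construction on $T(x)$: if a $1$-set of $T(x)$ contains $i$, Lemma~\ref{long} applied to $T^{-1}$ already gives equality, and if not, zeroing the $i$-th coordinate of $T(x)$ and applying $T^{-1}$ yields $\vert T(x)(i)\vert\geqslant\vert x(i)\vert$. Either way $\vert T(x)(i)\vert=\vert x(i)\vert$.

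For the sign, set $\varepsilon=\mathrm{sgn}(x(i))$ and suppose towards a contradiction that $\mathrm{sgn}(T(x)(i))=-\varepsilon\theta_i$. Using $f_i=\theta_i e_i$ (Lemma~\ref{cle}) together with Lemmas~\ref{lemma3} and~\ref{lemma6}, one has $T(-\varepsilon e_i)=-\varepsilon\theta_i e_i$, hence $\Vert T(x)+\varepsilon\theta_i e_i\Vert=\Vert x+\varepsilon e_i\Vert\geqslant 1+\vert x(i)\vert$. Choose $F\in\Sc_\alpha$ realizing $\Vert T(x)+\varepsilon\theta_i e_i\Vert$; since this exceeds $1$ we must have $i\in F$, and the assumed sign gives $\vert T(x)(i)+\varepsilon\theta_i\vert=1-\vert x(i)\vert$. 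Therefore, writing $S:=\sum_{k\in F\minus\{i\}}\vert x(k)\vert=\sum_{k\in F\minus\{i\}}\vert T(x)(k)\vert$ (by the coordinatewise modulus equality), I obtain $\Vert T(x)+\varepsilon\theta_i e_i\Vert=(1-\vert x(i)\vert)+S$, while evaluating $\Vert x+\varepsilon e_i\Vert$ on the same set $F$ gives $\Vert x+\varepsilon e_i\Vert\geqslant(1+\vert x(i)\vert)+S$. Since the two norms are equal and $\vert x(i)\vert>0$, this is absurd; hence $\mathrm{sgn}(T(x)(i))=\varepsilon\theta_i$ and $T(x)(i)=\theta_i x(i)$.

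The main obstacle is confined to the modulus part: the $\ell_1$-norm is not strictly convex, so the Minkowski equality-case reasoning available for $p>1$ is unavailable, and I must instead lean on the finiteness and $\varepsilon$-gap of $1$-sets (Fact~\ref{normeatteintep1}) to guarantee $y\in\Sp$, and on Lemma~\ref{long} to handle the coordinates lying in some $1$-set, together with the symmetric subcase for $T^{-1}$ needed to upgrade the one-sided inequality into an equality. The sign step, by contrast, is insensitive to convexity, since it only uses the elementary inequality $1+\vert x(i)\vert>1-\vert x(i)\vert$.
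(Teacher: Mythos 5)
Your proposal is correct and is essentially the paper's own proof: the paper proves Lemma~\ref{final'} by repeating the argument of Lemma~\ref{final} verbatim, with Lemmas~\ref{lemma3},~\ref{lemma6},~\ref{zero'} and~\ref{long} replacing Lemmas~\ref{l2},~\ref{l6},~\ref{zero} and~\ref{imp}, which is exactly the substitution you carry out (trivial case via Lemma~\ref{zero'}, modulus equality via Lemma~\ref{long} or the coordinate-zeroing vector $y$ plus the symmetric argument for $T^{-1}$, then the sign contradiction using $T(-\varepsilon e_i)=-\varepsilon\theta_i e_i$). Your only cosmetic deviation is computing the exact values $1\pm\vert x(i)\vert$ on the norm-attaining set instead of the paper's chain of strict inequalities, which changes nothing of substance.
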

					\begin{proof}
						The proof is the same as that of Lemma~\ref{final}. We use Lemmas~\ref{lemma3},~\ref{lemma6},~\ref{zero'} and~\ref{long} instead of~\ref{l2},~\ref{l6},~\ref{zero} and~\ref{imp}, respectively. 
					\end{proof}
					This completes the proof of Theorem~\ref{main2} and consequently Theorem~\ref{main} for $p=1$.
					

\begin{thebibliography}
						\rm\bibitem{AA} D. E. Alspach and S. A. Argyros, \emph{Complexity of weakly null sequences.}  Dissertationes Math. {\bf 321} (1992).
						\rm\bibitem{ABC} L. Antunes, K. Beanland and V. H. Chu, \emph{On the geometry of higher order Schreier spaces.} Illinois J. Math. {\bf 65} (2021), 47--69.
						\rm\bibitem{Banakh}  T. Banakh, \emph{Every 2-dimensional Banach space has the Mazur--Ulam property.} Linear Algebra Appl.
						{\bf 632} (2022), 268--280.
						\rm\bibitem{BDHQ}  K. Beanland, N. Duncan, M. Holt and J. Quigley, \emph{Extreme points for
							combinatorial Banach spaces.} Glasg. Math. J. {\bf 61} (2019), no. 2, 487--500.
						\rm\bibitem{BFT} C. Brech, V. Ferenczi and A. Tcaciuc, \emph{Isometries of combinatorial Banach spaces.}  Proc. Amer. Math. Soc. {\bf 148} (2020), no. 11, 4845--4854.

                        \rm\bibitem{Car} N. L. Carothers, \textit{A short course on Banach space theory.} London Mathematical Society Student Texts {\bf 64}. Cambridge University Press (2005).
						\rm\bibitem{Cau} R. Causey, \emph{Concerning the Szlenk index.} Studia Math. {\bf 236} (2017), 201--244.
						\rm\bibitem{Ding7}  G. G. Ding, \emph{On isometric extension problem between two unit spheres}. Sci. China Ser. A {\bf 52} (2009), no. 10, 2069--2083.
						\rm\bibitem{F} M. Fakhoury, \emph{Isometries of $p$-convexified combinatorial Banach spaces}. Studia Math. {\bf 275} (2024), no. 3, 197--234.
						\rm\bibitem{G} I. Gasparis, \emph{A dichotomy theorem for subsets of the power set of natural numbers}. Proc. Amer. Math. Soc. {\bf 129}, no. 3, 759--764.
						\rm\bibitem{LLi} L. Li, \emph{Isometries on the unit sphere of the $\ell^1$-sum of strictly convex normed spaces}. Ann. Funct. Anal. {\bf 7} (2016),  33--41.
						\rm\bibitem{Ma} N. Ma\'slany, \emph{On isometries and Tingley's problem for the spaces $T[\theta, \Sc_\alpha]$, $1\leqslant \alpha<\omega_1$}. Studia Math. {\bf 273} (2023), 285--299.
						\rm\bibitem{PSurvey} A. M. Peralta, \emph{A survey on Tingley's problem for operator algebras}.
						Acta Sci. Math. {\bf 84} (2018), 81--123.
						\rm\bibitem{T1} D. N. Tan, \emph{Isometries of the unit spheres of the Tsirelson space $T$ and the modified Tsirelson space $T_M$}. Houston J. Math. {\bf 38} (2012), no. 2, 571--581.
						\rm\bibitem{T} D. Tingley, \emph{Isometries of the unit sphere}. Geom Dedicata {\bf 22} (1987), no. 3, 371--378.
						\rm\bibitem{YZ} X. Yang and  X. Zhao, \emph{On the extension problems of isometric and
							nonexpansive mappings}.  Mathematics without boundaries. Edited
						by T. M. Rassias and P. M. Pardalos, 725--748,
						Springer, New York (2014).
					\end{thebibliography}
				\end{document}